\newtheorem{MainThm}{Theorem}
\theoremstyle{plain}
\newtheorem{thm}[equation]{Theorem}
\newtheorem{cor}[equation]{Corollary}
\newtheorem{lem}[equation]{Lemma}
\newtheorem{prop}[equation]{Proposition}
\theoremstyle{definition}
\newtheorem{definition}[equation]{Definition}
\newtheorem{definition-proposition}[equation]{Definition/Proposition}
\newtheorem{defn}[equation]{Definition}
\theoremstyle{remark}
\newtheorem{remark}[equation]{Remark}
\newtheorem{rem}[equation]{Remark}
\newtheorem{rems}[equation]{Remarks}
\numberwithin{equation}{subsection}
\newcommand\mnote[1]{\marginpar{\tiny #1}}
\newcommand{\bC}{\mathbb{C}}
\newcommand{\bKK}{\mathbf{KK}}
\newcommand{\bN}{\mathbb{N}}
\newcommand{\bQ}{\mathbb{Q}}
\newcommand{\bR}{\mathbb{R}}
\newcommand{\bS}{\mathbf{S}}
\newcommand{\bZ}{\mathbb{Z}}
\newcommand{\gA}{\bold{A}}
\newcommand{\gB}{\bold{B}}
\newcommand{\gC}{\bold{C}}
\newcommand{\gD}{\bold{D}}
\newcommand{\gR}{\bold{R}}
\newcommand{\gb}{\mathbf{b}}
\newcommand{\gx}{\mathbf{x}}
\newcommand{\gy}{\mathbf{y}}
\newcommand{\ga}{\mathbf{a}}
\newcommand{\gGamma}{\mathbf{\Gamma}}
\newcommand{\gu}{\bold{u}}
\newcommand{\gs}{\bold{s}}
\newcommand{\thom}{\tau}
\newcommand{\cI}{\mathcal{I}}
\newcommand{\cJ}{\mathcal{J}}
\newcommand{\cL}{\mathcal{L}}
\newcommand{\cR}{\mathcal{R}}
\newcommand{\gt}{\mathbf{t}}
\newcommand{\an}{\mathrm{an}}
\newcommand{\bott}{\beta}
\newcommand{\Cl}{\mathbf{Cl}}
\newcommand{\cstar}{\mathrm{C}^{\ast}}
\newcommand{\cstarred}{\mathrm{C}^{\ast}_{\mathrm{r}}}
\newcommand{\cstarmax}{\mathrm{C}^{\ast}_{\mathrm{m}}}
\newcommand{\Dir}{\slashed{\mathfrak{D}}}
\newcommand{\eps}{\epsilon}
\newcommand{\GL}{\mathrm{GL}}
\newcommand{\komprod}{\#}
\newcommand{\exprod}{\boxtimes}
\newcommand{\vol}{\mathrm{vol}}
\newcommand{\Free}{F}
\newcommand{\SB}{\mathrm{SB}}
\newcommand{\ind}{\operatorname{ind}}
\newcommand{\id}{\operatorname{id}}
\newcommand{\inddiff}{\operatorname{inddiff}} 
\newcommand{\Sub}{\mathrm{Sub}}
\newcommand{\Epi}{\mathrm{Epi}}
\newcommand{\Fr}{\mathrm{Fr}}
\newcommand{\BC}{\mu}
\newcommand{\Nov}{\nu}
\newcommand{\KO}{\mathbf{KO}}
\newcommand{\ko}{\mathbf{ko}}
\newcommand{\loopinf}[1]{\Omega^{\infty #1}}
\newcommand{\MT}{\mathbf{MT}}
\newcommand{\MSpin}{\mathbf{MSpin}}
\newcommand{\norm}[1]{\| #1 \|}
\newcommand{\normalize}[1]{\frac{#1}{(1+{#1}^{2})^{1/2}}}
\newcommand{\per}{\mathrm{per}}
\newcommand{\Riem}{\cR}
\newcommand{\res}{\mathrm{res}}
\newcommand{\Spin}{\mathrm{Spin}}
\newcommand{\spinor}{\slashed{\mathfrak{S}}}
\newcommand{\scal}{\mathrm{scal}}
\newcommand{\Th}{\mathbf{Th}}
\newcommand{\tor}{\mathrm{tor}}
\newcommand\lra{\longrightarrow}
\newcommand\Diff{\mathrm{Diff}}
\newcommand\hocolim{\operatorname*{hocolim}}
\newcommand\colim{\operatorname*{colim}}
\newcommand{\round}{\circ}
\newcommand\hcoker{/\!\!/}
\newcommand{\inter}[1]{\mathrm{int}(#1)}
\newcommand{\hAut}{\mathrm{hAut}}
\newcommand{\CircNum}[1]{\ooalign{\hfil\raise .00ex\hbox{\scriptsize #1}\hfil\crcr\mathhexbox20D}}
\newcommand{\hq}{/\!\!/}
\newcommand{\SE}{\mathrm{SE}}
\newcommand{\kothom}[1]{\lambda_{#1}}
\newcommand{\scpr}[1]{\langle #1 \rangle}
\renewcommand{\KO}{\mathsf{KO}}
\renewcommand{\ko}{\mathsf{ko}}
\renewcommand{\MT}[1]{\mathsf{MT #1}}
\renewcommand{\MSpin}{\mathsf{MSpin}}
\renewcommand{\bKK}{\mathsf{KK}}
\renewcommand{\Th}{\mathsf{Th}}
\renewcommand{\cstar}{\mathbf{C}^{\ast}}
\renewcommand{\cstarred}{\mathbf{C}^{\ast}_{\mathrm{r}}}
\renewcommand{\cstarmax}{\mathbf{C}^{\ast}_{\mathrm{m}}}
\title[Infinite loop spaces and positive scalar curvature. II.]{Infinite loop spaces\\and positive scalar curvature\\in the presence of a fundamental group}
\author{Johannes Ebert}
\email{johannes.ebert@uni-muenster.de}
\address{
Mathematisches Institut der Westf{\"a}lischen Wilhelms-Universit{\"a}t M{\"u}nster\\
Einsteinstr. 62\\
DE-48149 M{\"u}nster\\
Germany
}
\author{Oscar Randal-Williams}
\email{o.randal-williams@dpmms.cam.ac.uk}
\address{
Centre for Mathematical Sciences\\
Wilberforce Road\\
Cambridge CB3 0WB\\
UK}
\date{\today}
\keywords{Positive scalar curvature, Gromov--Lawson surgery, cobordism categories, diffeomorphism groups, Madsen--Weiss type theorems, secondary index invariant, Rosenberg index, Baum--Connes conjecture}
\subjclass[2010]{19K35, 19K56, 53C27, 55R35, 57R22, 57R65, 58D17, 58J22}
\begin{document}

\begin{abstract}
This is a continuation of our previous work with Botvinnik on the nontriviality of the secondary index invariant on spaces of metrics of positive scalar curvature, in which we take the fundamental group of the manifolds into account. We show that the secondary index invariant associated to the vanishing of the Rosenberg index can be highly nontrivial, for positive scalar curvature Spin manifolds with torsionfree fundamental groups which satisfy the Baum--Connes conjecture. For example, we produce a compact Spin 6-manifold such that its space of positive scalar curvature metrics has each rational homotopy group infinite dimensional.

At a more technical level, we introduce the notion of ``stable metrics'' and prove a basic existence theorem for them, which generalises the Gromov--Lawson surgery technique, and we also give a method for rounding corners of manifold with positive scalar curvature metrics.
\end{abstract}

\maketitle

\tableofcontents

\section{Introduction and statement of results}

It is a simple observation that if $M$ is a closed manifold, then the product $M \times S^2$ admits a metric of positive scalar curvature, so the fundamental group by itself not an obstruction against the existence of a metric of positive scalar curvature (at least in the high-dimensional regime). Nevertheless, the fundamental group has played an important role in the theory of positive scalar curvature since the work of Gromov and Lawson \cite{GromLawpi1, GL2}, the point being that there are obstructions to the existence of psc metrics on $M$ which lie in abelian groups depending on $\pi_1 (M)$. The most important is the \emph{Rosenberg index} \cite{RosNovI} which is defined for a closed $d$-dimensional spin manifold $M$. It is defined in terms of the real $K$-theory of the reduced group $\mathrm{C}^*$-algebra $\cstarred(\pi_1(M))$, as an element $\alpha_{\mathrm{r}} (M) \in KO_d (\cstarred(\pi_1(M)))$. 

If $M$ has a psc metric then the Rosenberg index vanishes, but in an appropriate sense the positivity of the scalar curvature also gives a \emph{reason} for it to vanish: therefore, following the recipe described in \cite[\S 3.3.2]{BERW}, if $\Riem^+ (M)$ denotes the space of psc metrics on $M$ then there is an associated secondary index invariant
\[
\inddiff^{\pi_1 (M)} : \Riem^+ (M) \times \Riem^+ (M) \lra  \loopinf{+d+1} \KO (\cstarred(\pi_1(M)))
\]
to an appropriate infinite loop space of the real $K$-theory spectrum of $\cstarred(\pi_1 (M))$. More generally, for a spin manifold $W$ with collared boundary, one may fix a metric $g \in \cR^+(\partial W)$ and consider the space $\Riem^+ (W)_g$ of psc metrics on $W$ which are of the form $g+dt^2$ on the collar. Then, given a discrete group $G$ and a map $f:W \to BG$, there is a map 
\[
\inddiff^{G} : \Riem^+ (W)_g \times \Riem^+ (W)_g \lra \loopinf{+d+1} \KO (\cstarred(G))
\]
(which depends on the reference map $f$). Usually, we fix a basepoint $h_0\in \Riem^+ (W)_g$ and consider the map $\inddiff^G_{h_0}(\_) = \inddiff^G (h_0,\_)$. On homotopy groups, $\inddiff^G_{h_0}$ induces a map
\[
 \inddiff^G_{h_0}: \pi_k (\Riem^+ (W),h_0) \lra \pi_k (\loopinf{+d+1} \KO (\cstarred (G)) = KO_{d+k+1} (\cstarred(G))
\]
to the $KO$-theory groups of $\cstarred (G)$.

In previous work with Botvinnik \cite{BERW} we studied the nontriviality of this map when $G=1$, in which case the target is just the real topological $K$-theory of a point. Our goal in this paper is to extend this to more general groups $G$, when the target can be much richer.

To state our results, we must introduce some notation from stable homotopy theory. The Madsen--Tillmann--Weiss spectrum associated with the map $B \Spin (d)\to BO(d)$ is denoted $\MT{Spin(d)}$. The Atiyah--Bott--Shapiro orientation of spin vector bundles yields a spectrum map 
$\kothom{-d}: \MT{Spin(d)} \to \Sigma^{-d} \KO$. For a discrete group $G$, there is the Novikov assembly map $\Nov: \KO\wedge BG_+ \to \KO (\cstarred(G))$ which we will recall in Definition \ref{defn:novikov-assembly} below.

\begin{MainThm}\label{main-theorem-mapintoRiem}
Let $W^{2n}$ be a compact connected spin manifold with boundary $\partial W$, let $G$ be a discrete group and let $f: W \to BG$ be a map. Furthermore, let $g \in \Riem^+ (\partial W)$ and $h_0 \in \Riem^+(W)_g$. Assume that
\begin{enumerate}[(i)]
\item $n \geq 3$, and
\item the homomorphism $\pi_1(f):\pi_1 (W) \to \pi_1 (BG)=G$ is split surjective.
\end{enumerate}
Then there exists a map $\Psi: \loopinf{+1} ( \MT{Spin(2n)} \wedge BG_+) \to \Riem^+ (W)_g$ such that the following diagram commutes up to homotopy:
\[
\xymatrix{
\loopinf{+1} (\MT{Spin (2n)} \wedge BG_+ )\ar[rr]^-{\Psi} \ar[d]_{\loopinf{+1} \kothom{-2n} \wedge \id} && \Riem^+ (W)_g \ar[d]^{\inddiff^G_{h_0}}\\
\loopinf{+2n+1} (\KO \wedge BG_+) \ar[rr]^-{\loopinf{+2n+1} \Nov} & & \loopinf{+2n+1} \KO (\cstarred (G)).
}
\]
\end{MainThm}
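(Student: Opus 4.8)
\emph{Strategy.} The plan is to follow the architecture of \cite{BERW}, which treats the case $G=1$, with the decisive new input being the existence theorem for ``stable metrics'' proved in the present paper together with the corner-rounding construction. These will replace the parametrised Gromov--Lawson surgery technique of \cite{BERW}, which only manipulates handles of codimension $\geq 3$ and is consequently almost insensitive to $\pi_1$, whereas here one must move around within the class of manifolds equipped with a fixed reference map to $BG$. There will be two parts: constructing $\Psi$, and checking that the square commutes.

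First I would set up the relevant cobordism category. Let $\theta\colon B\Spin(2n)\times BG\to BO(2n)$ be the tangential structure classifying a spin structure together with a map to $BG$, so that its Madsen--Tillmann--Weiss spectrum is $\mathsf{MT}\theta\simeq\MT{Spin(2n)}\wedge BG_+$. By the cobordism category theorem of Galatius and Randal-Williams --- applied to $\theta$ in place of $\Spin(2n)$, exactly as in \cite{BERW} --- the space $\loopinf{+1}\mathsf{MT}\theta$ is modelled by an iterated loop space built from the classifying space of the cobordism category $\mathcal{C}_\theta$ of $(2n-1)$-dimensional $\theta$-manifolds and $2n$-dimensional $\theta$-cobordisms. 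The spin structure and the map $f$ make $W$ into a $\theta$-manifold. Using hypothesis (ii) I would then reduce to the case that $f$ is $2$-connected, by surgery on $W$ along finitely many embedded circles normally generating $\ker(\pi_1(f))$ --- finitely many because $G$, being a retract of the finitely presented group $\pi_1(W)$, is finitely presented, hence that normal subgroup is finitely normally generated; and recall $BG$ is aspherical, so this is the same as making $\pi_1(f)$ an isomorphism. Each such surgery has codimension $2n-1\geq 5$, so Gromov--Lawson surgery transports psc metrics along it, the refined surgery theorem shows $\Riem^+$ changes only up to homotopy equivalence, and the compatibility statement for $\inddiff^G$ is undisturbed (as in \cite{BERW}). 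Hypothesis (i), $n\geq 3$, is precisely what places us in the range $2n\geq 6$ where both the Galatius--Randal-Williams theorem and the surgery techniques apply.

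Next I would construct $\Psi$. Introduce the psc cobordism category $\mathcal{C}_\theta^+$, whose objects and morphisms are those of $\mathcal{C}_\theta$ additionally decorated with psc metrics in product form near the cutting hypersurfaces, together with its forgetful functor to $\mathcal{C}_\theta$; then $\mathcal{C}_\theta^+$ acts on $\Riem^+(W)_g$ by gluing psc cobordisms on along $\partial W$, the corners created by the gluings being smoothed psc-preservingly by the corner-rounding construction. The parametrised existence theorem for stable metrics is the geometric engine that makes this action ``as homotopically rich as'' the abstract cobordism category: after restricting to the $\theta$-cobordisms that contain enough trivial handle pairs --- the stabilisation built into that theorem --- the geometric spaces of psc metrics acquire the homotopy type predicted by $\mathcal{C}_\theta$. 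Running the \cite{BERW} construction with this input in place of simply-connected surgery yields a map
\[
\Psi\colon \loopinf{+1}\mathsf{MT}\theta\;\simeq\;\loopinf{+1}(\MT{Spin(2n)}\wedge BG_+)\longrightarrow\Riem^+(W)_g
\]
normalised so that the basepoint is sent to $h_0$, with the loop coordinates bookkept as in \cite{BERW}. The hard part will be this construction of $\Psi$: the stable metrics theorem and the corner-rounding lemma are the heavy technical tools but are available by this point, so the real work is to thread them through the cobordism-category formalism in the presence of $G$ --- in particular to arrange that the target is $\Riem^+(W)_g$ for the given $W$ rather than a stabilisation of it, and to control which path-components of $\mathcal{C}_\theta$ are geometrically realised.

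Finally, for the square: $\inddiff^G$ is additive along the gluings implementing the $\mathcal{C}_\theta^+$-action, so on homotopy groups $\inddiff^G_{h_0}\circ\Psi$ is computed by the ``absolute'' $C^{*}$-index of the universal family of closed $\theta$-manifolds carried by the parameter. By the $C^{*}$-algebraic family index theorem in the Mishchenko--Fomenko picture --- namely the identification, set up in the preceding sections, of that absolute index with the composite of the Atiyah--Bott--Shapiro orientation $\kothom{-2n}\wedge\id\colon\mathsf{MT}\theta\to\Sigma^{-2n}\KO\wedge BG_+$ with the Novikov assembly map $\Nov$ --- this absolute index is exactly $\loopinf{+2n+1}\Nov\circ(\loopinf{+1}\kothom{-2n}\wedge\id)$ applied to the parameter, and so the diagram commutes up to homotopy.
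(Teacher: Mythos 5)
There is a genuine gap at the very first step of your construction: the proposed reduction to the case that $f$ is $2$-connected by doing surgery on circles generating $\ker(\pi_1(f))$. Surgeries on embedded circles are \emph{not} admissible in the sense of Corollary \ref{cor:chernysh-theorem}: although the handle you attach has codimension $2n-1\geq 3$ (so Chernysh's theorem applies in one direction), the dual handle in the surgered manifold $W'$ has codimension $2$, so the zig-zag producing a surgery equivalence breaks down. What one actually gets is a map $\Riem^+(W)_g \simeq \Riem^+(W,\phi)_g \cong \Riem^+(W',\phi')_g \to \Riem^+(W')_g$ whose last arrow is not known (and is in general false) to be an equivalence; in particular the homotopy type of $\Riem^+(W)_g$ is \emph{not} invariant under killing elements of $\pi_1$ --- indeed the whole point of this paper is that $\pi_1$ genuinely enriches these spaces. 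Worse, the one map you do obtain points from $\Riem^+(W)_g$ to $\Riem^+(W')_g$, so even if you build $\Psi$ into $\Riem^+(W')_g$ you cannot transport it back to $\Riem^+(W)_g$, which is what Theorem \ref{main-theorem-mapintoRiem} requires. A secondary issue is that making $f$ $2$-connected does not address the connectivity of the boundary inclusion: the factorization machinery (Theorem \ref{thm:factorization}, and the Galatius--Randal-Williams input behind it) needs $(W,\partial W)$ to be highly connected, which your reduction does not produce.

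The paper's route avoids both problems by never modifying $W$. One constructs an auxiliary manifold $P$ as a regular neighbourhood of a finite presentation $2$-complex for $G$ in $\bR^{2n}$, so that the structure map $P \to B\Spin(2n)\times BG$ is $2$-connected and $(P,\partial P)$ is $(2n-3)$-connected; Theorems \ref{thm:existence-stable-metrics1} and \ref{thm:factorization} then give the map $\Psi_{(P,h_0)}$ into $\Riem^+(P)_{g_0}$, and the index theorem with $\cstar(G)$-coefficients identifies $\inddiff^G\circ\Psi_{(P,h_0)}$ as in your final paragraph (that part of your sketch matches the paper). The transfer to the given $W$ is then done by the embedding trick: Theorem \ref{embedding-presentation-complex} (this is where the \emph{splitting} of $\pi_1(f)$ is used essentially, together with Phillips' submersion theorem) embeds $P$ into $W$; writing $W = P\cup K$, right-stability of the metric on $P$ produces $h'\in\Riem^+(K)_{g_0,g}$ with $h_0\cup h'$ in the correct component, and the gluing map $\mu(\_,h'):\Riem^+(P)_{g_0}\to\Riem^+(W)_g$ --- which goes in the right direction, unlike a surgery comparison --- composed with $\Psi_{(P,h_0)}$ gives $\Psi$, with compatibility with $\inddiff^G_{h_0}$ supplied by the additivity theorem. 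This gluing/embedding mechanism is the idea missing from your proposal.
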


\begin{rems}\label{rem:ThmA}\mbox{}
\begin{enumerate}[(i)]
\item The case $G=1$ of Theorem \ref{main-theorem-mapintoRiem} is precisely our previous result with Botvinnik \cite[Theorem B]{BERW}. 
\item In \cite{BERW}, we also proved a similar result for odd-dimensional manifolds, derived from \cite[Theorem B]{BERW}. The analogous result for nontrivial $G$ can be derived from Theorem \ref{main-theorem-mapintoRiem}, and this will be done in the forthcoming M\"unster PhD thesis of Lukas Buggisch. See Remark \ref{rem:spectralflow} for more details. 
\item There is also a maximal group $\mathrm{C}^*$-algebra $\cstarmax(G)$, with a homomorphism $\omega:\cstarmax(G) \to \cstarred(G)$. The secondary Rosenberg index can be made to have target $\loopinf{+2n+1} \KO (\cstarmax (G))$, and the Novikov assembly map naturally factors through a maximal version $\Nov_\mathrm{m}: \KO\wedge BG_+ \to \KO (\cstarmax(G))$. Theorem \ref{main-theorem-mapintoRiem} in fact holds when the bottom right-hand corner is replaced with $\loopinf{+2n+1} \KO (\cstarmax (G))$. Although this is a stronger theorem, we prefer to state it in terms of the reduced group $\mathrm{C}^*$-algebra as it is this to which the Baum--Connes conjecture applies, as we will now discuss.
\end{enumerate} 
\end{rems}

\subsection{Applications}

To draw computational consequences out of Theorem \ref{main-theorem-mapintoRiem} as in \cite[\S 5]{BERW}, one needs another assumption, namely that $G$ is torsionfree and satisfies the Baum--Connes conjecture, i.e. that the Baum--Connes assembly map
$$\BC : KO^G_*(\underline{E}G) \lra KO_*(\cstarred(G))$$
is an isomorphism. For torsionfree $G$, we may identify $KO^G_*(\underline{E}G)$ with $KO_*(BG)$ and the Baum--Connes assembly map with the map induced by the Novikov assembly map $\nu$ on homotopy groups. The Baum--Connes conjecture predicts that $\BC$ is an isomorphism, so for torsionfree $G$ it predicts that the Novikov assembly map is a weak homotopy equivalence. The Baum--Connes conjecture has been proven for vast classes of groups; see \cite[\S 2.6]{LR} for a slightly outdated survey.

\subsubsection{Bott-stabilised integral surjectivity}

Let $B^8$ be a Bott manifold, that is a Spin 8-manifold having $\widehat{\mathscr{A}}(B) = \beta \in KO_8(*)$. By the work of Joyce \cite[\S 6]{Joyce}, we may choose such a $B$ having a metric $g_B$ of holonomy $\mathrm{Spin}(7)$, which is then Ricci-flat and hence scalar-flat. For a closed manifold $M$ there are then induced maps
$$\cR^+(M) \overset{\_ \times (B, g_B)}\lra \cR^+(M \times B) \overset{\_ \times (B, g_B)}\lra \cR^+(M \times B \times B) \lra \cdots$$
and we write $\cR^+(M)[B^{-1}]$ for the homotopy colimit. As $\widehat{\mathscr{A}}(B)$ acts invertibly on $\KO(C^*(G))$, if $M$ is Spin and $f : M \to BG$ is a reference map then there is an extension
\begin{equation}\label{eq:StableRosenbergIndex}
\inddiff^G_{h_0}[B^{-1}] : \cR^+(M)[B^{-1}] \lra \Omega^{\infty+d+1} \KO(\cstarred(G))
\end{equation}
of the secondary Rosenberg index map for $M$.

\begin{MainThm}\label{thm:StableSurj}
If $G$ is a torsionfree group satisfying the Baum--Connes conjecture, $f_* : \pi_1(M) \to G$ is split surjective, and $d=\mathrm{dim}(M)$ is even\footnote{This hypothesis may be dropped once the work of Remark \ref{rem:ThmA} (ii) is complete.}, then the map \eqref{eq:StableRosenbergIndex} is surjective on all homotopy groups.
\end{MainThm}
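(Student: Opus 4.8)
The plan is to deduce this from Theorem~\ref{main-theorem-mapintoRiem} together with the Baum--Connes hypothesis, reducing it to a homotopy-theoretic surjectivity statement for the Atiyah--Bott--Shapiro orientation with the factor $BG_+$ present, which is then handled essentially as in the case $G=1$ treated in \cite[\S5]{BERW}.

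First I would normalise. Since the stabilisation $\_\times(B,g_B)$ is cofinal in both defining telescopes, replacing $M$ by $M\times B$ alters neither $\cR^+(M)[B^{-1}]$ up to weak equivalence nor the map~\eqref{eq:StableRosenbergIndex}, and (choosing $B$ simply connected, which Joyce's construction of $\Spin(7)$-holonomy manifolds permits, such manifolds having $\Ahat=1$) it preserves the split surjectivity of $\pi_1(f)$; iterating, I may assume $\dim M$ is even and $\geq 6$. Now fix $k\geq 0$, put $N_j=\dim(M\times B^j)=\dim M+8j$, and apply Theorem~\ref{main-theorem-mapintoRiem} to $W=M\times B^j$ with reference map $f\circ\pr_M$ and base metric $h_0\times g_B^{\times j}$ (hypotheses~(i) and~(ii) hold by the normalisation). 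Taking $\pi_k$ of the commuting square shows that
\[
\mathrm{im}\bigl[(\inddiff^G_{h_0})_*\colon\pi_k(\cR^+(M\times B^j))\to KO_{N_j+k+1}(\cstarred(G))\bigr]
\]
contains the image of $\Nov_*\circ(\kothom{-N_j})_*\colon\pi_{k+1}(\MT{Spin(N_j)}\wedge BG_+)\to KO_{N_j+k+1}(BG)\to KO_{N_j+k+1}(\cstarred(G))$. Since $G$ is torsionfree and satisfies Baum--Connes, $\Nov_*$ is the assembly isomorphism $KO_*(BG)\xrightarrow{\cong}KO_*(\cstarred(G))$, compatible with multiplication by $\bott=\Ahat(B)\in KO_8$, which acts invertibly on $KO_*$ of any $C^*$-algebra; this compatibility, together with the index-theoretic fact --- proved here as for $G=1$ in \cite[\S5]{BERW}, from multiplicativity of the index and scalar-flatness of $g_B$ --- that $\_\times(B,g_B)$ induces multiplication by $\Ahat(B)$ on $\inddiff^G$, is exactly what makes~\eqref{eq:StableRosenbergIndex} well defined. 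It follows that surjectivity of~\eqref{eq:StableRosenbergIndex} on $\pi_k$ is equivalent to the assertion that for every $x\in KO_{\dim M+k+1}(BG)$ there is a $j$ with $\bott^jx$ in the image of $(\kothom{-N_j})_*\colon\pi_{k+1}(\MT{Spin(N_j)}\wedge BG_+)\to KO_{N_j+k+1}(BG)$.

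This homotopy-theoretic statement is the heart of the matter, and I would prove it by running the argument of \cite[\S5]{BERW} with the factor $BG_+$ smashed in throughout. The ingredients are: the Galatius--Madsen--Tillmann--Weiss description of $\pi_*(\MT{Spin(N)}\wedge BG_+)$, in terms of which $(\kothom{-N})_*$ computes the $KO$-theoretic index of a family of closed spin $N$-manifolds carrying a reference map to $BG$; the identification, as $N$ runs through the Bott tower, of $\{\MT{Spin(N)}\}$ with $\MSpin$ up to shift (using that negation of bundles is a self-equivalence of $B\Spin$), which accounts for the Bott periodicity on the $KO$-side; and, crucially, that after Bott stabilisation the Atiyah--Bott--Shapiro orientation becomes surjective onto $KO_*(BG)$ in the relevant degrees. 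Here Anderson--Brown--Peterson is the essential input: their splitting of $\MSpin$ is natural, hence survives smashing with $BG_+$, and exhibits the connective Atiyah--Bott--Shapiro orientation as the projection onto a wedge summand, so that $\MSpin_*(BG)\to\ko_*(BG)$ is split surjective; combined with the standard comparison of $\ko$ and $\KO$ on skeleta (using that $\fib(\ko\to\KO)$ has homotopy in degrees $\leq -5$), this yields surjectivity of $\MSpin_m(BG)\to KO_m(BG)$ for $m\geq 0$, which is what the Bott-tower identification requires. Given $x$, one realises $\bott^jx$ for $j$ large by a suitable family of spin manifolds over $BG$ built from $\_\times B^j$, whose Atiyah--Bott--Shapiro pushforward is $\bott^jx$.

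The main obstacle is this last step: although morally it is ``\cite[\S5]{BERW} with a parameter space'', one must carry out with care the bookkeeping that matches the \emph{geometric} Bott stabilisation $\_\times B$ with the \emph{homotopy-theoretic} multiplication by the class $[B]\in\pi_{-8}\MT{Spin(8)}$, keep track of the attendant suspension shifts (the $\MT{Spin(N_j)}$ are genuinely distinct spectra, not suspensions of a single one), and verify that one lands in the required image already at a finite stage $j$ rather than merely in the colimit. By contrast, the reduction via Theorem~\ref{main-theorem-mapintoRiem}, the invocation of the Baum--Connes isomorphism, and the naturality of the Anderson--Brown--Peterson splitting are all routine.
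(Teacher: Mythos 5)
Your reduction follows the paper's in outline --- apply Theorem \ref{main-theorem-mapintoRiem} to $M\times B^{j}$, use Baum--Connes to replace $KO_*(\cstarred(G))$ by $KO_*(BG)$, and use the compatibility of the geometric stabilisation $\_\times(B,g_B)$ with multiplication by $\beta=\Ahat(B)$ --- and up to that point it is fine. The gap is in your proof of the residual homotopy-theoretic statement. The input you assert, that $\MSpin_m(BG)\to KO_m(BG)$ is surjective for all $m\geq 0$, is false for general $G$: rationally the cokernel of $\ko_m(BG)\to KO_m(BG)$ contains $\bigoplus_{i\geq 1}H_{m+4i}(BG;\bQ)$, which is already nonzero for $G=\bZ^9$ and $m=1$, and certainly for the Stallings--Bieri examples in this paper. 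Indeed, if that claim were true, Bott stabilisation would be unnecessary and Theorem \ref{main-theorem-rational-homotopy} would not need the finite homological dimension hypothesis and the range $r\geq q-2n-1$. A skeletal comparison of $\ko$ and $\KO$ only yields surjectivity in degrees above the dimension of the skeleton in question, and converting that into what you actually need --- that $\beta^j x$ is hit at some \emph{finite} stage $j$ --- is precisely the content you defer as ``bookkeeping''.

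That content is where the paper's proof lives. One first uses compact supports of $\pi_*(\KO\wedge -)$ to write the class (after Baum--Connes surjectivity) as coming from $\pi_*(\KO\wedge X_+)$ for a finite complex $X\to BG$. Then two separate Bott-type stabilisations are chosen quantitatively: a power of $\beta$ on the $\KO$-side so that the relevant degree exceeds $\dim X$, making $\per:\pi_*(\ko\wedge X_+)\to\pi_*(\KO\wedge X_+)$ an isomorphism (Anderson--Brown--Peterson plus Baas--Sullivan then give surjectivity of $\pi_*(\MSpin\wedge X_+)\to\pi_*(\ko\wedge X_+)$, as you say); and, separately, a power of the spectrum-level Bott class $\mathfrak{b}\in\pi_4(\MT{Spin(4)})$ with $\Ahat(\mathfrak{b})=\beta$, because the comparison $\pi_j(\MT{Spin(N)}\wedge X_+)\to\pi_{j+N}(\MSpin\wedge X_+)$ is an isomorphism only for $j<0$ --- $\MT{Spin(N)}$ is not a shift of $\MSpin$, contrary to your ``Bott-tower identification'' (and a closed spin $8$-manifold gives a class in $\pi_0\MT{Spin(8)}$, not $\pi_{-8}$) --- so one must descend to negative degree and then multiply by $\mathfrak{b}^{2\ell}$ to land in the group $\pi_{i+1}(\MT{Spin(2n+8k)}\wedge BG_+)$ that Theorem \ref{main-theorem-mapintoRiem}, applied to $M\times B^{k}$, realises geometrically. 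The two parameters must be chosen compatibly ($8\ell>i+1$ and $8k>\dim X-2n-1+16\ell$), and the matching of $\mathfrak{b}$ with $\beta$ under the Atiyah--Bott--Shapiro maps is what makes the comparison diagram commute. None of this appears in your proposal, and the statement you substitute for it is false, so the proof has a genuine gap at its central step.
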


\subsubsection{Rational surjectivity}

The spectrum map $\kothom{-2n}$ factors through the desuspension of the connective $KO$-theory spectrum as 
$$\kothom{-2n}: \MT{Spin (2n)} \overset{\kothom{-2n}'}\lra \Sigma^{-2n} \ko \overset{\Sigma^{-2n}\per}\lra \Sigma^{-2n} \KO,$$
and hence the spectrum map $\Nov \circ (\kothom{-2n} \wedge \id)$ may be written as the composition
\begin{align*}
\MT{Spin (2n)} \wedge BG_+ \stackrel{\kothom{-2n}'\wedge \id}{\lra} \Sigma^{-2n} \ko \wedge BG_+ &\stackrel{\Sigma^{-2n}\per\wedge \id}{\lra} \Sigma^{-2n} \KO \wedge BG_+\\
& \stackrel{\Sigma^{-2n}\Nov}{\lra} \Sigma^{-2n} \KO(\cstarred(G)).  
\end{align*}
It follows that
$$\mathrm{Im}(\pi_r (\inddiff^G_{h_0})) \supset \mathrm{Im}(\pi_{r+1}(\Sigma^{-2n}\Nov \circ \Sigma^{-2n}(\per \wedge \id) \circ \kothom{-2n}'\wedge \id)).$$

By a standard characteristic class computation (see e.g. \cite[Theorem 5.2.1]{BERW}), the map $\pi_{r+1}(\kothom{-2n}')\otimes \bQ$ is surjective, and by the (collapsing) Atiyah--Hirzebruch spectral sequence it follows that $\pi_{r+1} (\kothom{-2n}' \wedge \id)\otimes \bQ$ is too. If $G$ is torsionfree and satisfies the Baum--Connes conjecture then as we explained above the map $\Nov$ is a weak equivalence. The map $\per \wedge \id:\ko \wedge BG_+ \to \KO \wedge BG_+$ is not rationally surjective, but if the rational homological dimension of the group $G$ is finite, say equal to $ q$, then 
$$(\per \wedge \id)_* : \pi_s (\ko\wedge BG_+ )\otimes \bQ \lra \pi_s (\KO\wedge BG_+ )\otimes \bQ$$
is surjective for $s \geq q$. From these facts, we derive the following.

\begin{MainThm}\label{main-theorem-rational-homotopy}
Let $W$ be a spin manifold of dimension $2n \geq 6$, possibly with boundary. Let $G$ be a group and let $f: W \to BG$ be a map such that $\pi_1 (f): \pi_1(W) \to G$ is split surjective. Assume that $h_0 \in \Riem^+ (W)_g \neq \emptyset$, and that
\begin{enumerate}[(i)]
 \item $G$ satisfies the (rational) Baum--Connes conjecture,
 \item $G$ is torsionfree and has finite rational homological dimension $q$, and
 \item $r \geq q-2n-1$.
\end{enumerate}
Then the image of
\[
(\inddiff^G_{h_0})_*: \pi_r (\Riem^+ (W)_g) \lra KO_{2n+1+r} (C^*(G)) \otimes \bQ
\]
generates the target as a $\bQ$-vector space.
\end{MainThm}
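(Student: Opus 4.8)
The plan is to deduce Theorem \ref{main-theorem-rational-homotopy} from Theorem \ref{main-theorem-mapintoRiem} by passing to homotopy groups and feeding in the three rational surjectivity statements assembled in the paragraphs above. First I would apply Theorem \ref{main-theorem-mapintoRiem} to $W$ with $d = \dim W = 2n$: hypothesis (i) of that theorem holds since $2n \geq 6$, and (ii) is the split surjectivity assumed here. This produces a map $\Psi$ together with the homotopy-commutative square. Evaluating the square on $\pi_r$ and using $\pi_r(\loopinf{+k} E) = \pi_{r+k}(E)$ for a spectrum $E$, one obtains the identity
\[
(\inddiff^G_{h_0})_* \circ \Psi_* \;=\; \Nov_* \circ (\kothom{-2n} \wedge \id)_* : \pi_{r+1}(\MT{Spin (2n)} \wedge BG_+) \lra KO_{2n+1+r}(\cstarred(G)),
\]
and hence the containment of subgroups of $KO_{2n+1+r}(\cstarred(G))$
\[
\im\big((\inddiff^G_{h_0})_*\big) \;\supseteq\; \im\big(\Nov_* \circ (\kothom{-2n} \wedge \id)_*\big),
\]
which is precisely the inclusion recorded just before the statement of the theorem.

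Next I would use the factorisation of $\kothom{-2n}$ through $\Sigma^{-2n}\ko$ displayed above, smashed with $BG_+$, so that after tensoring with $\bQ$ the right-hand side becomes the composite
\[
\pi_{r+1}(\MT{Spin (2n)} \wedge BG_+) \otimes \bQ \lra \ko_{2n+1+r}(BG) \otimes \bQ \lra KO_{2n+1+r}(BG) \otimes \bQ \lra KO_{2n+1+r}(\cstarred(G)) \otimes \bQ
\]
of $(\kothom{-2n}' \wedge \id)_*$, $(\per \wedge \id)_*$ and $\Nov_*$. I would then check that each arrow is onto in the degree $s = 2n+1+r$ that occurs here: the first because $\pi_*(\kothom{-2n}')\otimes\bQ$ is surjective by the characteristic class computation (e.g.\ \cite[Theorem 5.2.1]{BERW}), whence $(\kothom{-2n}'\wedge\id)_*\otimes\bQ$ is too by the collapsing rational Atiyah--Hirzebruch spectral sequence; the second because $s = 2n+1+r \geq q$, which is hypothesis (iii) rewritten; the third because $G$ is torsionfree and satisfies the rational Baum--Connes conjecture, so $\Nov$ is a rational weak equivalence. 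Composing, $(\Nov_* \circ (\kothom{-2n} \wedge \id)_*) \otimes \bQ$ is surjective onto $KO_{2n+1+r}(\cstarred(G)) \otimes \bQ$; since $\bQ$ is flat over $\bZ$, this means $\im(\Nov_* \circ (\kothom{-2n}\wedge\id)_*)$ -- and a fortiori the larger group $\im((\inddiff^G_{h_0})_*)$ -- generates $KO_{2n+1+r}(\cstarred(G))\otimes\bQ$ as a $\bQ$-vector space. As $d = 2n$, this is the asserted conclusion.

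I do not expect a genuine obstacle here: Theorem \ref{main-theorem-mapintoRiem} carries all of the geometry, and the Baum--Connes conjecture is invoked only as a black box. The two points requiring care are the degree bookkeeping -- one must track the $+2n+1$ shift, land in degree $2n+1+r$, and observe that $\per \wedge \id$ being rationally onto there is exactly condition (iii) -- and the fact that forming images does not commute with $\otimes\bQ$ on the nose, so that the strongest statement this argument produces is the stated rational one, rather than any integral surjectivity.
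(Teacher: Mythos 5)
Your proposal is correct and is essentially the paper's own argument: the paper derives Theorem \ref{main-theorem-rational-homotopy} exactly by combining Theorem \ref{main-theorem-mapintoRiem} on $\pi_r$ with the factorisation through $\Sigma^{-2n}\ko \wedge BG_+$ and the three rational surjectivity facts (characteristic class computation plus collapsing Atiyah--Hirzebruch spectral sequence, surjectivity of $\per \wedge \id$ in degrees $\geq q$, and the rational equivalence of $\Nov$ for torsionfree Baum--Connes groups), with the same degree bookkeeping identifying $s=2n+1+r\geq q$ with hypothesis (iii).
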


\begin{remark}
Some rational consequences can also be obtained without the torsionfree hypothesis. When $G$ is a finite group the map
$$KO_*(BG) \otimes \bQ = KO^G_*(EG)\otimes \bQ \lra KO^G_*(\underline{E}G)\otimes \bQ$$
is split injective (as $KO_*(BG)\otimes \bQ = \bQ$, so a splitting may be given by the augmentation of the real representation ring of $G$), and a spectral sequence argument in equivariant $KO$-theory show that the analogous map is split injective for any group $G$, see \cite[Corollary A.3]{Matthey}. Thus if $G$ satisfies the (rational injectivity part of the) Baum--Connes conjecture then the Novikov assembly map is also rationally injective. Thus, in the situation of Theorem \ref{main-theorem-rational-homotopy} but omitting the torsionfree hypothesis, for $r \geq 2$ one finds that $\pi_r (\Riem^+ (W)_g) \otimes \bQ$ is at least as large as $KO_{2n+1+r}(BG) \otimes \bQ$ (we leave formulating the awkward conclusion of this argument for $r < 2$ to the reader).

On the other hand, consider the homotopy fibre $\mathsf{F}$ of the Novikov assembly map. On homotopy groups, the fibre sequence $\mathsf{F} \to \KO \wedge BG_+ \stackrel{\Nov}{\to} \KO(\cstarred(G))$ induces a long exact sequence, which may be identified with the analytical surgery sequence of Higson and Roe \cite{HigsonRoeEta}
\[
\cdots \lra  KO_d (BG) \overset{\nu_*}\lra KO_d (\cstarred (G)) \stackrel{\partial}{\lra} \mathrm{S}_{d-1}(G) \lra \cdots.
\]
Recently, B{\'a}rcenas and Zeidler \cite[Corollary 1.5]{BarcenasZeidler} have shown that the image of 
\[
 \pi_0 (\Riem^+ (M^d)) \stackrel{\inddiff^{G}_{h_0}}{\lra} KO_{d+1}(\cstarred(G)) \stackrel{\partial}{\lra} \mathrm{S}_d (G) \lra \mathrm{S}_d(G) \otimes \bQ 
\]
generates the target group, if $G$ satisfies the Baum--Connes conjecture and has rational homological dimension at most $2$ (e.g. if $G$ is finite). This supersedes previously known results by Botvinnik--Gilkey \cite{BotGil}, Piazza--Schick \cite{PS} and Weinberger--Yu \cite{WY} (which were all for odd $d$). By virtue of our construction, we can only construct classes in the homotopy of $\Riem^+ (W)$ which become trivial in $\mathrm{S}_d (G)$. 
\end{remark}

\subsubsection{A large example}

Let $\Free_r$ denote the free group of rank $r$. The \emph{Stallings--Bieri} group $\SB_n$ is defined as the kernel of the homomorphism
$$\phi : (\Free_2)^n \lra \bZ$$
sending each generator to 1. This group is of type $(F_{n-1})$ (i.e., there exists a $K(\SB_n,1)$-complex with finite $(n-1)$-skeleton), but has $H_{n}(\SB_n;\bQ)$ countably infinite dimensional (see \cite[p.\ 37]{Bieri}). Furthermore, it has homological dimension $n$, being a subgroup of the $n$-dimensional group $(\Free_2)^n$. Thus the group
$$G = \SB_3 *  \SB_4 * \SB_5 * \SB_6$$
is torsionfree, has homological dimension $6$ and has type $(F_2)$ (so is finitely presented). The class of groups satisfying the Baum--Connes conjecture with coefficients is closed under (amalgamated) free products, finite direct products, passing to subgroups, and HNN extensions \cite[Theorem 5.2]{LR}, so the Baum--Connes conjecture holds for $G$. Therefore by the Atiyah--Hirzebruch spectral sequence we have that
$$KO_{7+r} (\cstarred(G)) \otimes \bQ \cong KO_{7+r}(BG) \otimes \bQ \cong \bigoplus_{i \geq 0} H_{7+r-4i}(BG;\bQ)$$
is a countably infinite dimensional vector space for each $r \geq 0$. 

Take a finite $2$-skeleton of $BG$, embed it into $\bR^5$, take a regular neighborhood $N$ and consider $M=\partial N$. This is a $4$-dimensional spin manifold with $\pi_1 (M)=G$. The 6-manifold $W=M \times S^2$ has a psc metric. 
This puts us in a position to apply Theorem \ref{main-theorem-rational-homotopy}, showing that $\pi_r (\Riem^+ (W))\otimes \bQ$ is countably infinite dimensional for $r \geq 2$, the same for $r=1$ after abelianising, and that $\pi_0 (\Riem^+ (W))$ maps onto an infinite rank abelian group. As a separable Fr{\'e}chet manifold (such as $\Riem^+(W)$) has the homotopy type of a countable CW complex \cite[Corollary 4]{Henderson} and hence countable homotopy groups, this is as large as homotopy groups of a space of psc metrics on a compact manifold can possibly be.

\subsection{Stable metrics: extension of the Gromov--Lawson surgery method}

The main technical tool we shall develop, which was not available to us in \cite{BERW}, is an extension of the Gromov--Lawson surgery method \cite{GL} and in particular of Chernysh's cobordism invariance theorem \cite{Chernysh} to the case of manifolds with boundaries. Let us first recall some notation and describe this result, which may be of interest in its own right.

If $W: M_0 \leadsto M_1$ is a cobordism with collared boundaries and $g_i \in \Riem^+ (M_i)$, $i=0,1$, we let $\Riem^+ (W)_{g_0,g_1}$ be the space of all metrics of positive scalar curvature on $W$ which are equal to $g_i + dt^2$ near $M_i$, with respect to the given collar. 
For any $h \in \Riem^+ (W)_{g_0,g_1}$, there are composition maps
\[
\mu (h, \_): \Riem^+ (V)_{g_1,g_2} \lra \Riem^+ (W \cup V)_{g_0,g_2}
\]
and 
\[
\mu ( \_,h): \Riem^+ (V')_{g_{-1},g_0} \lra \Riem^+ (V' \cup W)_{g_{-1},g_1}
\]
defined for all cobordisms $V: M_1 \leadsto M_2$, $V': M_{-1} \leadsto M_0$ and boundary conditions $g_{-1}\in \Riem^+ (M_{-1})$ and $g_2 \in \Riem^+ (M_2)$.

\begin{defn}\label{defn:right-stable}
Let $W: M_0 \leadsto M_1$ be a cobordism and let $h \in \Riem^+ (W)_{g_0,g_1}$. Then $h$ is called \emph{left-stable} if the map $\mu (\_, h): \Riem^+ (V)_{g_{-1},g_0} \to \Riem^+ (V \cup W)_{g_{-1},g_1}$ is a weak equivalence for all cobordisms $V:M_{-1}\leadsto M_0$ and all boundary conditions $g_{-1}$. Dually, $h$ is \emph{right-stable} if the map $\mu (h,\_): \Riem^+ (V)_{g_1,g_2} \to \Riem^+ (W \cup V)_{g_0,g_2}$ is a weak equivalence for all cobordisms $V: M_1 \leadsto M_2$ and all boundary conditions $g_2$. Finally, $h$ is \emph{stable} if it is both left- and right-stable.
\end{defn}

We prove two results about the existence of such metrics. The first shows that right-stable metrics exist in abundance.

\begin{MainThm}\label{thm:existence-stable-metrics1}
Let $d \geq 6$ and let $W: M_0 \leadsto M_1$ be a $d$-dimensional cobordism. Assume that the inclusion map $M_1 \to W$ is $2$-connected. Then for each $g_0 \in \Riem^+ (M_0)$, there is a $g_1 \in \Riem^+ (M_1)$ and a right-stable $h \in \Riem^+ (W)_{g_0,g_1}$. 
\end{MainThm}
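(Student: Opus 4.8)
The plan is to reduce the statement, via handle decompositions, to a local surgery statement and then to invoke Chernysh's cobordism invariance theorem \cite{Chernysh} in a parametrised form. Since $M_1 \to W$ is $2$-connected, the cobordism $W: M_0 \leadsto M_1$ can be built from $M_1 \times [0,1]$ (read backwards) by attaching handles of index $\leq d-3$; equivalently, $W$ (read from $M_0$) is obtained by attaching handles of index $\geq 3$, and after trading handles we may assume the handles are attached to $M_0$ in increasing order of index, with indices in the range $3 \leq k \leq d-3$. First I would set up notation for this handle decomposition and reduce to the case of a single handle attachment: if $W = W' \cup_{M_0'} W''$ where $W''$ is a single $k$-handle and $W'$ is the rest, then a right-stable metric on $W'$ followed by a right-stable metric on $W''$ compose (using the composition maps $\mu$) to a right-stable metric on $W$, because the composite of two weak equivalences is a weak equivalence. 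This is where the careful bookkeeping with collars and the associativity of $\mu$ enters, but it is formal.

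The core case is then: $W: M_0 \leadsto M_1$ is an elementary cobordism given by a single surgery on an embedded sphere $S^{k-1} \subset M_0$ of codimension $d-k+1 \geq 3$ (equivalently, attaching a $k$-handle with $3 \leq k$, so that the trace has $M_1 \to W$ being $2$-connected only after all handles — for a single handle I would instead phrase the connectivity hypothesis as: the surgery sphere has codimension $\geq 3$). For such a $W$, I would construct $h \in \Riem^+(W)_{g_0,g_1}$ by the Gromov--Lawson construction: near the surgery sphere, interpolate from $g_0$ to the standard metric on the handle. The claim that this $h$ is right-stable is precisely the assertion that for every cobordism $V: M_1 \leadsto M_2$ and every $g_2$, the map $\mu(h,\_): \Riem^+(V)_{g_1,g_2} \to \Riem^+(W\cup V)_{g_0,g_2}$ is a weak equivalence. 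I would prove this by exhibiting, symmetrically, a ``reverse'' right-stable metric $\bar h \in \Riem^+(\bar W)_{g_1,g_0}$ on the opposite cobordism (surgery on a sphere of the complementary, still $\geq 3$, codimension — here one uses $d-k+1\geq 3$ and $k \geq 3$ together, i.e. $3 \le k \le d-3$), and showing that $\mu(\bar h, \_)\circ \mu(h,\_)$ and $\mu(h,\_) \circ \mu(\bar h,\_)$ are homotopic to the maps induced by gluing in the ``double'' $W \cup \bar W$, which is a trivial cobordism up to an isotopy of psc metrics. This last point — that doing a surgery and then the reverse surgery returns you, canonically up to homotopy, to where you started in the space of psc metrics — is exactly the content of Chernysh's theorem (in the form used in \cite{BERW, Chernysh, Walsh}), applied with parameters and with boundary.

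The main obstacle is the ``with boundary'' aspect: Chernysh's and Walsh's arguments are written for closed manifolds, and the surgeries here are performed in the interior of a cobordism while keeping the metric fixed (equal to $g_i + dt^2$) near $M_0$ and $M_1$. Making the Gromov--Lawson deformation and the Chernysh isotopies supported away from the boundary collars, and checking that all the relevant maps and homotopies respect the fixed boundary conditions, is where the real work lies; one needs a version of the surgery theorem relative to a neighbourhood of the surgery locus that is uniform enough to run in families. I would isolate this as a technical lemma — a relative, parametrised Gromov--Lawson--Chernysh theorem — prove it by adapting Walsh's account, and then deduce Theorem D by the handle-by-handle induction sketched above. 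The hypothesis $d \geq 6$ is used to guarantee that the handle indices can be arranged in the range $3 \le k \le d-3$ (so both a surgery and its reverse have codimension $\geq 3$), which is precisely what both directions of the stability argument require.
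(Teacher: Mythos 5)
There is a genuine gap, and it lies in the core case. Your reduction asserts that, because $M_1 \to W$ is $2$-connected, the handles attached along $M_0$ can be traded so that all indices lie in the range $3 \le k \le d-3$. But the hypothesis only controls connectivity on the $M_1$ side: relative to $M_0$ it yields the upper bound $k \le d-3$ and \emph{no} lower bound (you have in fact stated the duality backwards --- handles of index $\ge 3$ relative to $M_1$ correspond to handles of index $\le d-3$ relative to $M_0$). The theorem is deliberately stated so as to apply when $M_0 = \emptyset$, e.g.\ to $W = N \times D^k : \emptyset \leadsto N \times S^{k-1}$ (Chernysh's case) or to handlebodies built from handles of index $\le 2$, where handles of index $0,1,2$ relative to $M_0$ are unavoidable. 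This is fatal for your core argument, which genuinely needs $k \ge 3$: the ``reverse'' elementary cobordism has surgery sphere of codimension $k$, so the Gromov--Lawson/Chernysh machinery does not apply to it when $k \le 2$, and the double $W \cup \overline{W}$ cannot be surgered to a cylinder by admissible surgeries (admissibility of the belt-sphere surgery requires $k \ge 3$). Hence for low-index handles you can produce neither $\bar h$ nor the identification of the double with the trivial cobordism. Your two-sided-inverse scheme is essentially how the paper proves Theorem \ref{thm:right-and-left-stable}, where \emph{both} inclusions are assumed $2$-connected; it cannot prove Theorem \ref{thm:existence-stable-metrics1}, and indeed left-stability (which your symmetric argument would morally also give) genuinely fails in Chernysh's case.

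The paper's proof of the elementary case is asymmetric and uses no reverse surgery. For $W = T_\varphi$, the trace of a surgery on $\varphi : S^{k-1} \times \bR^{d-k} \to M_0$ with $d-k \ge 3$ only, one first standardises $g_0$ near the surgery sphere (Chernysh's deformation theorem together with Lemma \ref{lem:isotopy-gives-stable-concordance}), and then shows directly that for every $V : M_1 \leadsto M_2$ the gluing map $\mu(h,\_)$ factors as
\[
\Riem^+(V)_{g_1,g_2} \lra \Riem^+ (W\cup V, H^-_\varphi)_{g_0,g_2} \lra \Riem^+(W \cup V)_{g_0,g_2},
\]
where the second map is the inclusion of metrics standard on the incoming handle --- a weak equivalence by Theorem \ref{thm:chernysh-theorem}, needing only $d-k \ge 3$ --- and the first map is gluing on the complement $S = W \setminus \inter{H^-_\varphi}$, a manifold with corners carrying a metric produced by the corner-rounding theorem (Theorem \ref{thm:corner-smoothing}), which makes that gluing a weak equivalence as well. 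That corner-rounding result is the genuinely new technical input; the ``relative parametrised Gromov--Lawson--Chernysh lemma'' you propose does not supply it, and your doubling argument cannot replace it under the stated hypotheses. Your handle-by-handle composition step (right-stable metrics compose, Lemma \ref{lem:2-out-of-three}) and the boundary-condition bookkeeping are fine, but the heart of the theorem is not established by your route.
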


Note that the metric $g_1 \in \Riem^+ (M_1)$ is part of the statement of the theorem, not of the hypothesis. In other words, the theorem does \emph{not} say that given $g_0, g_1$ such that $\Riem^+ (W)_{g_0 , g_1} \neq \emptyset$, then there is a metric $h \in \Riem^+ (W)_{g_0,g_1}$ which is right stable (let alone that \emph{any} such $h$ has these properties). The theorem applies to the case $M_0 = \emptyset$ (because the unique Riemannian metric on $\emptyset$ has positive scalar curvature). The main result of \cite{Chernysh} may be viewed as the special case $W= N \times D^k: \emptyset \leadsto N \times S^{k-1}$ of Theorem \ref{thm:existence-stable-metrics1} (for $k \geq 3$), but it took us some time to arrive at the formulation of Theorem \ref{thm:existence-stable-metrics1} as the correct generalisation of this result.

The second result shows that right-stable metrics are often automatically also left-stable. 

\begin{MainThm}\label{thm:right-and-left-stable}
Let $d \geq 6$ and let $W: M_0 \leadsto M_1$ be a $d$-dimensional cobordism. Assume that \emph{both} inclusion maps are $2$-connected. Then every right-stable $h \in \Riem^+ (W)_{g_0,g_1}$ is also left-stable.
\end{MainThm}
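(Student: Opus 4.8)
The plan is to reduce the left-stability of $h$ to its already-established right-stability by a "both sides" trick: I want to glue $W$ to a cobordism running in the opposite direction and use that the resulting closed-up composite behaves well. First I would use Theorem \ref{thm:existence-stable-metrics1} in the reverse direction. Since the inclusion $M_0 \to W$ is $2$-connected, the turned-around cobordism $\overline{W}: M_1 \leadsto M_0$ has its incoming boundary inclusion $M_1 \to \overline W$ $2$-connected — wait, that is the wrong one; rather I apply Theorem \ref{thm:existence-stable-metrics1} to $\overline W : M_1 \leadsto M_0$, whose outgoing inclusion $M_0 \to \overline W$ is $2$-connected by hypothesis, to obtain some $g_0' \in \Riem^+(M_0)$ and a right-stable $h' \in \Riem^+(\overline W)_{g_1, g_0'}$. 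The key point I want to exploit is that a right-stable metric on $\overline W$ is precisely a \emph{left}-stable metric when $\overline W$ is reglued to things on its other side, i.e. right-stability of $h'$ on $\overline W$ says exactly that $\mu(h', \_): \Riem^+(V)_{g_0', g_2} \to \Riem^+(\overline W \cup V)_{g_1, g_2}$ is a weak equivalence.

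The second, and main, step is a two-out-of-three / retraction argument on the composite $W \cup \overline W$. Gluing $h$ and $h'$ gives $\mu(h, h') \in \Riem^+(W \cup \overline W)_{g_0, g_0'}$. Now consider, for an arbitrary cobordism $V: M_{-1} \leadsto M_0$ and boundary condition $g_{-1}$, the diagram of composition maps
\[
\Riem^+(V)_{g_{-1},g_0} \overset{\mu(\_,h)}{\lra} \Riem^+(V \cup W)_{g_{-1},g_1} \overset{\mu(\_, h')}{\lra} \Riem^+(V \cup W \cup \overline W)_{g_{-1}, g_0'}.
\]
The composite is $\mu(\_, \mu(h,h'))$. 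Here is where I expect to need a genuine input: I claim the composite is a weak equivalence because $W \cup \overline W$, built from $W$ and its reverse, is — up to the surgeries involved — a "stabilisation" of a cylinder, and more precisely that $\mu(h,h')$ can be taken (or compared) to a stable metric on a cobordism which is invertible up to the relevant equivalences. Concretely: the second map $\mu(\_, h')$ is a weak equivalence because $h'$ is right-stable on $\overline W$ (that is the content of the previous paragraph, applied with the cobordism "$V \cup W$" in the role of $V$). The third ingredient is that $\mu(\_, \mu(h,h'))$ is a weak equivalence: this I would get by recognising $W \cup \overline W : M_0 \leadsto M_0$ together with $\mu(h,h')$ as an instance already covered — either directly by applying Theorems \ref{thm:existence-stable-metrics1} and an analogue of the present statement in a situation where both conclusions are known, or, more honestly, by observing that since \emph{both} inclusions $M_i \to W$ are $2$-connected, both inclusions $M_0 \to W\cup \overline W$ are $2$-connected as well, so $W \cup \overline W$ is handle-equivalent rel boundary to the product cobordism, and along such handle trades the composition map with the glued psc metric is a weak equivalence by the surgery results underlying Theorem \ref{thm:existence-stable-metrics1}. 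Granting that the composite and the second map are weak equivalences, two-out-of-three forces $\mu(\_, h)$ to be a weak equivalence, which is exactly left-stability of $h$.

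The main obstacle, and the step that needs the most care, is justifying that $\mu(\_, \mu(h, h'))$ is a weak equivalence — i.e. that gluing on the "doubled" cobordism $W \cup \overline W$ with a suitable glued psc metric is invisible up to weak equivalence. The cheap hope that $\mu(h, h')$ is itself stable is not obviously available, since stability is what we are trying to prove; so I would instead argue at the level of the underlying cobordism-category / parametrised-surgery machinery: both boundary inclusions into $W \cup \overline W$ being $2$-connected, one can cancel the handles of $W \cup \overline W$ in pairs rel $\partial$, and the Gromov--Lawson/Chernysh-type parametrised results (the same ones proving Theorem \ref{thm:existence-stable-metrics1}) show each such elementary move induces a weak equivalence on the relevant spaces of psc metrics with fixed boundary conditions. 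Assembling these gives that $\mu(\_, \mu(h,h'))$ is a weak equivalence for \emph{some} choice of glued metric; a final mild argument (using that $\Riem^+(W\cup\overline W)_{g_0,g_0'}$ can be taken connected through such metrics, or a direct concatenation-of-stable-metrics lemma) promotes this to the specific $\mu(h,h')$ we built, completing the proof.
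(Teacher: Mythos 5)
Your overall strategy (glue on the reversed cobordism $\overline W = W^{op}$, compare $W\cup\overline W$ with a cylinder via admissible surgeries, and run a two-out-of-three argument) is the right one and is close in spirit to the paper's proof, but two steps are genuinely broken. The central one is your justification that the second map $\mu(\_,h')\colon \Riem^+(V\cup W)_{g_{-1},g_1}\to\Riem^+(V\cup W\cup\overline W)_{g_{-1},g_0'}$ is a weak equivalence ``because $h'$ is right-stable on $\overline W$''. Right-stability of $h'$ on $\overline W\colon M_1\leadsto M_0$ is the statement that $\mu(h',\_)\colon\Riem^+(V')_{g_0',g_2}\to\Riem^+(\overline W\cup V')_{g_1,g_2}$ is an equivalence for cobordisms $V'\colon M_0\leadsto M_2$ glued along the \emph{outgoing} end $M_0$; the map you need glues $\overline W$ onto $V\cup W$ along $M_1$, i.e.\ it is exactly the \emph{left}-stability of $h'$ on $\overline W$. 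Since $\overline W$ satisfies the same hypotheses as $W$, that left-stability is precisely an instance of the theorem you are proving, so the argument as written is circular. From the composite being a weak equivalence one can only conclude that $\mu(\_,h)$ is split mono and $\mu(\_,h')$ is split epi in the weak homotopy category; the paper closes exactly this gap by running the same argument a second time with the roles of $h$ and $h'$ exchanged (after checking, via Lemma \ref{lem:2-out-of-three}, that $h'$ is itself right-stable), which forces $\mu(\_,h')$ to also be split mono, hence an equivalence, hence $\mu(\_,h)$ is one too. That extra symmetric pass is missing from your proposal and cannot be dispensed with.

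The second gap is in how you produce $h'$ and in why $h\cup h'$ should be left-stable. Taking $h'$ from Theorem \ref{thm:existence-stable-metrics1} applied to $\overline W$ gives an uncontrolled boundary metric $g_0'$ (not the given $g_0$) and, more importantly, no control over which path component of $\Riem^+(W\cup\overline W)_{g_0,g_0'}$ the glued metric $h\cup h'$ lies in; the surgery equivalence with $\Riem^+(M_0\times[0,1])$ only shows that metrics in the component corresponding to the cylinder metric are stable (Lemma \ref{lem:stability-under-surgery} and \cite[Corollary 2.2]{BERW}), and your fallback that the space of psc metrics ``can be taken connected'' is false in general -- these spaces having many components is the whole point of the paper. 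The fix is to choose $h'$ differently: since $h$ is right-stable, $\mu(h,\_)\colon\Riem^+(W^{op})_{g_1,g_0}\to\Riem^+(W\cup W^{op})_{g_0,g_0}$ is a weak equivalence (in particular a $\pi_0$-surjection), so one can pick $h'\in\Riem^+(W^{op})_{g_1,g_0}$ with $h\cup h'$ in the component of the stable metric produced by the surgery equivalence; this both fixes the boundary condition to $g_0$ and guarantees that $h\cup h'$ is stable, which is what makes the composite a weak equivalence.
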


This theorem has no analogue in \cite{Chernysh}, as it does not hold in the situation considered there: a metric $h \in \Riem^+(W)_g$ on the cobordism $W= N \times D^k: \emptyset \leadsto N \times S^{k-1}$ being left-stable would mean that
$$\mu(\_, h) : \{*\} = \Riem^+ (\emptyset) \lra \Riem^+ (W)_{g}$$
is a weak equivalence, which is not generally true.
For later use we record the combination of Theorems \ref{thm:existence-stable-metrics1} and \ref{thm:right-and-left-stable} as follows.

\begin{cor}\label{cor:right-and-left-stable}
Let $d \geq 6$ and let $W: M_0 \leadsto M_1$ be a $d$-dimensional cobordism such that both inclusions $M_i \to W$ are $2$-connected. Then, given $g_0 \in \Riem^+ (M_0)$, there is a $g_1 \in \Riem^+ (M_1)$ and a stable $h\in \Riem^+ (W)_{g_0,g_1}$.\qed
\end{cor}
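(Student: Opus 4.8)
The plan is to obtain the corollary by feeding the output of Theorem \ref{thm:existence-stable-metrics1} into Theorem \ref{thm:right-and-left-stable}. First I would note that the hypothesis of Theorem \ref{thm:existence-stable-metrics1} --- that the inclusion $M_1 \to W$ be $2$-connected --- is a special case of our standing assumption that \emph{both} inclusions $M_i \to W$ are $2$-connected. Applying that theorem to the given $g_0 \in \Riem^+(M_0)$ therefore produces a metric $g_1 \in \Riem^+(M_1)$ together with a right-stable $h \in \Riem^+(W)_{g_0,g_1}$; here $g_1$, like $h$, is part of the conclusion rather than something we get to prescribe, which is exactly the form in which the corollary is stated.

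Next I would invoke Theorem \ref{thm:right-and-left-stable}. Its hypothesis, that both inclusions $M_i \to W$ are $2$-connected, holds by assumption, and its conclusion applies to \emph{any} right-stable metric on $W$ with arbitrary boundary conditions --- in particular to the metric $h$ just constructed, with boundary conditions $g_0$ and $g_1$. Hence $h$ is also left-stable, and therefore stable in the sense of Definition \ref{defn:right-stable}. This exhibits the desired pair $(g_1, h)$ and completes the argument.

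There is no genuine obstacle here: all the substance lies in Theorems \ref{thm:existence-stable-metrics1} and \ref{thm:right-and-left-stable}, and the corollary merely records their conjunction in the packaged form that will be convenient in later sections. The only point worth verifying is the compatibility of the two invocations --- namely that Theorem \ref{thm:right-and-left-stable} is stated for an arbitrary right-stable metric, so that it may legitimately be applied to the (non-canonical) metric $h$ handed to us by Theorem \ref{thm:existence-stable-metrics1} --- and this is indeed how that theorem is phrased.
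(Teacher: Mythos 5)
Your argument is correct and is exactly how the paper treats this statement: Corollary \ref{cor:right-and-left-stable} is recorded as the immediate conjunction of Theorem \ref{thm:existence-stable-metrics1} (producing $g_1$ and a right-stable $h$) and Theorem \ref{thm:right-and-left-stable} (upgrading $h$ to a stable metric under the two-sided $2$-connectivity hypothesis), which is why the paper states it with no further proof. Nothing is missing.
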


\begin{rem}
A related result has been proved by Walsh \cite[Theorem A]{Walsh4}, which in the language used here says that if $W$ is as in Corollary \ref{cor:right-and-left-stable} then for each $g_0 \in \Riem^+ (M_0)$, there is a $g_1 \in \Riem^+ (M_1)$ and a left-stable $h\in \Riem^+ (W)_{g_0,g_1}$. 
\end{rem}

To prove Theorems \ref{thm:existence-stable-metrics1} and \ref{thm:right-and-left-stable}, we introduce several technical tools, such as spaces of psc metrics on {manifolds with corners} and a procedure for rounding corners of manifolds with psc metrics. This is done in the fairly elementary Section \ref{sec:psc-spaces-on-mfds-w-corners}. The proof of Theorems \ref{thm:existence-stable-metrics1} and \ref{thm:right-and-left-stable} is given in Section \ref{sec:StableMetrics}.

\subsection{Outline of the proof of Theorem \ref{main-theorem-mapintoRiem}}

We follow the same general strategy as the argument for \cite[Theorem B]{BERW}. The proof is given in Sections \ref{sec:proof-constructivepart} and \ref{sec:indextheory}. In those sections, we shall assume familiarity with \cite[\S 2,3,4]{BERW} and focus on those parts of the argument which exhibit essential differences. The construction in \cite{BERW} rested on three pillars: Index theory, the Gromov--Lawson surgery method, and the high-dimensional Madsen--Weiss theorem of Galatius and the second named author.

Theorem \ref{main-theorem-mapintoRiem} has two distinct parts: one concerns the construction of $\Psi$, and the other the commutativity of the diagram. We have decided to separate these parts more cleanly than in \cite{BERW}. For the first part, the spin hypothesis does not play a role. Let $W^{2n}$ be a manifold and consider the Gauss map $\tau:W \to BO(2n)$. Let $W \stackrel{\ell}{\to} X \stackrel{\theta}{\to} BO(2n)$ be the second stage of the Moore--Postnikov tower of $\tau$. Recall that $\theta$ is a $2$-coconnected fibration and $\ell$ is a $2$-connected map. If $W$ is a spin manifold, then $X \simeq B\Spin (2n) \times B\pi_1 (M)$ (a complete classification of the second Moore--Postnikov stages of classifying maps of vector bundles can be found in \cite{Stolz2}). To the fibration $\theta$, we have the associated Madsen--Tillmann--Weiss spectrum $\MT{\theta}$ and, if $\partial W \to W$ is $2$-connected, there is a parameterised Pontrjagin--Thom map
\[
\alpha_W : B \Diff_\partial (W) \lra \loopinf{}_0 \MT{\theta}.
\]
The construction of the map $\Psi$ is a consequence of the following general theorem.

\begin{MainThm}\label{thm:factorization}
Let $W^{2n}$, $n \geq 3$, be a connected compact manifold viewed as a cobordism $W: \emptyset \leadsto \partial W$. Assume that $(W, \partial W)$ is $(n-1)$-connected. Let $g \in \Riem^+ (\partial W)$ be such that there exists a right stable $h \in \Riem^+ (W)_g$. Then there exists a fibration $p:T^+_{\infty}\to \loopinf{}_0 \MT{\theta}$ and a homotopy cartesian diagram
\[
\xymatrix{
\Riem^+ (W)_g \hq \Diff_\partial (W) \ar[r] \ar[d] & T_{\infty}^+ \ar[d]^{p}\\
B \Diff_\partial (W) \ar[r]^-{\alpha_W} & \loopinf{}_0 \MT{\theta}.
}
\]
\end{MainThm}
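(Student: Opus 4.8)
The plan is to construct $T^+_\infty$ as a homotopy colimit of "surgery towers" of psc-metric spaces, mimicking the argument of \cite[Theorem B]{BERW} but feeding in the new stable-metrics technology (Theorems \ref{thm:existence-stable-metrics1} and \ref{thm:right-and-left-stable}) in place of Chernysh's theorem, which only handled the closed case. First I would recall the construction of the parameterised Pontrjagin--Thom map $\alpha_W : B\Diff_\partial(W) \to \Omega^\infty_0 \MT{\theta}$, which factors through the cobordism category $\mathcal{C}_\theta$ of manifolds with $\theta$-structure: by the Galatius--Randal-Williams theorem, $B\mathcal{C}_\theta \simeq \Omega^{\infty-1}\MT{\theta}$, and $\alpha_W$ is the restriction to the path component of $W$ of the map $B\Diff_\partial(W) \to \Omega B\mathcal{C}_\theta$ classifying $W$ as a morphism. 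Because $(W,\partial W)$ is $(n-1)$-connected and $\dim W = 2n$, $W$ can be built from $\partial W \times [0,1]$ by attaching handles of index $\geq n$, hence $W$ (read backwards) is a composite of cobordisms whose "upward" inclusions are highly connected; in particular, after stabilising, one lands in the subcategory of $\mathcal{C}_\theta$ where the scanning/GMTW equivalence can be combined with a psc version.

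Next I would build, over $B\Diff_\partial(W)$ — or rather over the relevant space-level model of $\alpha_W$ — a psc analogue. Concretely: choose, using Theorem \ref{thm:existence-stable-metrics1}, a right-stable $h \in \Riem^+(W)_g$ (this is the hypothesis); then form the topological category $\mathcal{C}_\theta^{\Riem^+}$ whose objects are $\theta$-manifolds equipped with a psc metric and whose morphisms are cobordisms equipped with a psc metric that is a product near the ends, and the forgetful functor $\mathcal{C}_\theta^{\Riem^+} \to \mathcal{C}_\theta$. Gluing on a fixed right-stable metric on (a stabilising block of) $W$ is, by definition of right-stability and by Theorem \ref{thm:right-and-left-stable} together with the $(n-1)$-connectivity assumption, a weak equivalence on the relevant psc-metric spaces; this is exactly what makes the "fibre" of the forgetful functor, after inverting the stabilisation, behave like a fibration with fibre $\Riem^+(W)_g$. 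The space $T^+_\infty$ is then defined to be the homotopy colimit (along the stabilisation maps $\mu(h,-)$, which are weak equivalences by right-stability) of the total spaces of these psc bundles, and $p : T^+_\infty \to \Omega^\infty_0 \MT{\theta}$ is the induced map; the identification of $B\mathcal{C}_\theta$ (or its psc cousin) with the infinite loop space uses the parameterised surgery argument of \cite{BERW} essentially verbatim once the cobordism-invariance input is upgraded.

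The homotopy cartesian square is then assembled by comparing two homotopy colimits: on the base one has $B\Diff_\partial(W) \to \Omega^\infty_0\MT{\theta}$ realised as a colimit of classifying spaces of diffeomorphism groups of $W$ stabilised by gluing on handles/blocks, and on the total space the same colimit but with $\Riem^+$-decorations and homotopy quotiented by $\Diff_\partial$. Over each finite stage the square is cartesian essentially by inspection (the fibre over the point of $B\Diff_\partial(W)$ is $\Riem^+$ of the stabilised manifold, homotopy-quotiented appropriately, and the map to $\Omega^\infty_0\MT{\theta}$ has the corresponding $\Riem^+$-bundle as pullback); right-stability guarantees that passing to the colimit does not change the vertical homotopy fibres, since each stabilisation map is a weak equivalence on $\Riem^+$. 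Taking homotopy colimits of a levelwise-cartesian diagram of fibrations preserves the cartesian property, giving the claimed square, and $p$ is a fibration by construction (or after replacing it by one).

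The main obstacle I anticipate is \emph{not} the formal colimit bookkeeping but the geometric input that the stabilisation maps $\mu(h,-)$ used to build $T^+_\infty$ are genuinely weak equivalences in the relevant parameterised/family sense — i.e., upgrading Theorems \ref{thm:existence-stable-metrics1}--\ref{thm:right-and-left-stable} (which are stated for a single cobordism $W$) to statements about gluing on nontrivial \emph{families} of cobordisms, and checking that the connectivity hypothesis "$(W,\partial W)$ is $(n-1)$-connected" propagates to the stabilised manifolds so that Theorem \ref{thm:right-and-left-stable} keeps applying. This is exactly the place where \cite{BERW} invoked Chernysh and where the present paper's new technology is essential; the rest is a translation of the \cite{BERW} architecture, and I would isolate this family-version of stability as a lemma to be proved in the section on stable metrics and then quote it here.
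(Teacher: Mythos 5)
There is a genuine gap, and it is precisely the step you wave through when you say that $T^+_\infty$ ``is then defined to be the homotopy colimit \dots and $p:T^+_\infty \to \loopinf{}_0\MT{\theta}$ is the induced map''. The homotopy colimit of the Borel constructions $E\Diff_\partial \times_{\Diff_\partial}\Riem^+$ over the stabilised manifolds naturally fibres over the colimit $B_\infty$ of the classifying spaces $B\Diff_\partial$, and the comparison map from $B_\infty$ to $\loopinf{}_0\MT{\theta}$ supplied by high-dimensional Madsen--Weiss theory is only \emph{acyclic} (a homology equivalence, killing a perfect subgroup of $\pi_1$), not a weak equivalence. A fibration over $B_\infty$ does not automatically extend over an acyclic map; one must run an obstruction-theoretic argument, and for that one needs to know that the action of $\pi_1(B_\infty)$ on the fibre is sufficiently simple. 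This is the heart of the argument inherited from \cite{BERW}: one must prove that the action of $\Diff_\partial$ on $\Riem^+(W)_g$ (and on the psc-metric spaces of the stabilised cobordisms $V_{-p,q}$) is \emph{homotopy abelian}. In the paper this is Theorem \ref{thm:general-abelianness-thm} and its corollaries, whose proof is where the new stable-metrics technology (Theorems \ref{thm:existence-stable-metrics1} and \ref{thm:right-and-left-stable}) together with the trick of isotoping the $2$-skeleton into a boundary collar (Lemma \ref{lem:moving-the-2-skeleton-to-boundary}, via Phillips' submersion theorem) is really spent. Your proposal never mentions this step, and without it the fibration $p$ over $\loopinf{}_0\MT{\theta}$ simply cannot be produced; the GMTW/cobordism-category identification you invoke does not repair this, since the issue is the failure of $\alpha_W$ (even after stabilisation) to be a $\pi_1$-isomorphism.

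Two further remarks. First, the obstacle you single out as the main difficulty --- upgrading the stability theorems to \emph{families} of cobordisms --- is not needed: in the actual argument the glued cobordisms carry \emph{fixed} psc metrics, and since the gluing maps $\mu(h_{p,p+1},\_)$ are $\Diff_\partial$-equivariant weak equivalences, the squares of Borel constructions are homotopy cartesian levelwise with no family version of Theorems \ref{thm:existence-stable-metrics1}--\ref{thm:right-and-left-stable} required; stability for single metrics plus equivariance suffices. Second, you gloss over a genuine technical point in identifying the stabilised base with $\loopinf{}_0\MT{\theta}$: the Moore--Postnikov stage $X$ is in general only of type $(F_2)$, so the paper must stabilise in \emph{both} directions, building cobordisms $V_{-p-1,-p}$ (Theorem \ref{thm:construction-of-mfd-sequence}, via Lemmas \ref{lem:typef2-contable-pii} and \ref{lem:attaching-handles-typeFn}) to exhaust $X$ by type-$(F_n)$ approximations $X_p$ before the Galatius--Randal-Williams stable homology theorem can be applied; stable metrics on these backwards cobordisms are supplied by Corollary \ref{cor:right-and-left-stable}. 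Your sketch stabilises only by $S^n\times S^n$-connected sums and so does not reach $\MT{\theta}$ itself.
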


\begin{remark}
We shall show in \cite{ERWpsc3} that this theorem continues to hold under the weaker assumption that $(W, \partial W)$ is $2$-connected. This also follows from work of Perlmutter \cite{PerlmutterPSC}.
\end{remark}

If $W$ is spin with $\pi_1 (W)=G$ then $\MT{\theta} = \MT{Spin (2n)} \wedge BG_+$. Taking the fibre transport of the fibration $p$ at a specific basepoint yields the map $\Psi$ in Theorem \ref{main-theorem-mapintoRiem}. As in \cite{BERW} the construction of the fibration $T^+_\infty \to \loopinf{}_0 \MT{\theta}$ is by obstruction-theory, and it is here that Theorems \ref{thm:existence-stable-metrics1} and \ref{thm:right-and-left-stable} are used.

The second part of Theorem \ref{main-theorem-mapintoRiem} is index-theoretic. The main difference to \cite{BERW} is that the secondary index invariant takes place in the $K$-theory of $\cstarred(G)$ and not of $\bR$, and uses elliptic operators with coefficients in $\cstarred(G)$. The necessary elliptic regularity theory is developed in \cite{JEIndex1}. With these results, the index-theoretic part of the argument is largely the same as in \cite{BERW}, with the exception of the analogue of the Atiyah--Singer index theorem. In Section \ref{sec:indextheory}, we review the necessary changes. 

At this point, we have established Theorem \ref{main-theorem-mapintoRiem} for certain $W$ (those which are $(n-1)$-connected relative to their boundary) and certain boundary conditions (those which extend to a stable metric on $W$). To extend this to general manifolds and boundary conditions, we need the additivity theorem for the index and an embedding trick; stable metrics are used again here.

\vspace{1.5ex}

\noindent\textbf{Acknowledgements.} J.\ Ebert wishes to acknowledge a number of useful conversations with several colleagues, among them Michael Joachim, Wolfgang L\"uck and Rudolf Zeidler about the Baum--Connes conjecture and higher index theory. The authors were partially supported by the SFB 878 and by EPSRC grant EP/M027783/1.

\section{Spaces of psc metrics on manifolds with boundaries and corners}\label{sec:psc-spaces-on-mfds-w-corners}

\subsection{Spaces of psc metrics on manifolds with boundaries}

For a closed manifold $M$, we let $\Riem (M)$ be the space of all Riemannian metrics, equipped with the usual Fr\'echet topology and we let $\Riem^+ (M) \subset \Riem (M)$ be the open subspace of all Riemannian metrics with positive scalar curvature. 
Let $W$ be a compact manifold with boundary $M$. We assume that the boundary of $W$ comes equipped with a collar $c: M \times [0,1] \to W$. The collar identifies $M \times [0,1]$ with a subset of $W$ and we usually use this identification without further mention.

For $1\geq \eps>0$, we denote by $\Riem^+ (W)^{\eps}$ the space of all Riemannian metrics $h$ on $W$ with positive scalar curvature such that $c^* h = g+dt^2$ on $M \times [0,\eps]$ for some metric $g$ on $M$. It is topologised as a subspace of the space of smooth symmetric $(2,0)$-tensor fields, with the usual Fr\'echet topology. 

If the scalar curvature of $h$ is positive, then $g \in \Riem^+ (M)$, and assigning to $h$ its boundary value $g$ defines a map 
\[
\res_\eps: \Riem^+ (W)^{\eps} \lra \Riem^+ (M) 
\]
which is continuous. We define $\Riem^+ (W)_g^\eps := \res^{-1}_\eps (g) \subset \Riem^+ (W)^\eps$, the space of all psc metrics on $W$ which on $M \times [0,\eps]$ are of the form $g +dt^2$. 

For $1 \geq \delta >\eps$, the inclusion $\Riem^+ (W)_g^\delta \to \Riem^+ (W)_g^\eps$ is a closed embedding, and it is a homotopy equivalence by \cite[Lemma 2.1]{BERW}. As in \cite{BERW}, when the collar length does not play a role we abbreviate
$$\Riem^+(W) := \Riem^+ (W)^\eps \quad\text{ and }\quad \Riem^+ (W)_g := \Riem^+ (W)_g^\eps$$
for implicitly fixed values of $\epsilon$.

\subsection{Smooth manifolds with corners}

The proof of Theorem \ref{thm:existence-stable-metrics1} requires to come to grips with spaces of psc metrics on \emph{manifolds with corners} (whereas Theorem \ref{thm:right-and-left-stable} is a more formal consequence of Theorem \ref{thm:existence-stable-metrics1}). In the following section, we describe in detail what we mean by this and prove a corner rounding result, which is a new key tool.

\begin{definition}\label{defn:Corners}
A \emph{smooth $d$-manifold with acute corners} $W$ is a (second countable, Hausdorff) topological space locally modelled on $ \bR^{d-2} \times[0,\infty)^2$ and the local diffeomorphisms which preserve the sets $  \bR^{d-2}\times\{0\}$, $  \bR^{d-2}\times [0,\infty) \times \{0\}$, and $ \bR^{d-2} \times\{0\} \times [0,\infty)$. 

The (topological) boundary $\partial W$ is decomposed into two codimension zero pieces $M_0^{d-1}$ corresponding to $ \bR^{d-2} \times[0,\infty) \times \{0\}$ in local coordinates, and $M_1^{d-1}$ corresponding to $ \bR^{d-2}\times \{0\} \times [0,\infty)$ in local coordinates, which intersect along a closed submanifold $M_{01}^{d-2}$ corresponding to $  \bR^{d-2}\times\{0\}$ in local coordinates, which is the common boundary of $M_0$ and $M_1$. 

A \emph{smooth d-manifold with obtuse corners} is analogous but locally modelled on $ \bR^{d-2} \times(\bR^2 \setminus (0,\infty)^2)$.
\end{definition}

We write the following constructions and results for manifolds with acute corners, but the case of obtuse corners can be treated in the same way with only notational differences.

Choose collars $b:M_{01} \times [0,1]\times \{0\} \to M_0$ and $c:M_{01} \times \{0\} \times [0,1] \to M_1$, so that $b(x,0,0)=c(x,0,0) =x$ for all $x \in M_{01}$. The manifolds $M_0 \cup_{M_{01}} (M_{01} \times (-\infty,0])$ and $M_1 \cup_{M_{01}} (M_{01} \times (-\infty,0])$, where we use the collars $b$ and $c$ to define their smooth structures, are called the elongations of $M_0$ and $M_1$. The collars extend to embeddings
\begin{align*}
b' : M_{01} \times (-\infty,1] \times \{0\} &\lra M_0 \cup_{M_{01}} (M_{01} \times (-\infty,0])\\
c':M_{01} \times \{0\} \times (-\infty,1] &\lra M_1 \cup_{M_{01}} (M_{01} \times (-\infty,0])
\end{align*}
For each pair $-\infty \leq u \leq 0 \leq v \leq 1$ we may form the pushout
\[
\xymatrix{
M_{01} \times [u,v]^2 \ar[r]^-{i_{M_0}} \ar[d]^{i_{M_1}} & (M_0 \cup_{M_{01}} M_{01} \times [u,0]) \times [u,v] \ar[d]^{j_{M_0}}\\
(M_1 \cup_{M_{01}} M_{01} \times [u,0]) \times [u,v] \ar[r]^-{j_{M_1}} & K^{[u,v]},
}
\]
where $i_{M_0} (x,s,t):= (b'(x,t),s)$, $i_{M_1} (x,s,t):= ( c' (x,s), t)$. 

\begin{defn}
A \emph{bicollar} of $W$ is an embedding $e:K^{[0,1]} \to W$ such that $e (j_{M_0} (x,0))=x$ for all $x \in M_0$ and $e(j_{M_1} (y,0))=y$ for all $y \in M_1$. The smooth manifold $\widetilde{W} := W \cup_{K^{[0,1]}} K^{(-\infty,1]}$ is called the \emph{elongation of $W$}.
\end{defn}

\begin{figure}[htb!]
\begin{center}
\includegraphics{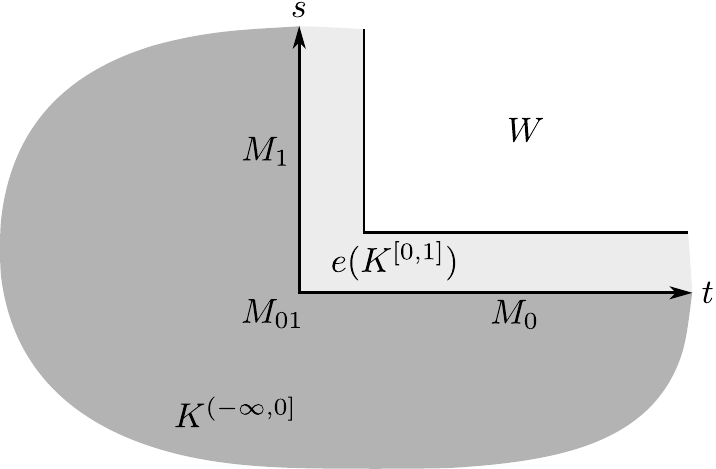}
\caption{The boundary structure of a manifold with (acute) corners}\label{fig:bicollars}
\end{center}
\end{figure}

There is a completely analogous model for an obtuse corner, which we shall omit.

\subsection{Rounding corners of smooth manifolds}

We first need to introduce some notation. 

\begin{defn}
A \emph{special curve in $\bR^2$} is a pair $(B,b)$, with $b \in [0,\infty)$ and $B \subset \bR^2$ a topological submanifold homeomorphic to $\bR$, such that 
\begin{enumerate}[(i)]
\item $B \cap ([0,\infty) \times \bR) =[0,\infty) \times \{-b \}$, and
\item $B \cap (\bR \times [0, \infty)) = \{-b\}\times [0, \infty)$. 
\end{enumerate}
We say that $(B_0,b_0) <(B_1,b_1)$ for two special curves if $B_0 \cap B_1 = \emptyset$ and $b_0<b_1$.
In that case, we denote by $[B_0,B_1]$ the region that lies between these two curves.
\end{defn}

\begin{figure}[htb!]
\begin{center}
\includegraphics{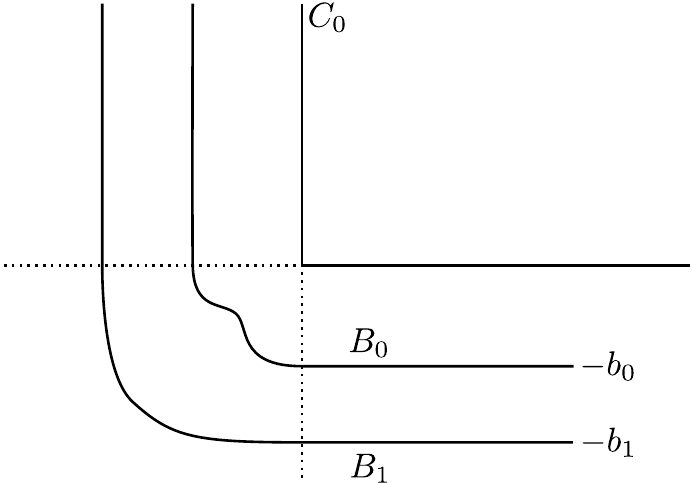}
\caption{Some special curves}\label{fig:specialcurve}
\end{center}
\end{figure}

If $(B,b)$ is a special curve then the number $b$ is determined by the curve $B$, so we abuse notation slightly by just writing $B$ for a special curve. For $b \in [0,\infty)$ we let
$$C_b := \{-b\} \times [-b, \infty) \cup  [-b, \infty)  \times \{-b\} \subset \bR^2.$$
Then $(C_b,b)$ is a special curve, and the only special curve with $b=0$ is $C_0$.
The part of $[B_0,B_1]$ outside the third quadrant consists of two strips. If $(B_0,b_0)< (B_1,b_1)$, we write
$$V_{B_0,B_1} := (M_{01} \times [B_0,B_1]) \cup (M_0 \times [-b_1,-b_0]) \cup (M_1 \times [-b_1,-b_0]) \subset K^{(-\infty,1]},$$
and for each special curve $B$ and bicollared manifold with acute corners $W$, we let
$$W_B:= W \cup_{\partial W} V_{C_0,B} \subset \widetilde{W}, $$
which is a smooth codimension zero submanifold with boundary of the elongation $\widetilde{W}$ if $B$ is smooth.

\begin{defn}
If $B$ is a smooth special curve, then $W_B$ is the \emph{result of rounding the corners of $W$}.
\end{defn}

The smooth manifold with boundary $W_B$ is independent of $B$ up to diffeomorphism, because the region $[C_0, B] \subset \bR^2$ is independent of $B$ up to diffeomorphism relative to $C_0$. Later we shall have to make more specific choices of $B$. If $a>0$ is chosen large enough so that $C_a > B$, then 
\[
W_B \cup_{\partial W} V_{B,C_a} \subset \widetilde{W}
\]
is a manifold with corners and it is diffeomorphic as such to $W$. In this way, we recover the manifold $W$ from $W_B$. 

\subsection{Collars}

We shall consider psc metrics on the manifolds $V_{B_0, B_1}$, so must clarify how we shall treat boundary conditions on such manifolds. If $B_0 = C_a$, then the manifold $V_{B_0, B_1}$ has an obtuse corner in the boundary $B_0$, and this has an evident bicollar; similarly, if $B_1 = C_a$  then the manifold $V_{B_0, B_1}$ has an acute corner in the boundary $B_1$, and this again has an evident bicollar. If instead $B$ is a smooth special curve then
$$(M_{01} \times B) \cup (M_0 \times \{-b\}) \cup (M_1 \times \{-b\}) \subset K^{(-\infty,1]}$$
is a codimension 1 submanifold and a choice of two-sided collar of $B$ (agreeing with the evident collar of each $\{-b\} \times [0,\infty)$ and $[-b,\infty) \times \{-b\}$) determines a two-sided collar of this submanifold.

\subsection{Spaces of psc metrics on manifolds with corners}

We will now define spaces of metrics of positive scalar curvature on a manifold with corners $W$ equipped with a bicollar $e : K^{[0,1]} \to W$. To ease notation, we will not mention the embedding $e$ of the bicollar into $W$ and pretend that $K^{[0,1]} \subset W$.

\begin{defn}
Let $g_{01} \in \Riem^+ (M_{01})$, $0<\eps<1$, $g_0 \in \Riem^+ (M_0)^{\eps}_{g_{01}}$ and $g_1 \in \Riem^+ (M_1)^{\eps}_{g_{01}}$, and write $g_{\partial W}:= (g_0 , g_1) \in \Riem^+ (M_0)^\eps \times_{\Riem^+ (M_{01})} \Riem^+ (M_1)^\eps$. Let $g_K$ be the Riemannian metric on $K^{(-\infty,\eps]}$ defined by
\[
g_K=
\begin{cases}
 g_{01} + dt^2+ ds^2 &\text{ on }K^{(-\infty,\eps]},\\
 g_0 + ds^2 & \text{ on } M_0 \times (-\infty,\eps],\\
g_1 + dt^2 & \text{ on }M_1 \times (-\infty,\eps].
\end{cases}
\]
The space of all psc metrics $h$ on $W$ such that $h|_{K^{[0,\eps]}} = g_K$ is denoted $\Riem^+  (W)^{\epsilon}_{g_{\partial W}}$.
\end{defn}

The following collar stretching lemma is the analogue of \cite[Lemma 2.1]{BERW}. In particular it implies that the homotopy type of $\Riem^+(W)^{\epsilon}_{g_{\partial W}}$ is independent of $\epsilon$, which allows us to neglect this from the notation when it is not important.

\begin{lem}\label{lem:BicollarLength}
If $0 < \eps' < \eps$ then the inclusion map $\Riem^+(W)^{\epsilon}_{g_{\partial W}} \to \Riem^+(W)^{\epsilon'}_{g_{\partial W}}$ is a homotopy equivalence.
\end{lem}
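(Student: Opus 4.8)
The plan is to follow the strategy of the proof of \cite[Lemma 2.1]{BERW}: construct an explicit deformation retraction by stretching the bicollar region, using a family of diffeomorphisms of $\widetilde{W}$ that push the corner structure inward and interpolate the collar length. First I would fix $0 < \eps' < \eps < 1$ and work inside the elongation $\widetilde W = W \cup_{K^{[0,1]}} K^{(-\infty,1]}$. The key observation is that both $\Riem^+(W)^{\eps}_{g_{\partial W}}$ and $\Riem^+(W)^{\eps'}_{g_{\partial W}}$ can be described in terms of psc metrics on $\widetilde W$ that equal the fixed model metric $g_K$ on a neighbourhood of the corner and decay appropriately; the difference between the two is only how large this prescribed collar region is. So I would choose a smooth isotopy $(\varphi_s)_{s \in [0,1]}$ of embeddings of $W$ into $\widetilde W$, starting at the inclusion, each $\varphi_s$ being the identity outside a fixed compact set, each preserving the corner structure, and with $\varphi_1$ arranged so that $\varphi_1^* g_K$ (on the relevant collar of $\varphi_1(W)$) has collar parameter $\eps$ when the original metric had collar parameter only $\eps'$. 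Pulling back metrics along $\varphi_s$ then gives a homotopy from the inclusion $\Riem^+(W)^{\eps}_{g_{\partial W}} \hookrightarrow \Riem^+(W)^{\eps'}_{g_{\partial W}}$ to a map landing back in $\Riem^+(W)^{\eps}_{g_{\partial W}}$, and one checks it is a homotopy inverse.

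More concretely, the mechanism is the same one-variable trick used in the boundary case: pick a smooth family of orientation-preserving diffeomorphisms $\psi_s$ of $(-\infty,1]$ which are the identity near $1$, equal to $t \mapsto t$ for $s = 0$, and for $s=1$ map $[0,\eps']$ linearly onto $[0,\eps]$ (suitably reparametrised) while fixing a neighbourhood of $1$. Because $g_K$ on the bicollar is a product metric in the two normal directions $s,t$ (and on the corner piece $K$ it is $g_{01}+ds^2+dt^2$), applying $\psi_s$ simultaneously in both normal coordinates — i.e.\ using the product diffeomorphism $\psi_s \times \psi_s$ on the $[u,v]^2$ factors in the pushout description of $K^{[u,v]}$, and $\psi_s$ in the single normal direction on $M_0 \times (-\infty,\eps]$ and $M_1 \times (-\infty,\eps]$ — sends $g_K$-product collars to $g_K$-product collars of a different length, and in particular carries a metric with an $\eps'$-collar to one with an $\eps$-collar. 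Crucially these pulled-back metrics still have positive scalar curvature (pullback by a diffeomorphism preserves $\scal > 0$), and the construction is continuous in the metric, so it defines a genuine homotopy of maps between the two spaces of psc metrics.

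The main obstacle, and the place requiring care, is compatibility along the corner: one must stretch the collar of $M_0$, the collar of $M_1$, and the bicollar of the corner $M_{01}$ \emph{simultaneously and compatibly}, so that the result is again a metric of the prescribed product form $g_K$ on the (now longer) bicollar $K^{[0,\eps]}$ and not merely on the two codimension-one faces separately. This is exactly why the pushout model $K^{[u,v]}$ and the diagonal use of $\psi_s \times \psi_s$ on the square factor $[u,v]^2$ is needed: it guarantees the three local stretchings glue to a single diffeomorphism of $\widetilde W$ preserving the corner structure and carrying $g_K$ to $g_K$. Once this compatibility is arranged the rest is routine: one verifies the family $\varphi_s$ is smooth in $s$ and in the metric, that $\varphi_0 = \id$, that $\varphi_1$ maps $\Riem^+(W)^{\eps'}_{g_{\partial W}}$ into $\Riem^+(W)^{\eps}_{g_{\partial W}}$, and that composing in both orders is homotopic to the identity through such pullback maps, exactly as in \cite[Lemma 2.1]{BERW}. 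This establishes that the inclusion is a homotopy equivalence.
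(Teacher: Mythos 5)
Your overall strategy (stretch the collar by a one-parameter family of reparametrisations, applied diagonally as $\psi_s\times\psi_s$ on the square factor of the corner model so that the three faces are treated compatibly) is indeed the skeleton of the paper's proof, and your emphasis on corner compatibility is correct. But there is a genuine gap at the central step: the claim that applying such a reparametrisation and pulling back ``sends $g_K$-product collars to $g_K$-product collars of a different length'', i.e.\ ``carr[ies] $g_K$ to $g_K$''. This is false unless $\psi_s$ is an isometry (a translation) on the relevant interval: for the linear stretch you specify, $\psi_1^*(dt^2)=\psi_1'(t)^2\,dt^2=(\eps/\eps')^2\,dt^2\neq dt^2$, so the pullback of $g_0+ds^2$ (resp.\ $g_{01}+ds^2+dt^2$, $g_1+dt^2$) is $g_0+\psi_1'(s)^2ds^2$, etc., which violates the defining condition of $\Riem^+(W)^{\eps}_{g_{\partial W}}$ (equality with $g_K$ on $K^{[0,\eps]}$, not merely a warped product). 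In fact no pullback along a self-diffeomorphism of $W$ alone can turn an $\eps'$-collared metric into an $\eps$-collared one: for $H^*h$ to equal $g_0+ds^2$ exactly on $[0,\eps]$ one needs $H$ to be a boundary-fixing translation there, i.e.\ the identity, and then nothing has been gained. So as written your homotopy does not stay in the spaces $\Riem^+(W)^{\bullet}_{g_{\partial W}}$ and $\varphi_1$ does not land in $\Riem^+(W)^{\eps}_{g_{\partial W}}$.

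The paper's proof repairs exactly this point: before pulling back along the isotopy $H_u$ induced by $h_u$, it first modifies the metric on the collar, replacing $ds^2$ and $dt^2$ by $f_u(s)\,ds^2$ and $f_u(t)\,dt^2$ with $f_u=1/(h_u'\circ h_u^{-1})^2$, precisely so that $h_u^*(f_u(t)\,dt^2)=dt^2$; the modified metric $h^u$ still has positive scalar curvature because the inserted one- and two-dimensional factors are flat, and the conditions imposed on $h_1$ (together with the harmless reductions $\eps-\eps'<1-\eps$, $2\eps'>\eps$) force $f_u\equiv 1$ near the matching regions, so $h^u$ is smooth and $F(h,u)=H_u^*h^u$ is a (weak) deformation retraction of $\Riem^+(W)^{\eps'}_{g_{\partial W}}$ onto $\Riem^+(W)^{\eps}_{g_{\partial W}}$. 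Alternatively, a pure pullback argument could be salvaged by extending a given $h\in\Riem^+(W)^{\eps'}_{g_{\partial W}}$ by $g_K$ over the external collar of $\widetilde W$ and pulling back along an embedding of $W$ into $\widetilde W$ that is a genuine translation (hence an isometry for the extended metric) on the new collar $[0,\eps]$, with the non-isometric catch-up region placed in $[\eps',1]$ where no product form is required of the output; but that is not the linear stretching you describe, and either way some device beyond the naive pullback is needed.
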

\begin{proof}
By breaking up this inclusion as a sequence of inclusions
$$\Riem^+(W)^{\epsilon}_{g_{\partial W}} \lra \Riem^+(W)^{\epsilon_1}_{g_{\partial W}} \lra \Riem^+(W)^{\epsilon_2}_{g_{\partial W}} \lra \cdots \lra  \Riem^+(W)^{\epsilon'}_{g_{\partial W}},$$
without loss of generality we may suppose that $\epsilon-\epsilon' < 1-\epsilon$ and that $2\epsilon' > \epsilon$.
Choose $0 < \delta < 2\epsilon'-\eps$ and a diffeomorphism $h_1 : [0,1] \to [0,1]$ such that
\begin{enumerate}[(i)]
\item $h_1(t) \leq t$,
\item $h_1 (\eps)= \eps'$,
\item $h_1(0)=0$ and $h_1(1)=1$,
\item $h_1'\equiv 1$ near $[0,\delta]\cup \{1\} \cup [\eps', \eps+(\eps-\eps')]$.
\end{enumerate}
The only condition which it is not obvious to satisfy is that $h_1'\equiv 1$ near $[\eps', \eps+(\eps-\eps')]$, which is where the conditions above enter in order to guarantee that
$$h_1(\epsilon + (\epsilon-\eps')) = h_1(\epsilon) + (\epsilon-\eps') = \eps' + (\epsilon-\eps') = \epsilon < 1$$
and
$$h_1(\epsilon') = h_1(\epsilon - (\epsilon-\eps')) = \eps' - (\epsilon-\eps') = 2\eps' -\eps > \delta.$$
Now let 
$$h_u(t) = u \cdot h_1(t) + (1-u) \cdot t$$
be the linear interpolation, an isotopy from the identity map to $h_1$. This induces an isotopy of $K^{[0,1]}$, by taking products, and hence an isotopy $H_u$ of $W$ supported inside $e(K^{[0,1]})$.

Let $f_u : [0,1] \to [0,\infty)$ be the function
\[
 f_u (t):= \frac{1}{(h_u' (h_u^{-1}(t)))^2}
\]
(note that $f_u \equiv 1$ near $[0,\delta]\cup [\eps', \eps]\cup\{1\}$ for all $u$). Then $h_u^* (f_u (t) dt^2) = dt^2$.

\begin{figure}[htb!]
\begin{center}
\includegraphics{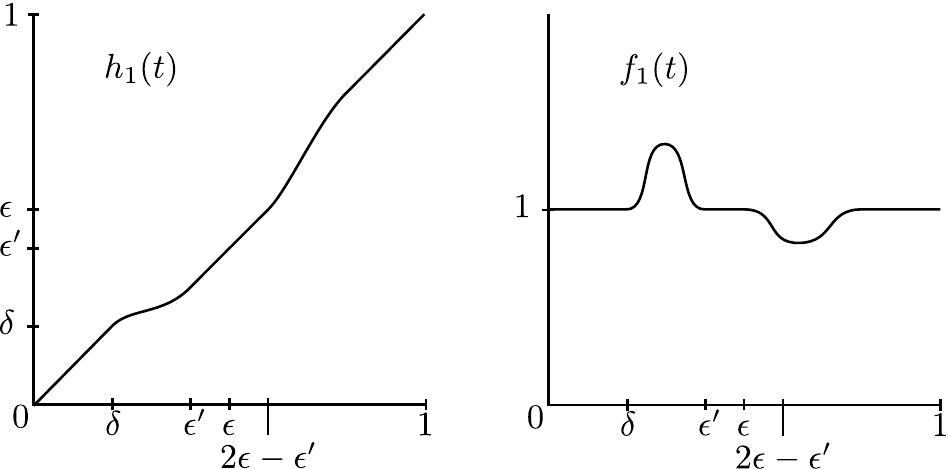}
\caption{The function $h_1(t)$ and its associated function $f_1(t)$.}\label{fig:functions}
\end{center}
\end{figure}

For a $h \in \Riem^+(W)^{\epsilon'}_{g_{\partial W}}$, define a Riemannian metric $h^u$ on $W$ by 
\[
 h^u := 
 \begin{cases}
  g_{01} + f_u (s)ds^2 + f_u (t)dt^2 & \text{on } M_{01} \times [0,\eps'] \times [0,\eps'] \\
  g_0  + f_u (s)ds^2 & \text{on } (M_0 \setminus b(M_{01} \times [0,\eps'] \times \{0\})) \times [0,\eps']\\
  g_1  + f_u (t)dt^2 & \text{on } (M_1 \setminus c(M_{01} \times \{0\} \times [0,\eps'])) \times [0,\eps']\\  
  h & \text{elsewhere}
 \end{cases}
\]
(this is not yet the homotopy we wish to construct).
This is a smooth Riemannian metric, as the $g_i$ are $\eps$-collared, $h$ is $\eps'$-bicollared, and $f_u \equiv 1$ near $\eps'$. Furthermore, it depends continuously on $u \in [0,1]$, is $\delta$-bicollared, and (as $f_u (t)dt^2$ is flat) it has positive scalar curvature.

Now we define a homotopy $F: \Riem^+(W)^{\epsilon'}_{g_{\partial W}} \times [0,1] \to \Riem^+(W)^{\epsilon'}_{g_{\partial W}}$ by the formula $F(h,u) = H_u^* h^u$, which satisfies
\[
 H_u^*h^u = 
 \begin{cases}
  g_{01} + ds^2 + dt^2 & \text{on } M_{01} \times [0,h_u^{-1}(\epsilon')] \times [0,h_u^{-1}(\epsilon')] \\
  g_0 + ds^2 & \text{on } (M_0 \setminus b(M_{01} \times [0,h_u^{-1}(\epsilon')] \times \{0\})) \times [0,h_u^{-1}(\epsilon')]\\
  g_1 + dt^2 & \text{on } (M_1 \setminus c(M_{01} \times \{0\} \times [0,h_u^{-1}(\epsilon')])) \times [0,h_u^{-1}(\epsilon')]\\  
  H_u^*h & \text{elsewhere}.
 \end{cases}
\]
This is a smooth Riemannian metric, is $h_u^{-1}(\epsilon')$-bicollared,  depends continuously on $u \in [0,1]$, and has positive scalar curvature.  As $F(h,0)=h$ and $F(h,1)$ is $\epsilon$-bicollared, this gives a homotopy ending in the subspace $\Riem^+(W)^{\epsilon}_{g_{\partial W}} \subset \Riem^+(W)^{\epsilon'}_{g_{\partial W}}$. Furthermore, as $f_u \equiv 1$ near $[\epsilon', \epsilon]$, if $h$ is $\eps$-bicollared then so is $ H_u^*h^u$ for all $u$, so this homotopy preserves the subspace $\Riem^+(W)^{\epsilon}_{g_{\partial W}}$. Thus the homotopy $F$ is a weak deformation retraction of $\Riem^+(W)^{\epsilon'}_{g_{\partial W}}$ into $\Riem^+(W)^{\epsilon}_{g_{\partial W}}$.
\end{proof}

There are several consequences of this lemma which we shall use more often than the result itself. Firstly, we can interpret adding an external collar as shrinking collar length.

\begin{cor}\label{cor:BicollarStretching}
The map
$$ S:= \_ \cup (V_{C_a, C_0}, g_K\vert_{V_{C_a, C_0}}) : \Riem^+ (W)^\eps_{g_{\partial W}} \lra \Riem^+ (W_{C_a})^\eps_{g_{\partial W_{C_a}}}$$
is a homotopy equivalence.
\end{cor}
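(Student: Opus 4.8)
The plan is to factor the map $S$ as a homeomorphism followed by a bicollar-stretching inclusion of exactly the type handled by Lemma~\ref{lem:BicollarLength}, and then to quote that lemma. To set things up, note first that $W_{C_a} = W \cup_{\partial W} V_{C_a,C_0}$ inherits a bicollar from that of $W$: writing the $\eps$-bicollar of $W$ as $K^{[0,\eps]}$ inside the global model $K^{(-\infty,1]}$, the glued-on piece $V_{C_a,C_0}$ is the region $K^{[-a,0]}$, so $W_{C_a}$ acquires the bicollar $K^{[-a,\eps]}$, with outer boundary $\partial W_{C_a}$ sitting at level $-a$ and carrying the boundary metric $g_{\partial W_{C_a}}$ obtained by restricting $g_K$ (this is canonically identified with $g_{\partial W}$). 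Thus $\Riem^+(W_{C_a})^\eps_{g_{\partial W_{C_a}}}$ is the space of psc metrics on $W_{C_a}$ that agree with $g_K$ on the width-$\eps$ neighbourhood $K^{[-a,-a+\eps]}$ of $\partial W_{C_a}$.

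Next I would verify that $S$ is a homeomorphism onto the subspace
$$\cN := \{\, k \in \Riem^+(W_{C_a})^\eps_{g_{\partial W_{C_a}}} \ :\ k|_{K^{[-a,\eps]}} = g_K \,\}$$
of metrics which are standard on the whole of $K^{[-a,\eps]}$. Indeed, given $h \in \Riem^+(W)^\eps_{g_{\partial W}}$, the metric $S(h) = h \cup g_K|_{V_{C_a,C_0}}$ equals $g_K$ on $V_{C_a,C_0} = K^{[-a,0]}$ by construction and on $K^{[0,\eps]} \subseteq W$ because $h$ is $\eps$-bicollared, so $S(h) \in \cN$; conversely any $k \in \cN$ restricts on $W$ to an element of $\Riem^+(W)^\eps_{g_{\partial W}}$, and applying $S$ to this restriction returns $k$. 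Since both $S$ and the restriction map $k \mapsto k|_W$ are continuous for the Fr\'echet topologies, $S : \Riem^+(W)^\eps_{g_{\partial W}} \to \cN$ is a homeomorphism.

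Finally, viewed as a map into $\Riem^+(W_{C_a})^\eps_{g_{\partial W_{C_a}}}$ as in the statement, $S$ is the composite of this homeomorphism with the inclusion $\cN \hookrightarrow \Riem^+(W_{C_a})^\eps_{g_{\partial W_{C_a}}}$. That inclusion is a homotopy equivalence by the argument proving Lemma~\ref{lem:BicollarLength}: stretching the outer bicollar of width $a$ gives a deformation retraction of $\Riem^+(W_{C_a})^\eps_{g_{\partial W_{C_a}}}$ onto $\cN$, exactly as the collar-stretching homotopy $F$ in that proof retracts onto the subspace of metrics which are standard on a wider neighbourhood of the boundary. Being the composite of a homeomorphism with a homotopy equivalence, $S$ is a homotopy equivalence. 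The step I expect to require the most care is this last reduction to Lemma~\ref{lem:BicollarLength}: one must keep track of the various collar and bicollar lengths, and in particular notice that the boundary metric $g_{\partial W_{C_a}}$ is only $\eps$-collared whereas the standard region $K^{[-a,\eps]}$ defining $\cN$ has width $\eps+a$, which is a harmless variant of the bookkeeping already carried out in the proof of Lemma~\ref{lem:BicollarLength} and requires only notational changes.
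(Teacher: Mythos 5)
Your proof is correct and is essentially the paper's own argument: identifying the image of the gluing map $S$ with the subspace $\Riem^+(W_{C_a})^{a+\eps}_{g_{\partial W_{C_a}}}$ of metrics standard on the longer bicollar, and then invoking the bicollar-stretching Lemma~\ref{lem:BicollarLength} to see that its inclusion into $\Riem^+(W_{C_a})^{\eps}_{g_{\partial W_{C_a}}}$ is a homotopy equivalence. The only cosmetic difference is that you appeal to the argument of that lemma rather than noting that, after reparametrising the bicollar of $W_{C_a}$, its statement applies verbatim.
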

\begin{proof}
The source may be identified with $\Riem^+ (W_{C_a})^{a+\eps}_{g_{\partial W_{C_a}}}$, whereupon this map becomes the natural inclusion $\Riem^+ (W_{C_a})^{a+\eps}_{g_{\partial W_{C_a}}} \subset \Riem^+ (W_{C_a})^{\eps}_{g_{\partial W_{C_a}}}$ which is a homotopy equivalence by Lemma \ref{lem:BicollarLength}.
\end{proof}

Secondly, we can add an external collar only to one part of the boundary, say $M_0$. Precisely, we can form the manifold $W \cup_{M_0} (M_0 \times [-a,0])$ whose boundary decomposes as $M_0$ and $M_1 \cup_{M_{01}} (M_{01} \times [-a,0])$. We have a boundary condition
$$\tilde{g}_{\partial W} := (g_0,g_1 \cup (g_{01} + ds^2)) \in \Riem^+(M_0)^{\epsilon}_{g_{01}} \times_{\Riem^+(M_{01})} \Riem^+(M_1 \cup_{M_{01}} (M_{01} \times [-a,0])^{a+\eps}_{g_{01}}$$
for this bicollared manifold.

\begin{cor}\label{cor:to-lem:BicollarStretching}
With the notations introduced above, the gluing map
\[
\Riem^+ (W)_{g_{\partial W}}  \lra \Riem^+ (W \cup_{M_0} M_0 \times [-a,0])_{\tilde{g}_{\partial W}}
\]
which glues in the psc metric $g_0 + ds^2$ is a homotopy equivalence.
\end{cor}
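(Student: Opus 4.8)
The plan is to deduce Corollary \ref{cor:to-lem:BicollarStretching} from Corollary \ref{cor:BicollarStretching} by the usual trick of ``cancelling'' a collar on one side against a collar added on the other side, together with a diffeomorphism that straightens the added external collar into the prescribed bicollar region. Write $W' := W \cup_{M_0} M_0 \times [-a,0]$ with the boundary condition $\tilde g_{\partial W}$, so that the map in question is the gluing map $\Phi : \Riem^+(W)_{g_{\partial W}} \to \Riem^+(W')_{\tilde g_{\partial W}}$ which attaches the flat external collar $g_0 + ds^2$ along $M_0$.

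First I would observe that the manifold $W'$ obtained by elongating $W$ only along $M_0$ is, up to a diffeomorphism fixing a neighbourhood of $M_1$ and of the bicollar $K^{[0,1]}$, nothing but $W$ with its $M_0$-collar made longer; concretely, using the collar $b'$ on the elongation $\widetilde W$ one has an identification $W' \cong W$ under which $\tilde g_{\partial W}$-collared metrics on $W'$ of bicollar length $\eps$ correspond exactly to $g_{\partial W}$-collared metrics on $W$ of a longer collar length along the $M_0$-direction. This is the same bookkeeping as in the proof of Corollary \ref{cor:BicollarStretching}, except that the collar is only stretched in the $s$-direction (the $M_0$-direction) and left unchanged in the $t$-direction. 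Under this identification $\Phi$ becomes the inclusion of the subspace of metrics which are $g_0 + ds^2$-collared to length $a+\eps$ along $M_0$ into those collared only to length $\eps$, while remaining $\eps$-bicollared in the corner.

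Next I would run (a one-sided version of) the argument of Lemma \ref{lem:BicollarLength}: choose a diffeomorphism $h_1 : [0,1] \to [0,1]$ with $h_1(t) \le t$, $h_1 \equiv \id$ near $0$ and near $1$, and $h_1' \equiv 1$ near the relevant intervals, interpolate linearly to get $h_u$, and form the metric $h^u$ by the same recipe but only modifying the $ds^2$ factor (in the $M_0 \times [0,\eps']$ piece and the corner piece $M_{01}\times[0,\eps']^2$), leaving the $t$-collar alone. Pushing forward by the induced isotopy $H_u$ of $W'$ supported in the $M_0$-collar direction, the formula $F(h,u) = H_u^* h^u$ gives a weak deformation retraction of $\Riem^+(W')_{\tilde g_{\partial W}}$ onto the subspace where the $M_0$-collar has the maximal length $a + \eps$, i.e.\ onto the image of $\Phi$; positivity of scalar curvature along the homotopy is automatic since the modified $ds^2$ factor is flat, exactly as in Lemma \ref{lem:BicollarLength}. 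Since $\Phi$ is moreover a closed embedding, a weak deformation retraction onto its image shows it is a homotopy equivalence. Alternatively, and more cheaply, one may simply invoke Lemma \ref{lem:BicollarLength} directly after the identification $W' \cong W$ of the first step, since ``length $a+\eps$ in the $M_0$-direction, length $\eps$ everywhere else'' versus ``length $\eps$ everywhere'' is a special case of shrinking bicollar lengths once one allows different lengths in the two corner directions (which the proof of Lemma \ref{lem:BicollarLength} accommodates with only notational changes).

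The main obstacle is purely organisational: making the identification $W' \cong W$ in the first paragraph precise and compatible with \emph{all} of the prescribed boundary data simultaneously — the metric $g_{01}+ds^2+dt^2$ on the corner, $g_0 + ds^2$ on the $M_0$-face, and $g_1 + dt^2$ on the $M_1$-face — so that the external collar one glues in matches up smoothly with the standing bicollar $g_K$ rather than merely up to isotopy. Once the normal forms near the corner are written out carefully this is routine, and the deformation-retraction step is a verbatim (one-variable) repetition of the proof of Lemma \ref{lem:BicollarLength}, so no genuinely new analytic input is needed.
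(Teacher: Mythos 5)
Your reduction does not work as stated, and the obstacle you dismiss as ``purely organisational'' is in fact the whole point. There is no identification $W \cup_{M_0} (M_0 \times [-a,0]) \cong W$ compatible with the prescribed boundary data: any such diffeomorphism must stretch the collar of $M_{01}$ inside the $M_1$-face over the new piece $M_{01}\times[-a,0]$, and since $g_1$ is only of the product form $g_{01}+ds^2$ to length $\eps$ along that collar, the pulled-back boundary condition is a collar-reparametrised metric, not $g_1$ (indeed $g_1$ and $g_1 \cup (g_{01}+ds^2)$ are not even isometric in general --- they have different volumes). The same problem kills the one-sided deformation retraction: an isotopy $H_u$ of $W' = W\cup_{M_0}(M_0\times[-a,0])$ which compresses the $s$-interval $[-a,\eps]$ into $[-a,-a+\eps']$ necessarily restricts to a highly nontrivial isotopy of the elongated $M_1$-face $\{t=0\}$, moving face points with $s$-coordinate in $(\eps,1]$ --- exactly the region where the fixed face metric $g_1\cup(g_{01}+ds^2)$ is unconstrained --- by roughly $a$. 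Near such face points $F(h',u)=H_u^*h'^u$ restricts to $(\psi_u^*(g_1\cup(g_{01}+ds^2)))+dt^2$, which differs from the required $(g_1\cup(g_{01}+ds^2))+dt^2$; so the homotopy leaves $\Riem^+(W')_{\tilde g_{\partial W}}$ and is not a deformation retraction of that space. Note that this is not a small perturbation of the proof of Lemma \ref{lem:BicollarLength}: there the stretching is bounded by $\eps$, whereas here it is by the arbitrarily large $a$, so the unconstrained part of $g_1$ is unavoidably disturbed. Positivity of scalar curvature is indeed not the issue; preservation of the fixed boundary germ is.

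The paper avoids any direct construction precisely for this reason. Its proof is formal: the corner-gluing map $S$ of Corollary \ref{cor:BicollarStretching} (gluing $V_{C_a,C_0}$ with the product metric, which \emph{can} be identified with an inclusion of longer-bicollared metrics on $W_{C_a}$ and hence is an equivalence by Lemma \ref{lem:BicollarLength}) factors as the map in question followed by a second map which again glues a cylinder along one boundary face. Since the composite is a weak equivalence, the first map is split monic and the second split epic in the homotopy category; since the second map is of the same type as the first, the same factorisation argument shows it is also split monic, hence an equivalence, hence so is the first. If you want to keep a direct geometric argument you would have to build a deformation which fixes a neighbourhood of the elongated $M_1$-face pointwise, and the naive fix (letting the $s$-compression depend on $t$) destroys the product form $(\cdot)+dt^2$ near that face via cross terms; so some genuinely new input, or the paper's formal trick, is needed.
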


\begin{proof}
By construction, the map $S$ from Corollary \ref{cor:BicollarStretching} factors as 
\[
 \Riem^+ (W)_{g_{\partial W}}  \lra \Riem^+ (W \cup_{M_0} M_0 \times [-a,0])_{\tilde{g}_{\partial W}} \lra \Riem^+ (W_{C_a})_{g_{\partial W_{C_a}}},
\]
and both maps glue in a cylinder along one of the parts of the boundary. We have to prove that the first of those maps is a weak homotopy equivalence, but the second one is of the same type. The conclusion now is as follows: since the composition is a homotopy equivalence, the first map is split monomorphic in the homotopy category, and the second is split epimorphic. Since the second map is of the same type as the first one, it is also split monomorphic, hence a homotopy equivalence. It follows that the first map is a homotopy equivalence as well.
\end{proof}

\subsection{The corner rounding theorem}

The main result of this section is as follows. We adopt our convention of omitting the bicollar length.

\begin{thm}\label{thm:corner-smoothing}
Let $W$ be as above, $g_{\partial W}=(g_0,g_1) \in \Riem^+ (M_0) \times_{\Riem^+ (M_{01})} \Riem^+ (M_1)$. Then there exists 
\begin{enumerate}[(i)]
\item a collared smooth special curve $(B, b)$, and a psc metric $h'$ on $V_{C_0, B}$ which is equal to $g_{\partial W}$ over $C_0$ and is equal to some $g_{\partial W_B}$ over $B$,

\item a $b'>0$ such that $C_{b'} > B$, and a psc metric $h''$ on $V_{B,C_{b'}}$ which is equal to $g_{\partial W_B}$ over $B$ and is equal to $g_{\partial W_{b'}}$ over $C_{b'}$,
\end{enumerate}
such that the maps
$$\Riem^+ (W)_{g_{\partial W}} \overset{\_ \cup (V_{B, C_0}, h')}\lra \Riem^+ (W_{B})_{g_{\partial W_B}} \overset{\_ \cup (V_{C_{b'}, B}, h'')}\lra \Riem^+ (W_{C_{b'}})_{g_{\partial W_{C_{b'}}}}$$
are weak homotopy equivalences, and the composition is homotopic to gluing on $(V_{C_{b'}, C_0}, g_K\vert_{V_{C_{b'}, C_0}})$.

Furthermore, we may suppose that $h'$ and $h''$ are equal to $g_0 + ds^2$ near $M_0 \times (-\infty,1]$ and to $g_1 + dt^2$ near $M_1 \times (-\infty,1]$.
\end{thm}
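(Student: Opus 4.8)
The plan is to reduce the corner rounding statement to the one-dimensional model problem of comparing metrics on the planar regions $[C_0, B]$, $[B, C_{b'}]$, and $[C_0, C_{b'}]$, and then to deduce the weak equivalence claims from Lemma~\ref{lem:BicollarLength} and Corollary~\ref{cor:to-lem:BicollarStretching}. First I would fix, for a suitable collared smooth special curve $(B,b)$, an explicit diffeomorphism $\Phi: [C_0, B] \to [0,1] \times [0,1]$ which is the identity near the ``flat'' strips (i.e.\ near $[0,\infty)\times\{0\}$ and $\{0\}\times[0,\infty)$ reparametrised to the $M_0$- and $M_1$-cylinder directions) and which carries the product flat metric $ds^2 + dt^2$ near the corners to something we can control. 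Using $\Phi$, the region $V_{C_0, B}$ becomes $M_{01} \times [0,1]^2$ with its corner, and a psc metric $h'$ which equals $g_K$ over $C_0$ and equals a ``standard bicollared'' metric $g_{\partial W_B}$ over $B$ is then constructed by taking $g_{01}$ plus the pullback under $\Phi$ of a fixed psc metric on the planar region that interpolates between the two boundary shapes; positivity of scalar curvature is automatic since on $M_{01}\times(\text{planar region})$ the scalar curvature is $\scal(g_{01}) + \scal(\text{planar metric})$ and the planar factor can be taken flat (it is a metric on a surface region with product-flat ends, which one can arrange to be flat, or at least to have scalar curvature bounded below by $-\inf\scal(g_{01})$ after scaling $g_{01}$). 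The same construction, run between $B$ and $C_{b'}$ for $b'$ large enough that $C_{b'} > B$, produces $h''$, and the ``near $M_i$'' normalisation in the final sentence is built in by demanding $\Phi$ be the identity on the strips.

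Next I would establish that gluing on $(V_{C_0,B}, h')$ is a weak homotopy equivalence. The key point is that $V_{C_0, B}$, viewed as sitting inside $\widetilde W$, is — after applying the diffeomorphism $\Phi$ above — precisely a collar region: gluing it on amounts to passing from $\Riem^+(W)_{g_{\partial W}}$ with bicollar length $\eps$ to $\Riem^+(W_B)$ with a longer bicollar, modulo the identification $W_B \cong W \cup (\text{collar})$. More carefully, I would factor the gluing map $\_\cup(V_{B,C_0},h')$ through gluing cylinders along $M_0$ and $M_1$ separately (using the strip decomposition of $[C_0,B]$ outside the third quadrant, exactly as in the definition of $V_{B_0,B_1}$), so that it becomes a composite of maps of the type appearing in Corollary~\ref{cor:to-lem:BicollarStretching} followed by a bicollar-length change covered by Lemma~\ref{lem:BicollarLength}; each factor is a homotopy equivalence, hence so is the composite. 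The same argument applies verbatim to $\_\cup(V_{C_{b'},B},h'')$.

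Then I would verify that the composition $\_\cup(V_{C_{b'},B},h'') \circ \_\cup(V_{B,C_0},h')$ is homotopic to $\_\cup(V_{C_{b'},C_0}, g_K|_{V_{C_{b'},C_0}})$. This is where I expect the only genuine subtlety. Gluing $h'$ then $h''$ produces a psc metric on $V_{C_0, C_{b'}} = V_{C_0,B}\cup_B V_{B,C_{b'}}$ which agrees with $g_K$ over $C_0$ and over $C_{b'}$, but in the interior it is the concatenation $h'\cup_B h''$ rather than $g_K|_{V_{C_0,C_{b'}}}$ itself; I must exhibit a path of psc metrics on $V_{C_0,C_{b'}}$, rel the boundary conditions over $C_0$ and $C_{b'}$ and rel the $M_0$- and $M_1$-strips, from $h'\cup_B h''$ to $g_K$. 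Since over $M_{01}$ the planar factor of each is a metric on the region $[C_0, C_{b'}]$ with product-flat ends, and the space of such (flat, or small-scalar-curvature after rescaling $g_{01}$) metrics with the prescribed behaviour near $C_0$, $C_{b'}$ and the two strips is convex (or at least contractible — one interpolates linearly among flat metrics, or uses that $[C_0,C_{b'}]$ is a disc so any two such metrics are isotopic rel boundary), such a path exists and keeps scalar curvature positive. The main obstacle is thus purely the bookkeeping of corners and collars needed to make "the planar factor" rigorous — i.e.\ choosing $B$ and the diffeomorphisms $\Phi$ compatibly so that all three regions $V_{C_0,B}$, $V_{B,C_{b'}}$, $V_{C_0,C_{b'}}$ are simultaneously identified with products $M_{01}\times(\text{planar disc with flat ends})$ in a way compatible with gluing — after which the homotopies are routine convexity arguments.
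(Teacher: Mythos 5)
The step where you claim that gluing on $(V_{C_0,B},h')$ and $(V_{B,C_{b'}},h'')$ are weak equivalences is a genuine gap. Lemma \ref{lem:BicollarLength} and Corollary \ref{cor:to-lem:BicollarStretching} only allow you to change bicollar length or to glue product cylinders $g_i+ds^2$ onto the faces $M_0$, $M_1$; every such operation preserves the corner structure, so no composite of them can identify $\Riem^+(W)_{g_{\partial W}}$ with $\Riem^+(W_B)_{g_{\partial W_B}}$, whose underlying manifold has smooth boundary. Your strip decomposition only accounts for the pieces $M_0\times[-b,0]$ and $M_1\times[-b,0]$; the remaining piece $M_{01}\times(\text{corner region of }[C_0,B])$ is exactly the rounding piece, carries a non-product metric, and is covered by neither cited result --- asserting that gluing it is an equivalence because ``it is a collar region'' begs precisely the question the theorem answers. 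The paper obtains the equivalences indirectly: it constructs a path $h_u$ of psc metrics on $V_{C_0,C_{2a}}$ from the product metric $g_K$ (whose gluing map is an equivalence by Corollary \ref{cor:BicollarStretching}) to a metric which is collared along an intermediate curve $B_a$ and splits as $h'\cup h''$, so that $\mu_{h''}\circ\mu_{h'}$ is a weak equivalence; this alone only makes $\mu_{h'}$ split monic and $\mu_{h''}$ split epic, and a \emph{second} path $n_u$ on the next band $V_{B_a,B_{3a}}$, ending at a cylinder metric, shows that $\mu_{h'''}\circ\mu_{h''}$ is an equivalence, whence $\mu_{h''}$ and then $\mu_{h'}$ are equivalences. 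This two-sided argument also yields your third step (the homotopy to gluing $g_K$) for free; your proposal contains neither the second path nor any substitute for it.

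The curvature claims in your construction of $h'$ and $h''$ are also false. If the planar factor is the standard flat metric near $C_0$ (respectively $C_{b'}$) and near the ends of the strips, and is product-collared near the smooth curve $B$ (so that $B$ is a geodesic, as it must be for $g_{\partial W_B}$ to be a collared boundary condition), then Gauss--Bonnet forces total curvature $+\pi/2$ on $[C_0,B]$ and $-\pi/2$ on $[B,C_{b'}]$: in particular the planar factor of $h''$ must be negatively curved somewhere, so positivity of $\scal(g_{01})+\scal(\text{planar factor})$ is not automatic, and it cannot be rescued by ``scaling $g_{01}$'', since $g_{01}$ is prescribed as part of the boundary condition $g_{\partial W}$. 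This is exactly why the paper stretches the planar region by the homothety $H_a$, making the planar scalar curvature $\geq -\lambda/a^{2}\geq -\kappa/2$ with $\kappa=\inf\{\scal(g_{01}),\scal(g_0),\scal(g_1)\}$, before taking the product with $g_{01}$. For the same reason your concluding convexity argument (linear interpolation among flat planar metrics) does not apply: the relevant planar metrics are not flat, and keeping the scalar curvature of the product positive along the interpolation again requires the quantitative rescaling.
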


\begin{proof}
Let $\varphi: \bR \times (0,4) \to \bR^2$ be a smooth embedding such that
\begin{enumerate}[(i)]
\item $\varphi (x,t)= (-t,-1-x)$ for $x \leq -1$,
\item $\varphi (x,t)= (x-1,-t)$ for $x \geq 1$,
\item $\varphi (\bR \times (0,4)) = ((-4,\infty) \times (-4,0)) \cup ((-4,0) \times (-4,\infty))$, 
\item $C_2 \subset \varphi (\bR \times (1,3))$.
\end{enumerate}
Define special curves
\[
B_1:= \varphi (\bR \times \{ 1\}) \quad \text{ and } \quad B_3:=  \varphi (\bR \times \{ 3\}) \subset \bR^2.
\]
Note that $C_0< B_1 < C_2 < B_3$, and that $\varphi$ restricts to a diffeomorphism from $\bR \times [1,3]$ to $[B_1,B_3]$. 

\begin{figure}[htb!]
\begin{center}
\includegraphics{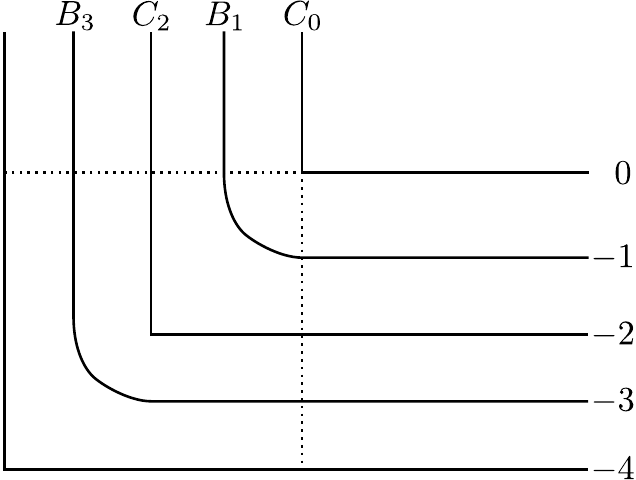}
\caption{Special curves used in the proof of Theorem \ref{thm:corner-smoothing}}\label{fig:specialcurves-proof-cornersmooth}
\end{center}
\end{figure}

Next, choose two isotopies of Riemannian metrics on $\bR^2$, $m_u$ and $n_u$, $u \in [0,1]$, such that
\begin{enumerate}[(i)]
\item $m_0=ds^2 + dt^2$ is the flat metric.
\item For each $u \in [0,1]$, $m_u$ coincides with $ds^2+dt^2$ outside $[-2,0]^2$.
\item For some fixed $\delta>0$, $\varphi^* m_1$ is the flat metric on $ \bR \times [1-\delta,1+\delta]$.
\item $n_0$ agrees with $m_1$ on a neighbourhood of $[B_1,C_2]$.
\item For each $u \in [0,1]$, $n_u$ coincides with $ds^2+dt^2$ outside $[-4,0]^2$.
\item $\varphi^* n_1$ is the flat metric in a neighbourhood of $\bR \times [3,1]$.
\end{enumerate}

Of course, we cannot require that $m_u$ or $n_u$ have positive or even nonnegative scalar curvature. But because both families of metrics are assumed to be constant (and flat) outside a compact subset of the plane, there exists a $\lambda>0$ such that
$$\scal (n_u), \scal(m_u) \geq -\lambda$$
for all $u \in [0,1]$. The following rescaling argument proves that we can make the constant $\lambda$ as small as we want, at the expense of stretching the region where the metrics $n_u$ and $m_u$ are not standard. More precisely, let $a>0$. Let $H_a: \bR^2 \to \bR^2$ be the homothety $H_a (x):= \frac{1}{a} x$. Then $a^2 H_{a}^{*}(ds^2 + dt^2) = ds^2 + dt^2$. Let 
\[
m_u^a:= a^2 H_{a}^{*}m_u \quad \text{ and } \quad n_u^a:= a^2 H_{a}^{*}n_u.
\]
By the well-known scaling identity for the curvature \cite[p. 136]{GHL}, we have
\[
\scal (n_u^a), \scal(m_u^a) \geq -\frac{1}{a^2}\lambda.
\]
These rescaled metrics $m_u^a$ and $n_u^a$ have the same properties as those listed above, but with $B_1$ replaced by $B_a$, $C_2$ by $C_{2a}$ and so on (one also has to rescale the embedding $\varphi$).
Let
\[
\kappa:= \inf \{\scal_{g_{01}}, \scal_{g_0}, \scal_{g_1} \}>0
\]
and choose $a \geq \sqrt{\frac{2 \lambda}{\kappa}}$. Define $h_u \in \Riem^+ (V_{C_0,C_{2a}})_{g_{\partial W}, g_{\partial W}}$ for $u \in [0,1]$, by
\[
h_u = 
\begin{cases}
g_0 + ds^2 & \text{on $M_0 \times [-2a,0]$}\\
g_1 + dt^2 & \text{on $M_1 \times [-2a,0]$}\\
g_{01} + m_u^a & \text{on $M_{01} \times [-2a,0]^2$}.
\end{cases}
\]
Over $M_{01} \times [-2a,0]^2$, this is a product metric, and (using \cite[\S 3.15]{GHL}) we have
$$ \scal(g_{01} + m_u^a) =  \scal(g_{01}) + \scal(m_u^a) \geq \kappa -\frac{1}{a^2}\lambda \geq \frac{1}{2} \kappa.$$
Over $M_0 \times [-2a,0]$ and $M_1 \times [-2a,0]$, $\scal(h_u) \geq \kappa$, so that $h_u$ is a psc metric. 

Gluing on $V_{C_0,C_{2a}}$ with the metric $h_u$ defines a homotopy of maps
$$\mu_{h_u}:\Riem^+ (W)_{g_{\partial W}} \lra \Riem^+ (W_{C_{2a}})_{g_{\partial W_{C_{2a}}}}.$$
The map $\mu_{h_0}$ is a weak homotopy equivalence by Corollary \ref{cor:BicollarStretching}, since it is given by gluing on $V_{C_0, C_{2a}}$ equipped with the metric $m_0 = ds^2 + dt^2$. Hence $\mu_{h_u}$ is a weak homotopy equivalence for all $u\in [0,1]$.  By construction, the metric $h_1$ is collared near $B_a$ so can be split along it, and so is the union of two psc metrics $h' \in \Riem^+ (V_{C_0, B_a})_{g_{\partial W}, g_{\partial W_{B_a}}}$ (this \emph{defines} $g_{\partial W_{B_a}}$) and $h'' \in \Riem^+ (V_{B_a,C_{2a}})_{g_{\partial W_{B_a}}, g_{\partial W_{C_{2a}}}}$. Therefore, the weak equivalence $\mu_{h_1}$ factors as
$$\Riem^+ (W)_{g_{\partial W}} \stackrel{\mu_{h'} }{\lra} \Riem^+ (W_{B_a})_{g_{\partial W_{B_a}}} \stackrel{\mu_{h''}}{\lra} \Riem^+ (W_{C_{2a}})_{g_{\partial W_{C_{2a}}}}.$$

So far, this proves only that $\mu_{h'}$ has a left-inverse and $\mu_{h''}$ has a right-inverse (in the weak homotopy category). To show that $\mu_{h'}$ is a weak equivalence, it suffices to show that $\mu_{h''}$ has a left-inverse as well, and for this, we employ the family $n_u$ constructed above. We define a metric $h_u$, for $u \in [1,2]$, on $V_{B_a, B_{3a}}$ by the formula
\[
h_u =
\begin{cases}
g_0 + ds^2 & \text{on $M_0 \times [-3a,-a]$}\\
g_1 + dt^2 & \text{on $M_1 \times [-3a,-a]$}\\
g_{01} + n_{u-1}^a & \text{on $M_{01} \times [B_a, B_{3a}]$}.
\end{cases}
\]
By construction $h_u$ has positive scalar curvature, and furthermore the metric $h_1$ is collared near $B_{3a}$ so can be written as the union of the metric $h''$ and a metric $h''' \in \Riem^+ (V_{C_{2a}, B_{3a}})_{g_{\partial W_{C_{2a}}}, g_{\partial W_{B_{3a}}}}$. So gluing in $h_u$ defines a homotopy 
\[
\nu_{h_u}: \Riem^+ (W_{B_a})_{g_{\partial W_{B_a}}} \lra \Riem^+ (W_{B_{3a}})_{g_{\partial W_{B_{3a}}}}, u \in [1,2].
\]
We can write $\nu_{h_1} = \mu_{h'''} \circ \mu_{h''}$. Also by construction, $\nu_{h_2}$ glues in a cylinder metric and hence is a weak equivalence by \cite[Lemma 2.1]{BERW}. Thus $\nu_{h_1}$ is a weak equivalence, which completes the proof.
\end{proof}

\section{Stable metrics}\label{sec:StableMetrics}

\subsection{Summary of the section}

If $W: M_0 \leadsto M_1$ is a cobordism with collared boundaries and $g_i \in \Riem^+ (M_i)$, $i=0,1$, we let $\Riem^+ (W)_{g_0,g_1}:= \Riem^+ (W)_{g_0 \coprod g_1}$ be the space of all metrics of positive scalar curvature on $W$ which are equal to $g_i + dt^2$ near $M_i$, with respect to the given collars.
For any $h \in \Riem^+ (W)_{g_0,g_1}$, there are composition maps
\[
\mu (h, \_): \Riem^+ (V)_{g_1,g_2} \lra \Riem^+ (W \cup V)_{g_0,g_2}
\]
and 
\[
\mu ( \_,h): \Riem^+ (V')_{g_{-1},g_0} \lra \Riem^+ (V' \cup W)_{g_{-1},g_1}
\]
defined for all cobordisms $V: M_1 \leadsto M_2$, $V': M_{-1} \leadsto M_0$ and boundary conditions $g_{-1}\in \Riem^+ (M_{-1})$ and $g_2 \in \Riem^+ (M_2)$.

\begin{defn}\label{defn:right-stablemaintext}
Let $W: M_0 \leadsto M_1$ be a cobordism and let $h \in \Riem^+ (W)_{g_0,g_1}$. Then $h$ is called \emph{left-stable} if the map $\mu (\_, h): \Riem^+ (V)_{g_{-1},g_0} \to \Riem^+ (V \cup W)_{g_{-1},g_1}$ is a weak equivalence for all cobordisms $V:M_{-1}\leadsto M_0$ and all boundary conditions $g_{-1}$. Dually, $h$ is \emph{right-stable} if the map $\mu (h,\_): \Riem^+ (V)_{g_1,g_2} \to \Riem^+ (W \cup V)_{g_0,g_2}$ is a weak equivalence for all cobordisms $V: M_1 \leadsto M_2$ and all boundary conditions $g_2$. Finally, $h$ is \emph{stable} if it is both, left and right stable.
\end{defn}

To state the main result of this section, for a codimension $0$ open submanifold $C \subset \inter{M}$ which is the interior of a submanifold with boundary $\bar{C} \subset \inter{M}$, and a $g_C \in \cR^+(C)$, we let
$$\cR^+(M, C; g_C) := \{g \in \cR^+(M) \, \vert \, g\vert_C = g_C\}.$$
For a cobordism $W : M_0 \leadsto M_1$ and a codimension 0 open submanifold $[0,1] \times C \subset W$ such that $M_i \cap ([0,1] \times C) = \{i\} \times C$, given $g_i \in \cR^+(M_i, C ; g_C)$ we let
$$\cR^+(W, [0,1] \times C; dt^2 + g_C)_{g_0, g_1} := \{g \in \cR^+(W)_{g_0, g_1} \, \vert \, g\vert_{[0,1] \times C} = dt^2 + g_C\}.$$

\begin{thm}\label{thm:stableL}
Let $W: M_0 \leadsto M_1$ be a cobordism of dimension $\geq 6$.
\begin{enumerate}[(i)]
\item Let $[0,1] \times C \subset W$ be a codimension 0 submanifold as above, and let $g_C \in \cR^+(C)$. If $(W, M_1)$ is 2-connected, then for each $g_0 \in \Riem^+ (M_0, C;g_C)$, there is $g_1 \in \Riem^+ (M_1, C; g_C)$ and a right stable $h \in \Riem^+ (W, [0,1] \times C; dt^2 + g_C)_{g_0,g_1}$.
\item If both $(W, M_1)$ and $(W, M_0)$ are 2-connected, then every right stable $h \in \Riem^+ (W)_{g_0,g_1}$ is also left stable.
\end{enumerate}
\end{thm}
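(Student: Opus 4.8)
The plan is to treat parts (i) and (ii) separately, as they need rather different arguments. Two soft facts will be used throughout. First, a composite of right-stable metrics is right-stable: if $h_0 \in \cR^+(W_0)_{g_0,g_1}$ and $h_1 \in \cR^+(W_1)_{g_1,g_2}$ are right-stable then $\mu(h_0 \cup h_1, \_) = \mu(h_0, \_) \circ \mu(h_1, \_)$ is a composite of weak equivalences; dually for left-stable metrics. Second, by the collar-stretching Lemma~\ref{lem:BicollarLength} and its corollaries, gluing a product metric $g+dt^2$ onto one face of the boundary neither creates nor destroys stability. In particular the prescribed cylindrical region $[0,1]\times C$ with its fixed metric $dt^2 + g_C$ causes no difficulty: all constructions below are performed in the complement of $[0,1]\times\bar C$, with every Morse function taken to equal the $[0,1]$-coordinate near $[0,1]\times\bar C$ so that all surgeries avoid $C$.

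\emph{Part (i).} Since $(W, M_1)$ is $2$-connected, I would first choose a handle decomposition of $W$ relative to the collar of $M_0$ with all handles of index $\le d-3$; equivalently, write $W = W_k \circ \cdots \circ W_1$ as a composite of elementary cobordisms, each the trace of a surgery in codimension $\ge 3$ on its incoming boundary. By the composition remark it then suffices to produce, from an arbitrary psc metric on the incoming boundary, a right-stable metric on the trace of a single surgery in codimension $\ge 3$; the natural candidate is the Gromov--Lawson trace metric. The hard part is to prove that this metric is right-stable. For $W = N \times D^k$ this is Chernysh's cobordism-invariance theorem, and the point is to run the Gromov--Lawson deformation in families over $\cR^+(V)_{g_1, g_2}$ for a general cobordism $V$. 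Doing this parametrically is what forces one to work with psc metrics on manifolds with corners and to control how the deformation interacts with the boundary collars --- precisely the role of the corner-rounding Theorem~\ref{thm:corner-smoothing} and the collar results of Section~\ref{sec:psc-spaces-on-mfds-w-corners}. I expect this single-handle case, carried out in families, to be the main obstacle in part (i).

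\emph{Part (ii).} Now both $(W, M_0)$ and $(W, M_1)$ are $2$-connected, and since $d \ge 6$ these conditions are compatible; handle trading then gives a handle decomposition of $W$ relative to $M_0$ in which every handle has index $\lambda \in \{3, \dots, d-3\}$. For such a ``doubly good'' handle $W_j$ both the surgery (codimension $d-\lambda \ge 3$) and its dual (codimension $\lambda \ge 3$) are admissible, so applying the single-handle case of part (i) to $W_j$ and to $W_j^{\mathrm{op}}$ shows that the Gromov--Lawson trace metric $h_j$ on $W_j$ is not merely right-stable but \emph{stable}.

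To deduce that the given right-stable $h \in \cR^+(W)_{g_0,g_1}$ is left-stable, I would argue by induction on the number $k$ of doubly-good handles. If $k \ge 1$, write $W = W' \circ W_1$ with $W_1$ the first handle and $M_{1/2}$ the intermediate boundary; $W'$ again has both inclusions $2$-connected and still carries a doubly-good decomposition, now with $k-1$ handles. The standard metric $h_1$ on $W_1$ is stable, hence right-stable, so $\mu(h_1, \_) : \cR^+(W')_{g_{1/2},g_1} \to \cR^+(W)_{g_0,g_1}$ is a weak equivalence; in particular $h$ is homotopic in $\cR^+(W)_{g_0,g_1}$ to some $h_1 \cup h'$, whence $\mu(\_,h) \simeq \mu(\_, h_1 \cup h')$ and $\mu(h_1 \cup h',\_)\simeq\mu(h,\_)$ is a weak equivalence. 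From $\mu(h_1 \cup h', \_) = \mu(h_1, \_)\circ\mu(h', \_)$ and two-out-of-three, $h'$ is right-stable on $W'$, so by the inductive hypothesis $h'$ is left-stable; since $h_1$ is also left-stable, $h_1 \cup h'$ is left-stable, and hence so is $h$. The base case $k = 0$, where $W \cong M_0 \times [0,1]$ is a cylinder, must be treated separately: here $h^{\mathrm{op}} \cup h$ is a concordance of $g_1$ with itself, and invoking the fact that gluing a concordance induces a weak equivalence one gets, via $\mu(h^{\mathrm{op}}\cup h,\_) = \mu(h^{\mathrm{op}},\_)\circ\mu(h,\_)$ and two-out-of-three, that $\mu(h^{\mathrm{op}},\_)$ is a weak equivalence, i.e.\ $h^{\mathrm{op}}$ is right-stable; as the cylinder is its own reverse, this is exactly the statement that $h$ is left-stable. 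I expect the handle-trading input and this cylinder base case to be the only delicate points in part (ii).
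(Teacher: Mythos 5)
Your part (i) is, in outline, the paper's own argument: decompose $W$ into elementary cobordisms of index $\le d-3$, standardise $g_0$ near the surgery sphere by an isotopy and absorb that isotopy into a stable concordance (Lemma \ref{lem:isotopy-gives-stable-concordance}), and then prove that a suitable trace metric on a single handle is right-stable; in the paper this last step is Lemma \ref{lem:rightstablemetric-on-elementary-cobordism}, whose proof combines Chernysh's Theorem \ref{thm:chernysh-theorem} (which already \emph{is} the ``family Gromov--Lawson'' statement, quoted as a black box) with the corner-rounding Theorem \ref{thm:corner-smoothing}. You correctly identify these tools, but you leave precisely this decisive step as ``the main obstacle'', so as written (i) is a plan rather than a proof.

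Part (ii) contains genuine gaps. First, the claim that applying part (i) to $W_j$ and to $W_j^{\mathrm{op}}$ shows the trace metric $h_j$ is \emph{stable} does not follow: left-stability of $h_j$ on $W_j$ is the same as right-stability of the \emph{same} metric $h_j$ read backwards on $W_j^{\mathrm{op}}$, whereas part (i) applied to $W_j^{\mathrm{op}}$ only produces \emph{some} right-stable metric with \emph{some} outgoing boundary condition; there is no formal reason this can be taken to be $h_j$ with boundary conditions $(g_{j+1},g_j)$ (compare the warning after Lemma \ref{lem:2-out-of-three} that the analogous two-out-of-three implications do not hold formally). Second, your cylinder base case invokes ``gluing a concordance induces a weak equivalence''; this is not a fact --- it is known only for concordances in the path component of $g+dt^2$, e.g.\ those arising from isotopies (Lemma \ref{lem:isotopy-gives-stable-concordance}, \cite[Corollary 2.2]{BERW}), and for an arbitrary psc concordance it is exactly the open problem that the notion of stability is designed to circumvent. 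Third, a handle decomposition with all indices in $\{3,\dots,d-3\}$ need not exist when $d=6$ (e.g.\ $h$-cobordisms with nontrivial Whitehead torsion); only $3\le k\le d-2$ is available, and the theorem includes $d=6$. The paper's proof of (ii) avoids all three issues by a doubling trick: using the index range $[3,d-2]$ one can surger $W\cup W^{\mathrm{op}}$ to the cylinder $M_0\times[0,2]$ by admissible surgeries (the dual spheres have dimension $d-k\ge 2$ and $\le d-3$), transport the stable cylinder metric through the surgery equivalence (Lemma \ref{lem:stability-under-surgery}) to a stable $h''$ on the double, use right-stability of $h$ to write $h''\simeq h\cup h'$ and conclude that $h'$ is right-stable, and then finish with a purely formal symmetry argument: $\mu(\_,h)$ and $\mu(\_,h')$ are each split monomorphisms in the weak homotopy category, and since their composite is a weak equivalence both are weak equivalences. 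You would need either to adopt this route or to supply a genuinely doubly-standard handle metric together with a proof that an arbitrary right-stable metric on a cylinder is left-stable; neither is in your proposal.
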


We shall only need the absolute ($C=\emptyset$) version in this paper (the full relative version will be used in \cite{ERWpsc3}). The proof is an elaboration of the Gromov--Lawson surgery technique, which we shall first review.

\subsection{The theorem of Gromov--Lawson and Chernysh}\label{subsec:gromov-lawson-recap}

\begin{defn}
By $g_\round^{k-1} \in \Riem (S^{k-1})$, we denote the round metric on $S^{k-1}$, i.e. the metric induced from euclidean metric by the standard inclusion $S^{k-1} \subset \bR^{k}$. It has constant scalar curvature $\scal (g_\round^{k-1}) = (k-1)(k-2)$ which is positive if $k \geq 3$. 

Let $\delta >0$. A \emph{$\delta$-torpedo metric} $g_\tor^k$ on $\bR^{k}$, $k \geq 3$, is an $O(k)$-invariant metric such that 
$\scal (g_\tor^k)\geq \frac{1}{\delta^2}$ and such that 
\[
\varphi^* g_\tor^k = dr^2 + \delta g_\round^{k-1}
\]
near $[1,\infty)\times S^{k-1}$ for some $\delta >0$, where $\varphi:(0,\infty) \times S^{k-1} \to \bR^k \setminus 0$ is the diffeomorphism defined by $(r,x)\mapsto rx$.
\end{defn}

In this work, we are mostly interested in the case $\delta =1$. We fix such a torpedo metric once and for all and refer to \cite[\S 2.3]{Walsh01} for more details.

\begin{defn}
Let $d-k \geq 3$, let $W^d$ be a compact manifold boundary $M$ and collar $[0,\infty) \times M \subset W$, let $V^k$ be a compact manifold with boundary $N$ and collar $[0,\infty) \times N \subset V$, and let $\phi: V \times \bR^{d-k} \to W$ be an embedding. Assume that $\phi^{-1} (M \times [0,\infty)) = N \times [0,\infty)$, and such that inside the collar $\phi$ is of the form $(x,t)\mapsto (\varphi (x),t)$ for some embedding $\varphi: N \times \bR^{d-k} \to M$. 

Let $h_V \in \Riem (V)_{g_V}$ be a metric and pick $\delta>0$ such that $\scal (h_V) + \delta >0$ and fix a $\delta$-torpedo metric $g_\tor^{d-k}$ on $\bR^{d-k}$. By 
\[
\Riem^+ (W,\phi)^\eps \subset \Riem^+ (W)^\eps,
\] 
we denote the space of all $\eps$-collared psc metrics $h$ on $W$ such that $\phi^* h = h_V + g_\tor^{d-k}$ near $V \times D^{d-k} \subset V \times \bR^{d-k}$. 
\end{defn}

For sufficiently small $\eps$, one can view $\Riem^+ (W,\phi)^{\eps}$ as a space of psc metrics on the manifold with corners $W \setminus \phi(V \times \mathring{D}^{d-k})$. Corollary \ref{cor:to-lem:BicollarStretching} allows us to neglect the notation $\eps$ from $\Riem^+ (W,\phi)^\eps$. In the cases of interest to us, $h_V$ has nonnegative scalar curvature, in which case we pick $\delta =1$. The following result due to Chernysh \cite[Theorem 1.3]{Chernysh2} is a sharpening of a classical result by Gromov and Lawson \cite{GL}, is of crucial importance for this paper:

\begin{thm}[Chernysh]\label{thm:chernysh-theorem}
Assume that $d-k \geq 3$. Then for each $g \in \Riem^+ (M,\varphi)$, the inclusion
\[
\Riem^+ (W,\phi)_g \lra \Riem^+ (W)_g
\]
is a weak equivalence.
\end{thm}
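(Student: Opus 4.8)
The plan is to reduce the statement to the Gromov--Lawson surgery construction together with its one-parameter (and, more generally, parametrised) refinement. First I would recall the geometry: an embedding $\phi: V \times \bR^{d-k} \to W$ as in the hypothesis describes how to do surgery along the family of $(d-k-1)$-spheres parametrised by $V$, and the subspace $\Riem^+(W,\phi)_g \subset \Riem^+(W)_g$ consists of those psc metrics which are already in ``standard form'' $h_V + g_\tor^{d-k}$ near the surgery locus. The key input is that the Gromov--Lawson construction, as sharpened by Chernysh, takes an arbitrary psc metric and deforms it — \emph{through} psc metrics, and continuously in the original metric — to one which agrees with $h_V + g_\tor^{d-k}$ near $V \times D^{d-k}$, while not changing the metric outside a neighbourhood of $\phi(V \times D^{d-k})$ and, crucially, fixing the behaviour near $\partial W$ (which is why the hypothesis demands $\phi$ be of product form $(\varphi(x),t)$ in the collar, and why one works with $g \in \Riem^+(M,\varphi)$). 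I would state this as a ``standardisation'' lemma: there is a homotopy $H: \Riem^+(W)_g \times [0,1] \to \Riem^+(W)_g$ with $H(\cdot,0) = \mathrm{id}$, with $H(h,1) \in \Riem^+(W,\phi)_g$ for all $h$, and with $H(h,u) = h$ for all $u$ whenever $h \in \Riem^+(W,\phi)_g$ already.

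Granting such a lemma, the theorem follows formally. The inclusion $\iota: \Riem^+(W,\phi)_g \hookrightarrow \Riem^+(W)_g$ has $H(\cdot,1)$ as a homotopy inverse: the composite $\Riem^+(W)_g \to \Riem^+(W,\phi)_g \xrightarrow{\iota} \Riem^+(W)_g$ is homotopic to the identity via $H$ itself, and the composite $\Riem^+(W,\phi)_g \xrightarrow{\iota} \Riem^+(W)_g \to \Riem^+(W,\phi)_g$ is homotopic to the identity via the restriction of $H$, which by the last property of $H$ stays inside $\Riem^+(W,\phi)_g$ throughout. Hence $\iota$ is a homotopy equivalence, in particular a weak equivalence. (If one only has a parametrised statement over compact parameter spaces rather than a genuine homotopy, one runs the same argument on each $\pi_k$ using maps out of $S^k$ and $S^k \times [0,1]$, which is all that ``weak equivalence'' requires; the conclusion is identical.)

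The main obstacle — and the reason this is cited rather than reproved — is the standardisation lemma itself, i.e. the parametrised Gromov--Lawson theorem. The original Gromov--Lawson argument deforms a \emph{single} psc metric: one bends the metric outward along a tube around the surgery sphere, using that the sphere has codimension $\geq 3$ so that the round factor contributes enough positive curvature to dominate the (controlled) negative curvature introduced by the bending, and then one caps off with the torpedo. Making this continuous in the input metric $h$, and simultaneously over the whole family $V$ of surgery spheres, requires care: the bending profile, the tube radius, and the curvature estimates must be chosen uniformly, which is exactly the content of Chernysh's refinement of \cite{GL}. For the purposes of this paper I would simply invoke \cite[Theorem 1.3]{Chernysh2} for the lemma and present only the formal deduction above; the honest work lives in that reference. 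One should also check the minor points that the product form of $\phi$ in the collar guarantees the deformation is supported away from $\partial W$ so that it preserves the boundary condition $g$, and that the choice $\delta = 1$ (legitimate since $h_V$ has nonnegative scalar curvature here) is compatible with the curvature bookkeeping — both are routine given the setup already fixed in the excerpt.
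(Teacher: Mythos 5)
The paper gives no proof of this statement: it is quoted verbatim from Chernysh \cite[Theorem 1.3]{Chernysh2} (as a sharpening of Gromov--Lawson \cite{GL}), which is exactly what you ultimately propose to do, so your treatment is essentially the same as the paper's. Your formal deduction of the weak equivalence from the parametrised standardisation lemma is correct, and your parenthetical caveat that one may only have compact-family deformations rather than a genuine deformation retraction (hence the conclusion ``weak equivalence'') is the right one.
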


An immediate corollary of Theorem \ref{thm:chernysh-theorem} is the following cobordism invariance result.

\begin{defn}
Let $W$ be a compact $d$-dimensional manifold, possibly with boundary $M$. A surgery datum (i.e. embedding) $\phi:S^{k} \times \bR^{d-k} \to \inter{W}$ is \emph{admissible} if $2 \leq k \leq d-3$.
\end{defn}

We let 
$$W_\phi:= (W\setminus \phi(S^k \times D^{d-k})) \cup_{S^k \times S^{d-k-1} } D^{k+1} \times S^{d-k-1}$$
be the result of performing a surgery along $\phi$.

\begin{cor}\label{cor:chernysh-theorem}
An admissible surgery datum $\phi$ determines a preferred homotopy class of weak homotopy equivalences
\[
\SE_\phi: \Riem^+ (W)_g \simeq \Riem^+ (W_{\phi})_g,
\]
called the \emph{surgery equivalence} determined by $\phi$.
\end{cor}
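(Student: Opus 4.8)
The plan is to realise both $\Riem^+ (W)_g$ and $\Riem^+ (W_\phi)_g$, up to weak equivalence, as one and the same space of psc metrics on the complement of the surgery region, and then apply Theorem~\ref{thm:chernysh-theorem} twice. Write $W_0 := W \setminus \phi(S^k \times \mathring{D}^{d-k})$; since $\phi$ maps into $\inter{W}$ this is a compact manifold with boundary $\partial W_0 = M \sqcup P$, where $P := \phi(S^k \times S^{d-k-1})$ carries a preferred product identification $P \cong S^k \times S^{d-k-1}$. Performing the surgery glues the handle $D^{k+1}\times S^{d-k-1}$ onto $W_0$ along $P$, so $W_0$ is equally the complement $W_\phi \setminus \bar\phi(S^{d-k-1} \times \mathring{D}^{k+1})$ of the \emph{dual} surgery datum $\bar\phi : S^{d-k-1} \times \bR^{k+1} \to \inter{W_\phi}$ whose core is the dual sphere $\{0\}\times S^{d-k-1} \subset D^{k+1} \times S^{d-k-1}$; one takes $\bar\phi$ to restrict to the standard handle near this sphere and to agree with $\phi$ on the collar of $P$ inside $W_0$, which pins $\bar\phi$ down up to a contractible space of choices. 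The admissibility condition $2 \leq k \leq d-3$ is symmetric under $k \leftrightarrow d-k-1$, so $\bar\phi$ is admissible as well, and both codimensions $d-k$ and $k+1$ are $\geq 3$, so Theorem~\ref{thm:chernysh-theorem} applies to $\phi$ and to $\bar\phi$.

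I would then identify $\Riem^+ (W,\phi)_g$ with $\Riem^+ (W_\phi,\bar\phi)_g$. Restricting to $W_0$ sends $\Riem^+ (W,\phi)_g$ homeomorphically onto the space of psc metrics on $W_0$ that equal $g + dt^2$ near $M$ and equal $g_\round^k + (dr^2 + g_\round^{d-k-1})$ near $P$; here one uses that the fixed torpedo metric $g_\tor^{d-k}$ is cylindrical, equal to $dr^2 + g_\round^{d-k-1}$, near $\partial D^{d-k}$, and the inverse map glues the metric $g_\round^k + g_\tor^{d-k}$ back onto the torpedo handle. Running the same argument for $\bar\phi$ identifies $\Riem^+ (W_\phi,\bar\phi)_g$ with the space of psc metrics on $W_0$ that equal $g + dt^2$ near $M$ and equal $g_\round^{d-k-1} + (dr^2 + g_\round^{k})$ near $P$. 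Under the product identification $P \cong S^k \times S^{d-k-1}$ these two boundary conditions are the \emph{same} metric, and since $\bar\phi$ shares the collar of $P$ with $\phi$ (and after the collar-length identifications coming from Corollary~\ref{cor:to-lem:BicollarStretching}, which we suppress from the notation, exactly as in the paragraph preceding Theorem~\ref{thm:chernysh-theorem}) the two target spaces coincide. This produces a homeomorphism
\[
c_\phi : \Riem^+ (W,\phi)_g \overset{\cong}{\lra} \Riem^+ (W_\phi,\bar\phi)_g
\]
which is canonical, the torpedo metric having been fixed once and for all and the round metrics being canonical.

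Finally, Theorem~\ref{thm:chernysh-theorem} says the inclusions $\iota : \Riem^+ (W,\phi)_g \hookrightarrow \Riem^+ (W)_g$ and $\bar\iota : \Riem^+ (W_\phi,\bar\phi)_g \hookrightarrow \Riem^+ (W_\phi)_g$ are weak homotopy equivalences, so I would set $\SE_\phi := \bar\iota \circ c_\phi \circ \iota^{-1}$ for a homotopy inverse $\iota^{-1}$ of $\iota$. The space of such homotopy inverses is contractible and $\bar\phi$ was itself fixed up to contractible choice, so the homotopy class of $\SE_\phi$ depends only on $\phi$ (indeed only on its isotopy class), giving the preferred class in the statement. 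The only step carrying real geometric content is the matching in the second paragraph: one must check that the standard psc metrics on the two complementary handles $S^k \times D^{d-k}$ and $D^{k+1}\times S^{d-k-1}$ restrict, with their collars, to the same metric on the overlap $P$. Granting this, everything else is a formal manipulation of Theorem~\ref{thm:chernysh-theorem}.
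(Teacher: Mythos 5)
Your proposal is correct and is essentially the paper's own argument: the proof here simply refers to \cite[Theorem 2.5]{BERW}, whose derivation is exactly your double application of Theorem \ref{thm:chernysh-theorem} --- once to $\phi$ in $W$ and once to the dual surgery datum in $W_\phi$ --- with the two spaces of standard metrics identified along the common complement, where both handle metrics restrict to $g_\round^{k} + g_\round^{d-k-1} + dr^2$ near $S^k \times S^{d-k-1}$. The only difference is cosmetic: the paper deliberately leaves $\SE_\phi$ as a zig-zag of weak equivalences (remarking that no actual map is constructed and only the identification of path components is used), whereas your choice of a homotopy inverse $\iota^{-1}$ implicitly needs these metric spaces to have CW homotopy type, which holds but is an extra (harmless) ingredient.
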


We refer to \cite[Theorem 2.5]{BERW} for the (easy) derivation of this Corollary from Theorem \ref{thm:chernysh-theorem}.
We remark that there is no actual map between these spaces, but rather a zig-zag of weak equivalences. This is not relevant in the present paper: we only use $\SE_\phi$ to identify the sets of path components of both spaces.

\subsection{The stability condition}

In this subsection, we collect some fairly straightforward but important facts about stable metrics.
The following simple observation is immediate from the definitions and will be used repeatedly.

\begin{lem}\label{lem:2-out-of-three}
Let $(W, h): (M_0, g_0) \leadsto (M_1, g_1)$ and $(W', h'): (M_1, g_1) \leadsto (M_2, g_2)$ be psc cobordisms. Then
\begin{enumerate}[(i)]
\item If $(W,h)$ and $(W',h')$ are left-stable, then so is $(W \cup W', h\cup h')$. 
\item\label{it:2of3ii} If $(W,h)$ and $(W',h')$ are right-stable, then so is $(W \cup W', h\cup h')$. 
\item If $(W', h')$ and $(W \cup W', h\cup h')$ are left-stable, then so is $(W,h)$.
\item If $(W,h)$ and $(W \cup W', h\cup h')$ are right-stable, then so is $(W', h')$.
\end{enumerate}
\end{lem}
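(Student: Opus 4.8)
\textbf{Proof plan for Lemma \ref{lem:2-out-of-three}.}

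The plan is to observe that all four statements are immediate consequences of the associativity of the gluing operation on psc metrics together with the two-out-of-three property of weak equivalences, so the only real content is bookkeeping. I would handle statement (i) first and explain the others by symmetry or duality. For (i), suppose $(W,h)$ and $(W',h')$ are both left-stable. Given any cobordism $V: M_{-1} \leadsto M_0$ and any boundary condition $g_{-1} \in \Riem^+(M_{-1})$, I want to show that
\[
\mu(\_, h\cup h') : \Riem^+(V)_{g_{-1}, g_0} \lra \Riem^+(V \cup W \cup W')_{g_{-1}, g_2}
\]
is a weak equivalence. The key point is that, up to the canonical identification $V \cup (W \cup W') = (V \cup W) \cup W'$, this map factors as the composite
\[
\Riem^+(V)_{g_{-1},g_0} \xrightarrow{\ \mu(\_,h)\ } \Riem^+(V \cup W)_{g_{-1},g_1} \xrightarrow{\ \mu(\_,h')\ } \Riem^+((V\cup W)\cup W')_{g_{-1},g_2},
\]
because gluing in $h$ and then $h'$ along the collars is the same as gluing in $h \cup h'$ (the collared metrics match on the overlap $M_1$, so the glued metric is smooth and agrees with the one-step construction). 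The first arrow is a weak equivalence since $(W,h)$ is left-stable (applied to the cobordism $V$), and the second is a weak equivalence since $(W',h')$ is left-stable (applied to the cobordism $V \cup W$ with boundary condition $g_{-1}$). Hence the composite is a weak equivalence, which is what we wanted; since $V$ and $g_{-1}$ were arbitrary, $(W\cup W', h\cup h')$ is left-stable.

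Statement (ii) is proved in exactly the same way, gluing on the right: for a cobordism $V: M_2 \leadsto M_3$ and boundary condition $g_3$, the map $\mu(h\cup h', \_)$ on $\Riem^+(V)_{g_2,g_3}$ factors as $\mu(h,\_) \circ \mu(h', \_)$ via the identification $(W\cup W')\cup V = W \cup (W' \cup V)$, and both factors are weak equivalences by right-stability of $(W,h)$ (applied to $W' \cup V$) and of $(W',h')$ (applied to $V$). For (iii) and (iv), I would use the same factorisations but now invoke the two-out-of-three property: in (iii), with $V, g_{-1}$ arbitrary, the composite $\mu(\_, h'\cup h') = \mu(\_,h') \circ \mu(\_,h)$ is a weak equivalence (since $W \cup W'$ is left-stable) and the second factor $\mu(\_,h')$ is a weak equivalence (since $W'$ is left-stable), so the first factor $\mu(\_,h) : \Riem^+(V)_{g_{-1},g_0} \to \Riem^+(V\cup W)_{g_{-1},g_1}$ is a weak equivalence; as $V, g_{-1}$ were arbitrary, $(W,h)$ is left-stable. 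Statement (iv) is the mirror image, using right gluing and the two-out-of-three property applied to $\mu(h\cup h', \_) = \mu(h,\_)\circ \mu(h',\_)$ with $W$ and $W\cup W'$ right-stable, deducing right-stability of $(W',h')$.

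There is essentially no hard step here; the only thing one must be slightly careful about is the claim that gluing in $h$ and then $h'$ literally agrees with gluing in $h \cup h'$, i.e. that the two maps in each factorisation are equal (not merely homotopic). This holds on the nose because the collar structures are fixed and compatible: the metric $h \cup h'$ on $W \cup W'$ is by definition the metric which restricts to $h$ on $W$ and $h'$ on $W'$ and is $g_1 + dt^2$ near the common boundary $M_1$, so performing the two gluings in sequence produces exactly this metric on $V \cup W \cup W'$. I would state this equality explicitly and then the rest follows formally. (One could alternatively phrase everything homotopy-coherently, but equality on the nose suffices and is cleaner.)
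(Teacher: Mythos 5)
Your argument is correct and is exactly the bookkeeping the paper has in mind: it states this lemma without proof, calling it immediate from the definitions, and your factorisations $\mu(\_,h\cup h')=\mu(\_,h')\circ\mu(\_,h)$ and $\mu(h\cup h',\_)=\mu(h,\_)\circ\mu(h',\_)$ together with two-out-of-three (applied only when the outer factor is known to be an equivalence, consistent with the paper's warning that the other implications do not follow formally) are that immediate argument spelled out. The only blemish is the typo ``$\mu(\_,h'\cup h')$'' for ``$\mu(\_,h\cup h')$'' in your treatment of (iii).
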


Note that the statements ``$(W', h')$ and $(W \cup W', h\cup h')$ right-stable $\Rightarrow$ $(W,h)$ right-stable'' and ``$(W', h')$ and $(W \cup W', h\cup h')$ left-stable $\Rightarrow$ $(W',h')$ left-stable'' do \emph{not} follow formally.

\begin{lem}\label{lem:isotopy-gives-stable-concordance}
Let $g_0, g_1 \in \Riem^+ (M)$ lie in the same path component. Then there is a stable metric $h \in \Riem^+ (M \times [0,1])_{g_0,g_1}$. 
\end{lem}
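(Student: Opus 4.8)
Suppose $g_0$ and $g_1$ are connected by a path $(g_s)_{s \in [0,1]}$ in $\Riem^+(M)$. The plan is to first produce \emph{some} stable metric on $M \times [0,1]$ for a conveniently chosen boundary condition, and then to transport stability along the given path.

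First I would treat the case $g_0 = g_1 = g$. Here I claim that a sufficiently stretched cylinder metric $g + dt^2$ on $M \times [0,1]$ is stable. Indeed, gluing on a cylinder is, up to the bicollar-stretching results of the previous section (Lemma \ref{lem:BicollarLength} and its corollaries, in the absolute case), the same as increasing collar length, which is a weak homotopy equivalence; this shows $\mu(g+dt^2, \_)$ and $\mu(\_, g+dt^2)$ are weak equivalences after gluing onto any cobordism $V$ with any boundary condition. Hence $g + dt^2$ is both left- and right-stable, i.e. stable.

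Next I would handle a general path. Given the path $(g_s)$, cover $[0,1]$ by finitely many subintervals so that, after subdividing $M \times [0,1]$ into a composition of cylinders $M \times [s_{i-1}, s_i]$, on each piece the two boundary metrics $g_{s_{i-1}}$ and $g_{s_i}$ are \emph{close} — close enough that the straight-line family $tg_{s_{i-1}} + (1-t)g_{s_i}$ stays in $\Riem^+(M)$, which by convexity of the positivity condition along short paths can be arranged. On such a short piece, the ``path metric'' built from the linear interpolation (suitably bent to be collared at both ends, as in the standard Gromov--Lawson/Walsh normalisations) is isotopic \emph{rel boundary-behaviour} to a cylinder: precisely, reparametrising the $[0,1]$-direction and using that the family is small, one can produce a diffeomorphism of $M \times [s_{i-1},s_i]$ carrying this metric to $g_{s_{i-1}} + dt^2$ up to the collar adjustments covered by Corollary \ref{cor:to-lem:BicollarStretching}. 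Since stability is invariant under pulling back by a diffeomorphism of the cobordism fixing the collars, and since gluing on cylinders preserves and reflects stability in the relevant directions (using the cylinder case above together with the two-out-of-three principle, Lemma \ref{lem:2-out-of-three}), each short piece carries a stable metric; by Lemma \ref{lem:2-out-of-three}(i),(ii) their composite over $M \times [0,1]$ is stable, and it has boundary values $g_0, g_1$ as required.

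\textbf{Main obstacle.} The genuinely delicate point is the middle step: showing that the interpolating metric on a short cylinder piece is, up to diffeomorphism and collar-stretching, a product metric. One must bend the linear homotopy $t \mapsto g_{s_{i-1}(1-t)} + g_{s_i}t$ near $t=0,1$ so that it is collared there while retaining positive scalar curvature — this is where smallness of the family (hence a uniform positive lower bound on $\scal$ competing against a bounded ``bending'' error term, exactly as in the classical argument) is used — and then verify that the resulting metric is isotopic rel the collared ends to a constant cylinder. The cleanest route is probably to observe that two such ``small'' interpolation metrics with the same endpoints are isotopic rel boundary, apply this with one of them chosen so that it visibly reparametrises to a cylinder after enlarging the collar, and then invoke that an isotopy rel boundary of psc metrics on $W$ induces a homotopy of the gluing maps $\mu(h,\_)$ through weak equivalences — so that if one is a weak equivalence on all $V$, so is the other. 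Once this reparametrisation lemma is in place, the two-out-of-three bookkeeping is routine.
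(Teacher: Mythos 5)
Your construction of a psc metric on $M \times [0,1]$ with boundary values $g_0, g_1$ is fine, and so is the base case that a cylinder $g + dt^2$ is stable (this is \cite[Corollary 2.2]{BERW}, which the paper also invokes). The gap is the middle step, and it is not a technical point that more care would fix: it is impossible for the interpolating metric on a short piece $M \times [s_{i-1}, s_i]$, with boundary values $g_{s_{i-1}} \neq g_{s_i}$, to be carried to the product metric $g_{s_{i-1}} + dt^2$ by a diffeomorphism together with collar adjustments. A diffeomorphism of the cobordism which is the identity near the boundary preserves the boundary restriction of any metric (and one merely preserving the boundary only moves it within its $\Diff(M)$-orbit), while Lemma \ref{lem:BicollarLength} and Corollary \ref{cor:to-lem:BicollarStretching} only lengthen or shorten collars; none of these operations can turn the boundary value $g_{s_i}$ into $g_{s_{i-1}}$, and no reparametrisation in the $t$-direction of a product metric changes the fact that it has the \emph{same} $M$-metric at both ends. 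For the same reason your fallback principle --- that an isotopy of psc metrics induces a homotopy of the gluing maps $\mu(h,\_)$ through weak equivalences --- does not apply: it is valid only for isotopies \emph{within} $\Riem^+(M\times[s_{i-1},s_i])_{g_{s_{i-1}},g_{s_i}}$, i.e.\ with fixed boundary conditions, whereas any isotopy from your interpolation metric to a cylinder necessarily moves the boundary condition, so the gluing maps along the isotopy land in different target spaces and cannot be compared. Subdividing so that $g_{s_i}$ is close to $g_{s_{i-1}}$ buys nothing here: stability is a statement about infinitely many gluing maps into spaces that change with the boundary condition, and there is no perturbation argument available, so the difficulty for a small change of boundary metric is exactly the same as for a large one.

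The paper's proof avoids this by a doubling trick rather than subdivision. Using Gajer's estimate one chooses a slow function $f:[0,2c]\to[0,1]$ with $f\equiv 0$ near $0$ and $2c$ and $f\equiv 1$ near $c$, so that $h_1 = dt^2 + g_{f(t)}$ is a psc metric on $M\times[0,2c]$ with boundary value $g_0$ at \emph{both} ends; then $h_s = dt^2 + g_{sf(t)}$ is a path in the fixed space $\Riem^+(M\times[0,2c])_{g_0,g_0}$ from the cylinder $dt^2+g_0$ to $h_1$, so $h_1$ is stable. Splitting $h_1$ at $t=c$ into $h'$ (from $g_0$ to $g_1$) and $h''$ (from $g_1$ to $g_0$) shows that $\mu(\_,h')$ has a left inverse and $\mu(\_,h'')$ a right inverse in the homotopy category; gluing one more bent cylinder onto $h''$ and repeating the homotopy argument shows $\mu(\_,h'')$ also has a left inverse, hence is a weak equivalence, hence so is $\mu(\_,h')$, and similarly on the other side. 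This formal splitting step, applied to the doubled path with equal boundary conditions --- not a reparametrisation of the one-sided interpolation to a product --- is the essential content of the lemma and is what your outline is missing.
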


\begin{proof}
Let $g_t$, $t \in [0,1]$ be a smooth isotopy from $g_0$ to $g_1$. A result of Gajer \cite[p. 184]{Gajer} shows that there is $\Lambda >0$ such that whenever $f: \bR \to [0,1]$ is a smooth function with $|f'|, |f''| \leq \Lambda$, the metric $dt^2 + g_{f(t)}$ on $\bR \times M$ has positive scalar curvature. There exists $c>0$ and a smooth function $f: [0,2c] \to [0,1]$ such that $f\equiv 0$ near $0$ and $2c$, $b \equiv 1$ near $c$ and such that $|f'|, |f''| \leq \Lambda$. The formula 
\[
 h_s = dt^2 + g_{s f(t)}
\]
thus defines a curve in the space $\Riem^+ ([0,2c] \times M, [0,2c] \times L)_{g_0,g_0}$. But $h_0=dt^2 + g_0$ is stable by \cite[Corollary 2.2.]{BERW}, and so $h_1$ is a stable metric as well. We can write $h_1 = h' \cup h''$, with $h' \in \Riem^+ ([0,c] \times M, [0,c] \times L)_{g_0,g_1}$ and $h'' \in \Riem^+ ([c,2c] \times M, [c,2c] \times L)_{g_1,g_0}$. Then the map $\mu (\_, h')$ has a left inverse in the homotopy category, and $\mu(\_, h'')$ has a right inverse. Let $h'''= dt^2 + g_{a(t-2c)}$, a psc metric in $\Riem^+ ([2c,3c] \times M, [2c,3c] \times L)_{g_1,g_0}$. A similar homotopy proves that $h'' \cup h'''$ is stable. Therefore, the above map $\mu(\_, h'')$ also has a left-inverse in the homotopy category. Hence $\mu(\_, h'')$ is a weak equivalence, and so is $\mu(\_, h')$; in other words, $h'$ is left stable. An analogous argument shows that $h'$ is also right stable. Reparametrising the interval $[0,c]$ gives the desired stable $h \in \Riem^+ ([0,1] \times M, [0,1] \times L)_{g_0,g_1}$. 
\end{proof}

The next result shows the invariance of stable metrics under surgery equivalences. 

\begin{lem}\label{lem:stability-under-surgery}
Let $W: M_0 \leadsto M_1$ be a cobordism and $\phi: S^{k} \times D^{d-k} \to \inter{W}$ be an admissible surgery datum (i.e. $2 \leq k \leq d-3$). Let $[h]\in \pi_0\Riem^+ (W)_{g_0,g_1}$ and $[h'] \in  \pi_0\Riem^+ (W_{\phi})_{g_0,g_1}$ correspond under the weak equivalence $\SE_{\phi}$. Then $h'$ is left stable (right stable) iff $h$ is left stable (right stable).
\end{lem}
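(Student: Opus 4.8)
The plan is to reduce the statement about stability of $h$ and $h'$ to the behaviour of the surgery equivalence $\SE_\phi$ under the gluing maps $\mu(\_,\,\cdot\,)$ and $\mu(\,\cdot\,,\_)$. Recall that $\SE_\phi$ is not an actual map but a zig-zag of weak equivalences coming from Theorem \ref{thm:chernysh-theorem}: one first passes to the subspace $\Riem^+(W,\phi)_{g_0,g_1}$ of metrics which are standard (torpedo $\times$ round) near the surgery sphere, and then identifies $\Riem^+(W,\phi)_{g_0,g_1}$ with $\Riem^+(W_\phi,\phi')_{g_0,g_1}$ by an explicit construction which modifies the metric only in a compact region in the interior of $W$ (disjoint from the collars of $M_0$ and $M_1$). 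The key observation to establish is that this whole zig-zag is \emph{compatible with gluing on cobordisms along $M_0$ or $M_1$}: since every metric in the zig-zag is unchanged near $\partial W$, gluing a cobordism $V:M_1\leadsto M_2$ (with a fixed metric) onto $W$ along $M_1$, respectively gluing $V':M_{-1}\leadsto M_0$ onto $W$ along $M_0$, produces a zig-zag of weak equivalences which is exactly the surgery equivalence $\SE_\phi$ performed in the larger manifold $W\cup V$ or $V'\cup W$, using the same surgery datum $\phi$ (whose image still lies in the interior).

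Concretely, first I would fix, for a cobordism $V:M_1\leadsto M_2$ and a boundary condition $g_2$, an arbitrary metric $k\in\Riem^+(V)_{g_1,g_2}$ and form the square comparing $\mu(h,\_)=\mu(\,\cdot\,,k)$ applied to $h$ on the $W$-side and the analogous map on the $W_\phi$-side. The point is that $\SE_\phi$ for $W$ and $\SE_{\phi}$ for $W\cup V$ fit into a commutative diagram (in the homotopy category) with the gluing maps $\_\cup k$: this is because each elementary weak equivalence in the zig-zag (restriction to the standardised subspace, and the Gromov--Lawson/Chernysh deformation) is defined locally near $\phi$ and therefore commutes strictly with gluing in a fixed metric along a boundary component disjoint from $\mathrm{supp}$ of the modification. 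Granting this, we get a homotopy-commutative square
\[
\xymatrix{
\Riem^+(W)_{g_0,g_1} \ar[r]^-{\mu(h,\_)} \ar[d]_{\SE_\phi}^{\simeq} & \Riem^+(W\cup V)_{g_0,g_2} \ar[d]^{\SE_\phi}_{\simeq}\\
\Riem^+(W_\phi)_{g_0,g_1} \ar[r]^-{\mu(h',\_)} & \Riem^+((W\cup V)_\phi)_{g_0,g_2}
}
\]
in which the vertical maps are weak equivalences, and similarly for the target $(W\cup V)_\phi = W_\phi \cup V$. Since $[h]$ and $[h']$ correspond under $\SE_\phi$, commutativity shows $\mu(h,\_)$ is a weak equivalence if and only if $\mu(h',\_)$ is. As $V$ and $g_2$ were arbitrary, $h$ is right-stable iff $h'$ is right-stable; the left-stable case is symmetric, gluing along $M_0$ instead.

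The main obstacle is making precise the claim that $\SE_\phi$ is natural with respect to gluing, i.e.\ that the zig-zag for the larger manifold restricts to the zig-zag for $W$. This requires unwinding the construction of $\SE_\phi$ recalled from \cite[Theorem 2.5]{BERW}: one must check that (a) the standardisation subspace $\Riem^+(W,\phi)$ is carried to $\Riem^+(W\cup V,\phi)$ under $\_\cup k$, which is immediate since the condition near $\phi$ is identical; (b) the Chernysh weak equivalence $\Riem^+(W,\phi)_g \to \Riem^+(W)_g$ of Theorem \ref{thm:chernysh-theorem} is itself compatible with gluing, which follows because the only subtlety in that theorem is local to the surgery region; and (c) the diffeomorphism $W\setminus\phi(S^k\times\mathring D^{d-k}) \cong W_\phi\setminus(\cdots)$ implementing the actual surgery is supported in the interior and hence extends over $V$ by the identity. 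None of these steps involves new analysis — they are bookkeeping about supports — so once the diagram above is set up the conclusion is formal via the two-out-of-three property of weak equivalences. I would also remark that, just as with Lemma \ref{lem:2-out-of-three}, the statement is genuinely an ``iff'' only because both halves of the zig-zag are honest weak equivalences in both directions.
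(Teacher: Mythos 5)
Your argument is essentially the paper's own: the paper disposes of this lemma by citing the third paragraph of the proof of \cite[Theorem 2.6]{BERW}, which is precisely your observation that the zig-zag defining $\SE_\phi$ (standardisation near $\phi$ via Theorem \ref{thm:chernysh-theorem}, plus the surgery identification) is supported in the interior of $W$ away from the collars, hence commutes with gluing cobordisms along $M_0$ or $M_1$, and the conclusion follows by two-out-of-three. One repair to your displayed square: $\mu(h,\_)$ and $\mu(h',\_)$ have source $\Riem^+(V)_{g_1,g_2}$, not $\Riem^+(W)_{g_0,g_1}$, so the left-hand vertical arrow should be the identity of $\Riem^+(V)_{g_1,g_2}$ rather than $\SE_\phi$ (and one should first isotope $h$ into the standardised subspace $\Riem^+(W,\phi)_{g_0,g_1}$, which changes $\mu(h,\_)$ only up to homotopy, so that the square can be made to commute on the nose); with that correction your bookkeeping in (a)--(c) is exactly the intended argument.
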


\begin{proof}
Exactly as the argument in the third paragraph of the proof of \cite[Theorem 2.6]{BERW}.
\end{proof}

\subsection{Construction of stable metrics}

We now prove Theorem \ref{thm:stableL}. The key step is the case of an elementary cobordism $W$. Let us introduce some notation. Throughout this section, $M_i$ will be a closed $(d-1)$-manifold and cobordisms between those are typically denoted $W: M_0 \leadsto M_1$. When $W$ is such a cobordism, we denote by $W^{op}: M_1 \leadsto M_0$ the same manifold, but viewed as a cobordism in the other direction. 

Let 
$$\varphi: S^{k-1} \times \bR^{d-k} \lra M_0$$
be a smooth embedding. We can view $\varphi|_{S^{k-1} \times D^{d-k}}$ as a surgery datum on $M_0$, as well as the attaching map for a $k$-handle. Let $M_1$ be the result of performing surgery on $\varphi$, and let $T_{\varphi}$ be the trace of the surgery. Then $T_{\varphi}: M_0 \leadsto M_1$ is a cobordism (an \emph{elementary cobordism of index $k$}). There is an embedding
\[
\phi^-:(D^k \times D^{d-k},S^{k-1} \times D^{d-k} ) \lra (T_{\varphi}, M_0)
\]
such that $\phi^- |_{S^{k-1} \times D^{d-k} }=\varphi$. The image of $\phi^-$ is denoted $H_{\varphi}^-$ and called the \emph{incoming handle}. Moreover, there is a dual embedding
\[
\phi^+:(D^{d-k}\times D^k, S^{d-k-1}\times D^k) \lra (T_{\varphi}, M_1)
\]
with image $H_{\varphi}^+$, called the \emph{outgoing handle}. 

It is important for us to consider the following decomposition of $T_\varphi$. For $r>1$, we let $D^n_r$ be the closed $n$-disc of radius $r$ and let $S^{n-1}_r$ be its boundary. Let 
\[
\varphi_0: S^{k-1} \times D^{d-k} \lra S^{k-1} \times D^{d-k}_2
\]
be the inclusion and let $U$ be the trace of a surgery on $\varphi_0$. This is a manifold with corners. The boundary of $U$ is decomposed into three pieces: $\partial_0 U:= S^{k-1} \times D^{d-k}_2$, $\partial_1 U$ is the result of a surgery along $\varphi_0$ and $\partial_2 U = S^{k-1} \times S^{d-k-1}_2 \times [0,1]$. Let $L := M_0 \setminus \varphi (S^{k-1} \times D^{d-k}_2)\subset M_0$. Then $T_\varphi = (L \times [0,1]) \cup_{S^{k-1} \times S^{d-k}_2 \times [0,1]} U$, see Figure \ref{fig:handle}. The incoming and the outgoing handle are disjoint from $L \times [0,1]$. 

\begin{figure}[htb!]
\begin{center}
\includegraphics{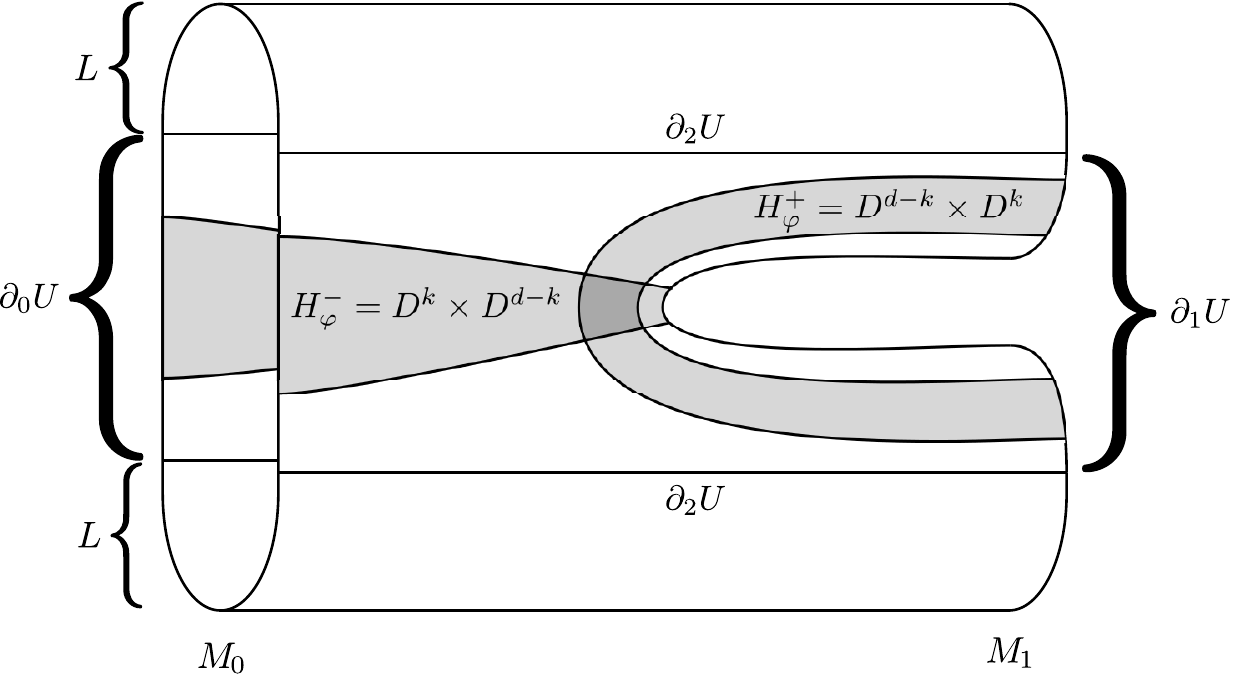}
\caption{Decomposition of an elementary cobordism}\label{fig:handle}
\end{center}
\end{figure}

Denote $\varphi^{op}:= \phi^+ |_{ S^{d-k-1}\times D^k}$. By surgery on $\varphi^{op}$, one recovers $M_0$ from $M_1$. Note furthermore that there is an identification $T_{\varphi^{op}} \cong (T_{\varphi})^{op}$, relative to the boundaries. Inside the composed cobordism $T_{\varphi} \cup T_{\varphi^{op}}$, we find an embedding $\phi = \phi^+ \cup \phi^-$ of $S^{d-k} \times D^k$. Doing surgery on $\phi$ results in a manifold diffeomorphic to the cylinder $M_0 \times [0,2]$ relative to the canonical diffeomorphism $\partial (T_{\varphi} \cup T_{\varphi^{op}}) \cong \partial(M_0 \times [0,2])$. 

\begin{lem}\label{lem:rightstablemetric-on-elementary-cobordism}
Let $k \leq d-3$. Then there is a psc metric $h$ on the manifold with corners $U$, such that 
\[
h = 
\begin{cases}
g_\round^{k-1} + g_\round^{d-k-1} + dt^2+ds^2 & \text{ near } \partial_2 U\\
g_\round^{k-1} + g_\tor^{d-k} + dt^2 & \text{ near } \partial_0 U
\end{cases}
\]
and such that the following property holds. If $M^{d-1}$ is a closed manifold, $g_0 \in \Riem^+ (M)$ and $\varphi : S^{k-1} \times \bR^{d-k}\to M$ an embedding such that $\varphi^* g_0 = g_\round^{k-1} + g_\tor^{d-k} $ near $\varphi (S^{k-1} \times D^{d-k}_2)$, then the psc metric $h \cup (g_0|_L + dt^2)$ on $U \cup (L \times [0,1])= T_\varphi$ is right-stable.
\end{lem}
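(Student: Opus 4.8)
The plan is to take $h$ to be a ``universal'' Gromov--Lawson surgery metric on $U$, and to derive right-stability of $h' := h \cup (g_0|_L + dt^2)$ on $T_\varphi$ by comparison with the dual trace $T_{\varphi^{op}}$. Concretely, I would show that the metrics $h'\cup\bar h'$ and $\bar h'\cup h'$ obtained by going around the round trips $T_\varphi\cup T_{\varphi^{op}}$ and $T_{\varphi^{op}}\cup T_\varphi$ are \emph{stable} --- the point being that, after the cancelling surgery on $\phi$ (resp.\ its dual), these become cylinder metrics, which are stable by \cite[Corollary 2.2]{BERW} --- and then deduce right-stability of $h'$ by a two-sided argument in the homotopy category, parallel to the proof of Lemma \ref{lem:isotopy-gives-stable-concordance}.

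\emph{Construction of $h$.} On $\partial_0 U=S^{k-1}\times D^{d-k}_2$ I would start from the standard psc metric $g_\round^{k-1}+g_\tor^{d-k}$. Since $d-k\geq 3$, the Gromov--Lawson surgery construction underlying Theorem \ref{thm:chernysh-theorem} (see \cite{GL,Chernysh2,Walsh01}), applied to the surgery datum $\varphi_0:S^{k-1}\times D^{d-k}\hookrightarrow S^{k-1}\times D^{d-k}_2$, yields a psc metric $h$ on the trace $U$ which near $\partial_0 U$ equals $g_\round^{k-1}+g_\tor^{d-k}+dt^2$. The Gromov--Lawson modification can be taken supported in an arbitrarily thin tube around the surgery core $S^{k-1}\times\{0\}$, which is disjoint from $\partial(S^{k-1}\times D^{d-k}_2)=S^{k-1}\times S^{d-k-1}_2$; hence $h$ can be arranged to equal the product $g_\round^{k-1}+g_\round^{d-k-1}+dt^2+ds^2$ near $\partial_2 U$, and it depends only on the fixed torpedo metric. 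For any $(M,g_0,\varphi)$ as in the statement, the hypothesis $\varphi^*g_0=g_\round^{k-1}+g_\tor^{d-k}$ near $\varphi(S^{k-1}\times D^{d-k}_2)$ makes $g_0|_L$ collared near $\partial L$ with product value $g_\round^{k-1}+g_\round^{d-k-1}$, so $h$ and $g_0|_L+dt^2$ glue smoothly to a psc metric $h'$ on $T_\varphi=U\cup(L\times[0,1])$.

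\emph{The round trips are stable.} Running the dual construction along $\varphi^{op}=\phi^+|_{S^{d-k-1}\times D^k}$ (of codimension $k+1\geq 3$ once $k\geq 2$; the remaining small values of $k$ need only cosmetic changes) produces $\bar h'\in\Riem^+(T_{\varphi^{op}})_{g_1,g_0}$ of the analogous shape. With respect to $h'\cup\bar h'$ on $T_\varphi\cup T_{\varphi^{op}}$, the embedded disc $\phi=\phi^+\cup\phi^-$ of $S^{d-k}\times D^k$ carries a standard neighbourhood, a doubled torpedo on the $S^{d-k}$-factor being isotopic to the round metric; consequently the surgery on $\phi$ identifying $T_\varphi\cup T_{\varphi^{op}}$ with $M_0\times[0,2]$ carries $[h'\cup\bar h']$ to the class of the cylinder metric $dt^2+g_0$ --- this is the Gromov--Lawson handle-cancellation computation, cf.\ \cite{Walsh01,Chernysh2}. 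As $dt^2+g_0$ is stable by \cite[Corollary 2.2]{BERW} and stability is invariant under surgery equivalences (Lemma \ref{lem:stability-under-surgery}), $h'\cup\bar h'$ is stable; the mirror-image argument, using the dual disc $\phi'$ of $S^k\times D^{d-k}$ inside $T_{\varphi^{op}}\cup T_\varphi$, shows $\bar h'\cup h'$ is stable as well.

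\emph{Right-stability of $h'$, and the main difficulty.} Writing $\mu(h'\cup\bar h',\_)=\mu(h',\_)\circ\mu(\bar h',\_)$ and $\mu(\bar h'\cup h',\_)=\mu(\bar h',\_)\circ\mu(h',\_)$, the previous step shows that $\mu(h',\_)$ has a homotopy left inverse on every $\Riem^+(V)_{g_1,g_2}$ with $V:M_1\leadsto M_2$, and a homotopy right inverse on those $\Riem^+(V)_{g_1,g_2}$ with $V$ of the form $T_{\varphi^{op}}\cup W'$. For an arbitrary $V$, surgery on $\phi'$ --- which is disjoint from the outer copy of $T_\varphi$ carrying $h'$ --- turns $T_{\varphi^{op}}\cup T_\varphi\cup V$ into $M_1\times[0,2]\cup V\cong V$ rel boundary, and hence intertwines the map $\mu(h',\_):\Riem^+(V)_{g_1,g_2}\to\Riem^+(T_\varphi\cup V)_{g_0,g_2}$ with the map $\mu(h',\_):\Riem^+(T_{\varphi^{op}}\cup W')_{g_1,g_2}\to\Riem^+(T_\varphi\cup T_{\varphi^{op}}\cup W')_{g_0,g_2}$ for $W'=T_\varphi\cup V$; the latter has both a left and a right inverse by the preceding sentence, so it is a weak equivalence, and therefore so is the former. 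Thus $h'$ is right-stable. I expect the crux to be the handle-cancellation claim in the previous paragraph: identifying $h'\cup\bar h'$ with the cylinder metric after surgery on $\phi$ requires controlling the Gromov--Lawson metrics on $T_\varphi$ and $T_{\varphi^{op}}$ explicitly near their co-cores, and this --- together with the bookkeeping at the extreme values of $k$ --- is where essentially all the geometric work lies.
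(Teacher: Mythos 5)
Your route is genuinely different from the paper's, but it has a real gap that is not just technical. The lemma is stated (and needed) for all $k \leq d-3$ with \emph{no lower bound} on $k$: in the proof of Theorem \ref{thm:stableL}(i) the cobordism is decomposed into elementary cobordisms of index $k \leq d-3$ relative to $M_0$, and since only $(W,M_1)$ is assumed $2$-connected, handles of index $0,1,2$ cannot be traded away (the basic example $W = N \times D^j : \emptyset \leadsto N \times S^{j-1}$ already requires index $0$). Your argument, however, needs: a Gromov--Lawson metric $\bar h'$ on $T_{\varphi^{op}}$, i.e.\ a psc trace for the dual surgery $\varphi^{op}$, which has codimension $k$ in $M_1$ and so requires $k \geq 3$; an admissible surgery on $\phi \cong S^{d-k}\times D^k$ (needs $d-k \leq d-3$, i.e.\ $k \geq 3$); and an admissible surgery on $\phi' \cong S^k \times D^{d-k}$ (needs $k \geq 2$). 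For $k \leq 2$ none of this is available -- these are precisely the codimension $\leq 2$ situations where the Gromov--Lawson/Chernysh technology breaks down -- so dismissing the small $k$ as ``cosmetic changes'' is not correct; those are the cases the lemma exists to handle. In addition, even for $3 \leq k \leq d-3$, the central claim that $\SE_\phi$ carries $[h' \cup \bar h']$ to the cylinder class $[g_0 + dt^2]$ is asserted rather than proved (you flag it yourself as ``where essentially all the geometric work lies''); it is not a formal consequence of the GL construction, and without it the two-sided argument has nothing to run on. (Your surgery-intertwining step for upgrading ``split mono everywhere, split epi on stabilised spaces'' to a genuine equivalence is a reasonable patch of the formal trap warned about after Lemma \ref{lem:2-out-of-three}, but it again uses admissibility of $\phi'$.)

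The paper avoids both problems by never doubling the cobordism. It writes $T_\varphi = H^-_\varphi \cup S$, where $H^-_\varphi$ is the incoming handle and $S = T_\varphi \setminus \inter{H^-_\varphi}$ is a manifold with corners of exactly the type treated in Theorem \ref{thm:corner-smoothing}. Chernysh's Theorem \ref{thm:chernysh-theorem}, applied to the incoming handle, shows that imposing the standard metric $g_\tor^k + g_\tor^{d-k}$ there is a weak equivalence -- this uses only $d-k \geq 3$, i.e.\ $k \leq d-3$, with no lower bound on $k$ -- while the corner-rounding theorem produces a metric $h'$ on $S$ (equal to $g_0|_L + dt^2$ over $L\times[0,1]$, defining $g_1$ as the rounded boundary condition) for which gluing $(S,h')$ onto any $\Riem^+(V)_{g_1,g_2}$ is a weak equivalence, with no index constraint at all. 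Right-stability of $h = (g_\tor^k + g_\tor^{d-k}) \cup h'$ is then the composite of these two equivalences, so no dual metric $\bar h'$, no handle-cancellation computation, and no two-sided formal argument are needed. If you want to salvage your approach you would have to both prove the cancellation statement for GL metrics and supply a separate argument for $k \leq 2$; the corner-rounding theorem is what lets the paper do neither.
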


\begin{proof}
We construct a right stable metric on $T_\varphi$ and justify during the course of the proof why it has the particular form stated in the lemma. The proof depends on the decomposition of $T_\varphi$ described above and shown in Figure \ref{fig:handle}.

Here we let $H^-_\varphi \subset U \subset W=T_{\varphi}$ be the image of the embedding $\phi^-:(D^k \times D^{d-k},S^{k-1} \times D^{d-k} ) \to (T_{\varphi}, M_0)$, and let $S= T_{\varphi} \setminus \inter{H^-_\varphi}$. Note that $S$ is a manifold with (acute) corners in the sense of Definition \ref{defn:Corners}, and that it is diffeomorphic to a manifold of the form $V_{B_1, C_2}$ considered in the proof of Theorem \ref{thm:corner-smoothing}. More precisely, the left boundary of $S$ is decomposed as 
\[
(M_0 \setminus \inter{S^{k-1} \times D^{d-k}}) \cup_{S^{k-1} \times S^{d-k-1} } (S^{k-1} \times D^k)
\]
and the right boundary of $S$ is the result of corner smoothing. Let 
\begin{align*}
g_{\partial} &= (g_0 |_{M_0 \setminus \inter{S^{k-1} \times D^{d-k}}}, g_{\round}^{k-1} + g_{\tor}^k) \\
&\,\,\,\,\in \Riem^+ (M_0 \setminus \inter{S^{k-1} \times D^{d-k}}) \times_{\Riem^+ (S^{k-1} \times S^{d-k-1})} \Riem^+  (S^{k-1} \times D^k). 
\end{align*}
We now \emph{define} $g_1 \in \Riem^+ (M_1)$ as $g_{\partial W_B}$, as explained in the proof of Theorem \ref{thm:corner-smoothing}.  By that theorem, there is a metric $h' \in \Riem^+ (S)_{g_{\partial},g_1}$ which is right stable in a slightly generalised sense: if $V: M_1 \leadsto M_2$ is a cobordism and $g_2 \in \Riem^+ (M_2)$, then $\Riem^+ (V)_{g_1,g_2} \to \Riem^+ (S \cup V)_{g_{\partial},g_2}$ is a weak equivalence. Moreover, by the last statement of Theorem \ref{thm:corner-smoothing}, we can assume that $h'$ is of the form $g_0|_L + dt^2 $ over $L \times [0,1]$, and $h'|_{S \setminus (L \times [0,1])}$ does not depend on the ambient manifold $M$. 

Let now $\Riem^+ (W,H^-_\varphi)_{g_0,g_1} \subset \Riem^+ (W)_{g_0,g_1}$ be the subspace of those psc metrics $h$ such that $(\phi^-)^* h = g_{\tor}^k + g_{\tor}^{d-k}$. By Theorem \ref{thm:chernysh-theorem} (iii), the inclusion 
\begin{equation*}
\Riem^+ (W,H^-_\varphi)_{g_0,g_1} \lra \Riem^+ (W)_{g_0,g_1}
\end{equation*}
is a weak homotopy equivalence because $d-k \geq 3$. The same is true when $W$ is replaced by $W \cup V$, for any other cobordism $V: M_1 \leadsto M_2$ and any boundary condition $g_2 \in \Riem^+ (M_2)$.
Now we consider the psc metric $h = ( g_{\tor}^k + g_{\tor}^{d-k}) \cup h'$ on $W = H^-_\varphi \cup S$. Let $V: M_1 \leadsto M_2$ be any other cobordism and $g_2 \in \Riem^+ (M_2)$. Then the map $(W, h) \cup -: \Riem^+ (V)_{g_1, g_2} \to \Riem^+ (W \cup V)_{g_0,g_2}$ is equal to the composition
\[
  \Riem^+ (V)_{g_1, g_2} \stackrel{(S, h') \cup -}{\lra} \Riem^+ (S \cup V)_{g_{\partial}, g_2} \cong \Riem^+ (W \cup V,H^-_\varphi)_{g_0,g_2} \lra \Riem^+ (W \cup V)_{g_0,g_2}
\]
of two weak homotopy equivalences, so $(W, h)$ is right stable. 
\end{proof}

\begin{proof}[Proof of Theorem \ref{thm:stableL} (i)]
By handle cancellation theory (see e.g.\ \cite{Kervaire, WallGC}), the cobordism $W$ admits a decomposition into elementary cobordisms of index $k \leq d-3$ (relative to $M_0$). Therefore, we may assume that $W=T_{\varphi}$, for some embedding $\varphi: S^{k-1} \times \bR^{d-k} \to M_0 \setminus C$ with $d-k \geq 3$. 

The next step is to turn the metric $g_0$ into some standard form. Because $d-k \geq 3$, there is a smooth family $g_t \in \Riem^+ (M_0)$ such that $\varphi\vert_{S^{k-1} \times D^{d-k}}^* g_1 = g_{\round}^{k-1} + g_{\tor}^{d-k}$, by Chernysh's theorem \cite[Theorem 1.1]{Chernysh} (the weaker result by Gajer \cite{Gajer} also suffices for this purpose), and this may be supported on a small neighbourhood of $\varphi(S^{k-1} \times D^{d-k})$ and hence disjointly from $C$, so gives a smooth family $g_t \in \Riem^+ (M_0, C)$. By Lemma \ref{lem:isotopy-gives-stable-concordance}, there is a stable metric $h_1 \in \Riem^+ ([0,1] \times M_0, [0,1] \times C; dt^2 + g_C )_{g_0,g_1}$ (this $h$ and $g_1$ are \emph{not} yet the metrics we are looking for). Lemma \ref{lem:rightstablemetric-on-elementary-cobordism} gives a right-stable metric $h_2 \in \cR^+(T_\varphi)_{g_1, g_2}$ which is cylindrical over $[0,1] \times L$ and so in particular over $[0,1] \times C$. The composition
$$h_1 \cup h_2 \in \cR^+([0,1] \times M_0 \cup_{M_0} T_\varphi)_{g_0, g_2} \cong\cR^+( T_\varphi)_{g_0, g_2}$$
is then right-stable by Lemma \ref{lem:2-out-of-three} (\ref{it:2of3ii}), and is cylindrical over $[0,1] \times C$ as required.
\end{proof}

\begin{proof}[Proof of Theorem \ref{thm:stableL} (ii)]
Let $W: M_0\leadsto M_1$ be a cobordism of dimension $d \geq 6$, assume that $M_i\to W$ are $2$-connected and let $h\in \Riem^+ (W)_{g_0,g_1}$ be a right stable psc metric on $W$. 
Using handle cancellation theory \cite{Kervaire, WallGC}, $W$ has a handle decomposition (relative to $M_0$) using handles of index $3 \leq k \leq d-2$ (in fact, if $d\geq 7$, one can assume that $3 \leq k \leq d-3$, but if $d=6$, this can fail, for example if $W$ is an $h$-cobordism with nontrivial Whitehead torsion). 

We have noted above that if $T_{\varphi}:M \leadsto M'$ is an elementary cobordism of index $k$, then the composite $T_{\varphi} \cup T_{\varphi}^{op}$ contains a surgery datum $\phi:S^{d-k} \times D^k \to T_{\varphi} \cup T_{\varphi}^{op}$ in its interior so that performing surgery on $\phi$ results in the cylinder $M\times [0,2]$. It follows that $W \cup W^{op}$ can be surgered to $M_0 \times [0,2]$, and this can be done using admissible surgeries, provided that all handles of $W$ have index $3 \leq k \leq d-2$, which is what we arranged in the first step of the proof. Hence there is a surgery equivalence
\[
\SE: \Riem^+ (M_0\times [0,2])_{g_0, g_0} \simeq \Riem^+ (W\cup W^{op})_{g_0,g_0}.
\]
Let $h'' \in \Riem^+ (W\cup W^{op})_{g_0,g_0}$ be in the component of the cylinder metric $g_0 + dt^2$ under $\SE$. By Lemma 
\ref{lem:stability-under-surgery} and \cite[Corollary 2.2]{BERW}, $h''$ is stable.

Now $h \in \Riem^+ (W)_{g_0,g_1}$ is right stable by assumption, so there is a psc metric $h' \in \Riem^+ (W^{op})_{g_1, g_0}$ such that $h\cup h'$ and $h''$ lie in the same path component of $\Riem^+ (W\cup W^{op})_{g_0,g_0}$ (just because $\mu (h,\_): \Riem^+ (W^{op})_{g_1, g_0} \to\Riem^+ (W\cup W^{op})_{g_0,g_0}$ is a weak homotopy equivalence). Since $h$ and $h \cup h'\simeq h''$ are right stable, so is $h'$, by Lemma \ref{lem:2-out-of-three}.

Let us summarise what we proved so far. Given a cobordism $W$ such that both inclusions $M_i \to W$ are $2$-connected, and given a right stable $h \in \Riem^+ (W)_{g_0,g_1}$, we find another right stable $h' \in \Riem^+ (W^{op})_{g_1,g_0}$ such that $h \cup h'$ is stable. 

The conclusion of the argument is formal. Namely, $\mu (\_,h\cup h')= \mu (\_,h' ) \circ  \mu (\_, h) $ is a weak equivalence, and so $\mu (\_, h)$ is ``split monomorphic in the weak homotopy category''. The argument leading to that conclusion only used that $h$ is a right stable psc metric on a cobordism whose inclusion maps $M_i \to W$ are both $2$-connected. Because $W^{op}$ and $h'$ also satisfy these conditions, we can apply the same argument to $h'$ and get that $\mu (\_, h')$ is split monomorphic in the weak homotopy category as well. Since $\mu (\_,h' ) \circ  \mu (\_, h) $ is a weak equivalence, $\mu (\_, h')$ is also split epimorphic. So $\mu (\_, h')$ is a weak equivalence, and so is $\mu (\_,h)$, which is what we had to prove.
\end{proof}

\section{Proof of the main factorization result}\label{sec:proof-constructivepart}

In this section we will prove Theorem \ref{thm:factorization}.

\subsection{A commutativity result}

Let $W^d$ be a compact manifold with boundary. The group $\Diff_{\partial} (W)$ of diffeomorphisms which are equal to the identity near $\partial W$ acts by pullback on the space $\Riem^+ (W)_g$ of psc metrics. This action defines a homomorphism $\Diff_{\partial} (W) \to \hAut (\Riem^+ (W)_g)$ to the monoid of homotopy automorphisms of\footnote{As in \cite[\S 4.1]{BERW}, one should replace $\Riem^+ (W)_g$ by a space having the homotopy type of a CW complex; we do so implicitly here.} $\Riem^+ (W)_g$. A key step of the paper \cite{BERW} (namely Theorem 4.1) is that under some circumstances the induced homomorphism $\pi_0 (\Diff_{\partial} (W)) \to \pi_0 \hAut (\Riem^+ (W)_g)$ has an abelian group as its image. This is to say: if $f_0,f_1 \in \Diff_{\partial} (W)$, then $f_0^* f_1^* $ and $f_1^* f_0^*: \Riem^+ (W)_g \to \Riem^+ (W)_g$ are homotopic. 
We introduce the following language.

\begin{defn}\label{defn:homotopy-abelian}
Let $X$ be a space and let $G$ be a topological group which acts on $X$. This gives rise to an $H$-space map $G \to \hAut (X)$. 
The action of $G$ on $X$ is called \emph{homotopy abelian} if the image of the induced map $\pi_0 (G) \to \pi_0 (\hAut(X))$ is an abelian group.
\end{defn}

\begin{thm}\label{thm:general-abelianness-thm}
Let $W^d:\emptyset \leadsto \partial W$ be a cobordism. Assume that $d \geq 6$ and that the inclusion map $\partial W \to W$ is $2$-connected. Assume that $g \in \Riem^+ (\partial W)$ is such that there exists a right stable $h \in \Riem^+ (W)_g$. Then the action of $\Diff_{\partial} (W) $ on $\Riem^+ (W)_{g}$ is homotopy abelian.
\end{thm}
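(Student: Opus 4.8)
The plan is to mimic the strategy of \cite[Theorem 4.1]{BERW}, replacing the ``stabilisation by gluing on $N \times D^k$'' used there with the gluing-in of a stable metric, which is precisely the extra flexibility that Theorem \ref{thm:stableL} and Corollary \ref{cor:right-and-left-stable} provide. Given $f_0, f_1 \in \Diff_\partial(W)$, the goal is to show that the self-maps $f_0^* f_1^*$ and $f_1^* f_0^*$ of $\Riem^+(W)_g$ are homotopic. Write $W$ as a cobordism $\emptyset \leadsto \partial W$. First I would form the ``doubling'' cobordism $\overline{W} := W \cup_{\partial W} (\partial W \times [0,1]) : \emptyset \leadsto \partial W$ with its extra collar, noting that by Corollary \ref{cor:to-lem:BicollarStretching} (and its relative version obtained by treating $\partial W \times [0,1]$ as an external collar) gluing in $g + dt^2$ gives a weak equivalence $\Riem^+(W)_g \simeq \Riem^+(\overline W)_g$ which is equivariant for $\Diff_\partial(W) \hookrightarrow \Diff_\partial(\overline W)$. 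This reduces the problem to $\overline W$, where we have more room to move supports of diffeomorphisms around.

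The key construction is the following. Since $h \in \Riem^+(W)_g$ is right-stable, and since the inclusion $\partial W \to W$ is $2$-connected, I can also build a right-stable metric on a cobordism going the ``other way'' — more precisely, the trick (as in \cite{BERW}) is to embed two disjoint copies of $W^{op}$ into a large cobordism $P : \partial W \leadsto \partial W$ equipped with a stable metric $h_P$ whose existence is guaranteed by Corollary \ref{cor:right-and-left-stable}, arranged so that $P$ contains two disjoint copies $W_0, W_1$ of (a handle-reversed version of) $W$ sitting over disjoint sub-intervals, with $f_i$ supported in $W_i$. Then $\mu(h, \_) : \Riem^+(P)_{g,g} \to \Riem^+(\overline W \cup P)_{g}$ and $\mu(\_, h_P) : \Riem^+(\overline W)_g \to \Riem^+(\overline W \cup P)_g$ are both weak equivalences (the first by right-stability of $h$, the second by left-stability of $h_P$), and conjugation by $f_0$ in the first copy and $f_1$ in the second copy takes place in \emph{disjoint} regions of $\overline W \cup P$, hence the two self-maps literally commute there. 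Transporting this commutation back along the two weak equivalences — and checking that the relevant square of weak equivalences commutes up to homotopy, which is where one uses that $h_P$ is a \emph{stable} (not merely left- or right-stable) metric so that the gluing maps are equivalences in both slots and the ``Eckmann--Hilton'' shuffle goes through — yields that $f_0^* f_1^* \simeq f_1^* f_0^*$ on $\Riem^+(\overline W)_g$, hence on $\Riem^+(W)_g$.

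The main obstacle, as in \cite{BERW}, is the bookkeeping needed to realise ``$f_0$ and $f_1$ act in disjoint regions after stabilising'': one must produce the cobordism $P$ together with the two disjointly-supported embeddings of (handle-reversals of) $W$, using handle-trading to ensure all relevant handles have index in the admissible range $3 \le k \le d-2$ so that surgery equivalences apply, and one must verify that the whole diagram of stabilisation maps commutes up to homotopy with the diffeomorphism actions. This is the technical heart; the homotopy-abelian conclusion then follows formally once the disjointness and the commuting square of weak equivalences are in place. I would therefore structure the proof as: (1) reduce to $\overline W$ via Corollary \ref{cor:to-lem:BicollarStretching}; (2) construct $P$, $h_P$, and the disjoint embeddings using Corollary \ref{cor:right-and-left-stable}, Lemma \ref{lem:stability-under-surgery}, and handle theory; (3) observe the literal commutation on $\Riem^+(\overline W \cup P)$; (4) transport back along the stabilisation equivalences, invoking left- \emph{and} right-stability of $h_P$ to make the transport well-defined and natural; (5) conclude.
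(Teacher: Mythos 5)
Your overall instinct --- that the new ingredient replacing the special hypotheses of \cite[Theorem 4.1]{BERW} is the existence of stable metrics, and that the endgame is a formal ``commuting equivalences'' argument --- is right, but the central step of your proposal does not work. You want to arrange a cobordism $P:\partial W \leadsto \partial W$ containing two disjoint copies $W_0, W_1$ ``with $f_i$ supported in $W_i$'', so that $f_0^*$ and $f_1^*$ literally commute on $\Riem^+(\overline{W}\cup P)$. But $f_0$ and $f_1$ are diffeomorphisms of the \emph{same} manifold $W$; after extending by the identity over $P$ they are both supported in the original copy of $W\subset \overline{W}\cup P$, not in disjoint regions. To make your claim meaningful you would have to show that $f_i\cup\id_P$ is isotopic rel boundary to a diffeomorphism supported in $W_i\subset P$ (or at least that its action on the space of psc metrics agrees up to homotopy with that of such a diffeomorphism), and no argument is offered for this. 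Indeed, if disjoint supports could be arranged at the level of diffeomorphisms, you would conclude that the image of $\pi_0\Diff_\partial(W)\to\pi_0\Diff_\partial(\overline{W}\cup P)$ is abelian, which is far stronger than the theorem and false in general; and proving the weaker homotopy statement about actions on $\Riem^+$ is essentially the theorem itself, so the proposal is circular at its heart. You also never make essential use of the hypothesis that $\partial W\to W$ is $2$-connected, which should be a warning sign.

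The paper's proof takes a different route in exactly this step. One chooses a handle decomposition of $W$ with all handles of index $\leq d-3$, lets $K$ be the union of the handles of index $\leq 2$ and $V:=W\setminus\inter{K}$, a cobordism $M_0\leadsto\partial W$ with \emph{both} boundary inclusions $2$-connected. Lemma \ref{lem:moving-the-2-skeleton-to-boundary} (an application of Phillips' submersion theorem, and the place where $2$-connectivity of $\partial W\to W$ is used) shows $K$ can be isotoped into a collar of $\partial W$, whence $\pi_0\Diff_\partial(V)\to\pi_0\Diff_\partial(W)$ is surjective; so it suffices to treat the $\Diff_\partial(V)$-action. By Theorem \ref{thm:existence-stable-metrics1} there is a right-stable $h'\in\Riem^+(K)_{g_0}$, one chooses $h''$ on $V$ with $h'\cup h''\simeq h$, notes $h''$ is right-stable by Lemma \ref{lem:2-out-of-three}, and uses the $\Diff_\partial(V)$-equivariant equivalence $\mu(h',\_)\colon\Riem^+(V)_{g_0,g}\to\Riem^+(W)_g$ to transfer the problem to $V$. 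Since both ends of $V$ are $2$-connected, Theorem \ref{thm:right-and-left-stable} upgrades $h''$ to a \emph{stable} metric, and then the formal result \cite[Lemma 4.2]{BERW} (Lemma \ref{lem:abeliannessproof-formal}) gives homotopy abelianness. The commutativity thus comes from a metric on the middle piece being simultaneously left- and right-stable, not from pushing the supports of $f_0$ and $f_1$ into disjoint regions; the latter idea is the gap you would need to repair, and repairing it would amount to redoing the reduction just described.
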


\begin{remark}
Theorem 4.1 of \cite{BERW} applies under the following hypotheses: 
\begin{enumerate}[(i)]
\item $d \geq 5$,
\item $W$ is simply connected and spin,
\item $\partial W= S^{d-1}$ and $g$ is the round metric on $S^{d-1}$,
\item $W$ is spin cobordant to $D^d$, relative to its boundary. 
\end{enumerate}
Thus except in the case $d =5$, \cite[Theorem 4.1]{BERW} follows from Theorem \ref{thm:general-abelianness-thm}.
\end{remark}

The proof of Theorem \ref{thm:general-abelianness-thm} is quite similar to the proof of \cite[Theorem 4.1]{BERW}. The first step is actually almost the same.

\begin{lem}\label{lem:abeliannessproof-formal}
Let $V: M_0 \leadsto M_1$ be a cobordism such that both boundary inclusions $M_i \to V$ are $2$-connected. Let $g_i \in \Riem^+ (M_i)$ so that there is a right stable $h \in \Riem^+ (V)_{g_0,g_1}$. Then the action of $\Diff_\partial (V)$ on $\Riem^+ (V)_{g_0,g_1}$ is homotopy abelian.
\end{lem}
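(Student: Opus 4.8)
The strategy mirrors the proof of \cite[Theorem 4.1]{BERW}, now using stable metrics as the essential substitute for the ad hoc constructions available there in the simply-connected case. The goal is to show that for any two diffeomorphisms $f_0, f_1 \in \Diff_\partial(V)$, the pullback maps satisfy $f_0^* f_1^* \simeq f_1^* f_0^*$ on $\Riem^+(V)_{g_0,g_1}$. The idea is to realise both composites geometrically via a ``doubling'' or ``gluing mapping cylinders'' construction: form a larger cobordism out of several copies of $V$ glued along their boundaries, use the diffeomorphisms to build self-maps of this larger object, and then exploit the fact that translating a handle/region past another along a product region is harmless (the Eckmann--Hilton-type argument), where the harmlessness is guaranteed precisely by right-stability of $h$.

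\textbf{Key steps.} First I would fix the right-stable metric $h \in \Riem^+(V)_{g_0,g_1}$ and recall that by Theorem~\ref{thm:right-and-left-stable} (using that both $M_i \to V$ are $2$-connected) $h$ is in fact \emph{stable}, so both $\mu(h,\_)$ and $\mu(\_,h)$ are weak equivalences for all choices of complementary cobordism and boundary condition. Second, I would pass to the ``block'' or ``mapping cylinder'' model as in \cite[\S4]{BERW}: consider $V^{op} \cup V$, or more precisely a chain $V \cup V^{op} \cup V \cup \cdots$, equipped with the stable metric $h$ (and $h$ reflected) on each block; stability makes gluing on such a block a homotopy equivalence, so the homotopy type of $\Riem^+(V)_{g_0,g_1}$ is unchanged and the $\Diff_\partial$-action is transported along these equivalences. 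Third, given $f_0, f_1$, extend each by the identity to a diffeomorphism of the larger glued cobordism supported in a single block; because the blocks are separated by product (cylinder) regions carrying cylinder metrics, one can slide the support of the ``$f_0$-block'' past the ``$f_1$-block'' through an isotopy of the ambient glued manifold, using a diffeomorphism of the cylinder region that shuffles the two sub-blocks — this shuffle is the source of commutativity. Fourth, I would check that this sliding isotopy is compatible with the metric data: the cylinder regions carry $g_i + dt^2$, which is preserved (up to the reparametrisation homotopies of Lemma~\ref{lem:BicollarLength}), so the isotopy descends to a homotopy between $f_0^* f_1^*$ and $f_1^* f_0^*$ on the model for $\Riem^+(V)_{g_0,g_1}$, and finally transport back along the stability equivalences to conclude.

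\textbf{Main obstacle.} The crux — and the reason one needs stability rather than merely an existence statement for psc metrics — is that the ``sliding'' isotopy genuinely changes which glued cobordism one is looking at, so without a gluing result that is a \emph{weak equivalence} (not just a map) one cannot compare the self-maps of the various intermediate spaces. Right-stability is exactly what upgrades the abstract gluing maps $\mu(h,\_)$ and $\mu(\_,h)$ to weak equivalences, and the delicate point is to organise the argument so that every intermediate gluing that appears is of the form covered by Definition~\ref{defn:right-stablemaintext} with the \emph{same} stable $h$ (or $h^{op}$), rather than some metric for which stability is unknown. Carefully bookkeeping the supports of the diffeomorphisms, ensuring they stay in the non-product ``core'' of each block while all the gluing/sliding happens in the product collars, and verifying that the Eckmann--Hilton shuffle of the cylinder region is an actual diffeomorphism rel boundary (so that the resulting homotopy of pullback maps is well-defined and lands in $\Riem^+(V)_{g_0,g_1}$ throughout) is the technical heart; this is where I expect the proof to follow \cite[\S4]{BERW} most closely, with stability plugged in wherever \cite{BERW} used the simply-connected geometry.
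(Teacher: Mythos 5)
Your opening move --- using Theorem \ref{thm:right-and-left-stable} to upgrade the right-stable $h$ to a stable one --- is exactly the paper's first, and only geometric, step. After that the paper does not run a block-shuffling argument: its proof consists of citing the formal result \cite[Lemma 4.2]{BERW} and repeating the argument of \cite[Theorem 4.1]{BERW}. The mechanism there is purely homotopy-theoretic: left- and right-stability make the gluing maps $\mu(\_,h)$ and $\mu(h,\_)$ weak equivalences (in particular $\pi_0$-surjective), which, in essence, allows one to trade the pullback action of a diffeomorphism for self-maps realised by gluing psc pieces at the two ends $M_0$ and $M_1$; those commute because the gluing regions are disjoint, and the formal lemma organises the comparison maps so that this commutativity descends to the original action on $\Riem^+(V)_{g_0,g_1}$. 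No isotopy of the underlying manifolds enters.

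The genuine gap in your plan is the step you yourself identify as its heart: the ``sliding isotopy'' that ``shuffles'' the block supporting $f_0$ past the block supporting $f_1$. Both $f_0$ and $f_1$ are diffeomorphisms of the \emph{same} copy of $V$. If you extend them by the identity into \emph{different} blocks of a chain $V \cup V^{op} \cup V \cup \cdots$, they commute on the nose and no sliding is needed; the real work is then to identify ``act in the first block'' and ``act in a later block'' with the original action, and that identification is not given by any ambient move: a diffeomorphism supported in a cylinder region fixes everything outside it, the glued cobordism has in general no self-diffeomorphism rel $\partial$ exchanging two $V$-blocks, and even if it did, conjugating by it would not act trivially on homotopy classes unless it were isotopic to the identity rel $\partial$. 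If instead you keep both supports in one block, the required shuffle is exactly the (false in general) claim that two arbitrary elements of $\Diff_\partial(V)$ can be given disjoint supports up to isotopy. Finally, ``transport back along the stability equivalences'' conceals the actual delicate point: if the two realisations of the action are compared with the original one via two \emph{different} equivalences, one only learns that the image of $\pi_0\Diff_\partial(V)$ in $\pi_0\hAut(\Riem^+(V)_{g_0,g_1})$ commutes with a conjugate of itself, which does not give abelianness. Arranging compatible comparisons is precisely what \cite[Lemma 4.2]{BERW} packages, and it is why the paper's proof is just the stability upgrade plus that citation; your sketch would need to reproduce that formal bookkeeping rather than an isotopy argument.
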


\begin{proof}
By Theorem \ref{thm:right-and-left-stable}, $h$ is also left-stable. One then uses the formal result \cite[Lemma 4.2]{BERW} in the same way as it was used in the proof of \cite[Theorem 4.1]{BERW}. 
\end{proof}

The next step is to reduce Theorem \ref{thm:general-abelianness-thm} to Lemma \ref{lem:abeliannessproof-formal}. This is significantly more complicated than the corresponding step in \cite[p. 801 f]{BERW}.

\begin{lem}\label{lem:moving-the-2-skeleton-to-boundary}
Let $W$ be a manifold of dimension $d \geq 6$ and assume that $\partial W \to W$ is $2$-connected. Pick a handle decomposition of $W$ without handles of index $\geq d-2$ and let $K$ be the union of all handles of index $\leq 2$. Then there is a collar $C$ of $\partial W$ in $W$ which contains $K$ in its interior. 
\end{lem}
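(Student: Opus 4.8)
\textbf{Proof proposal for Lemma \ref{lem:moving-the-2-skeleton-to-boundary}.}

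The plan is to reduce the statement to a purely smooth-topological fact about handle decompositions, namely that when $\partial W \to W$ is $2$-connected one can push all handles of index $\leq 2$ off towards the boundary. First I would observe that since $\partial W \to W$ is $2$-connected, the relative handle decomposition of $(W, \partial W)$ can be arranged (by the usual handle trading and cancellation arguments, see e.g.\ \cite{WallGC}) to have no handles of index $0$, $1$, or $2$; that is, all handles have index $\geq 3$. Equivalently, viewing $W$ built from $\partial W \times [0,1]$ by attaching handles, the handles of index $\leq 2$ can be cancelled against higher-index handles at the expense of possibly introducing more handles of index in the range $3 \leq k \leq d-2$, using that $d \geq 6$ so that there is enough room (this is the same handle-cancellation input that is invoked in the proof of Theorem \ref{thm:stableL}).

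Given such a handle decomposition, turn $W$ upside down: $W$ is obtained from a collar of $\partial W$ by attaching handles, where now the handle attached last has lowest \emph{co}index, i.e.\ the dual decomposition of $W$ relative to $\partial W$ has all handles of index $\geq 3$. But the hypothesis of the lemma is stated in terms of a \emph{given} handle decomposition without handles of index $\geq d-2$, with $K$ the union of all handles of index $\leq 2$. The point is then that any two handle decompositions of $W$ with these index restrictions are related, and the subcomplex $K$ can be isotoped into a collar: concretely, one uses that the cores of the handles of index $\leq 2$ form a $2$-complex $K$, and that by general position together with the $2$-connectivity of $\partial W \hookrightarrow W$ — which by obstruction theory means $W$ deformation retracts, after attaching cells of dimension $\geq 3$, onto $\partial W$ — the inclusion $K \hookrightarrow W$ is homotopic (rel nothing) to a map into $\partial W$, and since $2 \cdot 2 < d$ and $2 + \dim(\partial W) = 2 + (d-1) < 2d - 1$, this homotopy can be realised by an ambient isotopy carrying $K$ into an arbitrarily small collar neighbourhood of $\partial W$. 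Taking $C$ to be a collar containing the image of $K$ under such an isotopy (and thickening slightly so that $K$ lies in its interior) gives the desired collar.

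The main obstacle I expect is making precise the step ``$K$ can be isotoped into a collar of $\partial W$'': one must ensure not merely that $K$ is homotopic into $\partial W$ but that the homotopy is covered by an ambient isotopy of $W$ fixing a neighbourhood of $\partial W$ appropriately, and that the resulting embedded $2$-complex (with its handle-thickening) sits inside a genuine product collar $\partial W \times [0,1]$ with $K$ in the interior. This requires care with the corners of the handles and with the fact that $K$ is not a manifold but a handlebody of dimension $\leq 2$ embedded in $W$; the cleanest route is probably to argue via the dual handle decomposition (all dual handles of index $\geq 3$, hence $K^{\vee} := W \setminus \mathrm{int}(C)$ built from $\partial_1 C$ by handles of index $\geq 3$ for a suitable collar $C$) and then identify that $C$ contains $K$. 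I would also need the low-dimensional cases of handle trading (trading $0$- and $1$-handles, and $2$-handles against $3$-handles) which is where $d \geq 6$ and the $2$-connectivity are both used; these are standard but should be cited carefully, e.g.\ from \cite{WallGC} or \cite{Kervaire}.
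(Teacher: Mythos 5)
Your core argument --- compress the $2$-dimensional spine of $K$ into $\partial W$ using $\pi_i(W,\partial W)=0$ for $i\le 2$, upgrade the homotopy to an isotopy by general position in dimension $d\ge 6$, extend it ambiently, and finally shrink $K$ into a small neighbourhood of its spine --- is a workable route, but it is genuinely different from the paper's, and it leaves open exactly the step you flag. The paper never treats the cores as a polyhedron: it keeps $K$ as a compact codimension-zero submanifold and uses Phillips' submersion theorem \cite{Phillips}. Since a collar $B\to W$ is $2$-connected, the map $\Fr(TB)\to \Fr(TW)$ of frame bundles is $2$-connected, so (as $K$ has handle dimension $\le 2$) the map $\Epi(TK;TB)\to \Epi(TK;TW)$ is $0$-connected, and by the submersion theorem so is $\Sub(K;B)\to \Sub(K;W)$; this produces a regular homotopy $h_t$ from the inclusion $K\subset W$ to a submersion into $B$. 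General position (this is where $d\ge 6$ enters there) allows one to assume $h_t$ embeds a neighbourhood $U$ of the cores; choosing an embedding $e:K\cong L\subset U$ isotopic to the identity of $K$, Hirsch's isotopy extension theorem \cite{Hirsch} applied to the family of embeddings $h_t\circ e$ of the compact manifold $K$ gives an ambient isotopy $f_t$ with $f_1(K)\subset B$, and one sets $C=f_1^{-1}(B)$. What this buys is that isotopy extension is only ever invoked for a compact manifold; your version instead needs an ambient-isotopy extension for an isotopy of the embedded $2$-complex of cores (together with a regular-neighbourhood compression of $K$ towards its spine), which is true but must be carried out cell by cell and is precisely the ``care with corners'' you defer. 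Your approach is more elementary in that it avoids the h-principle, at the cost of this polyhedral bookkeeping.

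Three smaller points. First, your opening paragraph (trading away low-index handles in a decomposition relative to $\partial W$, dual decompositions, and the claim that any two decompositions with these index restrictions are ``related'') does no work: the lemma concerns the specific $K$ arising from the given decomposition of $W$ built from $\emptyset$, which necessarily contains handles of index $\le 2$, and your own ``concretely'' argument does not use that paragraph --- nor does the paper. Second, the dimension count for the key step should be $d\ge 2\cdot 2+2=6$: a generic one-parameter family of maps of a $2$-complex into a $d$-manifold consists of embeddings only in that range, whereas your inequality $2\cdot 2<d$ only controls a single generic map. Third, at the end one should take $C=f_1^{-1}(B)$ for a collar $B$ containing $f_1(K)$ (as the paper does); ``a collar containing the image of $K$ under the isotopy'' contains $f_1(K)$, not $K$ itself.
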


\begin{proof}
We prove the following equivalent statement: if $B$ is an arbitrary collar, then there is an isotopy $f_t$ of $W$, relative to the boundary, so that $f_0=\id$ and $f_1 (K) \subset B$. If that is proven, then $C:= f_1^{-1} (B)$ does the job.

We make use of the Phillips submersion theorem \cite{Phillips}. Let $\Sub (K;W)$ be the space of all submersions $K \to W$ and let $\Epi (TK;TW)$ be the space of bundle epimorphisms $TK \to TW$. The submersion theorem states that the differential map $\Sub (K;W)\to \Epi (TK;TW)$ is a weak homotopy equivalence. The space $\Epi (TK;TW)$ is the space of cross-sections in the fibre bundle $\Fr (TK) \times_{\GL_d (\bR)} \Fr (TW) \to K$, where $\Fr(V)$ denotes the frame bundle of the vector bundle $V$. 

Since the inclusion $B \to W$ is $2$-connected, so is the induced map $\Fr (TB) \to \Fr (TW)$ on frame bundles. Because the handle dimension of $K$ is $\leq 2$, it follows that the map $\Epi (TK;TB) \to  \Epi (TK; TW)$ is $0$-connected, and by the submersion theorem, $\Sub (K;B) \to \Sub (K;W)$ is $0$-connected.

This implies the existence of a regular homotopy $h_t$ from the inclusion map $h_0:K \to W$ to a submersion $h_1:K \to B$. Since $d=\dim (W) \geq 6$, we can assume, by general position, that $h_t$ embeds a neighbourhood $U \subset K$ of the cores of the handles of $K$. There is an embedding $e:K \cong L \subset U$ which is isotopic (as a map $K \to K$) to the identity. 

By the isotopy extension theorem \cite[Theorem 8.1.4]{Hirsch}, we find an isotopy $f_t$ of $W$, starting with the identity and such that $f_t \circ e = h_t \circ e$. Finally, the embeddings $K  \subset W$ and $K \stackrel{e}{\to } K \subset W$ are ambiently isotopic, so that altogether, the inclusion $K \to W$ is ambiently isotopic to an embedding into the collar $B$.
\end{proof}

\begin{proof}[Proof of Theorem \ref{thm:general-abelianness-thm}]
Pick a handlebody decomposition of $W$ using only handles of index $\leq d-3$. Let $K$ be the union of all handles of index $\leq 2$, and let $V:= W \setminus \inter{K}$. Let $M_0:= \partial K$ and $M_1 := \partial W$, so that $V: M_0 \leadsto M_1$. Observe that both inclusions $M_i \to V$ are $2$-connected. According to Lemma \ref{lem:moving-the-2-skeleton-to-boundary}, there is a collar $C$ of $\partial W$ in $W$ containing $K$. Let $\Diff (W,C) \subset \Diff_{\partial} (W)$ be the subgroup of diffeomorphisms, fixing $C$. The inclusion $\Diff (W,C) \to \Diff_{\partial} (W)$ is a weak equivalence, and it factors through the group $\Diff (W,K) \cong \Diff_{\partial} (V)$. Therefore, the map $\Diff_{\partial}(V) \to \Diff_{\partial} (W)$ extending diffeomorphisms as the identity over $K$, induces a surjection on path components. Therefore, it is enough to show that the image of $\pi_0(\Diff_{\partial} (V))\to \pi_0(\hAut (\Riem^+ (W)_g))$ is an abelian group. Write $g_1:= g$. 
By Theorem \ref{thm:existence-stable-metrics1}, there is $g_0 \in  \Riem^+ (M_0)$, and a right stable $h' \in \Riem^+ (K)_{g_0}$. Pick $h'' \in \Riem^+ (V)_{g_0,g_1}$ so that $h' \cup h''$ and $h$ lie in the same path component of $\Riem^+ (W)_{g_1}$, using that $h'$ is right stable. Since $h$ and $h'$ are right stable, so is $h''\in \Riem^* (V)_{g_0,g_1}$, by Lemma \ref{lem:2-out-of-three}.

Because $h'$ is right stable, the map $\mu (h',\_) : \Riem^+ (V)_{g_0,g_1} \to \Riem^+ (W)_{g_1}$ is a weak equivalence, and since $\mu (h',\_)$ is $\Diff_{\partial }(V)$-equivariant, 
we have reduced the problem to showing that $\Diff_{\partial}(V)$ acts on $\Riem^+ (V)_{g_0,g_1}$ through an abelian group. But $h'' \in \Riem^+ (V)_{g_0,g_1}$ is stable, so that by Lemma \ref{lem:abeliannessproof-formal}, the action of $\Diff_\partial (V)$ on $\Riem^+ (V)_{g_0,g_1}$ is homotopy abelian. 
\end{proof}

\begin{cor}\label{cor:to-abelianness}
Let $W:S^{d-1} \leadsto M$ be a cobordism, $d \geq 6$ and assume that $M \to W$ is $2$-connected. Let $g \in \Riem^+ (M)$ be such that there exists a right stable $h \in \Riem^+ (W)_{g_{\round},g}$. Then the action of $\Diff_\partial (W)$ on $\Riem^+ (W)_{g_\round,g}$ is homotopy abelian.
\end{cor}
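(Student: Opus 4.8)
\emph{Proof strategy.} The plan is to cap off the incoming sphere boundary and reduce to Theorem~\ref{thm:general-abelianness-thm}. Fix once and for all a torpedo metric $g_\tor^d$ on $D^d$ whose collar near $\partial D^d = S^{d-1}$ is $g_\round + dt^2$, and set $W' := W \cup_{S^{d-1}} D^d$, regarded as a cobordism $\emptyset \leadsto M$. Up to homotopy $W'$ is obtained from $W$ by attaching a single cell of dimension $d \geq 6$, so the inclusion $W \hookrightarrow W'$ is an isomorphism on $\pi_i$ for $i \leq d-2$; since $M \to W$ is $2$-connected, the composite $M \to W'$ is therefore still $2$-connected. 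There is a homomorphism $\Diff_\partial(W) \to \Diff_\partial(W')$ extending a diffeomorphism by the identity over $D^d$ (legitimate since elements of $\Diff_\partial(W)$ are the identity near $S^{d-1}$), and the ``cap off'' map
\[
\mathsf{glue} := \mu(g_\tor^d, \_) \colon \Riem^+(W)_{g_\round, g} \lra \Riem^+(W')_g
\]
is equivariant for it.

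The only non-formal ingredient is that \emph{$(D^d, g_\tor^d)$, viewed as the cobordism $\emptyset \leadsto S^{d-1}$, is right stable}, equivalently that gluing in the torpedo disc along a round boundary sphere is a weak equivalence. I would deduce this from Chernysh's Theorem~\ref{thm:chernysh-theorem}: given any cobordism $V\colon S^{d-1}\leadsto M_2$ and $g_2\in\Riem^+(M_2)$, apply that theorem to $D^d\cup_{S^{d-1}}V$ with the submanifold ``$V$'' of Theorem~\ref{thm:chernysh-theorem} taken to be a point and the normal fibre $\bR^d$ an open neighbourhood of the capping disc; then the subspace of $\Riem^+(D^d\cup V)_{g_2}$ of metrics agreeing with $g_\tor^d$ near the capping disc is, by restriction, canonically homeomorphic to $\Riem^+(V)_{g_\round,g_2}$, and its inclusion is a weak equivalence by Theorem~\ref{thm:chernysh-theorem}. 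That inclusion, precomposed with the homeomorphism, is exactly $\mu(g_\tor^d,\_)$, so $(D^d,g_\tor^d)$ is right stable. Taking $V=W$, $M_2=M$, $g_2=g$ shows $\mathsf{glue}$ is a weak equivalence; and by Lemma~\ref{lem:2-out-of-three}\,(\ref{it:2of3ii}), since $(W,h)$ is right stable by hypothesis, $h' := g_\tor^d \cup h$ is a right-stable element of $\Riem^+(W')_g$. Thus $(W',g,h')$ satisfies the hypotheses of Theorem~\ref{thm:general-abelianness-thm}.

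It then remains a formal diagram chase. By Theorem~\ref{thm:general-abelianness-thm} the image of $\pi_0(\Diff_\partial(W')) \to \pi_0(\hAut(\Riem^+(W')_g))$ is an abelian group. Since $\mathsf{glue}$ is a $\Diff_\partial(W)$-equivariant weak equivalence, conjugation by it gives an isomorphism $\hAut(\Riem^+(W)_{g_\round,g}) \cong \hAut(\Riem^+(W')_g)$ under which the $H$-map $\Diff_\partial(W) \to \hAut(\Riem^+(W)_{g_\round,g})$ corresponds to the composite $\Diff_\partial(W)\to\Diff_\partial(W')\to\hAut(\Riem^+(W')_g)$; hence the image of $\pi_0(\Diff_\partial(W))$ in $\pi_0(\hAut(\Riem^+(W)_{g_\round,g}))$ is isomorphic to a subgroup of an abelian group, so is abelian, which is exactly the assertion.

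I expect the main obstacle to be the middle paragraph: setting up the identification of ``metrics in torpedo form near the capping disc'' with $\Riem^+(V)_{g_\round,g_2}$ precisely enough (including the collar-length bookkeeping, absorbed by Corollary~\ref{cor:to-lem:BicollarStretching}) for Theorem~\ref{thm:chernysh-theorem} to apply, and checking that all the maps involved are equivariant for $\Diff_\partial(W)$ acting through extension by the identity over $D^d$. The connectivity count and the final diagram chase are routine.
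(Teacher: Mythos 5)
Your proposal is correct and follows essentially the same route as the paper: cap off the sphere boundary with $(D^d,g_\tor^d)$, check that $g_\tor^d\cup h$ makes $D^d\cup W:\emptyset\leadsto M$ satisfy the hypotheses of Theorem \ref{thm:general-abelianness-thm}, and transfer homotopy commutativity back along the $\Diff_\partial(W)$-equivariant weak equivalence $\mu(g_\tor^d,\_)$. The only difference is cosmetic: you re-derive the right-stability of the torpedo disc from Theorem \ref{thm:chernysh-theorem}, whereas the paper takes this as known (it is \cite[Corollary 2.2]{BERW}) and simply asserts the hypotheses hold.
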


\begin{proof}
The cobordism $V= D^d \cup W: \emptyset \leadsto M$, with the metric $h':g_\tor^d \cup h$, satisfies the hypotheses of Theorem \ref{thm:general-abelianness-thm}. Let $f_0,f_1 \in \Diff_\partial (W)$ and let $\bar{f_i}$ be the extension to $V$ by the identity. The diagram
\[
 \xymatrix{
 \Riem^+ (W)_{g_\round,g} \ar[r]^{f_i^*} \ar[d]^{\mu(g_\tor^d,\_)} &  \Riem^+ (W)_{g_\round,g}\ar[d]^{\mu(g_\tor^d,\_)} \\
 \Riem^+ (V)_g \ar[r]^{\bar{f_i}^*} & \Riem^+ (V)_g
 }
\]
commutes and the vertical maps are homotopy equivalences. By Theorem \ref{thm:general-abelianness-thm}, $\bar{f_0}^*$ and $\bar{f_1}^*$ commute up to homotopy, and hence so do $f_0^*$ and $f_1^*$.
\end{proof}

\subsection{Starting the proof of Theorem \ref{thm:factorization}}

Let us first recall the set-up of Theorem \ref{thm:factorization}. We are given a $2n$-manifold $W$ such that $(W, \partial W)$ is $(n-1)$-connected, and we form the second stage
$$\tau : W \overset{\ell_W}\lra X \overset{\theta}\lra BO(2n)$$
of the Moore--Postnikov tower for the map $\tau$ classifying the tangent bundle of $W$. The map $\theta$ classifies a vector bundle $\theta^*\gamma_{2n} \to X$, and a \emph{$\theta$-structure} on a $2n$-dimensional manifold $U$ is a bundle map $\hat{\ell}_U : TU \to \theta^*\gamma_{2n}$. Similarly, a \emph{$\theta$-structure} on a $(2n-1)$-manifold $P$ is a bundle map $\hat{\ell}_P : \epsilon^1 \oplus TP \to \theta^*\gamma_{2n}$; if $P$ is the boundary of a collared manifold $U$, then the collar gives an identification $TU\vert_P = \epsilon^1 \oplus TP$ and hence a $\theta$-structure on $U$ induces one of $P$. We call the underlying map $\ell_U : U \to X$ the \emph{structure map} of the $\theta$-manifold $(U, \hat{\ell}_U)$.

In this section, we construct a certain sequence of manifolds $W_k$ such that $W_0=W$ and such that $B \Diff_\partial (W_k)$ homologically approximates the infinite loop space $\loopinf{}_0 \MT{\theta}$. Let us recall the following

\begin{defn}[Wall \cite{WallFin}]
A space $X$ is of \emph{type $(F_k)$} if there exists a finite complex $K$ and a $k$-connected map $K \to X$.
\end{defn}

Each connected space is of type $(F_0)$, and being of type $(F_1)$ means that $\pi_1 (X)$ is finitely generated. A connected space $X$ is of type $(F_2)$ if and only if $\pi_1 (X)$ is finitely presented and $\pi_2 (X)$ is a finitely generated $\bZ[\pi_1 (X)]$-module. As we have a 2-connected map $\ell_W : W \to X$ from a compact manifold, the space $X$ is of type $(F_2)$. If $X$ happens to be of type $(F_n)$ then the proof of Theorem \ref{thm:factorization} we shall give can be somewhat simplified: many of the constructions in this section are designed to deal with the more complicated case when $X$ is only of type $(F_2)$. 

As we have said, we need to construct a suitable sequence of manifolds. Before we state the result, we introduce some notation. Let $M_p$, $p \in \bZ$, be closed $(2n-1)$-dimensional manifolds and $V_{p,p+1}: M_p \leadsto M_{p+1}$ be cobordisms. We then let, for $p<q$,
\[
V_{p,q}:= V_{p,p+1} \cup V_{p+1,p+2} \cup \cdots \cup V_{q-1,q}
\]
and let $V_{-\infty,q}=\bigcup_{p=-\infty}^{q} V_{p,q}$ be the infinite composition. 

\begin{thm}\label{thm:construction-of-mfd-sequence}
Then there exist $(2n-1)$-dimensional $\theta$-manifolds $M_p$ for $p \in \bZ$ and $\theta$-cobordisms $V_{p,p+1}: M_p \leadsto M_{p+1}$, such that
\begin{enumerate}[(i)]
\item $M_0=\partial W$, and $V_{0,1}$ is the manifold $W \cup_{\partial W} (\partial W \times [0,1] \setminus D^{2n})$, viewed as a $\theta$-cobordism $\partial W \to S^{2n-1}$.
\item For $p \geq 1$, $M_p= S^{2n-1}$ and $V_{p,p+1}= (S^{2n-1} \times [p,p+1]) \# (S^n \times S^n)$.  
\item\label{it:LHSConn} The inclusion $M_p \to V_{p,p+1}$ is $(n-1)$-connected, for all $p$. 
\item The inclusion $M_{p+1} \to V_{p,p+1}$ is $2$-connected, for all $p<0$.
\item The structure map $\ell_{V_{-\infty,p}}: V_{-\infty,p} \to X$ is $n$-connected if $p \geq 0$.
\item Each of the manifolds $M_p$ contains an embedded disc $D^{2n-1}$, and each $V_{p,p+1}$ contains an embedded strip $S_{p,p+1} = D^{2n-1} \times [p,p+1]$ restricting to the embedded discs on both ends. For $p \geq 0$, the disc in $M_p=S^{2n-1}$ is the lower half disc $D_-^{2n-1}$, and the strip in $V_{p,p+1}= (S^{2n-1} \times [p,p+1]) \# (S^n \times S^n)$ is $D_-^{2n-1} \times [p,p+1]$ (and we assume that the connected sum with $S^n \times S^n$ is taken on the upper half part of $S^{2n-1} \times [p,p+1]$). 
\end{enumerate}
\end{thm}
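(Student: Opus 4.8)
The plan is to build the sequence in three stages --- the ``standard part'' $(V_{p,p+1})_{p\geq 1}$, the ``initial cobordism'' $V_{0,1}$, and the ``negative part'' $(V_{p,p+1})_{p<0}$ --- the last of which is the technical heart. For $p\geq 1$ I would take $M_p=S^{2n-1}$ with its standard $\theta$-structure (the one bounding $D^{2n}$) and $V_{p,p+1}=(S^{2n-1}\times[p,p+1])\#(S^n\times S^n)$, with the connected sum performed in the interior of the upper half of the strip and a $\theta$-structure extending the standard one on the boundary; such an extension exists by the standard $\theta$-connected-sum construction (using that $\theta$ is $2$-coconnected and $S^n\times S^n$ is stably parallelisable, as in \cite{BERW}). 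The strip required in (vi) is $D^{2n-1}_-\times[p,p+1]$. Since this cobordism is obtained from $S^{2n-1}\times[p,p+1]$ by attaching handles of index $\geq n$, the pair $(V_{p,p+1},M_p)$ is $(n-1)$-connected, which gives (ii) and (iii) for $p\geq 1$.

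For the initial cobordism I would set $V_{0,1}:=W\cup_{\partial W}(\partial W\times[0,1]\setminus\mathring{D}^{2n})$, with $\theta$-structure given by $\hat\ell_W$ on $W$, by the product on the collar, and by the standard structure on the sphere bounding the removed disc; this is (i), and the embedded disc in $M_0=\partial W$ is chosen to match the strips. Removing an interior ball only introduces a wedge summand $S^{2n-1}$, so $V_{0,1}\simeq W\vee S^{2n-1}$ rel $\partial W$; hence $\ell_{V_{0,1}}$ is $2$-connected, and composing the $(n-1)$-connected maps $\partial W\to W\to V_{0,1}$ gives (iii) for $p=0$. Because $(W,\partial W)$ is $(n-1)$-connected, $W$ is built from $\partial W$ by handles of index $\geq n$, so $(V_{0,1},M_0)$ --- and then, by the previous paragraph, $(V_{0,q},M_0)$ for every $q\geq 1$ --- is $(n-1)$-connected.

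The negative part is where the weak hypothesis that $X$ be only of type $(F_2)$ must be dealt with. Starting from $M_0=\partial W$ with its $2$-connected structure map, I would inductively construct cobordisms $\overline{V_{-k-1,-k}}\colon M_{-k}\leadsto M_{-k-1}$, each built from $M_{-k}\times[0,1]$ by finitely many handles of index in $\{3,\dots,n\}$ carrying compatible $\theta$-structures; reversing gives $V_{p,p+1}$ for $p<0$, and this index range is exactly what forces (iii) and (iv) (a handle of index $k\in\{3,\dots,n\}$ on the $M_{p+1}$-side makes that inclusion $2$-connected, and is a handle of index $2n-k\geq n$ from $M_p$, making that inclusion $(n-1)$-connected). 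The handles would be chosen by surgery below the middle dimension in the spirit of Kreck and of Galatius--Randal-Williams: as $\dim M=2n-1\geq 5$, every element of $\ker(\pi_j(M)\to\pi_j(X))$ with $j\leq n-1$ is carried by a framed embedded sphere on which we surger --- the choice of nullhomotopy into $X$ making the surgery $\theta$-equivariant --- and every element of $\pi_n(X)$ is brought into the image of $\pi_n(M)$ by a connected sum with $S^n\times S^{n-1}$ (a trivial $n$-handle whose core maps to the desired class). The new difficulty compared with \cite{BERW} is that the $\bZ[\pi_1]$-modules involved need not be finitely generated, so infinitely many surgeries are needed: I would fix countable generating sets of all the relevant kernels and image-complements and, at the $k$-th stage, treat the ``$k$-th batch'', organising the bookkeeping so that every obstruction is eventually removed, so that obstructions are cleared in order of increasing degree (so the only surgeries that can reintroduce a cleared class are those in degree $n-1$, whose newly created spheres are again null in $X$ via the co-core disc and are themselves enumerated and killed later), and so that all surgeries avoid a fixed disc $D^{2n-1}$ that is transported along the sequence to produce the discs and strips of (vi). Then $V_{-\infty,0}=\bigcup_k V_{-k,0}$ has $\ell_{V_{-\infty,0}}$ $n$-connected, which is (v) for $p=0$; for $p\geq 1$, writing $V_{-\infty,p}=V_{-\infty,0}\cup_{M_0}V_{0,p}$ and using that $(V_{0,p},M_0)$ is $(n-1)$-connected (so that, by Blakers--Massey, $(V_{-\infty,p},V_{-\infty,0})$ is $(n-1)$-connected and $\pi_n(V_{-\infty,p},V_{-\infty,0})\cong\pi_n(V_{0,p},M_0)$ is generated by the attaching spheres of the $n$-handles of $V_{0,p}$, which are null in $X$ hence null in $V_{-\infty,0}$ by injectivity of $\pi_{n-1}(V_{-\infty,0})\to\pi_{n-1}(X)$), a short diagram chase shows $\ell_{V_{-\infty,p}}$ is still $n$-connected, proving (v) in general.

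The hard part will be the negative construction: making this infinite surgery process actually converge to an $n$-connected structure map while simultaneously respecting the index bounds (iii)--(iv), the $\theta$-structures, and the fixed disc of (vi). The non-finiteness of the modules forces the swindle, and the fact that $\pi_n$ lives in the middle dimension of the $(2n-1)$-manifolds $M_p$ makes the degree-$(n-1)$ and degree-$n$ steps delicate, which is why the order in which obstructions are cleared must be chosen with care.
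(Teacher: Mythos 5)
Your outline reaches the statement by the same basic mechanism as the paper --- handles of index in $\{3,\dots,n\}$ attached along the free boundary, which is exactly what forces (iii) and (iv), plus trivially attached handles to create image in $\pi_n$ --- but your treatment of the failure of $X$ to be of type $(F_n)$ is genuinely different. You surger directly towards $X$, enumerating countable generating sets of the relevant kernels and cokernels and spreading infinitely many surgeries over the stages $p<0$, with a bookkeeping argument (and you correctly observe that only the index-$n$ surgeries can create new kernel classes, in degree $n-1$). The paper instead pushes all of this bookkeeping into homotopy theory: Lemma \ref{lem:typef2-contable-pii} produces an $n$-connected map $C \to X$ with $C$ obtained from $W$ by attaching \emph{one cell at a time} in dimensions $3,\dots,n$, and the proof works with the fibrant replacements $Y_m$ of the finite stages, each of type $(F_n)$ and mapping $2$-connectedly to the next; Lemma \ref{lem:attaching-handles-typeFn}, which rests on Wall's finiteness theorem \cite{WallFin}, then kills $\pi_{\leq n}(Y_m, \,\cdot\,)$ by \emph{finitely many} handles at each stage, so after every step the structure map is exactly $n$-connected to the current target, and (v) drops out in the colimit with no convergence argument, no re-created obstructions and no diagonal enumeration. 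Your route avoids the auxiliary targets but has to carry out the limiting argument you yourself flag as the hard part; the paper's route buys a clean finite statement per stage at the price of the two homotopy-theoretic lemmas.

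Two places in your sketch need repair if you carry it out. First, your explicit list of surgeries only makes $\pi_j$ injective for $j\leq n-1$ and surjective for $j=n$; for $n$-connectivity you must also arrange surjectivity of $\pi_j(\,\cdot\,)\to\pi_j(X)$ for $3\leq j\leq n-1$, which is not automatic (it is handled by the same device of trivially attached $j$-handles with prescribed structure maps on their cores, and since $\theta$ is $2$-coconnected one has $\pi_j(X)\cong\pi_j(BO(2n))$ for $j\geq 3$, so finitely many handles per degree suffice); relatedly, your enumeration scheme needs the inductive fact, proved by a Blakers--Massey argument, that $(V_{-k,0}\cup_{M_0}W, M_{-k})$ remains $(n-1)$-connected at every stage, so that kernel classes of the growing union in degrees $\leq n-1$ are visible on the current boundary $M_{-k}$ --- the paper needs the same fact in order to re-apply Lemma \ref{lem:attaching-handles-typeFn}. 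Second, your justification of (iii) for $p=0$ is garbled: $V_{0,1}$ is a subspace of $W$, so there is no map $W\to V_{0,1}$ to compose with, and the asserted splitting $V_{0,1}\simeq W\vee S^{2n-1}$ is neither needed nor naturally formulated; the correct (and easier) point is that deleting an interior $2n$-disc does not change $\pi_j$ for $j\leq 2n-2$, so $(V_{0,1},\partial W)$ is still $(n-1)$-connected and $\ell_{V_{0,1}}$ is still $2$-connected, which is all that is used.
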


We prepare for the proof of Theorem \ref{thm:construction-of-mfd-sequence} with some basic homotopy theory.

\begin{lem}\label{lem:typef2-contable-pii}\mbox{}
There is a relative $CW$-complex $(C, W)$ and an extension $\ell_C : C \to X$ of $\ell_W : W \to X$ such that
\begin{enumerate}[(i)]
\item $\ell_C$ is $n$-connected, and 
\item there exists a filtration $W=C_0 \subset C_1 \subset C_2 \subset\cdots$ of $C$ such that $C_{m+1} $ is obtained from $C_m$ by attaching a single cell of dimension $3 \leq k_m \leq n$.
\end{enumerate}
\end{lem}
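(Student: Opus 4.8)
The plan is the standard procedure for turning a map into an $n$-connected one by attaching cells of dimensions between $3$ and $n$; the only genuine issue is to arrange that at each stage only \emph{countably} many cells are needed, so that they can be added one at a time, as required by (ii).

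First I would construct $C$ as an increasing union $W = C^{(2)} \subset C^{(3)} \subset \cdots \subset C^{(n)} = C$ together with compatible extensions $\ell^{(j)}: C^{(j)} \to X$ of $\ell_W$, such that $\ell^{(j)}$ is $j$-connected and $C^{(j)}$ is obtained from $W$ by attaching cells of dimensions in $\{3,\dots,j\}$. The base case $j = 2$ is the hypothesis. Given $\ell^{(j)}$ with $2 \leq j \leq n - 1$, I would pick a generating set $\{\alpha_i\}_{i \in I}$ of the relative homotopy group $\pi_{j+1}(X, C^{(j)})$ as a module over $\bZ[\pi_1(W)]$, represent each $\alpha_i$ by a map of triples $(D^{j+1}, S^j, *) \to (X, C^{(j)}, *)$, and let $C^{(j+1)}$ be obtained from $C^{(j)}$ by attaching, one cell at a time, a $(j+1)$-cell along $\alpha_i\vert_{S^j}$ for each $i \in I$, extending $\ell^{(j)}$ over each new cell by $\alpha_i\vert_{D^{j+1}}$. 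Concatenating all these single-cell attachments over $j = 2, \dots, n - 1$ produces the filtration required by (ii).

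To check that $\ell^{(j+1)}$ is again $(j+1)$-connected, I would use that $(C^{(j+1)}, C^{(j)})$ is a relative $CW$-pair with cells only in dimension $j+1$, hence $j$-connected: $\pi_i(C^{(j+1)}, C^{(j)}) = 0$ for $i \leq j$, while $\pi_{j+1}(C^{(j+1)}, C^{(j)})$ is the free $\bZ[\pi_1(W)]$-module on $I$. The long exact sequence of the triple $(X, C^{(j+1)}, C^{(j)})$ then yields $\pi_i(X, C^{(j+1)}) \cong \pi_i(X, C^{(j)}) = 0$ for $i \leq j$, and exhibits $\pi_{j+1}(X, C^{(j+1)})$ as the cokernel of the map $\pi_{j+1}(C^{(j+1)}, C^{(j)}) \to \pi_{j+1}(X, C^{(j)})$ carrying the $i$-th generator to $\alpha_i$; since the $\alpha_i$ generate the target, this cokernel vanishes. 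Hence $\pi_i(X, C^{(j+1)}) = 0$ for all $i \leq j+1$, i.e.\ $\ell^{(j+1)}$ is $(j+1)$-connected.

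The substantive point is the countability needed to choose the $\alpha_i$, and hence the cells, to be countable in number. Here I would argue that $W$ and each $C^{(j)}$ have the homotopy type of $CW$-complexes with countably many cells --- $W$ being a compact manifold, and $C^{(j)}$ being $W$ with countably many cells attached --- so that all of their homotopy groups are countable; and that $X$ likewise has countable homotopy groups, since $\pi_1(X) \cong \pi_1(W)$ is finitely presented, $\pi_2(X)$ is a quotient of $\pi_2(W)$, and $\pi_i(X) \cong \pi_i(BO(2n))$ is finitely generated for $i \geq 3$ because $\theta$ is $2$-coconnected. The long exact sequence of the pair $(X, C^{(j)})$ then makes $\pi_{j+1}(X, C^{(j)})$ countable, so a countable generating set, and a countable set of cells, may be chosen. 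I expect this countability bookkeeping to be the only delicate point: note that it is \emph{not} the type-$(F_2)$ property of $X$ that is used --- that would only give $X$ a finite $2$-skeleton, not countability of its higher homotopy --- but rather the explicit description of $X$ coming from the Moore--Postnikov factorisation of $\tau$.
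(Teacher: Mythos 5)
Your inductive construction and the verification of $(j+1)$-connectedness (attaching cells along module generators of $\pi_{j+1}(X,C^{(j)})$ and running the long exact sequence of the triple) is the same as the paper's argument for part (i), and your countability discussion — that $\pi_*(X)$ is countable because it is built from $\pi_*(W)$ and $\pi_*(BO(2n))$, not because $X$ is of type $(F_2)$ — is exactly the point the paper makes.

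However, there is a genuine gap in your treatment of part (ii). You attach all the $3$-cells, then all the $4$-cells, and so on, and assert that ``concatenating all these single-cell attachments'' gives the required filtration. But the filtration in (ii) is indexed by $\bN$: one needs $W = C_0 \subset C_1 \subset \cdots$ with $C = \bigcup_m C_m$ and each step a single cell. In the situation the lemma is designed for, $X$ is only of type $(F_2)$ (the paper explicitly flags that the $(F_n)$ case would simplify matters), so some $\pi_{j+1}(X,C^{(j)})$ may fail to be finitely generated and a given dimension may require countably infinitely many cells. With your ordering (all $3$-cells before any $4$-cell) the $4$-cells then never appear at any finite stage, so the union of your $\bN$-indexed filtration is only $C^{(3)}$, not $C$; the honest indexing of your concatenation is an ordinal like $\omega\cdot(n-2)$, which is not what (ii) asks for. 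The fix is the standard reordering argument the paper uses: since each attaching map has compact image, it lands in the union of $W$ with finitely many of the other cells, so one can choose a bijection $\bN \to \{\text{cells of } C\smallsetminus W\}$ such that the $m$-th cell attaches into the union of $W$ with the first $m-1$ cells, interleaving dimensions; taking $C_m$ to be $W$ together with the first $m$ cells then gives (ii). With that addition your proof agrees with the paper's.
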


\begin{proof}
First note that the homotopy groups of $X$ are countable, as those of $BO(2n)$ and $W$ are.

For part (i), start by letting $C^{(2)} := W$; the structure map $\ell_{C^{(2)}} := \ell_W : C^{(2)}\to X$ is $2$-connected. Look at the portion 
\[
\pi_3 (X)\lra \pi_3 (X, C^{(2)}) \lra \pi_2 (C^{(2)})
\]
of the long exact sequence for the pair $(X, C^{(2)})$. The two outer homotopy groups are countable, so $\pi_3(X, C^{(2)})$ is too, and hence we can attach countably many $3$-cells to $C^{(2)}$ to arrive at a countable complex $C^{(3)}$ with a $3$-connected map to $X$. Repeating this process up to dimension $n$, we obtain a countable complex $C=C^{(n)}$ with an $n$-connected map $\ell_C : C \to X$.

For part (ii), note that $C$ has countably many $k$-cells for $3 \leq k \leq n$. Now we use the well-known property that the attaching map of each $k$-cell goes into a finite subcomplex of $C^{(k-1)}$. Using this property, we find a bijection from $\bN$ to the set of cells of $C \setminus C^{(2)}$, such that the attaching map of the $m$th cell goes into the subcomplex of $C$ given by the union of the first $m-1$ cells with $C^{(2)}$. Now let $X_m$ be the union of $C^{(2)}$ with the first $m$ cells.
\end{proof}

\begin{lem}\label{lem:attaching-handles-typeFn}
Let $\theta: Y \to BO(2n)$ be a fibration, assume that $Y$ is of type $(F_n)$ and let $k \leq n$. Let $V$ be a $2n$-manifold with a $\theta$-structure $\ell_V: V \to Y$ which is $(k-1)$-connected. Furthermore, assume that the inclusion $M:=\partial V \to V$ is $(n-1)$-connected. Then there is a $\theta$-cobordism $W:M \leadsto M'$ which is a composition of elementary cobordisms of index $\geq k$ and $\leq n$, such that the structure map $\ell_V \cup \ell_W : V \cup_M W \to Y$ is $n$-connected.
\end{lem}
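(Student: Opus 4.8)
The plan is to build the cobordism $W$ handle by handle, in increasing order of index $k, k+1, \dots, n$, at each stage attaching handles to kill the relevant relative homotopy group of the structure map. Here is the induction in detail. Suppose inductively that we have built a $\theta$-cobordism $W^{(j)} : M \leadsto M^{(j)}$, a composition of elementary cobordisms of index between $k$ and $j-1$, such that $\ell_V \cup \ell_{W^{(j)}} : V \cup_M W^{(j)} \to Y$ is $(j-1)$-connected and such that $M^{(j)} \to V \cup_M W^{(j)}$ is still $(n-1)$-connected (the base case $j=k$ being the hypothesis, with $W^{(k)}$ the trivial cobordism). Write $V^{(j)} := V \cup_M W^{(j)}$ for brevity. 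We wish to attach handles of index $j$ to $V^{(j)}$ along $M^{(j)}$ so as to make the structure map $j$-connected. Consider the relative homotopy group $\pi_j(Y, V^{(j)})$; since $Y$ is of type $(F_n)$ and $V^{(j)}$ is a compact manifold, and $j \leq n$, this group is a finitely generated $\bZ[\pi_1(Y)]$-module (this uses $j \geq k \geq 3$ for the module structure to make sense without basepoint subtleties, together with the fact that $\pi_1(V^{(j)}) \to \pi_1(Y)$ is already an isomorphism since $j-1 \geq 2$). Choose finitely many generators, represent each by a map $(D^j, S^{j-1}) \to (Y, V^{(j)})$, and homotope the boundary spheres into the interior collar $M^{(j)} \times [0,1) \subset V^{(j)}$; since $2(j-1) \leq 2(n-1) < d = 2n$ we may use the Whitney trick / general position to make these $(j-1)$-spheres disjoint embeddings in $M^{(j)}$ with trivial normal bundle (the normal bundle is stably trivial and of rank $2n-j > j$, so actually trivial), and the $\theta$-structure on the normal bundle of each embedded sphere is standard after a homotopy, since $\theta : Y \to BO(2n)$ is a fibration and the spheres are null-homotopic in $Y$ through the chosen disc. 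This produces surgery data, hence handles of index $j$; attaching them produces $W^{(j+1)} := W^{(j)} \cup (\text{handles})$ with structure map $V^{(j+1)} := V \cup_M W^{(j+1)} \to Y$ now $j$-connected (by the usual handle-attachment effect on relative homotopy: attaching $j$-handles along a generating set of $\pi_j(Y, V^{(j)})$ kills that group while not affecting lower ones), and one checks $M^{(j+1)} \to V^{(j+1)}$ remains $(n-1)$-connected because the handles have index $j \leq n$.

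After the step $j = n$ we obtain $W := W^{(n+1)}$, a composition of elementary cobordisms of index between $k$ and $n$, with $\ell_V \cup \ell_W : V \cup_M W \to Y$ an $n$-connected map, which is exactly the conclusion. One should also record that the $\theta$-structure on $V$ extends over each handle: this is automatic, since attaching a handle along $\theta$-surgery data comes with an extension of the $\theta$-structure by construction, and the structure on $W$ restricts to the given one on $M = \partial V$.

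I expect the main obstacle to be the bookkeeping around the module-finiteness of $\pi_j(Y, V^{(j)})$ and its realization by finitely many \emph{embedded} framed spheres carrying compatible $\theta$-structures. Finite generation as a $\bZ[\pi_1]$-module is where the hypothesis ``$Y$ of type $(F_n)$'' is essential (and is the reason the lemma is stated for such $Y$ rather than arbitrary $Y$); the argument is a relative version of Wall's finiteness/handle-trading discussion and one must be a little careful that $V^{(j)}$ itself remains of the right type at each stage, which it does because it is a compact manifold throughout. The embedding and framing step is routine general position plus obstruction theory once one notes the dimension inequality $2(n-1) < 2n$ gives room for the Whitney trick and $2n - j \geq n \geq 3$ makes normal bundles trivial; the compatibility of $\theta$-structures is handled exactly as in the Galatius--Randal-Williams surgery-below-the-middle-dimension arguments, using that $\theta$ is a fibration.
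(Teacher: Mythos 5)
Your proposal is correct and is essentially the paper's own argument: Wall's finiteness theorem supplies finitely many classes of $\pi_j(Y,V^{(j)})$ to attach, the $(n-1)$-connectivity of $(V^{(j)},M^{(j)})$ (via the exact sequence of the triple) pushes their boundary spheres into $M^{(j)}$, general position and stable triviality of the normal bundle give framed embedded surgery data, and the $\theta$-structure is extended over the handles as in Galatius--Randal-Williams; the paper merely phrases the induction as a single connectivity-raising step (``it is enough to make the map $k$-connected'') and quotes Wall's Theorem A directly instead of arguing finite generation over $\bZ[\pi_1]$. One cosmetic quibble: the normal bundle of $S^{j-1}\subset M^{(j)}$ has rank $2n-j$ over a $(j-1)$-dimensional base, so the relevant triviality criterion is $2n-j>j-1$ (which holds for all $j\leq n$), not $2n-j>j$ as you wrote, which fails at the harmless edge case $j=n$.
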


\begin{proof}
It enough to find a $W$  so that $\ell_V \cup \ell_W : V \cup_M W \to Y$ is $k$-connected, by induction. By \cite[Theorem A]{WallFin}, there are finitely many elements $x_1, \ldots,x_r \in \pi_k (Y,V)$ so that attaching $k$-cells to $V$ along all of those $x_i$ and extending the map to $Y$ yields an $n$-connected map $V \cup \bigcup_{i=1}^{r} D^k \to Y$.
The long exact sequence of the triple $M \to V \to Y$ yields the piece
\[
\pi_k (Y, M) \lra \pi_k (Y,V) \lra \pi_{k-1}(V, M)=0.
\]
Hence each of the elements $x_i$ can be represented by a square of the form
\[
 \xymatrix{
 S^{k-1} \ar[r]^{g} \ar[d] & M\ar[d]^{\ell_W|_{M}} \\
 D^k \ar[r]^{f} & Y.
 }
\]
Since $k \leq n$, we can perturb $g$ so that it becomes an embedding, by general position. The normal bundle of $g$ is stably trivial because $\nu_g \oplus TS^{k-1} \oplus \bR \cong g^* \ell_W|_{M}^* \theta^*\gamma_{2n} \cong f^* \theta^*\gamma_{2n}$. Again because $k \leq n$, the normal bundle of $g$ is actually trivial, so that $g$ extends to an embedding $\hat{x}_i : S^{k-1} \times D^{2n-k} \to M$, which may be taken to be disjoint for different $i$. Now attach $k$-handles along these embeddings. The $\theta$-structure on $V$ can be extended over these handles, see e.g. \cite[\S 4.1]{GRW}, and the cobordism $W$ may be taken to be the composition of the elementary cobordisms of these $r$ handle attachments.
\end{proof}

\begin{proof}[Proof of Theorem \ref{thm:construction-of-mfd-sequence}]
For $p \geq 0$, the manifolds $M_p$ and $V_{p,p+1}$ are already prescribed in the Theorem. Note that $V_{0,1} \cong W \setminus D^{2n}$.

Let $\theta_m : Y_m \to X$ be the fibrant replacement of the map $\ell_C\vert_{C_m} : C_m \to X$ from Lemma \ref{lem:typef2-contable-pii} and let $f'_m : Y_m \to Y_{m+1}$ be the map induced by the inclusion $C_m \subset C_{m+1}$. Note that $f'_m$ is a map over $X$ and hence over $BO(2n)$. Furthermore, each $Y_m$ is of type $(F_n)$ and each $f_m$ is $2$-connected. 

The manifold $W$ has a $\theta_0$-structure with $n$-connected structure map $W \to Y_0$, by construction, and $\partial W \to W$ is $(n-1)$-connected. Using the map $f'_0$, we can view $W$ as a $\theta_1$-manifold, and the structure map $W \to Y_1$ is $2$-connected. Applying Lemma \ref{lem:attaching-handles-typeFn}, we find a $\theta_1$-cobordism $V_{-1,0}: M_{-1} \leadsto M_0=\partial W$ such that the structure map $V_{-1,0} \cup_{M_0} W \to Y_1$ is $n$-connected, such that $M_0 \to V_{-1,0} $ is $2$-connected and such that $M_{-1} \to V_{-1,0}$ is $(n-1)$-connected. Continuing in this way, we construct a sequence of cobordisms
\[
\ldots \stackrel{V_{-3,-2}}{\leadsto} M_{-2} \stackrel{V_{-2,-1}}{\leadsto} M_{-1}  \stackrel{V_{-1,0}}{\leadsto} M_0 
\]
such that $V_{-p-1,-p}$ has a $Y_{p+1}$-structure and such that the structure map $V_{p,0} \cup_{M_0} W \to Y_p$ is $n$-connected. This property persists if a disc is removed from the interior of $W$, so we have constructed the required manifolds. 

It remains to embed strips $D^{2n-1} \times [p,p+1]$ into $V_{p,p+1}$, for all $p \leq -1$. This can be done inductively, using the connectivity of the cobordisms with respect to both ends.
\end{proof}

\subsection{Application of high-dimensional Madsen--Weiss theory}

We continue to let $\theta: X\to BO(2n)$ be the second Moore--Postnikov stage of the structure map $W \to BO(2n)$. In Theorem \ref{thm:construction-of-mfd-sequence}, we constructed certain manifolds $M_p$ and cobordisms $V_{p,p+1}: M_p \leadsto M_{p+1}$. 

Next let
$$\ell_{V_{-p,\infty}} : V_{-p,\infty} \overset{\ell^p_{V_{-p,\infty}}}\lra X_p \overset{u_p}\lra X$$
be the $n$th stage of the Moore-Postnikov factorization of the $\theta$-structure on $V_{-p, \infty}$, and define $\theta_p = \theta \circ u_p : X_p \to BO(2n)$. The maps
$$V_{-p,q} \subset V_{-p,\infty} \overset{\ell^p_{V_{-p,\infty}}}\lra X_p \overset{u_p}\lra X$$
form the $n$th stage of the Moore-Postnikov factorization of the $\theta$-structure on $V_{-p, q}$ for any $q \geq 1$. By the naturality of the Moore--Postnikov factorization, we obtain a commutative diagram
\[
\xymatrix{
V_{0,1} \ar[r] \ar[d]^{\ell_0} & V_{-1,1} \ar[r] \ar[d]^{\ell_1} & V_{-2,2} \ar[d]^{\ell_2} \ar[r] &\cdots \ar[r]& V_{-\infty,\infty} \ar[d]^{\ell}\\
X_0 \ar[d]^{\theta_0} \ar[r]^{f_1} & X_1 \ar[r]^{f_2 } \ar[d]^{\theta_1} & X_2 \ar[r] \ar[d]^{\theta_2}& \cdots  \ar[r]& X \ar[d]^{\theta}\\
BO(2n) \ar@{=}[r] & BO(2n)\ar@{=}[r] & BO(2n)\ar@{=}[r] & \cdots \ar@{=}[r] & BO(2n).
}
\]
Note that each space $X_p$ is of type $(F_n)$, as $\ell_p := \ell^p_{V_{-p,\infty}}\vert_{V_{-p,p}} : V_{-p,p} \to X_p$ is an $n$-connected map from a compact manifold, and that $\theta_p$ is $n$-coconnected. Moreover, the natural map $\hocolim_p X_p \to X$ is a weak equivalence and each of the maps $f_p:X_p \to X_{p+1}$ are $2$-connected.

Now let $\Diff_\partial (V_{p,q},S_{p,q})$ be the group of diffeomorphisms of $V_{p,q}$ which are the identity near the boundary and on the embedded strip $S_{p,q} \subset V_{p,q}$. Let $\cI=(\bN_0, \leq)$ and $\cJ=(\bN, \leq)$ as directed sets. 
The classifying spaces of these groups induce a directed system of spaces, indexed by $\cI \times \cJ$, namely
\[
(p,q) \longmapsto B \Diff_\partial  (V_{-p,q},S_{-p,q})
\]
with the maps induced by extending diffeomorphism as the identity. The parametrised Pontrjagin--Thom construction provides us with a map
\[
\alpha_{p,q}:B \Diff_\partial   (V_{-p,q},S_{-p,q}) \lra \Omega_{M_{-p},M_q} \loopinf{-1} \MT{\theta_p}
\]
and the diagram
\[
\xymatrix{
B \Diff_\partial  (V_{-p,q},S_{-p,q}) \ar[r]^{\alpha_{p,q}}\ar[d] & \Omega_{M_{-p},M_q} \loopinf{-1} \MT{\theta_p}  \ar[d]\\
B \Diff_\partial  (V_{-p,q+1},S_{-p,q+1}) \ar[r]^{\alpha_{p,q+1}} & \Omega_{M_{-p},M_{q+1}} \loopinf{-1} \MT {\theta_p},
}
\]
where the right hand vertical is the concatenation with the path induced by $V_{q,q+1}$, commutes\footnote{Strictly speaking this does not make sense: the parametrised Pontrjagin--Thom, or ``scanning", construction depends on choices e.g.\ of tubular neighbourhoods, so in principle just gives a homotopy class of map. There are several standard solutions to this problem. One is that in fact the Pontrjagin--Thom construction depends on a \emph{contractible} space of choices and one can simply build such choices in to a model for the classifying space $B \Diff_\partial (V_{-p,q},S_{-p,q})$ used. Another solution is to replace the spectrum $\MT{\theta_p}$ with a homotopy equivalent spectrum $\mathsf{GRW\theta_p}$ constructed from all manifolds in $\bR^\infty$, not just affine ones: this receives a canonical scanning map when $B \Diff_\partial (V_{-p,q},S_{-p,q})$ is modelled as a space of $\theta_p$-manifolds in $\bR^\infty$. Either solution works for our purposes, though we slightly prefer the second option. An extensive discussion of this option is given Section 2.5 of \cite{JEIndex2}.}. We get a map 
\[
\hocolim_q B \Diff_\partial  (V_{-p,q},S_{-p,q})  \stackrel{\hocolim\limits_q \alpha_{p,q}}{\lra} \hocolim_q \Omega_{M_{-p},M_q} \loopinf{-1} \MT {\theta_p}  \simeq \loopinf{} \MT {\theta_p}.
\]

\begin{thm}\label{thm:acyclic}
This map is acyclic on to the path-component which it hits.
\end{thm}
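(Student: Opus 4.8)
The plan is to deduce Theorem \ref{thm:acyclic} from the high-dimensional Madsen--Weiss theorem of Galatius--Randal-Williams, exactly as in \cite[Theorem 4.1 and \S 7]{GRW} (and as recalled in the $G = 1$ case in \cite[\S 3]{BERW}). First I would fix $p$ and pass to the homotopy colimit over $q$: the group completion / scanning theorem for cobordism categories identifies $\hocolim_q B\Diff_\partial(V_{-p,q},S_{-p,q})$ with (a path component of) $\Omega^\infty \MT{\theta_p}$ up to \emph{homology equivalence}, provided the relevant tangential structure and connectivity hypotheses hold. Concretely, the manifolds $V_{p,p+1}$ for $p \geq 0$ are obtained from a cylinder by connect-summing with $S^n \times S^n$, so stabilising by gluing them on is precisely the standard stabilisation map in the parametrised surgery / Madsen--Weiss machine; the embedded strips $S_{p,q}$ provide the basepoints (``once-punctured'' condition) needed to make the scanning map a map of $E_1$- (indeed $E_2$-) algebras and to apply the group-completion theorem.

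The key input to be verified is the connectivity hypothesis of the relevant Madsen--Weiss theorem, namely \cite[Theorem 1.5]{GRW} in the version for manifolds with boundary: one needs the structure map $\ell_{V_{-p,q}} : V_{-p,q} \to X_p$ to be highly connected relative to the outgoing boundary as $q \to \infty$, and one needs the tangential structure $\theta_p : X_p \to BO(2n)$ to be spherical / the moduli space to be connected in the appropriate range. By Theorem \ref{thm:construction-of-mfd-sequence} (and the subsequent discussion) we have arranged that $\ell_p : V_{-p,p} \to X_p$ is $n$-connected, that $X_p$ is of type $(F_n)$, that $\theta_p$ is $n$-coconnected, and that $M_p \to V_{p,p+1}$ is $(n-1)$-connected; together with $n \geq 3$ (so $2n \geq 6$, the range where \cite{GRW} applies) these are exactly the hypotheses needed. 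Thus the scanning map
\[
\hocolim_q B\Diff_\partial(V_{-p,q},S_{-p,q}) \lra \Omega^\infty \MT{\theta_p}
\]
is a homology equivalence onto the path components in its image.

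To upgrade ``homology equivalence onto the image'' to ``\emph{acyclic} map onto the path-component it hits'', recall that an acyclic map is one inducing an isomorphism on homology with all (including twisted) coefficients; by a standard argument (e.g.\ \cite[\S 4]{BERW} or the plus-construction literature) it suffices to know that the map is a homology equivalence and that it induces an epimorphism of fundamental groups with \emph{perfect} kernel. The latter is built in to the Galatius--Randal-Williams machine: the homology-fibration / group-completion setup shows the map is, up to homotopy, a plus-construction with respect to a perfect subgroup of $\pi_1$ (the ``stable mapping class group'' being perfect in this range, since its abelianisation is detected by the homology equivalence). So the source and target have the same homology with all local coefficient systems pulled back from the target, which is the definition of acyclicity onto that component.

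The main obstacle I anticipate is purely bookkeeping rather than conceptual: one must check that the indexing and stabilisation in the bidirected system $\cI \times \cJ$ genuinely matches the hypotheses of the cited Madsen--Weiss theorem --- in particular that the ``once-punctured'' condition coming from the strips $S_{-p,q}$ is maintained compatibly through all the maps, that the $\theta_p$-structures are the ones produced by Theorem \ref{thm:construction-of-mfd-sequence}, and that passing to $\hocolim_q$ before comparing to $\Omega^\infty\MT{\theta_p}$ (rather than after) does not disturb the connectivity estimates. Since each $X_p$ is genuinely of finite type $(F_n)$ and each $\theta_p$ genuinely $n$-coconnected, \cite[Theorem 1.5]{GRW} applies verbatim, and no further work beyond quoting it (and the group-completion theorem) is required; the proof is therefore short.
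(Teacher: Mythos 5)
Your overall strategy---stabilise over $q$ and quote the Galatius--Randal-Williams stable moduli space theorem---is the same as the paper's, which deduces the statement from \cite[Theorem 1.5]{GRWHomStab2}. However, two steps in your write-up are genuinely problematic. First, your ``upgrade'' from a homology equivalence to an acyclic map does not work as stated: the relevant path component of $\loopinf{}\MT{\theta_p}$ has fundamental group $\pi_1(\MT{\theta_p})$, which is typically a nontrivial abelian group, so the stabilised diffeomorphism/mapping class group is certainly \emph{not} perfect; what must be perfect is the kernel of the map on $\pi_1$, and even granting that, an integral homology equivalence together with a perfect kernel does not by itself yield an isomorphism on homology with all local coefficient systems pulled back from the target---that is exactly the content of acyclicity. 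The point is that no upgrade is needed: \cite[Theorem 1.5]{GRWHomStab2} produces an \emph{acyclic} map onto the component hit, not merely an integral homology equivalence, and its proof identifies that map with $\hocolim_q \alpha_{p,q}$. As written, your extra step is both unnecessary and unsound.

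Second, the real work, which your proposal elides, is identifying $B\Diff_\partial(V_{-p,q},S_{-p,q})$ with a path component of the moduli space $\mathcal{N}^{\theta_p}_n(P;\hat{\ell}_P^{(q)})$ to which the theorem applies, where $P$ is the boundary of $V^\circ_{-p,q} := V_{-p,q}\setminus \inter{S_{-p,q}}$. This requires showing that the space of $\theta_p$-structures on $V^\circ_{-p,q}$ extending the boundary structure and having $n$-connected underlying map is nonempty and contractible, an obstruction-theoretic argument using that $(V^\circ_{-p,q},P)$ is $(n-1)$-connected (via Theorem \ref{thm:construction-of-mfd-sequence} (\ref{it:LHSConn})) and that $\theta_p$ is $n$-coconnected. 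Your framing via group completion/scanning for cobordism categories, with the strips supplying an $E_1$-structure, is not how the comparison goes: the strips are removed precisely so that gluing on $V_{q,q+1}$ becomes boundary connect-sum with $S^n\times S^n$ relative to the fixed boundary $P$, which is the stabilisation appearing in the cited theorem; and no sphericity hypothesis on $\theta_p$ is needed (that is an input to the homological stability theorems, not to the stable acyclicity statement used here). With these two repairs---quote the acyclicity statement directly and supply the contractibility of the space of $n$-connected structures---your argument coincides with the paper's.
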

\begin{proof}
This is simply an application of \cite[Theorem 1.5]{GRWHomStab2}; let us explain how to connect that with the formulation given here. We assume familiarity with the notation of \cite{GRWHomStab2}.

The manifold $V^\circ_{-p,q} := V_{-p,q} \setminus \inter{S_{-p,q}}$, with rounded corners, has boundary $P := \partial V^\circ_{-p,q}$. The inclusion $M_{-p} \subset V_{-p,q}$ is $(n-1)$-connected by Theorem \ref{thm:construction-of-mfd-sequence} (\ref{it:LHSConn}) and induction (if $q \geq 1$). It follows that the inclusions $M_{-p} \setminus \inter{D^{2n-1}} \subset V^\circ_{-p,q}$ and hence $P \subset V^\circ_{-p,q}$ are also $(n-1)$-connected.

The map
$$V_{-p,q}^\circ \subset V_{-p,q} \subset V_{-p, \infty} \overset{\ell^p_{V_{-p,\infty}}}\lra X_p$$
is $n$-connected, and restricts to a $\theta_p$-structure $\hat{\ell}_{P}^{(q)} : \epsilon^1 \oplus TP \to \theta_p^*\gamma_{2n}$ on $P = \partial V_{-p,q}^\circ$. The space $\mathrm{Bun}_{n, \partial}^{\theta_p}(TV_{-p,q}^\circ; \hat{\ell}_{P}^{(q)})$ of $\theta_p$-structures which extend $\hat{\ell}_{P}^{(q)}$ and whose underlying map is $n$-connected is therefore nonempty, and by obstruction theory (using that $(V_{-p,q}, P)$ is $(n-1)$-connected and $\theta_p$ is $n$-coconnected) is hence contractible. Thus
$$B\Diff_\partial (V_{-p,q}, S_{-p,q}) = B\Diff_\partial(V^\circ_{-p,q}) \simeq \mathrm{Bun}_{n, \partial}^{\theta_p}(TV_{-p,q}^\circ; \hat{\ell}_{P}^{(q)}) \hcoker \Diff_\partial(V_{-p,q})$$
is a path component of the space $\mathcal{N}_n^{\theta_p}(P; \hat{\ell}_P^{(q)})$.

The manifold $V^\circ_{-p,q+1}$ is obtained from $V^\circ_{-p,q}$ by gluing on $V^\circ_{q,q+1}$, or in other words by boundary connect-sum with a manifold diffeomorphic to $S^n \times S^n \setminus \inter{D^{2n}}$, so its boundary can also be identified with $P$, with a potentially different $\theta_p$-structure $\hat{\ell}^{(q+1)}_{P}$. (In fact, as $\theta_p$ is once-stable, see \cite[\S 5.1]{GRW}, it can be arranged that $\hat{\ell}^{(q+1)}_{P} = \hat{\ell}^{(q)}_{P}$, but that does not matter for this argument.) This identifies $\hocolim_q B \Diff (V_{-p,q},S_{-p,q})$ with a path-component of the space
$$\hocolim_{q \to \infty} \mathcal{N}_n^{\theta_p}(P ; \hat{\ell}^{(q)}_{P})$$
and \cite[Theorem 1.5]{GRWHomStab2} provides an acyclic map from this space to $\loopinf{} \MT {\theta_p}$. The proof of \cite[Theorem 1.5]{GRWHomStab2} shows that it is indeed the map $\hocolim\limits_q \alpha_{p,q}$ which is acyclic.
\end{proof}

Next, we use naturality of the above discussion with respect to $p$. Namely, the maps $f_p: X_p \to X_{p+1}$ induce maps of spectra $\MT {\theta_p} \to \MT {\theta_{p+1}}$ and the diagram
\[
\xymatrix{
B \Diff_\partial  (V_{-p,q},S_{-p,q}) \ar[r]\ar[d] & \Omega_{M_{-p},M_q} \loopinf{-1} \MT {\theta_p}  \ar[d]\\
B \Diff_\partial  (V_{-p-1,q},S_{-p,q}) \ar[r] & \Omega_{M_{-p-1},M_{q}} \loopinf{-1} \MT {\theta_{p+1}}.
}
\]
commutes\footnote{The discussion of the previous footnote applies here too.}. Here the right hand vertical map is the map induced by $f_p$, followed by path concatenation with the path corresponding to the cobordism $V_{-p-1,-p}$. Altogether, we have two directed systems of spaces indexed by $\cI \times \cJ$, namely
\[
(p,q) \longmapsto B \Diff_\partial  (V_{-p,q},S_{-p,q}) \,\,\text{ and }\,\, (p,q) \longmapsto \Omega_{M_{-p},M_q} \loopinf{-1} \MT {\theta_p},
\]
and the maps $\alpha_{p,q}$ together define a map of directed systems. 

\begin{cor}\label{cor:acyclic-map-in-double-hocolim}\mbox{}
\begin{enumerate}[(i)]
\item The map
\[
\hocolim_{p,q} B \Diff_\partial  (V_{-p,q},S_{-p,q}) \stackrel{\hocolim_{p,q}\alpha_{p,q}}{\lra} \hocolim_{p,q} \Omega_{M_{-p},M_q} \loopinf{-1} \MT {\theta_p} 
\]
is acyclic.
\item There is a weak equivalence
 \[
\hocolim_{p,q} \Omega_{M_{-p},M_q} \loopinf{-1} \MT {\theta_p}  \lra \loopinf{}_0 \MT{\theta}.
\]
\item The diagonal map 
\[
\hocolim_p B \Diff_\partial (V_{-p,p},S_{-p,p}) \lra \hocolim_{p,q} B \Diff_\partial (V_{-p,q},S_{-p,q})
\]
is a weak equivalence.
\item\label{it:FinalAcyclicMap} There is an acyclic map 
\[
\hocolim_p B \Diff_\partial (V_{-p,p},S_{-p,p}) \lra \loopinf{}_0 \MT{\theta}.
\]
\end{enumerate}
\end{cor}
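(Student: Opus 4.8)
The plan is to derive the whole corollary from Theorem \ref{thm:acyclic} by formal manipulations of homotopy colimits, obtaining (iv) as the composite of the maps in (iii), (i) and (ii). Throughout I would use the Fubini principle $\hocolim_{(p,q)\in\cI\times\cJ}\simeq\hocolim_{p\in\cI}\hocolim_{q\in\cJ}$ and the fact that every indexing category in play is sequential (hence filtered), so that homology with local coefficients and homotopy groups all commute with the relevant colimits.

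For (i), I would first fix $p$: Theorem \ref{thm:acyclic} says that $\hocolim_q\alpha_{p,q}\colon\hocolim_q B\Diff_\partial(V_{-p,q},S_{-p,q})\to\hocolim_q\Omega_{M_{-p},M_q}\loopinf{-1}\MT{\theta_p}\simeq\loopinf{}\MT{\theta_p}$ is acyclic onto the path component it hits. Letting $p$ vary, the maps $f_p\colon X_p\to X_{p+1}$ assemble these into a map of sequential $\cI$-diagrams which is levelwise acyclic, and I claim the induced map on $\hocolim_\cI$ is again acyclic. The cleanest argument is via the characterisation ``$g$ acyclic onto its image $\Leftrightarrow$ $g$ induces an isomorphism on homology with coefficients in every local system pulled back from the target'': since such homology commutes with sequential homotopy colimits and each stage is an isomorphism, so is the colimit. (Alternatively, a point of $\hocolim_\cI$ comes from a finite stage, the homotopy fibre over it is the sequential hocolim of the homotopy fibres at the finite stages, each of which is acyclic by Theorem \ref{thm:acyclic}, hence the hocolim is acyclic.) Applying Fubini then gives that $\hocolim_{p,q}\alpha_{p,q}=\hocolim_p(\hocolim_q\alpha_{p,q})$ is acyclic.

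For (ii), I would again collapse the $q$-direction first, using the equivalence $\hocolim_q\Omega_{M_{-p},M_q}\loopinf{-1}\MT{\theta_p}\simeq\loopinf{}\MT{\theta_p}$ already recorded before Theorem \ref{thm:acyclic}, reducing the claim to identifying $\hocolim_p\loopinf{}\MT{\theta_p}$ with $\loopinf{}_0\MT{\theta}$. The maps $f_p$ induce maps of Madsen--Tillmann--Weiss spectra $\MT{\theta_p}\to\MT{\theta_{p+1}}$, and because $\hocolim_p X_p\to X$ is a weak equivalence, each $X_p$ is of type $(F_n)$, and the formation of $\MT{(-)}$ commutes with sequential homotopy colimits of the base, one gets $\hocolim_p\MT{\theta_p}\simeq\MT{\theta}$; since $\loopinf{}$ of a spectrum commutes with sequential homotopy colimits up to weak equivalence, $\hocolim_p\loopinf{}\MT{\theta_p}\simeq\loopinf{}\MT{\theta}$. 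Tracking path components (the relevant component of $\loopinf{}\MT{\theta_p}$ is the one hit by $\alpha_{p,q}$, these are compatible under stabilisation, and they map to the component of $\alpha_W$) then shows the map is a weak equivalence onto $\loopinf{}_0\MT{\theta}$.

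For (iii), I would observe that the diagonal $\Delta=\{(r,r)\}\subset\cI\times\cJ=\bN_0\times\bN$ is cofinal: given $(p,q)$ with $q\geq1$ we have $(p,q)\leq(r,r)$ for $r:=\max(p,q)\geq1$; and $\Delta$ is itself a chain, hence filtered. Therefore restriction to $\Delta$ induces a weak equivalence on homotopy colimits, which is precisely the diagonal map of the statement. Finally (iv) follows by composing: the map of (iii) is a weak equivalence (hence acyclic), the map of (i) is acyclic, and the map of (ii) is a weak equivalence (hence acyclic), and a composite of acyclic maps is acyclic, so the composite $\hocolim_p B\Diff_\partial(V_{-p,p},S_{-p,p})\to\loopinf{}_0\MT{\theta}$ is acyclic. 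I expect the only genuinely delicate points to be bookkeeping ones: making sure ``acyclic onto the component it hits'' survives the sequential homotopy colimit in (i) (for which the local-coefficient-homology characterisation is the safest tool, together with the fact that points of a sequential hocolim come from finite stages), and pinning down the path-component identifications in (ii) so that the target is exactly $\loopinf{}_0\MT{\theta}$ and not merely some unspecified component of $\loopinf{}\MT{\theta}$. Everything else is a formal consequence of Theorem \ref{thm:acyclic} and standard properties of homotopy colimits.
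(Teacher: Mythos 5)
Your proposal is correct and follows essentially the same route as the paper: part (i) via Theorem \ref{thm:acyclic} together with the fact that directed homotopy colimits of acyclic maps are acyclic (the paper cites the homological characterisation of acyclic maps for this, exactly the local-coefficient argument you give), part (ii) from $\hocolim_p X_p \to X$ being a weak equivalence carried through the Madsen--Tillmann--Weiss spectra and the path-space/loop-space identification, part (iii) by cofinality of the diagonal, and part (iv) by composing. Your extra bookkeeping about path components is a reasonable elaboration of what the paper leaves implicit, but it is not a different argument.
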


\begin{proof}
Part (i) follows from the fact the maps $\hocolim_q \alpha_{p,q}$ are acyclic (by Theorem \ref{thm:acyclic}) and that directed (homotopy) colimits of acyclic maps are acyclic (which follows from the homological characterization of acyclic maps \cite[\S 1]{HH}).

Part (ii) follows from the fact that $\hocolim_p X_p \to X$ is a weak equivalence, which carries over to the Madsen--Tillmann--Weiss spectra and their infinite loop spaces, and the fact that path spaces (when non-empty) are homotopy equivalent to loop spaces. 

Part (iii) follows from the fact that $p \mapsto (p,p) : \bN \to \cI \times \cJ$ is cofinal, and part (iv) follows by combining all the previous parts.
\end{proof}

\subsection{Finishing the proof of Theorem \ref{thm:factorization}}

After all these preliminaries, we can now prove Theorem \ref{thm:factorization} by the same sort of arguments as those in \cite[\S 4]{BERW}. Because the argument is so similar, we shall be quite brief. We use the notation introduced in the previous section. 

\begin{lem}\label{lem:proof-fact-2}
There are psc metrics $g_p \in \Riem^+ (M_p)$ and $h_{p,p+1} \in \Riem^+ (V_{p,p+1})_{g_p,g_{p+1}}$ such that
\begin{enumerate}[(i)]
\item $h_{p,p+1}$ is stable for all $p \neq 0$,
\item $h_{0,1}$ is left stable.
\end{enumerate}
\end{lem}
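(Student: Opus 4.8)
The plan is to build the metrics $g_p$ and $h_{p,p+1}$ by working outward from $p=0$ in both directions, using the two sides of the sequence of manifolds from Theorem \ref{thm:construction-of-mfd-sequence} separately, since they have quite different structure. Start with the prescribed $g \in \Riem^+(\partial W)$ of the hypothesis of Theorem \ref{thm:factorization} and set $g_0 := g$; by hypothesis there is a right-stable $h \in \Riem^+(W)_{g}$, and since $V_{0,1} \cong W \setminus \mathring{D}^{2n}$ (with $M_1 = S^{2n-1}$ the boundary of the removed disc), one obtains from $h$ a psc metric on $V_{0,1}$ which is a torpedo near the new $S^{2n-1}$ boundary; applying Chernysh's theorem (Theorem \ref{thm:chernysh-theorem}) to pass between $\Riem^+(W)_g$ and the appropriate space of metrics on $V_{0,1}$ with torpedo boundary condition, as in \cite[Theorem 2.5]{BERW}, shows that this $h_{0,1} \in \Riem^+(V_{0,1})_{g_0, g_\round}$ is still right-stable; set $g_1 := g_\round$. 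Since the inclusion $M_1 \to V_{0,1}$ need not be $2$-connected (only $M_0 = \partial W \to V_{0,1}$ is $n$-connected is not assumed either — only $(W,\partial W)$ is $(n-1)$-connected), we cannot invoke Theorem \ref{thm:right-and-left-stable} here, so we only claim $h_{0,1}$ is \emph{left-stable}; wait — we want left-stable, so I should instead check that $h_{0,1}$ is left-stable, which follows because the inclusion $M_0 = \partial W \hookrightarrow V_{0,1} = W \setminus \mathring D^{2n}$ is the same as $\partial W \hookrightarrow W$ up to the disc removal, and this is where the argument needs care: actually left-stability of $h_{0,1}$ is exactly statement (ii), and it should be deduced by the same $W \cup W^{op}$-type argument as in the proof of Theorem \ref{thm:stableL}(ii), using that we only need the \emph{right-hand} inclusion $M_1 \to V_{0,1}$ to fail $2$-connectivity while the argument for promoting right-stable to left-stable needs both.

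For $p \geq 1$, the cobordisms $V_{p,p+1} = (S^{2n-1} \times [p,p+1]) \# (S^n \times S^n)$ have both boundary inclusions $2$-connected (as $n \geq 3$), so by Corollary \ref{cor:right-and-left-stable} applied with $g_0 := g_\round$ there is $g_{p+1} \in \Riem^+(S^{2n-1})$ and a stable $h_{p,p+1} \in \Riem^+(V_{p,p+1})_{g_p, g_{p+1}}$; but we must also arrange $g_{p+1} = g_\round$ to match Theorem \ref{thm:construction-of-mfd-sequence}(ii), and moreover the metrics must be cylindrical over the embedded strips $S_{p,p+1}$. For the strip condition I would use the relative version of Theorem \ref{thm:stableL}(i) with $C = D^{2n-1}$ (the lower half-disc) and $g_C$ a fixed torpedo-type metric on the disc, which is precisely why the relative formulation was proved. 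To force $g_{p+1} = g_\round$: having produced \emph{some} stable $h_{p,p+1}$ with boundary $g_{p+1}'$, use Lemma \ref{lem:isotopy-gives-stable-concordance} to find a stable concordance from $g_{p+1}'$ to $g_\round$ (these lie in the same path component of $\Riem^+(S^{2n-1})$ since the latter is connected for $2n-1 \geq 5$ by Cerf, or more carefully by invoking that all the metrics in sight are obtained from $g_\round$ by the Gromov--Lawson construction and Chernysh's theorem controls the path components), and then compose; stability is preserved under composition by Lemma \ref{lem:2-out-of-three}. Alternatively, and more cleanly, I would just perform the construction of Theorem \ref{thm:stableL}(i) directly to the elementary cobordism $V_{p,p+1}$ using the decomposition into handles of index $n$ (one pair for the $S^n \times S^n$ summand), using Lemma \ref{lem:rightstablemetric-on-elementary-cobordism} to produce the right-stable metric in standard form with $g_{p+1} = g_\round$ automatically, and then promote to stable via Theorem \ref{thm:right-and-left-stable}.

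For $p \leq -1$, the cobordisms $V_{p,p+1}$ satisfy: $M_p \to V_{p,p+1}$ is $(n-1)$-connected and $M_{p+1} \to V_{p,p+1}$ is $2$-connected (Theorem \ref{thm:construction-of-mfd-sequence}(iii),(iv)). Here I would work inductively downward: suppose $g_{p+1}$ and the stable $h_{p+1, \cdot}$ have been constructed (for the step $p = -1$ we take $g_0 = g$ as above). Apply Theorem \ref{thm:stableL}(i) to the cobordism $V_{p,p+1} : M_p \leadsto M_{p+1}$ viewed in the \emph{reversed} direction $V_{p,p+1}^{op} : M_{p+1} \leadsto M_p$ — note that for $V^{op}$ the roles swap so that $M_{p+1} \to V^{op}$ plays the role of "$M_1$" and is $2$-connected as required by the hypothesis of Theorem \ref{thm:stableL}(i) (which needs $(W, M_1)$ $2$-connected) — to obtain $g_p \in \Riem^+(M_p)$ and a right-stable metric on $V_{p,p+1}^{op}$ with boundary values $g_{p+1}$ and $g_p$; since for $V_{p,p+1}$ both inclusions... no: only one inclusion of $V_{p,p+1}$ is $2$-connected, so Theorem \ref{thm:right-and-left-stable} does not apply directly, and to get a \emph{stable} metric I would instead again run the $W \cup W^{op}$ argument, which only requires that the handle indices of $V_{p,p+1}^{op}$ relative to $M_{p+1}$ lie in the admissible range $3 \leq k \leq d-2$; since $V_{p,p+1}$ is built from $M_p$ with handles of index $\leq n \leq d-3$, the dual handles relative to $M_{p+1}$ have index $\geq d-n \geq 3$, so this range condition holds, and the argument of Theorem \ref{thm:stableL}(ii) gives stability. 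Running this for all $p \leq -1$ produces all the required $h_{p,p+1}$, all stable for $p \neq 0$ and with $h_{0,1}$ left-stable.

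The main obstacle I anticipate is keeping track of the strip/collar conditions (that all metrics are product over the embedded discs and strips $S_{p,p+1}$) simultaneously with the stability properties, since the stability arguments involve gluing and reversing cobordisms and one must check these operations respect the strip constraint — this is exactly what the relative version $\Riem^+(W, [0,1]\times C; dt^2 + g_C)_{g_0,g_1}$ of Theorem \ref{thm:stableL} was designed to handle, so the bookkeeping should go through, but it is the delicate point. A secondary point is the asymmetry at $p = 0$: we genuinely only get left-stability of $h_{0,1}$ because the right-hand inclusion $M_1 = S^{2n-1} \to V_{0,1}$ fails to be $2$-connected when $W$ has nontrivial homology, and one must be careful that the $W \cup W^{op}$ promotion argument is applied on the correct side.
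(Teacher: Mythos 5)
The essential gap is in the case $p=0$, which is exactly conclusion (ii) and the only genuinely delicate point of the lemma. You correctly observe that the inclusion $M_1=S^{2n-1}\to V_{0,1}$ need not be $2$-connected (its fundamental group is that of $W$), but you then propose to deduce left-stability of $h_{0,1}$ ``by the same $W\cup W^{op}$-type argument as in the proof of Theorem \ref{thm:stableL}(ii)''. That argument requires surgering the double to a cylinder by admissible surgeries, which needs a handle decomposition of $V_{0,1}$ with indices in $[3,d-2]$, i.e.\ $2$-connectivity of \emph{both} boundary inclusions; for $V_{0,1}$ this fails on the $S^{2n-1}$ side, so the promotion argument simply does not apply, and you never resolve the tension you yourself point out. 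The paper's route is different and does not prove any new stability at all at this step: view $W$ as a cobordism $M_0\leadsto\emptyset$, so the hypothesised right-stable $h\in\Riem^+(W)_g$ becomes left-stable; the torpedo metric on $D^{2n}\colon S^{2n-1}\leadsto\emptyset$ is left-stable (Theorem \ref{thm:chernysh-theorem}), so $\mu(\_,g_{\tor})\colon \Riem^+(V_{0,1})_{g_0,g_1}\to\Riem^+(W)_{g_0}$ is a weak equivalence and one may choose $h_{0,1}$ with $h_{0,1}\cup g_{\tor}$ in the path component of $h$; then the two-out-of-three Lemma \ref{lem:2-out-of-three} (left-stability of $h_{0,1}\cup g_{\tor}$ and of $g_{\tor}$ imply left-stability of $h_{0,1}$) gives (ii). You have the first half of this (Chernysh to produce a torpedo-collared representative on $V_{0,1}$) but never invoke Lemma \ref{lem:2-out-of-three} to transfer the stability hypothesis on $W$ to $V_{0,1}$, which is the missing idea.

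Two secondary inaccuracies, less serious because your alternatives still go through or the constraints are not needed: (a) for $p\le -1$ you assert that only one boundary inclusion of $V_{p,p+1}$ is $2$-connected, but Theorem \ref{thm:construction-of-mfd-sequence}(iii),(iv) with $n\ge 3$ give that \emph{both} are, so one can simply apply Corollary \ref{cor:right-and-left-stable} to the reversed cobordism (as the paper does) instead of your hand-made doubling argument; (b) the lemma does not require $g_p=g_{\round}$ for $p\ge 1$, nor that the metrics be cylindrical over the strips $S_{p,p+1}$ (the strips only constrain the diffeomorphisms), so the matching step is unnecessary --- which is fortunate, since your justification that $\Riem^+(S^{2n-1})$ is path-connected is false (by Carr/Hitchin-type results it has many components); the paper instead fixes $g_p=g_{\round}$ by taking $h_{p,p+1}$ to correspond to the cylinder metric under the surgery equivalence and invoking Lemma \ref{lem:stability-under-surgery}.
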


\begin{proof}
Let $p\geq 1$. Then $M_p=S^{2n-1}$ and $V_{p,p+1} = S^{2n-1} \times [p,p+1] \sharp (S^n \times S^n)$. For those $p$, put $g_p := g_{\round} \in \Riem^+ (M_p)=\Riem^+ (S^{2n-1})$. For $p \geq 1$, we let $h_{p,p+1}\in \Riem^+ (V_{p,p+1})_{g_{p},g_{p+1}}$ be a metric which corresponds to $g_p +dt^2$ under the surgery equivalence $\Riem^+ (V_{p,p+1})_{g_{p},g_{p+1}} \simeq \Riem^+ (S^{2n-1} \times [p,p+1])_{g_\round,g_\round}$ (exactly as in \cite[Proposition 4.8]{BERW}). By Lemma \ref{lem:stability-under-surgery}, $h_{p,p+1}$ is stable.

Let $p=0$. Recall that $W= V_{0,1} \cup D^{2n}$. By hypothesis, there is a psc metric $g_0 \in \Riem^+ (M_0)$, and a \emph{left} stable $h \in \Riem^+ (W)$ (here we view $W$ as a cobordism $M_0 \leadsto \emptyset$, and turning the direction of a cobordism turns right stable into left stable psc metrics). The torpedo metric on $D^{2n}: S^{2n-1}\leadsto \emptyset$ is left stable. Hence there is a metric $h_{0,1} \in \Riem^+ (V_{0,1})_{g_0,g_1}$ such that $h_{0,1} \cup g_{\tor} \in \Riem^+ (W)_{g_0}$ lies in the same component as $h$. By Lemma \ref{lem:2-out-of-three}, $h_{0,1}$ is left stable. 
 
Let $p<0$. Since the cobordism $V_{p,p+1}$ is $2$-connected with respect to either end, the existence of the metrics $g_p$ and $h_{p,p+1}$ with the desired properties follows by a repeated application of Corollary \ref{cor:right-and-left-stable}. 
\end{proof}

\begin{lem}\label{lem:proof-fact-3}
The action of $\Diff_\partial (V_{-p,q}) $ on $\Riem^+ (V_{-p,q})_{g_{-p},g_q}$ is homotopy abelian, for $p \geq 0$ and $q>0$.
\end{lem}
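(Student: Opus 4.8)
The plan is to deduce this from Corollary~\ref{cor:to-abelianness}. The key observation is that $V_{-p,q}$ is a cobordism \emph{into} the sphere $M_q = S^{2n-1}$ (since $q>0$, by Theorem~\ref{thm:construction-of-mfd-sequence}(ii)), whereas Corollary~\ref{cor:to-abelianness} is phrased for cobordisms \emph{out of} a sphere; so the argument should be run on the reversed cobordism $V_{-p,q}^{op}: S^{2n-1}\leadsto M_{-p}$.

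First I would produce the relevant metric. By Lemma~\ref{lem:proof-fact-2} there are psc metrics $g_p\in\Riem^+(M_p)$ and $h_{p,p+1}\in\Riem^+(V_{p,p+1})_{g_p,g_{p+1}}$ with $h_{p,p+1}$ stable for $p\neq 0$ and $h_{0,1}$ left stable. Gluing these gives $h_{-p,q}:=h_{-p,-p+1}\cup\cdots\cup h_{q-1,q}\in\Riem^+(V_{-p,q})_{g_{-p},g_q}$. Every factor is left stable --- the ones of index $\neq 0$ a fortiori, and $h_{0,1}$ by construction --- so by Lemma~\ref{lem:2-out-of-three}(i) the metric $h_{-p,q}$ is left stable; equivalently, regarded as a metric on $V_{-p,q}^{op}$ it is right stable.

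Next I would check the connectivity hypothesis of Corollary~\ref{cor:to-abelianness}, namely that $M_{-p}\to V_{-p,q}$ is $2$-connected. By Theorem~\ref{thm:construction-of-mfd-sequence}(iii) each inclusion $M_j\to V_{j,j+1}$ is $(n-1)$-connected, hence each inclusion $V_{-p,j}\to V_{-p,j}\cup_{M_j}V_{j,j+1}=V_{-p,j+1}$ is $(n-1)$-connected, and composing these gives that $M_{-p}\to V_{-p,q}$ is $(n-1)$-connected, which for $n\geq 3$ is in particular $2$-connected. This is precisely the step where one sees the shape of the argument: the \emph{other} inclusion $M_q=S^{2n-1}\to V_{-p,q}$ is in general only $0$-connected (it does not surject onto $\pi_1$), so Lemma~\ref{lem:abeliannessproof-formal} is \emph{not} directly applicable to $V_{-p,q}$, and the reversal is genuinely needed --- this is the main obstacle, and essentially the only nontrivial point.

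Finally I would invoke Corollary~\ref{cor:to-abelianness} for $V_{-p,q}^{op}: S^{2n-1}\leadsto M_{-p}$, which is legitimate since $2n\geq 6$, the inclusion $M_{-p}\to V_{-p,q}^{op}$ is $2$-connected, $g_q=g_\round$ by Lemma~\ref{lem:proof-fact-2}, and $h_{-p,q}$ is a right stable metric in $\Riem^+(V_{-p,q}^{op})_{g_\round,g_{-p}}$. The corollary then gives that $\Diff_\partial(V_{-p,q}^{op})$ acts homotopy-abelianly on $\Riem^+(V_{-p,q}^{op})_{g_\round,g_{-p}}$, and since $\Diff_\partial(V_{-p,q}^{op})=\Diff_\partial(V_{-p,q})$ and $\Riem^+(V_{-p,q}^{op})_{g_\round,g_{-p}}=\Riem^+(V_{-p,q})_{g_{-p},g_q}$, this is exactly the assertion. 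The only thing requiring care throughout is the bookkeeping under reversal: using the sphere-\emph{in} version of abelianness, feeding it the \emph{left}-stable metric, and keeping track of which boundary condition sits on which end.
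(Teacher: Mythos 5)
Your proof is correct and is exactly the argument the paper intends: its own proof of this lemma is the one-line assertion that it follows from Corollary \ref{cor:to-abelianness} and Lemma \ref{lem:proof-fact-2}. The details you supply --- viewing $V_{-p,q}$ as a cobordism out of $S^{2n-1}$, obtaining a left-stable (hence right-stable after reversal) glued metric via Lemma \ref{lem:2-out-of-three}, using $g_q=g_\round$, and the inductive $(n-1)$-connectivity of $M_{-p}\to V_{-p,q}$ (also invoked in the proof of Theorem \ref{thm:acyclic}) --- are precisely what the paper leaves implicit.
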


\begin{proof}
This is clear from Corollary \ref{cor:to-abelianness} and Lemma \ref{lem:proof-fact-2}.
\end{proof}

The rest of the argument is as in \cite[\S 4]{BERW} and will only be sketched. Let us write
$$B_0 := B \Diff_\partial  (V_{0,1}, S_{0,1}); \,\,\quad T_0 := E \Diff_\partial  (V_{0,1}, S_{0,1}) \times_{\Diff_\partial  (V_{0,1}, S_{0,1})} \Riem^+ (V_{0,1})_{g_{0},g_1}$$
and, for $p \geq 1$,
\begin{align*}
B_{p} &:= B \Diff_\partial  (V_{-p,p}, S_{-p,p})\\
T_p &:= E \Diff_\partial  (V_{-p,p}, S_{-p,p}) \times_{\Diff_\partial  (V_{-p,p}, S_{-p,p})} \Riem^+ (V_{-p,p})_{g_{-p},g_p}.
\end{align*}
Gluing on the psc manifolds $(V_{p,p+1}, h_{p,p+1})$ and $(V_{-p-1,-p}, h_{-p-1,p})$ gives commutative and homotopy cartesian diagrams
\[
\xymatrix{
T_{p}\ar[r] \ar[d] & T_{p+1} \ar[d]\\
B_{p}\ar[r] & B_{p+1},
}
\]
and passing to the homotopy colimit we obtain a fibration 
$$T_\infty \lra B_\infty := \hocolim_{p} B_{p}$$
from which the fibration $T_{0} \to B_{0}$ is pulled back (up to homotopy). 
The same obstruction theoretic argument as in \cite{BERW}, using Lemma \ref{lem:proof-fact-3}, applied to the acyclic map $\hocolim_{p} B_{p} \to \loopinf{}_0 \MT{\theta}$ of Corollary \ref{cor:acyclic-map-in-double-hocolim} (\ref{it:FinalAcyclicMap}), produces a fibration $T_\infty^+ \to \loopinf{}_0 \MT{\theta}$ which pulls back to $T_\infty \to B_\infty$ and hence to $T_0 \to B_0$. This finishes the proof of Theorem \ref{thm:factorization}.

\section{The secondary Rosenberg index}\label{sec:indextheory}

In this section, we explain how to extend the results of \cite[\S 3]{BERW} to the case of a nontrivial fundamental group. In order to avoid repetitions of large portions of loc.\ cit., we only explain the differences. We will then explain how, together with Theorem \ref{thm:factorization}, these results imply Theorem \ref{main-theorem-mapintoRiem}. 
The reader is warned that the following pages are not meant to be understandable without reference to \cite{BERW}. 

\subsection{The Rosenberg--Dirac operator} 

Let $G$ be a discrete group and let $M$ be a Riemannian spin manifold equipped with a map $M \to BG$ (or equivalently with a $G$-Galois cover). With these data, Rosenberg \cite{RosNovI} associated a certain Dirac operator on an infinite dimensional bundle over $M$. 

Before we recapitulate the construction, we recall the notion of a \emph{Real} $\mathrm{C}^*$-algebra \cite{Kasp}. This is a $\mathrm{C}^*$-algebra $\gA$ over the complex numbers, together with a complex-antilinear automorphism $a \mapsto \overline{a}$ of order $2$. Important examples are: $\gR$, which denotes $\bC$ with the complex conjugation; $\gC_0(X)$, the algebra of all continuous functions $f:X \to \bC$ from a locally compact Hausdorff space which vanish at $\infty$, with conjugation $\overline{f}(x):= \overline{f(x)}$. The complexification $\Cl^{p,q}$ of the real Clifford algebra\footnote{The first $p$ generators have square $-1$, the last $q$ generators have square $+1$.} $\mathrm{Cl}^{p,q}$ is also important for us. 

Let $\bC[G]$ be the complex group ring, with the involution $(\sum_g a_g g)^*:= \sum_g \overline{a_g} g^{-1}$. The regular representation of $G$ on $L^2 (G)$ induces an injective ring homomorphism $\rho:\bC[G] \to \mathrm{Lin} (L^2 (G))$ which preserves $*$, and we define $\norm{x}_{\mathrm{r}}:= \norm{\rho(x)}$. The \emph{reduced group $\mathrm{C}^*$-algebra} $\cstarred (G)$ is the completion of $\bC[G]$ with respect to the norm $\norm{\_}_r$. The \emph{maximal} (or full) group $\mathrm{C}^*$-algebra $\cstarmax(G)$ is obtained by completing $\bC[G]$ with respect to the norm $\norm{x}_{\mathrm{m}} := \sup_{\lambda} \norm{\lambda (x)}$, where $\lambda$ runs over all unitary representations of $G$ on Hilbert spaces \cite[3.7.4]{HR}. If $G$ is countable, then both group $\mathrm{C}^*$-algebras are separable. The conjugation $\overline{\sum_g a_g g}:= \sum_g \overline{a_g} g$ on $\bC[G]$ extends to Real structures on $\cstarred(G)$ and $\cstarmax(G)$. We will write $\cstar(G)$ in all statements which apply to both $\cstarred (G)$ and $\cstarmax (C)$.

We consider $\cstar(G)$ as a right module over itself; the formula $\scpr{x,y}:= x^* y$ turns $\cstar(G)$ into a Hilbert-$\cstar(G)$-module. 
The unitary group $U(\cstar(G))$ contains $G$ as a subgroup; hence $G$ acts by left-multiplication on $\cstar(G)$ by $\cstar(G)$-linear operators preserving the inner product. The \emph{Mishchenko--Fomenko line bundle} is the bundle 
\[
\cL_G:=EG \times_G \cstar(G) \lra BG
\]
of (rank $1$ free) Hilbert modules. Let $(M,g)$ be a Riemannian spin manifold of dimension $d$, with spinor bundle $\spinor_M$. This is a $\Cl(TM \oplus \bR^{0,d})$-module (in the terminology of \cite[\S 3.1.3]{BERW}). If in addition $M$ is equipped with a map $f:M \to BG$, then
\[
\spinor_M \otimes f^* \cL_G \lra M
\]
is a bundle of $\cstar(G)$-Hilbert modules (projective, of finite rank) with $\cstar(G)$-valued inner product $\scpr{-,-}$, and has a compatible action of $\Cl(TM) \otimes \Cl^{0,d}$. The spinor bundle $\spinor_M$ inherits a connection from the Levi-Civita connection on $M$, and $f^* \cL_G$ has a natural flat connection, so $\spinor_M \otimes f^* \cL_G$ has the tensor product connection. The \emph{Rosenberg--Dirac operator} or \emph{$G$-Dirac operator} 
$$\Dir_{f}=\Dir_{f,g}: \Gamma (M;\spinor_M \otimes f^* \cL_G) \to \Gamma (M;\spinor_M \otimes f^* \cL_G)$$
is defined by the classical formula \cite[\S II.5]{SpinGeometry}. The Schr\"odinger--Lichnerowicz formula
\[
\Dir_{f,g}^2 = \nabla^* \nabla + \frac{1}{4} \scal (g)
\]
still holds, and relates $\Dir_{f,g}$ to positive scalar curvature on $M$. The $G$-Dirac operator is formally self-adjoint, odd with respect to the grading on $\spinor_M \otimes f^*\cL_G$ and $ \cstar(G)$-linear. It \emph{anticommutes} with the action of $\Cl^{0,d}$.
\begin{remark}
In \cite{BERW}, we used a slightly different setup in which the Dirac operator was \emph{$\Cl^{d,0}$-linear}. The translation between these is explained in \cite[p. 773]{BERW}. 
\end{remark}

We can extend this construction to the family case, as discussed in \cite[\S 3.2.2]{BERW}. 
Assume that $\pi:E\to X$ is a bundle of $d$-dimensional compact spin manifolds with fibre $W$ equipped with Riemannian metrics and that $f: E \to BG$ is a fixed map. Assume that the boundary bundle is trivial, $\partial E = X \times \partial W$. As in \cite[\S 3.2]{BERW}, we form the elongation $\hat{E}$ by adding $\partial E \times [0, \infty)$, and extend the spin structure, Riemannian metric and map to $BG$ in the obvious fashion. We obtain a bundle of noncompact manifolds with cylindrical ends, again denoted by $E$. We denote the fibres of $\pi$ by $E_x$, the Riemannian metrics on $E_x$ by $g_x$ etc.

Let us now turn to the analytical properties of the $G$-Dirac operator, parallel to \cite[\S 3.2.3]{BERW}. We deviate from the setting used in \cite{BERW} and rely on the analytical results proven in \cite{JEIndex1}. Instead of the Hilbert bundles in \cite{BERW}, we use \emph{continuous fields of Hilbert-$\cstar(G)$-modules}. This notion is defined in \cite[\S 2.1]{JEIndex1} and is a straightforward adaptation of the notion of a continuous field of Hilbert spaces from \cite{DixDou}. For each $x \in X$, the space $\Gamma_c (E_x; \spinor_x \otimes \cL_{f,x})$ of smooth, compactly supported sections has an $\cstar(G)$-valued inner product 
\[
 \scpr{s,t}:= \int_{E_x} \scpr{s(y),t(y)} d \vol_{E_x} (y)
\]
with completion $L^2 (E_x; \spinor_x \otimes \cL_{f,x})$, a countably generated Real graded Hilbert $\cstar (G)$-module with an action of $\Cl^{0,d}$. The family $(L_x^2 (E_x;\spinor_x \otimes \cL_{f,x}))_{x \in X}$ assembles to a continuous field of Hilbert-$\cstar(G)$-modules $L^2_X (E;\spinor_E \otimes f^* \cL_G)$; see \cite[Definition 2.12]{JEIndex1} for further details. 
The individual $G$-Dirac operators $\Dir_{f,x}$ on $E_x$ are unbounded symmetric operators, and together they form an unbounded operator family $\Dir_f$, in the sense of \cite[\S 2.2]{JEIndex1}. By \cite[Theorem 1.14 and Example 2.28]{JEIndex1}, the closure of this operator family is self-adjoint in the sense of \cite[Definition 2.26]{JEIndex1}. The point here is that the projection $p:\partial E \times [0,\infty) \to [0,\infty)$ can be extended to what is called a ``coercive function'' such that $[\Dir_f,p]$ is bounded. This is a generalization of the classical result \cite[Proposition 10.2.10]{HR}. Therefore, we can use functional calculus \cite[\S 2.3]{JEIndex1} and form the bounded transform $\normalize{\Dir_f}$. This is a bounded self-adjoint operator family on $L^2_X(E;\spinor_E \otimes f^* \cL_G)$.

\begin{prop}\label{prop:dirac-is-fredholm}\cite[Theorem 2.41 and Lemma 2.42]{JEIndex1}
If the scalar curvature of $E\to X$ is positive on the ends, then $F=(\normalize{\Dir_{f,x}})_{x \in X}$ is a Fredholm family on $L^2_X(E;\spinor_E \otimes f^* \cL_G)$ (in the sense of \cite[Definition 2.17]{JEIndex1}). If the scalar curvature is everywhere positive over a closed subspace $Y \subset X$, then $F|_Y$ is invertible.
\end{prop}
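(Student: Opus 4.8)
The plan is to deduce both assertions from the general operator-family machinery of \cite{JEIndex1}, with the only genuinely geometric input being two applications of the Schr\"odinger--Lichnerowicz formula. Recall that we have already arranged, following \cite[Theorem 1.14 and Example 2.28]{JEIndex1}, that the closure of the operator family $\Dir_f$ on the continuous field $L^2_X(E;\spinor_E \otimes f^* \cL_G)$ is self-adjoint, the relevant coercive function being (a smoothing of) the projection $p:\partial E \times [0,\infty) \to [0,\infty)$, for which $[\Dir_f,p]$ is bounded. Self-adjointness is the standing hypothesis of the Fredholmness criterion \cite[Theorem 2.41]{JEIndex1}, so the remaining work is to verify its other hypotheses and, separately, the invertibility statement.

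First I would treat the invertibility statement, as it is the cleaner of the two. Over $Y$ the scalar curvature is positive on each fibre $E_y$, and since $E\to X$ is a bundle of compact manifolds with a fixed cylindrical end the infimum $\inf_{E_y}\scal(g_y)$ is a positive continuous function of $y$; restricting to a neighbourhood of a given point of $Y$ we obtain a uniform lower bound $\scal(g_y)\geq \varepsilon>0$. The Schr\"odinger--Lichnerowicz formula $\Dir_{f,y}^2 = \nabla^*\nabla + \tfrac14 \scal(g_y) \geq \tfrac{\varepsilon}{4}$ then shows that $\Dir_{f,y}$ is invertible with $\|\Dir_{f,y}^{-1}\|\leq 2\varepsilon^{-1/2}$, locally uniformly in $y$; equivalently the bounded transform $\normalize{\Dir_{f,y}}$ is invertible with its inverse locally bounded in operator norm. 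Hence $F|_Y$ is an invertible operator family in the sense of \cite{JEIndex1}, which is the last clause of \cite[Lemma 2.42]{JEIndex1}.

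For the Fredholm statement I would appeal directly to \cite[Theorem 2.41]{JEIndex1}, which asserts that a self-adjoint operator family is Fredholm as soon as (a) it has locally compact resolvent and (b) it is invertible ``at infinity'' relative to the chosen coercive function. Condition (a) is the family version of Rellich's lemma: here ``compact'' is meant in the Hilbert $\cstar(G)$-module sense, which is available because $\spinor_E \otimes f^*\cL_G$ is a bundle of finitely generated projective $\cstar(G)$-modules, so the standard elliptic estimate for $\Dir_{f,x}$ combines with compactness of $H^1\hookrightarrow L^2$ over the compact core; this is the substance of \cite[Lemma 2.42]{JEIndex1}. Condition (b) uses the psc-on-the-ends hypothesis a second time: on $\partial E \times [1,\infty) = X\times\partial W\times[1,\infty)$ the metric is a product, so $\Dir_{f,x}$ there has the translation-invariant form built from $\partial_t$ and the boundary operator $\Dir^{\partial}_{f,x}$ on $\partial W$, and positivity of $\scal$ forces $(\Dir^{\partial}_{f,x})^2 \geq \tfrac14\inf\scal > 0$ uniformly, giving the required uniform spectral gap for the compression of $\Dir_f$ to a neighbourhood of infinity. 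Granting (a) and (b), \cite[Theorem 2.41]{JEIndex1} yields that $F$ is a Fredholm family.

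The main obstacle --- and the reason the real work lies in \cite{JEIndex1} rather than here --- is making ``invertibility at infinity'' and the corresponding Fredholm criterion precise and robust for continuous fields of Hilbert $\cstar(G)$-modules and unbounded operator families: one must check that patching a parametrix for the locally compact part with the invertible end preserves continuity in $x\in X$ and $\cstar(G)$-linearity, and that ``compact'' is interpreted throughout in the Hilbert-module sense (so that $\cstar(G)$ itself is not forced to be compact). Once that framework is in place these points are routine, and the present proof reduces to citing \cite[Theorem 2.41 and Lemma 2.42]{JEIndex1} and recording the two Schr\"odinger--Lichnerowicz estimates above.
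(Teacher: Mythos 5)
Your proposal is correct and matches the paper's treatment: the paper offers no argument of its own for this proposition, simply importing it from \cite[Theorem 2.41 and Lemma 2.42]{JEIndex1}, and your reduction to those results—with the Schr\"odinger--Lichnerowicz formula supplying positivity at infinity (for Fredholmness) and everywhere over $Y$ (for invertibility)—is exactly the intended deduction.
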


\subsection{\texorpdfstring{$K$}{K}-theory with coefficients in a \texorpdfstring{$\mathrm{C}^*$}{C*}-algebra}

The framework for topological $K$-theory that we use is different from that in \cite[\S 3.1]{BERW} and is that developed in \cite[\S 3]{JEIndex1}. Let us quickly recall the definition.  

\begin{defn}
Let $\gA$ be a Real graded $\mathrm{C}^*$-algebra (the only relevant example for us is $A= \cstar(G)$) and let $(X,Y)$ be a space pair. The group $KO^{-d}(X,Y;\gA)$ is the group of equivalence classes of tuples $(H,\iota,c,F)$, where
\begin{enumerate}[(i)]
 \item $H$ is a continuous field of Real Hilbert-$\gA$-modules on $X$, with grading $\iota$, 
 \item $c$ is a $\Cl^{0,d}$-structure on $H$ (see \cite[Definition 3.1]{JEIndex1}),
 \item $F$ is a self-adjoint graded Fredholm family on $H$, and $c(v)F+Fc(v)=0$ for all $v \in \bR^{d} \subset \Cl^{0,d}$, such that
 \item $F|_Y$ is invertible. 
\end{enumerate}
Two such tuples are equivalent if they are concordant. The group structure is given by direct sum.
\end{defn}

When $(X,Y)$ is a compact Hausdorff pair, the group $KO^{-d} (X,Y;\gA)$ is naturally isomorphic to Kasparov's Real $KK$-groups
\begin{align*}
 KK(\Cl^{0,d},\gC_0 (X-Y) \otimes \gA) &\cong  KK(\gC,\gC_0 (X-Y) \otimes \gA \otimes \Cl^{d,0})\\
 &\cong KK (\gC,\gC_0( (X-Y)\times \bR^d) \otimes \gA)
\end{align*}
by \cite[Proposition 3.12]{JEIndex1}. The functor $(X,Y) \mapsto KO^{-d}(X,Y;\gA)$ is the degree $-d$ part of the cohomology theory represented by the real $K$-theory spectrum of the graded $\mathrm{C}^*$-algebra $\gA$ (the values for positive degrees can be defined using the Clifford algebra $\Cl^{d,0}$ instead of $\Cl^{0,d}$). As in \cite{BERW}, we shall represent elements in $KO^{-d} (X,Y;\gA)$ by maps of pairs
\[
(X,Y) \lra  (\loopinf{+d} \KO (\gA), *).
\]
Hence if $\pi: E \to X$ is a bundle of spin manifolds with cylindrical ends, equipped with a map $f:E \to BG$ and a fibrewise Riemannian metric which is cylindrical over the ends and has positive scalar curvature, then the $G$-Dirac operators $\Dir_{f,g}$ define an element
\[
 \ind (\Dir_{f,g})  = [L^2_X (E;\spinor_E \otimes f^* \cL_G),\iota,c,\normalize{\Dir_{f,g}}    ]\in KO^{-d} (X;\cstar (G))
\]
and if the scalar curvature is positive over a subspace $Y \subset X$, then this refines to an element in the relative group $KO^{-d} (X,Y;\cstar (G))$.

\subsection{The index difference}

If $W$ is a $d$-dimensional compact spin manifold with boundary, equipped with a map $f: W \to BG$, and $g\in \Riem^+ (\partial W)$, then we can define the Hitchin version of the index difference 
\[
\inddiff^G: \Riem^+ (W)_g \times \Riem^+ (W)_g \lra \loopinf{+d+1}\KO(\cstar(G)),
\]
analogously to \cite[\S 3.3.1]{BERW}. It should be emphasised that it of course depends on the homotopy class of the map $f$ and not merely on the abstract group $G$.

\begin{remark}\label{rem:spectralflow}
For closed $W$, we also can define the Gromov--Lawson version of the index difference, as in \cite[\S 3.3.2]{BERW}. The main result of \cite{Eb13}, restated as Theorem 3.10 in \cite{BERW} shows that both definitions agree if $G=1$. This was used in \cite{BERW} to derive a detection theorem for odd-dimensional manifolds (Theorem B loc.\ cit.) from the even-dimensional case (Theorem A loc.\ cit). 

The proof given in \cite{Eb13} does not generalise to the case $G \neq 1$. In the forthcoming thesis of Lukas Buggisch, a proof that both versions agree for general $G$ will be given, using Kasparov theory. With the aid of that result, one can prove a version of \cite[Theorem C]{BERW} from Theorem \ref{main-theorem-mapintoRiem}, following exacty the argument in \cite[\S 3.6]{BERW}.
\end{remark}

\subsection{The additivity theorem}

An important ingredient in \cite{BERW} was the additivity theorem for the index \cite[Theorem 3.12]{BERW} (a strengthening of a result by Bunke \cite{Bunke1995}).
For the $G$-index, the additivity theorem continues to hold. When the base space is a point, this was already proven in \cite{Bunke1995}. For the general case, one follows the proof given in \cite{BERW}, replacing the quotations to \cite{Eb13} and \cite{HR} by quoting \cite{JEIndex1}, in particular Proposition 2.34 and 2.36. The proof of \cite[Theorem 3.16]{BERW} (which states an additivity property for the index difference) carries over without change to the case $G \neq 1$.

\subsection{The relative index construction}

The results of \cite[\S 3.5]{BERW} are of formal nature and hold verbatim in the present more general framework, replacing $KO$ by $KO$-theory with coefficients in $\cstar(G)$. 

\subsection{Some words about \texorpdfstring{$KK$}{KK}-theory}

One of the key ingredients in \cite{BERW} was the Atiyah--Singer family index theorem for the Dirac operators. Here, we need the family index theorem for the $\cstar(G)$-valued index, in the real case. While this can certainly be extracted from e.g. \cite{Connes-Skandalis}, we are not aware of a detailed account. Presenting the details would lead us too far away from the main focus of this paper, and we content ourselves with precise statements and an overview of the main steps in the proof, using $KK$-theory. We begin by introducing notation for the Kasparov product.
\begin{enumerate}[(i)]
 \item A homomorphism $\phi:\gA \to \gB$ of $\mathrm{C}^*$-algebras defines a class $[\phi] \in KK (\gA,\gB)$ \cite[Example 17.1.2(a)]{Bla}.
 \item The product $KK(\gA,\gB) \times KK(\gB,\gD) \to KK(\gA,\gD)$ is denoted $(\gx,\gy) \mapsto \gx \komprod \gy$.
 \item The exterior product $KK(\gA,\gB) \times KK(\gA',\gB') \to KK(\gA \otimes \gA', \gB \otimes \gB')$ is denoted $(\gx,\gy) \mapsto \gx \exprod \gy$. 
\end{enumerate}

We will give a homotopy-theoretic formulation of the index theorem, and the formulation uses results by Joachim and Stolz \cite{JS}. They constructed spectra $\bKK (\gA, \gB)$ for each pair of Real graded $\mathrm{C}^*$-algebras and spectrum maps 
\begin{equation}\label{eq:joachimstolzmaps}
\begin{aligned}
\bKK(\gA, \gB) \wedge \bKK(\gB, \gD) &\lra \bKK (\gA, \gD)\\
 \bKK(\gA , \gB) \wedge \bKK(\gA', \gB') &\lra \bKK(\gA \otimes \gA', \gB \otimes \gB'),
\end{aligned}
\end{equation}
as well an equivalence
\begin{equation}\label{eq:joachimstolz-equivalence}
F(\Sigma^\infty X_+, \bKK(\gA, \gB)) \simeq \bKK (\gA,\gC(X) \otimes \gB)
\end{equation}
(the left hand side denotes the function spectrum) for each compact space $X$. The equivalence \eqref{eq:joachimstolz-equivalence} implies that 
\begin{equation}\label{eq:joachimstolz-equivalence2}
[\Sigma^\infty X_+, \bKK(\gA, \gB)] \cong KK (\gA,\gC(X) \otimes \gB).
\end{equation}
In particular $\pi_0 (\bKK(\gA,\gB))\cong KK(\gA,\gB)$, and the maps \eqref{eq:joachimstolzmaps} realise the Kasparov product. Bott periodicity in $KK$-theory implies that there are natural equivalences
\[
 \bKK(\Cl^{p,q} \otimes \gA,\gB) \simeq \bKK(\gA, \Cl^{q,p} \otimes \gB) \simeq \Sigma^{p-q} \bKK(\gA, \gB) 
\]
and hence isomorphisms
\[
KK (\Cl^{p,q} \otimes \gA,\gB) \cong \pi_{q-p}(\bKK (\gA,\gB)).
\]
Moreover, there is a natural equivalence
\[
\bKK(\gR,\gA)\simeq \KO(\gA)
\]
for each graded Real $\mathrm{C}^*$-algebra $\gA$. We further write $\KO(\gR)=\KO$.
The \emph{analytic $K$-homology groups} of a compact Hausdorff space $X$ are
\[
KO_{p-q}^{\an} (X):= KK (\Cl^{p,q} \otimes \gC(X),\gR) \cong \pi_{p-q} (\bKK(\gC(X),\gR)).
\]
For an arbitrary space $Y$, one defines 
\[
RKO_{p-q} (Y) := \colim_{X \subset Y} KO_{p-q}^{\an} (X),
\]
where the colimit runs over all compact subspaces of $Y$. The following result is folklore:

\begin{prop}\label{prop:analytic-vs-homotopical-k-homology}
For CW complexes $Y$, there is a natural isomorphism 
\[
KO_{k} (Y):= \pi_{k} (\Sigma^{\infty} Y_+ \wedge \KO) \stackrel{\cong}{\lra} RKO_{k} (Y)
\]
from the topologically defined $KO$-homology groups $KO_{k}(Y)$.
\end{prop}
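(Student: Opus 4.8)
The plan is to recognise both sides as homology theories on CW complexes which commute with filtered colimits of subcomplexes, to construct a natural transformation $\Phi\colon KO_\ast(-)\to RKO_\ast(-)$ between them, to verify that $\Phi$ is the identity on coefficients, and then to invoke the uniqueness theorem for homology theories.

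First I would record that $Y\mapsto KO_k(Y)=\pi_k(\Sigma^\infty Y_+\wedge\KO)$ is a homology theory with $KO_k(\mathrm{pt})=\pi_k(\KO)=KO_k$, and that it commutes with filtered colimits of subcomplexes since both $\Sigma^\infty(-)_+\wedge\KO$ and $\pi_\ast$ do. For the analytic side I would check that on a finite CW complex $X$ the functor $X\mapsto\pi_\ast(\bKK(\gC(X),\gR))$ is a homology theory: homotopy invariance is homotopy invariance of $KK$; the long exact sequence of a pair comes from the mapping-cone sequence together with excision in (Real, graded) $KK$-theory; and Bott periodicity, in the form recalled just above, identifies the coefficients with $\pi_\ast(\bKK(\gR,\gR))=\pi_\ast(\KO)=KO_\ast$. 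These facts are standard in the complex ungraded case and carry over to the Real graded setting with only notational changes. Since $RKO_\ast(Y)=\colim_{X\subseteq Y}KO^{\an}_\ast(X)$ over compact (equivalently, finite) subcomplexes by definition, it follows that $RKO_\ast$ is a homology theory on all CW complexes which commutes with filtered colimits of subcomplexes.

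Next I would construct the natural transformation $\Phi$. By the Joachim--Stolz construction, analytic $K$-homology is represented by a spectrum-valued functor $Y\mapsto\bKK(\gC(Y),\gR)$ on compact spaces, and what is needed is a map of such functors from $Y\mapsto\Sigma^\infty Y_+\wedge\KO$; such a comparison map is the one implicit in \cite{JS} (and it may also be obtained, using the multiplicative structure \eqref{eq:joachimstolzmaps} and the equivalence $\bKK(\gR,\gR)\simeq\KO$, from the class dual to the unit $\gR\to\gC(Y)$). In the geometric guise of Baum--Douglas cycles this map sends a spin manifold $f\colon N\to Y$ to the analytic $K$-homology class obtained by pushing the Dirac operator of $N$ forward along $f$. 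By naturality these maps assemble, after passage to colimits over finite subcomplexes, into a transformation of homology theories on CW complexes, and unwinding the construction at $Y=\mathrm{pt}$ shows that $\Phi$ induces the identity on $KO_\ast=\pi_\ast(\bKK(\gR,\gR))$.

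Finally, a natural transformation of homology theories on CW complexes which is an isomorphism on a point and commutes with filtered colimits of subcomplexes is an isomorphism everywhere: this follows by the usual induction over skeleta (the five lemma applied to the long exact sequences of the pairs $(Y^{(n)},Y^{(n-1)})$, using the wedge axiom and the computation on spheres) to handle finite complexes, followed by passage to the colimit over finite subcomplexes for infinite ones. Applied to $\Phi$ this proves the proposition. I expect the main obstacle to be the foundational input on the analytic side --- excision and the correct colimit behaviour of $KK(\gC(-),\gR)$ in the Real graded framework, together with pinning down the comparison transformation at the level of spectra and checking its effect on coefficients --- which is exactly the $C^*$-algebraic bookkeeping the authors prefer to cite rather than reproduce; the homotopy-theoretic half of the argument is entirely formal.
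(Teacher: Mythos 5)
Your proposal is correct and follows essentially the same route as the paper: the paper also builds the comparison map from the Joachim--Stolz machinery (concretely, by taking the spectrum map $u_X\colon\Sigma^\infty X_+\to\bKK(\gC(X),\gR)$ adjoint to $[\id_{\gC(X)}]$ under the equivalence \eqref{eq:joachimstolz-equivalence} and then using the $\KO$-module structure to get $v_X\colon\Sigma^\infty X_+\wedge\KO\to\bKK(\gC(X),\gR)$), checks it is an isomorphism on a point, and concludes by comparing the two excisive functors on finite complexes and passing to the colimit over compact subspaces. The only difference is that where you leave the natural transformation as ``the one implicit in \cite{JS}'' (or sketch a Baum--Douglas description), the paper pins it down by this identity-element adjunction, which is the one step worth making explicit.
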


\begin{proof}[Sketch of proof]
This can be deduced quickly from \cite{JS}, so we include the proof here. Let $X$ be a finite CW-complex. Under the isomorphism
\[
KK(\gC(X),\gC(X)) \cong \pi_0 (\bKK(\gC(X),\gC(X))) \stackrel{\eqref{eq:joachimstolz-equivalence}}{\cong}
[\Sigma^\infty X_+, \bKK(\gC(X),\gR)],
\]
the identity element $[\id_{\gC(X)}] \in KK(\gC(X),\gC(X))$ corresponds to a map of spectra
\[
u_X:\Sigma^{\infty} X_+ \lra \bKK(\gC(X),\gR),
\]
which depends naturally on $X$. Using the $\KO$-module structure on the $\bKK$-spectra, we obtain a map $v_X:\Sigma^{\infty} X_+ \wedge \KO \stackrel{u_X\wedge \mathrm{Id}}{\to} \bKK(\gC(X),\gR) \wedge \KO \to \bKK(\gC(X),\gR)$. On homotopy groups, $v_X$ yields a natural transformation
\[
t_X:KO_k (X) \lra KO^{\an}_k(X).
\]
The source is excisive by general homotopy theory, and the target by excision in analytic $K$-homology. If $X=*$, then $u_X=1 \in \pi_0 (\KO)=\bZ$, and this shows that $t_*$ is an isomorphism.
Hence $t_X$ is an isomorphism for all finite CW-complexes, and this establishes the claimed result upon taking colimits. 
\end{proof}

The universal Mishchenko--Fomenko line bundle $\cL_G \to BG$ yields a map 
\[
 \cL_G: \Sigma^\infty BG_+ \lra \bKK(\gR, \cstar (G))
\]
of spectra. If $f:E \to BG$ is a map from a compact space, then $\cL_G \circ (\Sigma^\infty f_+) \in [\Sigma^{\infty}E_+, \bKK(\gR, \cstar (G))]$ corresponds to an element $[\cL_{f}] \in KK(\gR,\gC(E) \otimes \cstar(G))$ under the isomorphism \eqref{eq:joachimstolz-equivalence2}. There is the following concrete description for $[\cL_f]$: 
\begin{equation}\label{eq:kk-element-michfom}
 [\cL_{f}] =  [\Gamma (E;f^* \cL_G),\_,0]\in KK( \gR, \gC(E) \otimes \cstar(G))
\end{equation}
(the symbol $\_$ denotes the usual representation of $\gR$ on the sections of $f^* \cL_G$).

\begin{defn}\label{defn:novikov-assembly}
The \emph{Novikov assembly map} is the composition
\begin{equation*}
 \Nov: \bKK(\gR, \gR) \wedge BG_+ \stackrel{\id \wedge \cL_G }{\lra}  \bKK(\gR, \gR) \wedge \bKK(\gR, \cstar (G)) \lra    \bKK(\gR, \cstar (G)).
\end{equation*}
\end{defn}
As explained in the introduction to \cite{JS}, the map $\Nov$ induces the classical assembly map on homotopy groups. More precisely, for each compact subset $Y \subset BG$, we have the class $[\cL|_Y]\in KK(\gC(Y),\cstar (G))$, and we get maps
\begin{align*}
KK (\Cl^{p,q}\otimes \gC(Y) ,\gR)  &\lra KK (\Cl^{p,q},\cstar (G))\\
\gx &\longmapsto ([\id_{\Cl^{p,q}}] \exprod [\cL|_Y]) \komprod (\gx  \exprod [\id_{\cstar(G)}]).
\end{align*}
These maps are natural in $Y$ and hence induce a map on the colimit
\begin{equation}\label{eq:classical-novikov-assembly}
RKO_{p-q}(BG):=\colim_{Y \subset BG} KK (\Cl^{p,q}\otimes \gC(Y) ,\gR) \lra KO_{p-q} (\cstar(G)),
\end{equation}
which is the classical Novikov assembly map.

\subsection{The Atiyah--Singer index theorem}

Let $\pi:E \to X$ be a bundle of \emph{closed} spin manifolds on a compact manifold $X$ ($X$ is allowed to have boundary) and let $f:E \to BG$ be a map. To these data there are associated elements in certain $KK$-groups, besides the class \eqref{eq:kk-element-michfom} given by the Mishchenko--Fomenko line bundle. The ordinary spin Dirac operator defines a class
\begin{equation*}
[\Dir] = [L^2_X (E, \spinor_E), \mu, \normalize{\Dir}] \in KK (\gC (E) \otimes \Cl^{0,d},\gC(X))
\end{equation*}
where $\mu$ is the representation of $\gC(E)$ by multiplication operators, and the $G$-Dirac operator defines a class
\begin{equation*}
\begin{aligned}
{ [\Dir_{f}]} = [L^2_X (E, \spinor_E \otimes f^* \cL_G), \mu, \normalize{\Dir_f}]\\
 \in KK (\gC (E) \otimes \Cl^{0,d},\gC(X) \otimes \cstar (G)).
\end{aligned}
\end{equation*}
The index of $\Dir_{f}$ is recovered from this by the unit homomorphism $u : \gR \to \gC(E)$: we have
\begin{equation*}
 \ind (\Dir_{f}) = (u\exprod [\id_{\Cl^{0,d}}]) \komprod [\Dir_{f}]  \in KK(\Cl^{0,d}, \gC(X) \otimes \cstar (G))=KO^{-d}(X;\gA),
\end{equation*}
essentially by the definition of all these terms (note that the composition product with the class of a homomorphism is easy to compute, and so is the exterior product with the class of the identity). The index theorem describes $\ind (\Dir_{f})$ in topological terms. 

The $K$-theoretic Thom class of a rank $n$ spin vector bundle $V \to X$ gives a map $\kothom{V} : \mathrm{Th}(V) \to \loopinf{-n} \KO$ of spaces, which is adjoint to a map $\kothom{V}:\Th (V) \to \Sigma^n \KO$ of spectra. More generally if $V \to X$ is a stable spin vector bundle of rank $r$, we get a spectrum map 
\[
\kothom{V}: \Th (V) \lra \Sigma^{r} \KO.
\]
For example, if $V$ is the additive inverse of the universal vector bundle on $B \Spin (d)$, the corresponding spectrum map is
\[
\kothom{-d}: \MT{Spin (d)} \lra \Sigma^{-d} \KO.
\]

Let $\pi:E \to X$ be a bundle of $d$-dimensional closed spin manifolds, equipped with a map $f: E \to BG$. The normal bundle $\nu(\pi)$ of $\pi:E \to X$ is a stable vector bundle of rank $-d$, and the Pontrjagin--Thom collapse defines a map
\[
 c: \Sigma^\infty X_+ \lra \Th (\nu(\pi))
\]
from the suspension spectrum of $X$ to the Thom spectrum of the normal bundle. Write
\[
\Delta: \Th (\nu(\pi)) \lra  \Th (\nu(\pi)) \wedge E_+
\]
for the diagonal map. Using the classifying map $E \to B \Spin (d)$ of the vertical tangent bundle of $\pi$ and the map $f:E \to BG$, we obtain a map $v:\Th (\nu(\pi)) \to \MT{Spin (d)}$ and so can form the composition
\[
\alpha_\pi^{ad}: \Sigma^\infty X_+  \stackrel{\Delta\circ c}{\lra} \Th (\nu(\pi)) \wedge E_+ \stackrel{v \wedge f_+}{\lra} \MT{Spin (d)} \wedge BG_+.
\]
and its adjoint
\[
\alpha_\pi: X \lra \loopinf{}(\MT{Spin (d)} \wedge BG_+).
\]
\begin{thm}[The index theorem]\label{thm:indextheorem}
If the base space $X$ is compact, then the composition
\[
\loopinf{}(\Sigma^{-d} \Nov \circ (\kothom{-d} \wedge \id_{BG})) \circ \alpha_\pi: X \lra \loopinf{+d} \KO(\cstar(G))
\]
is equal to $\ind(\Dir_{f})$ under the identification of the set $[X,\loopinf{+d} \KO (\cstar(G))]$ of homotopy classes with $KO^{-d}(X;\cstar(G))$.
\end{thm}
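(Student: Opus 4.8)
The plan is to reduce to the case $G=1$, which is the Atiyah--Singer family index theorem in the form used in \cite{BERW}, by recognising that the Rosenberg--Dirac operator $\Dir_f$ is the fibrewise spin Dirac operator twisted by the flat bundle of finitely generated projective Hilbert modules $\cL_f = f^*\cL_G$, and that this twisting is implemented in $KK$-theory by Kasparov multiplication with the Mishchenko--Fomenko class $[\cL_f]$ of \eqref{eq:kk-element-michfom}. Since the Novikov assembly map $\Nov$ is \emph{defined} (Definition \ref{defn:novikov-assembly}) to be exactly the spectrum-level incarnation of this multiplication, once the multiplicativity of the analytic index is in hand the theorem follows by unwinding definitions with the help of the equivalences and product maps of Joachim--Stolz \cite{JS}.

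First I would establish the multiplicativity of the $KK$-class of $\Dir_f$: the class $[\Dir_f] \in KK(\gC(E)\otimes\Cl^{0,d},\gC(X)\otimes\cstar(G))$ is the Kasparov product over $\gC(E)$ of the $\cL_f$-twisting class (the module $\Gamma(E; f^*\cL_G)$ with its multiplication action of $\gC(E)$ and zero operator) with the untwisted fibrewise Dirac class $[\Dir] \in KK(\gC(E)\otimes\Cl^{0,d},\gC(X))$. Composing with the unit homomorphism $u : \gR\to\gC(E)$, and using associativity of the Kasparov product, compatibility of the exterior product with composition, and the easy computation of products against classes of homomorphisms and against identities, this descends to the identity
\[
\ind(\Dir_f) = \bigl([\cL_f]\exprod[\id_{\Cl^{0,d}}]\bigr)\komprod\bigl([\Dir]\exprod[\id_{\cstar(G)}]\bigr) \in KK(\Cl^{0,d},\gC(X)\otimes\cstar(G)) = KO^{-d}(X;\cstar(G)).
\]
That $\Dir_f$ literally represents this product amounts to verifying Kasparov's connection and positivity conditions for the twist; since $\cL_f$ carries a \emph{flat} connection the curvature correction to the Schr\"odinger--Lichnerowicz formula vanishes, so these hold essentially as in the scalar case (cf.\ \cite{Connes-Skandalis}), while the analytic inputs over the compact base $X$ --- self-adjointness and the Fredholm property of $\normalize{\Dir_f}$ --- are provided by \cite{JEIndex1} and Proposition \ref{prop:dirac-is-fredholm}.

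Next I would feed in the untwisted family index theorem for $[\Dir]$, which identifies $\ind(\Dir)\in KK(\Cl^{0,d},\gC(X))$, equivalently the corresponding map $X\to\loopinf{+d}\KO$, with $\loopinf{}(\kothom{-d})$ precomposed with the untwisted Pontrjagin--Thom map $X\to\loopinf{}\MT{Spin (d)}$ --- the latter obtained from $\alpha_\pi$ by collapsing $BG$ to a point, while the diagonal $\Delta$ appearing in the definition of $\alpha_\pi^{ad}$ is precisely what reintroduces the $f$-information needed for the twist. Finally I would translate the remaining Kasparov multiplication into spectra: under the equivalence \eqref{eq:joachimstolz-equivalence2} the class $[\cL_f]$ corresponds to $\cL_G\circ(\Sigma^\infty f_+)$, the product maps \eqref{eq:joachimstolzmaps} realise the Kasparov product, and hence ``$\komprod$ with $[\cL_f]\exprod[\id_{\Cl^{0,d}}]$'' becomes the map $\Sigma^{-d}\Nov$, with its $BG_+$-slot filled by the $f$-component already built into $\alpha_\pi$. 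Chasing these identifications through the displayed formula for $\ind(\Dir_f)$ exhibits it as represented by $\loopinf{}(\Sigma^{-d}\Nov\circ(\kothom{-d}\wedge\id_{BG}))\circ\alpha_\pi$, which is the assertion.

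The main obstacle is the multiplicativity step. Making precise that the Rosenberg--Dirac operator represents the Kasparov product --- with the Real, $\bZ/2$-graded, $\Cl^{0,d}$-linear structure all correctly matched, in the continuous-field framework of \cite{JEIndex1} rather than the classical Hilbert-space setting, and with $\cL_f$ only finitely generated projective --- requires a careful check of Kasparov's technical hypotheses and of the compatibility of the Joachim--Stolz product maps with the analytic model of the index used here; one must also keep track of the Clifford degree shifts and of the translation between the $\Cl^{0,d}$- and $\Cl^{d,0}$-linear conventions noted after the definition of the Rosenberg--Dirac operator. The remaining steps, though notation-heavy, are formal manipulations in $KK$-theory and stable homotopy theory.
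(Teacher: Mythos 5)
Your overall architecture is the same as the paper's: first the multiplicativity statement $\ind(\Dir_f) = ([\cL_f]\exprod[\id_{\Cl^{0,d}}])\komprod([\Dir]\exprod[\id_{\cstar(G)}])$ (the paper's Proposition \ref{prop:index-of-dirac-with-coefficients}, proved by adapting \cite[Theorem 5.22]{SchickKKL2} to the Real family setting via \cite{JEIndex1}), then a topological identification, then the Joachim--Stolz translation of Kasparov products into spectrum maps, which is how the paper also recovers $\Nov$ and the map $\alpha_\pi$. The first and last steps of your sketch are essentially the paper's.

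The gap is in the middle step. You propose to ``feed in the untwisted family index theorem for $[\Dir]$, which identifies $\ind(\Dir)$'' in the form used in \cite{BERW}. But that theorem only computes $\ind(\Dir)=\bigl(u\exprod[\id_{\Cl^{0,d}}]\bigr)\komprod[\Dir]\in KO^{-d}(X)$, i.e.\ the class \emph{after} composing with the unit $u:\gR\to\gC(E)$, whereas your displayed multiplicativity formula pairs $[\cL_f]$ against the full class $[\Dir]\in KK(\gC(E)\otimes\Cl^{0,d},\gC(X))$, which remembers the $\gC(E)$-module structure. Knowledge of $\ind(\Dir)$ does not determine how $[\Dir]$ pairs with $K$-theory classes on the total space $E$ with $\cstar(G)$-coefficients, and since $\cL_f$ is an infinite-dimensional flat Hilbert-module bundle there is no formal device (and the diagonal map $\Delta$ cannot ``reintroduce'' the $f$-information after the fact) that upgrades the untwisted statement to the twisted one. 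What is actually needed is an identification of $[\Dir]$ itself with a topologically defined class --- equivalently, the equality of the analytic and topological umkehr maps acting on $K$-theory with arbitrary $\mathrm{C}^*$-coefficients, the Connes--Skandalis-type statement the paper alludes to. This is exactly where the paper does real work: Proposition \ref{proofindextheorem2} embeds $E$ into $X\times\bR^n$, uses the spin Thom class $\gt_V$/inverse Thom class $\gu_V$ of the normal bundle, and compares the Dirac operator on the tubular neighbourhood with the Euclidean fibrewise Dirac operator (an analytic argument in the continuous-field setting, following \cite[Proposition 10.8.2]{HR} and \cite[\S 2.4]{JEIndex1}), yielding $\beta_n(\ind(\Dir_f)) = j_!(\thom_V([\cL_f]))$; the class $\kothom{V}\wedge\cL_f$ and the diagonal in $\alpha_\pi$ then come out of this computation rather than being inserted formally. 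Your sketch omits this step entirely, so as written the reduction to the $G=1$ case does not go through.
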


We now give a sketch of the proof. The main part is analytical in nature and is carried out on the level of $KK$-groups. The first step is to relate the index of the $G$-Dirac operator to the $K$-homology class of the ordinary Dirac operator.

\begin{prop}\label{prop:index-of-dirac-with-coefficients}
The relation
\[
 \ind (\Dir_{f}) = ([\cL_f] \exprod [\id_{\Cl^{0,d} }] \komprod ([\Dir] \exprod [\id_{\cstar(G)}]) \in KK (\Cl^{0,d}, \gC(X) \otimes \cstar (G)) 
\]
holds. 
\end{prop}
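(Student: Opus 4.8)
The plan is to verify the identity by unwinding the definitions of all the $KK$-classes involved and recognising that the right-hand side computes the same continuous field of Hilbert modules, the same representation, and the same (bounded transform of the) operator as the left-hand side. First I would recall that
$$[\cL_f] = [\Gamma(E; f^*\cL_G), \_, 0] \in KK(\gR, \gC(E)\otimes\cstar(G))$$
from \eqref{eq:kk-element-michfom}, that $[\Dir] = [L^2_X(E,\spinor_E), \mu, \normalize{\Dir}] \in KK(\gC(E)\otimes\Cl^{0,d}, \gC(X))$, and that
$$[\Dir_f] = [L^2_X(E,\spinor_E\otimes f^*\cL_G), \mu, \normalize{\Dir_f}] \in KK(\gC(E)\otimes\Cl^{0,d}, \gC(X)\otimes\cstar(G)).$$
Note that the exterior product $[\cL_f]\exprod[\id_{\Cl^{0,d}}]$ lives in $KK(\Cl^{0,d}, \gC(E)\otimes\cstar(G)\otimes\Cl^{0,d})$, and $[\Dir]\exprod[\id_{\cstar(G)}]$ lives in $KK(\gC(E)\otimes\Cl^{0,d}\otimes\cstar(G), \gC(X)\otimes\cstar(G))$, so the composition product is taken over $\gC(E)\otimes\cstar(G)\otimes\Cl^{0,d}$ (after the evident flip identifications) and indeed lands in $KK(\Cl^{0,d}, \gC(X)\otimes\cstar(G))$, matching the target.

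The key computational step is to identify the Kasparov product on the right. Since $[\cL_f]$ is represented by a Kasparov module whose operator is \emph{zero} and whose underlying module $\Gamma(E; f^*\cL_G)$ is finitely generated projective over $\gC(E)\otimes\cstar(G)$, the composition product $[\cL_f]\komprod(\text{anything})$ is easy: it is computed simply by tensoring the second module with $\Gamma(E;f^*\cL_G)$ over $\gC(E)$ and keeping the operator unchanged (no connection term is needed because the operator in the first factor vanishes, so the Connes--Skandalis positivity and connection conditions are trivially met). Concretely, forming the internal tensor product of $L^2_X(E,\spinor_E)\otimes\cstar(G)$ (the module underlying $[\Dir]\exprod[\id_{\cstar(G)}]$) with $\Gamma(E; f^*\cL_G)$ over $\gC(E)$ yields exactly $L^2_X(E, \spinor_E\otimes f^*\cL_G)$ as a continuous field of Hilbert-$\cstar(G)$-modules, the representation $\mu$ of $\gC(E)$ carries over, the $\Cl^{0,d}$-action carries over, and the bounded transform $\normalize{\Dir}\otimes 1$ becomes $\normalize{\Dir_f}$ because $\Dir_f$ is, fibrewise and by construction, the Dirac operator $\Dir$ twisted by the flat bundle $f^*\cL_G$, i.e.\ $\Dir_f = \Dir\otimes 1$ with respect to the tensor-product connection. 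This is precisely the module, representation, and operator defining $[\Dir_f]$, so the two classes agree.

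The main obstacle I expect is the bookkeeping around flips of tensor factors and the verification that the Kasparov product is genuinely \emph{represented} (not merely equal in $KK$) by the obvious module, so that one may read off the operator directly. Two technical points need care: (a) the module $\Gamma(E;f^*\cL_G)$ is finitely generated projective but the fibrewise connection on $f^*\cL_G$ is only flat, not trivial, so one must check that twisting by a flat connection does not introduce a curvature correction to the Dirac operator — this is classical and reduces to the Leibniz rule for the tensor-product connection, but must be stated; and (b) one must confirm that the bounded transform commutes with the internal tensor product in this finitely-generated-projective setting, so that $\normalize{\Dir}\otimes 1 = \normalize{\Dir\otimes 1}$, which holds because $\Dir\otimes 1$ and $\Dir$ have the same spectral data on each fibre. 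Once these are in place — and here one can cite the regularity and self-adjointness results of \cite{JEIndex1} (Theorem 1.14, Example 2.28) to know that all operators in sight are regular self-adjoint and that the bounded transforms behave well — the identity follows from the uniqueness of the Kasparov product together with the explicit verification that the stated module is a representative. I would write this out as a short direct argument rather than invoking a general associativity/naturality formalism, since the vanishing of the operator in $[\cL_f]$ makes the product essentially elementary.
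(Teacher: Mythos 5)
Your overall strategy -- compute the Kasparov product directly, using that $[\cL_f]$ is represented by a finitely generated projective module with zero operator, and identify the internal tensor product with $L^2_X(E;\spinor_E\otimes f^*\cL_G)$ -- is the standard one, and it is essentially what the reference the paper leans on does. But your justification of the key step has a genuine gap. When $F_1=0$ the Connes--Skandalis \emph{positivity} condition is indeed vacuous, but the \emph{connection} condition is not: it is precisely the nontrivial content of the proposition. You must show that the candidate operator $\normalize{\Dir_f}$ is an $F_2$-connection for $F_2=\normalize{\Dir}\exprod\id_{\cstar(G)}$ on the balanced tensor product, i.e.\ that for each section $\xi$ of $f^*\cL_G$ the operators $T_\xi F_2 \mp \normalize{\Dir_f}\,T_\xi$ are compact (as operator families with $\cstar(G)$-coefficients). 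Your shortcut ``keep the operator unchanged, $\normalize{\Dir}\otimes 1$ becomes $\normalize{\Dir_f}$'' is not even well defined: the tensor product is balanced over $\gC(E)$, and $\normalize{\Dir}$ does not commute with multiplication by functions on $E$, so ``$\normalize{\Dir}\otimes 1$'' makes no sense on $\Gamma(E;f^*\cL_G)\otimes_{\gC(E)}L^2_X(E;\spinor_E)$; likewise the asserted identity $\normalize{\Dir}\otimes 1=\normalize{\Dir\otimes 1}$ and the appeal to ``same spectral data on each fibre'' do not substitute for the compactness estimates. Flatness of $f^*\cL_G$ makes the local comparison of $\Dir_f$ with $\Dir$ transparent, but one still has to compare bounded transforms up to compacts, uniformly in the family, which is where serious analysis (elliptic regularity, compact commutators/resolvents for operator families with coefficients in $\cstar(G)$) enters.

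For comparison, the paper does not argue this from scratch: it quotes Schick's proof of the point case in complex $K$-theory \cite[Theorem 5.22]{SchickKKL2} (which carries out exactly the connection verification you are eliding), observes that the Real modifications are purely linear-algebraic, and handles the family case by replacing the analytical inputs from \cite{Bunke1995} used there with the results of \cite[\S 2.4]{JEIndex1}. If you want to keep your direct argument, the missing piece is precisely an analogue of those analytical lemmas: a proof that $\normalize{\Dir_f}$ is a $\normalize{\Dir}$-connection in the family setting, for which the self-adjointness/regularity results of \cite{JEIndex1} you cite are necessary but not sufficient -- you also need the compactness statements for the relevant commutators and differences of bounded transforms.
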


The case of $X=*$ (and with complex $K$-theory) is done in \cite[Theorem 5.22]{SchickKKL2}. The linear-algebraic modifications to carry out the Real case are clear (everything in loc.cit. is Real). To deal with the family case, one replaces the references in \cite{SchickKKL2} to the analytical parts of \cite{Bunke1995} with references to \cite[\S 2.4]{JEIndex1}.

To describe the proof of the index theorem, we have to give explicit descriptions of the Bott maps and the Thom isomorphism. The \emph{Bott class} $\gb_n \in KK(\gR,\gC_0 (\bR^n) \otimes \Cl^{0,n})$ is 
\begin{equation*}
 \gb_n = [\gC_0 (\bR^n;\Cl^{0,n}),1, \frac{\lambda(x)}{(1+\norm{x}^2)^{\frac{1}{2}}}] \in KK(\gR,\gC_0 (\bR^n) \otimes \Cl^{0,n})
\end{equation*}
where $\lambda$ the action by left-multiplication, and $x$ is the identity function on $\bR^n$. The \emph{inverse Bott class} $\ga_n \in KK(\gC_0 (\bR^n) \otimes \Cl^{0,n},\gR)$ is 
\begin{equation}\label{eq:inverse-bottelement}
\ga_n =  [L^2 (\bR^n; \bS_{n,n}), \mu,\normalize{D}] \in  KK(\gC_0 (\bR^n) \otimes \Cl^{0,n},\gR).
\end{equation}
Here $\bS_{n,n}$ is the exterior algebra $\Lambda^* \bR^n$ with its canonical $\Cl^{n,n}$-structure (see e.g. \cite[Definition 3.2]{JEIndex1}), $\mu$ is the action of $\gC_0 (\bR^n) \otimes \Cl^{0,n}$ on $L^2 (\bR^n;\bS_{n,n})$ which combines pointwise multiplication by real-valued functions on $\bR^n$ and the restriction of the Clifford structure to $\Cl^{0,n}$, and $D$ is the Dirac operator (which is equal to $d+d^*$ in this case). Kasparov proved that
\begin{equation}\label{eq:kasp-bott-periodicity}
\begin{aligned}
 \gb_n \komprod \ga_n &= 1 =[\id_{\gR}] \in KK(\gR,\gR)\cong \bZ\\
  \ga_n \komprod \gb_n &= [\id_{\gC_0 ( \bR^n) \otimes \Cl^{0,n} }] \in KK(\gC_0 (\bR^n) \otimes \Cl^{0,n},\gC_0 (\bR^n) \otimes \Cl^{0,n}),
\end{aligned}
\end{equation}
see \cite[Theorem 7]{Kasp} or \cite[p. 101 ff.]{Echterhoff} for more details. 

There are parametrised version of these classes: let $p:V \to Y$ be a real vector bundle of rank $n$ on a locally compact space $Y$, and let $\Cl (V^-)\to Y$ be the bundle of Clifford algebras of $V^-$ (the Clifford generators have positive square). By $\gGamma_0 (V;\Cl(V^-))$, we denote the $\mathrm{C}^*$-algebra of sections of $p^* \Cl (V^-) \to V$ which vanish at infinity. The above elements generalise to
\begin{equation*}
\begin{aligned}
 \gb_V &\in KK(\gC_0 (Y),\gGamma_0 (V; \Cl(V^-)))\\
 \ga_V &\in KK(\gGamma_0 (V; \Cl(V^-)), \gC_0 (Y)).
\end{aligned}
\end{equation*}
A spin structure on $V$ yields a $KK$-equivalence $\gs_V\in KK(\gGamma_0 (Y;\Cl (V^-)), \gC_0 (Y) \otimes \Cl^{0,n})$. Namely, let $P \to Y$ be the underlying $\Spin (n)$-principal bundle and consider the bundle $P \times_{\Spin (n)} \Cl^{n,n} \to Y$. It has an action of $\Cl(V)$ and one of $\Cl^{0,n}$ which anticommute. Using the grading, one can turn the $\Cl (V)$-action into a $\Cl (V^-)$-action, as in \cite[p. 773]{BERW}. We apply this construction to the pullback $p^* V \to V$ and obtain the Thom class of the spin bundle $V$
\begin{equation*}
 \gt_V :=\gb_V \komprod \gs_{p^*V} \in KK(\gC (Y),\gC_0 (V) \otimes \Cl^{0,n})
\end{equation*}
and the inverse Thom class
\begin{equation*}
\gu_V:= \gs_{p^*V}^{-1} \komprod \ga_V \in KK(\gC_0 (V) \otimes \Cl^{0,n},\gC (Y)).
\end{equation*}
These are mutually inverse $KK$-equivalences. 

Now choose an embedding $E \to X \times \bR^n$ over $X$. Let $V \to E$ be the normal bundle of this embedding, and pick an open embedding $j:V \to X \times \bR^n$ over $X$ as a tubular neighbourhood. Extension by zero gives a homomorphism $j_!: \gC_0 (V) \to \gC_0 (X \times \bR^n)$. 

\begin{prop}\label{proofindextheorem2}
The relation
\[
(\gu_V \exprod [\id_{\Cl^{0,d}}]) \komprod [\Dir] = ([j_!] \exprod [\id_{\Cl^{0,n}}]  ) \komprod (\ga_n \exprod [\id_{\gC(X)}]) \in KK(\gC_0 (V) \otimes \Cl^{0,n} ,\gC(X))
\]
holds. 
\end{prop}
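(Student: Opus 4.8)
The plan is to show that both sides of the identity compute the fibrewise (over $X$) fundamental $K$-homology class of the open submanifold $j(V)\subset X\times\bR^n$. For the right-hand side this is essentially an unwinding of definitions: the class $\ga_n\exprod[\id_{\gC(X)}]$ is, by the definition of $\ga_n$, the class $[\Dir_{X\times\bR^n/X}]\in KK(\gC_0(X\times\bR^n)\otimes\Cl^{0,n},\gC(X))$ of the Clifford-linear Dirac operator $d+d^{*}$ of the trivial bundle $X\times\bR^n\to X$ with values in $X\times\bS_{n,n}$ (namely $\ga_n$ externally multiplied by the identity module over $\gC(X)$), and precomposing with $j_!:\gC_0(V)\to\gC_0(X\times\bR^n)$ is exactly restriction of this class along the open inclusion $j$.

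For the left-hand side I would first substitute $\gu_V=\gs_{p^{*}V}^{-1}\komprod\ga_V$ and reassociate the Kasparov products, obtaining $\gs_{p^{*}V}^{-1}\komprod\bigl((\ga_V\exprod[\id_{\Cl^{0,d}}])\komprod[\Dir]\bigr)$. Here $\ga_V$ is represented by the fibrewise Dirac operator (again of the form $d+d^{*}$, now taking values in the pulled-back Clifford bundle of the normal directions) of the vector bundle projection $p:V\to E$. The \emph{multiplicativity of the Dirac class} under the composite submersion $V\xrightarrow{p}E\xrightarrow{\pi}X$ identifies $(\ga_V\exprod[\id_{\Cl^{0,d}}])\komprod[\Dir]$ with the class of the Clifford-linear Dirac operator on the total space of $V\to X$ (for a submersion metric, with the Clifford-bundle structure coming from $TV\cong p^{*}TE\oplus p^{*}V$), and the outer composition with $\gs_{p^{*}V}^{-1}$ replaces this Clifford-bundle structure by the genuine spinor bundle of $V$ for the spin structure induced from those of $TE$ and of $V$ (the latter being the one fixed in the definition of $\gs_V$). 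Thus the left-hand side equals $[\Dir_{V/X}]\in KK(\gC_0(V)\otimes\Cl^{0,n},\gC(X))$, the fibrewise spinor Dirac class of $V\to X$.

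To conclude, I would use that $[\Dir_{V/X}]$, as a class in $KK(-,\gC(X))$, is independent of the complete metric chosen on $V$ — it is the fibrewise fundamental class of $V\to X$ — and that the fundamental class is local: its restriction along an open inclusion is the fundamental class of the open submanifold (this is the finite-propagation-speed/locality property of Dirac operators). Applied to the open embedding $j:V\hookrightarrow X\times\bR^n$ over $X$ (the chosen tubular neighbourhood), this gives $[j_!]\komprod[\Dir_{X\times\bR^n/X}]=[\Dir_{V/X}]$, so the left-hand side equals $([j_!]\exprod[\id_{\Cl^{0,n}}])\komprod(\ga_n\exprod[\id_{\gC(X)}])$, which is the right-hand side. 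The one genuinely substantial ingredient is the multiplicativity of the Dirac class invoked in the second paragraph, in the Real, $\Cl^{0,*}$-linear, and family setting; this is classical (Kasparov; Connes--Skandalis), but checking it in this generality is a lengthy exercise with Kasparov's connection formalism and the grading/Clifford bookkeeping, so I would record it as a cited lemma rather than carry it out here, in keeping with the level of detail adopted throughout this section.
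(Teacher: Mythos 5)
Your argument is correct and is essentially the paper's own: both identify the left-hand side with the (Clifford-linear, fibrewise over $X$) Dirac class of the total space of $V$ for a submersion-type metric — the paper does this by "one may check" from the formula for $\gu_V$, you by citing multiplicativity of the Dirac class, which is the same deferred Kasparov-product computation — and both identify the right-hand side as the restriction along the tubular neighbourhood $j$ of the Euclidean Dirac class of $X\times\bR^n\to X$. Your final appeal to metric-independence and locality of the Dirac class is exactly what the paper implements concretely, by reducing to the unit disc bundle, interpolating the metrics so the two operators agree there, and invoking Higson--Roe (Prop.\ 10.8.2, Lemma 10.8.4) together with the family-case analysis of \cite{JEIndex1}.
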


\begin{proof}[Sketch of proof]
By homotopy invariance of $KK$-theory, it is enough to prove this equation in $KK(\gC_0 (V_0) \otimes \Cl^{0,n} ,\gC(X))$, where $V_0 \subset V$ is the unit disc bundle. The given Riemannian metric on $E$, a bundle metric on $V$ and a connection on $V$ together define a complete Riemannian metric on the total space $V$. Using the formula for $\gu_V$, one may check that $(\gu_V \exprod [\id_{\Cl^{0,d}}]) \komprod [\Dir]$ is represented by the continuous field of Hilbert spaces $L^2_X (V;\spinor_V)$, with the Clifford action by the (trivial!) vertical tangent bundle of $V$, and the Dirac operator on $V$.
The spin Dirac operator with respect to the euclidean metric on the manifold bundle $X \times \bR^n \to X$ represents the element $\gu_n \exprod [\id_{\gC(X)}]$, by \eqref{eq:inverse-bottelement}. The same is true if we deform the metric, as long as the deformation is constant near infinity. Pick a Riemannian metric on the bundle $X \times \bR^n \to X$ which coincides with the euclidean metric near infinity and with the metric of $V$ on $V_0$. So the two classes we claim are equal are represented by operators which are equal on $V_0$ (but are defined on widely different domains). Using the techniques of \cite[Proposition 10.8.2 and Lemma 10.8.4]{HR} (and the analytical results of \cite[\S 2.4]{JEIndex1} to deal with the family case) finishes the proof.
\end{proof}

Let us introduce more notation. For a rank $n$ spin vector bundle $V \to Y$ on a compact Hausdorff space and $\mathrm{C}^*$-algebras $\gA$, $\gB$, the \emph{Thom isomorphism} is
\begin{align*}
\thom_V:  KK (\gA, \gC(Y) \otimes \gB) &\lra KK(\gA, \gC_0 (V) \otimes \Cl^{0,n} \otimes \gB)\\
 \gx &\longmapsto \gx \komprod (\gt_V \exprod [\id_{ \gB}]).
\end{align*}
The Thom isomorphism of the trivial bundle $Y \times \bR^n \to Y$ is the \emph{Bott map} $\bott_n := \thom_{Y \times \bR^n}$. The inclusion $j:U \to Y$ of an open subspace induces a homomorphism $j_!: \gC_0 (U) \to \gC_0 (Y)$ and we write
\begin{align*}
 j_! : KK(\gC_0 (Y) \otimes \gA, \gB) &\lra KK (\gC_0 (U) \otimes \gA, \gB)\\
 \gx &\longmapsto ([j_!] \exprod [\id_{\gA}]) \komprod \gx.
\end{align*}

With these short notations, Propositions \ref{prop:index-of-dirac-with-coefficients} and \ref{proofindextheorem2}, together with the relations \eqref{eq:kasp-bott-periodicity} and the formal properties of the Kasparov product \cite[\S 18.6 and 18.7]{Bla} imply the formula
\begin{equation}\label{eq:index-theorem-for-dirac-with-coefficients}
 \beta_n (\ind(\Dir_{f})) = j_! (\thom_V ([\cL_f])) \in KK (\Cl^{0,d}, \gC(X \times \bR^n) \otimes \Cl^{0,n} \otimes \cstar (G)).
\end{equation}
Let $c: S^n \wedge X_+ \to \mathrm{Th}(V)$ be the Pontrjagin--Thom collapse, and $\Delta: \mathrm{Th}(V) \to \mathrm{Th} (V) \wedge E_+$ be the diagonal map. The $K$-theory classes $\kothom{V} \in KO^{n-d} (\mathrm{Th}(V),\infty)$ and $\cL_f \in KO (E_+,+;\cstar(G))$ together give a $K$-theory class
\[
\kothom{V} \wedge \cL_f \in KO^{n-d} (\mathrm{Th}(V) \wedge E_+,+;\cstar(G))
\]
which is represented by a based map
\[
\kothom{V} \wedge \cL_f  : \mathrm{Th}(V) \wedge E_+\lra \loopinf{+d-n} \KO (\cstar(G)).
\]
In homotopy theoretic terms, the identification \eqref{eq:index-theorem-for-dirac-with-coefficients} can be reformulated by saying that the adjoint
\[
((\kothom{V} \wedge \cL_f )\circ \Delta \circ c)^{ad}: X \lra \loopinf{+d} \KO (\cstar(G))
\]
of the composition $(\kothom{V} \wedge \cL_f )\circ \Delta \circ c$ represents the index class $\ind(\Dir_f)$. Finally, we let $n$ tend to $\infty$ and use the map $f:E \to BG$ and the classifying map $E \to B\Spin (d)$ of the vertical tangent bundle of $E$ and arrive at the formulation given in Theorem \ref{thm:indextheorem}. This finishes our sketch of the proof of the index theorem.

\subsection{Proof of Theorem \ref{main-theorem-mapintoRiem}}

We now explain how to deduce Theorem \ref{main-theorem-mapintoRiem} from what we have shown so far. Let $f : W \to BG$ and $h \in \cR^+(W)_g$ be as in the statement of Theorem \ref{main-theorem-mapintoRiem}.

By \cite[Lemma 1.3]{WallFin}, as $\pi_1(f) : \pi_1(W) \to G$ is a split surjection from a finitely presented group, $G$ is finitely presented. By embedding a finite presentation $2$-complex for $G$ into $\bR^{2n}$ and taking a regular neighbourhood, we obtain a Spin manifold $P$ with fundamental group $G$, whose structure map $P \to B \Spin (2n) \times BG$ is $2$-connected, and such that $(P, \partial P)$ is $(2n-3)$-connected. (Alternatively, $P$ could be constructed by appealing to Lemma \ref{lem:attaching-handles-typeFn}). We consider $P$ as a cobordism $P : \emptyset \leadsto \partial P$.

By Theorem \ref{thm:existence-stable-metrics1}, there exists a $g_0 \in \Riem^+(\partial P)$ and a right stable $h_0\in \Riem^+ (P)_{g_0}$. The pair $(P,g_0)$ satisfies the hypotheses of Theorem \ref{thm:factorization}, and so we obtain the left half (which is homotopy cartesian) of a diagram
\begin{equation*}
\xymatrix{
\Riem^+ (P)_g \hq \Diff_\partial (P) \ar[r] \ar[d] & T_{\infty}^+ \ar[d] \ar[r] &\ast\ar[d]\\
B \Diff_\partial (P) \ar[r]^-{\alpha_P} & \loopinf{}_0 \MT{Spin (2n)}\wedge BG_+ \ar[r]^-{\eta} & \loopinf{+2n} \KO (\cstarred(G))
}
\end{equation*}
with $\eta=\loopinf{} (\Sigma^{-2n}\Nov \circ (\kothom{-2n} \wedge \id_{BG}))$. The induced map on homotopy fibres is $\inddiff^G_{h_0}$ (the metric $h_0$ enters the construction of the implied nullhomotopy of the composition $\Riem^+ (P)_g \hq \Diff_\partial (P) \to B \Diff_\partial (P) \to \loopinf{+2n} \KO (\cstar(G))$). The right half of the diagram is obtained exactly as in \cite[\S 4.2 and 4.3]{BERW}. The ingredients for this step in \cite{BERW} were the homotopy theoretic formulation of the index theorem \cite[Theorem 3.31]{BERW}, the additivity theorem \cite[Theorem 3.12]{BERW} and the relative index construction \cite[\S 3.5]{BERW}. In the present chapter, we have explained how all these generalise to the case of nontrivial $G$.

Once the commutative diagram above is established, fibre transport for the middle column acting on $h_0 \in \cR^+(P)_{g_0}$ gives a map 
$$\Psi=\Psi_{(P,h_0)}: \loopinf{+1}\MT{Spin (2n)} \wedge BG_+ \lra \Riem^+ (P)_ {g_0}$$
such that
\[
\inddiff^G \circ \Psi \simeq \loopinf{+1} (\Sigma^{-2n}\Nov \circ (\kothom{-2n} \wedge \id_{BG})).
\]
This proves Theorem \ref{main-theorem-mapintoRiem} for the manifold $P$. To prove it for arbitrary $W$ and boundary conditions $g$ we shall embed $P$ into $W$, for which we use the following result.

\begin{thm}\label{embedding-presentation-complex}
Let $W^{2n}$, $n \geq 3$, be a connected compact spin manifold equipped with a map $f:W \to BG$ which is split surjective on $\pi_1$. Let $P^{2n}$ be a spin manifold with a map $h:P \to BG$ which is $2$-connected, and assume that $P$ built from $\emptyset $ by attaching handles of dimension $\leq 2$. Then there exists an embedding $g: P \to W$ preserving the spin structure and such that $f \circ g \simeq h$.
\end{thm}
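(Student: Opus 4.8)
The plan is to build the embedding in three stages: first produce \emph{some} map $g_0 : P \to W$ with $f \circ g_0 \simeq h$, using that $\pi_1(f)$ is split surjective; then promote $g_0$ to an embedding by general position, which is available because $2n \geq 6$; and finally exploit the framing freedom at each handle attachment to arrange that $g$ pulls back the spin structure of $W$. Throughout we use that, as $P$ is built from handles of index $\leq 2$, it collapses onto a $2$-dimensional spine $K$ and is diffeomorphic to a regular neighbourhood of $K$, and that $\pi_1(h): \pi_1(P) \to G$ is an isomorphism since $h$ is $2$-connected. Note that $g$ is necessarily a codimension-zero embedding, so ``preserving the spin structure'' means $g^*(\text{spin structure of }W) = \text{spin structure of }P$.

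\emph{Constructing the map.} Fix a splitting $s : G \to \pi_1(W)$ of $\pi_1(f)$. On the $1$-skeleton $K^{(1)}$, whose fundamental group $F$ is free, define $g_0$ so that $(g_0)_* : F \to \pi_1(W)$ is the composite $F \twoheadrightarrow \pi_1(P) \xrightarrow{\pi_1(h)} G \xrightarrow{s} \pi_1(W)$; this is possible since any homomorphism out of a free group is realised by a map. Each $2$-cell of $K$ is attached along a word $r \in F$ which is a relator, i.e.\ maps to $1$ in $\pi_1(P) \cong \pi_1(K)$, so $(g_0)_*(r) = 1$ and $g_0$ extends over it. This gives $g_0 : K \to W$, hence (via $P \simeq K$) a map $g_0 : P \to W$ with $(f \circ g_0)_* = \pi_1(f) \circ s \circ \pi_1(h) = \pi_1(h)$ on $\pi_1$; since $BG$ is aspherical and $\pi_1(P) \cong \pi_1(K)$, maps to $BG$ are classified up to homotopy by their effect on $\pi_1$, so $f \circ g_0 \simeq h$.

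\emph{Making it an embedding, compatibly with the spin structure.} The cleanest route is to embed $P$ handle by handle, keeping the restriction to the spine homotopic to $g_0|_K$ at every stage. Embed the $0$-handle as a ball; attach the images of the $1$-handles along embedded, pairwise disjoint arcs in the complement of the part already placed, chosen to realise the prescribed elements of $\pi_1(W)$; and for each $2$-handle, observe that its attaching circle is a relator, hence null-homotopic in $W$, and in fact null-homotopic in the complement of the part already placed --- because that part is a regular neighbourhood of a complex of codimension $\geq 4 \geq 3$, so its complement has the same $\pi_1$ as $W$. Since $2n \geq 6$, such a circle bounds an embedded $2$-disc there (disjoint from the already-placed handles, by general position, as $2 + 2 < 2n$), over which the $2$-handle is embedded. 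At each step the normal framing of the core is chosen within its torsor of possibilities ($\pi_0 O(2n-1)$ for a $1$-handle, $\pi_1 O(2n-2)$ for a $2$-handle) so that $g$ pulls back the spin structure; this is unobstructed because $B\Spin(2n)$ is $2$-connected, so the two maps $K \to B\Spin(2n)$ classifying $TP|_K$ and $g^*TW|_K$ with their spin structures are automatically homotopic on the $2$-complex $K$. (Equivalently, one may embed the spine $K \hookrightarrow \inter{W}$ by general position and then invoke thickening theory in the stable range, together with this same $2$-connectedness, to identify a regular neighbourhood of $K$ with $P$ compatibly with the inclusions of $K$ and with the spin structures.) The resulting $g : P \hookrightarrow W$ preserves the spin structure, and since $g|_K \simeq g_0|_K$ we still have $f \circ g \simeq h$ (using $P \simeq K$ and asphericity of $BG$).

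\emph{Main obstacle.} The delicate point is the $2$-handle step: one must control $\pi_1$ of the complement of the already-embedded handles and verify that each successive attaching sphere bounds an embedded disc \emph{disjoint} from them. This is exactly where the hypotheses $n \geq 3$ (so $2n \geq 6$) and ``$P$ built from handles of index $\leq 2$'' are essential. By contrast, the homotopy-theoretic construction of $g_0$ and the spin-structure bookkeeping are comparatively formal once this geometric input is in place.
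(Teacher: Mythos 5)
Your construction of the initial map $g_0$ (splitting $\sigma$, free group on the $1$-skeleton, relators dying in $\pi_1(W)$, asphericity of $BG$) is exactly the first half of the paper's proof. Where you genuinely diverge is in upgrading $g_0$ to an embedding: the paper folds the spin data into the lifting problem by working over $B=B\Spin(2n)\times BG$ (using, as you do, that any map from the $2$-spine to the $2$-connected space $B\Spin(2n)$ is nullhomotopic), deduces $TP\cong g_0^*TW$ as spin bundles, and then invokes Phillips' submersion theorem to replace $g_0$ by a submersion realising this bundle map; general position makes the ($\leq 2$-dimensional) handle cores embedded and disjoint, the submersion then embeds a neighbourhood $U$ of the cores, and a diffeomorphic copy of $P$ inside $U$ gives the embedding, with the spin condition coming for free from the bundle map. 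Your route instead embeds $P$ handle by handle (Whitney-type arguments: arcs for $1$-handles, embedded null-homotopy discs in the complement for $2$-handles, using $\pi_1(W\setminus H)\cong\pi_1(W)$ in codimension $\geq 3$) and then matches $P$ via framing/thickening considerations. This works, and your ``main obstacle'' paragraph correctly isolates the delicate step, but the h-principle route buys you freedom from precisely the bookkeeping you gesture at: you must match not only the spin structures but the framed isotopy classes of the $2$-handle attaching circles so that the embedded handlebody is diffeomorphic to $P$ rel its spine. One small correction there: for a $2$-handle the attaching framing is not freely ``chosen within its torsor'' --- an embedded core disc in $W$ induces a framing up to homotopy, and the correct statement is that this induced framing is automatically the spin-compatible one (the handle sits inside spin $W$), which agrees with $P$'s framing once the $0$- and $1$-handle stages have been identified spin-compatibly; alternatively your parenthetical appeal to uniqueness of thickenings of a $2$-complex in the stable range ($2n\geq 6$), combined with triviality of spin bundles over a $2$-complex, is the cleaner way to close this step.
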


\begin{proof}
First note that it is really necessary that $\pi_1 (f)$ is split epimorphic. The first part of the proof is pure homotopy theory. Write $B= B \Spin (2n) \times BG$, with universal vector bundle $V\to B$.
Observe that $P$ is homotopy equivalent to a $2$-dimensional complex $K\subset P$. We claim that the lifting problem 
\[
 \xymatrix{
  & W \ar[d]^{f} \\
K \ar@{..>}[ur]^{g} \ar[r]^{h} & B
 }
\]
can be solved (up to homotopy). To do this, let $\sigma: G\to \pi_1 (W)$ be a splitting of the map $\pi_1 (f)$. We identify $G$ and $\pi_1 (K)$ using the isomorphism $\pi_1 (h)$. 
We can assume that $K^{(0)}=*$ and that the attaching maps $S^1 \to K^{(1)}$ for all $2$-cells are pointed.
The complex $K$ is the presentation complex for some finite presentation $\langle S, R\rangle$ of the group $G$, with $S$ indexing the $1$-cells and $R$ indexing the $2$-cells. 
First, we construct the lift $g$ on the $1$-skeleton $K^{(1)}$. Each $1$-cell $\alpha_s$, $s \in S$, is mapped to a loop in $W$ representing $\sigma (s) \in \pi_1 (W)$. This defines $g_1=g|_{K^{(1)}}$.
Let $\beta_r$, $r \in R$ be a $2$-cell with attaching map $\phi_r: S^1 \to K^{(1)}$. Now $g_1 \circ \phi_r: S^1 \to W$ represents the element $\sigma (r) \in \pi_1 (W)$. Since $\sigma$ is a homomorphism, $\sigma(r)=1$, and so $g_1 \circ \phi_r$ is nullhomotopic. Thus we can extend $g_1$ over each $2$-cell of $K$, and this shows the existence of the lift $g$.

Since $P$ is homotopy equivalent to $K$, we obtain a map $g:P \to W$ such that $f \circ g \simeq h$. But then
\[
TP \cong h^* V \cong g^* f^*V \cong g^* TW.
\]
Therefore, $g$ is covered by a bundle map $TP \to TW$. Since $P$ has no closed component, we can apply Phillips' submersion theorem \cite{Phillips}
and find a submersion $g':P \to W$, homotopic to $g$. Thus, we might assume that $g$ is a submersion. 

By general position, we can assume that $g$, restricted to the core of each handle, is self-transverse. As the cores of the handles are at most $2$-dimensional, $g$ will embed all handles, since $\dim (W) \geq 5$. Then $g$ embeds a small neighbourhood $U \subset P$ into $W$. But there is a diffeomorphic copy of $P$ contained in $U$, and hence we found the desired embedding.
\end{proof}

Now take an embedding $g: P \to W$, and view the cobordism $W : \emptyset \leadsto \partial W$ as the composition $\emptyset \stackrel{P}{\leadsto} \partial P \stackrel{K}{\leadsto} \partial W$ of two cobordisms. By hypothesis, $h \in \Riem^+ (W)_{g} \neq \emptyset$. Since $h_0 \in \Riem^+ (P)_{g_0}$ is right-stable, we find a psc metric $h' \in \Riem^+ (K)_{g_0,g}$ such that $h_0 \cup h $ lies in the same component of $\Riem^+ (W)_g$ as $h$. 
The composition 
\[
 \loopinf{+1} \MT{Spin (2n)} \wedge BG_+ \stackrel{\Psi_{(P,h_0)}}{\lra} \Riem^+ (P)_{g_0} \stackrel{\mu (\_,h')}{\lra} \Riem^+ (W)_g
\]
gives the map $\Psi$ whose existence is asserted by Theorem \ref{main-theorem-mapintoRiem}. To compute the composition with the index difference, one uses the additivity theorem for the index, along the same lines as in \cite[Theorem 3.4.11]{BERW}.

\subsection{The Baum--Connes conjecture}

Here we recall the statement of the Baum--Connes conjecture and the relation to the Novikov assembly map, mostly referring to the literature. Here we only use the reduced group $\mathrm{C}^*$-algebra $\cstarred(G)$. For a proper and locally compact $G$-space $X$ and $p,q\in \bN_0$, one defines an analytical assembly map in complex $K$-theory
\[
K_{p-q}^{\an,G} (X):=KK^G (\gC(X)\otimes \Cl^{p,q}, \gC ) \overset{\BC_X}\lra K_{p-q} (\cstarred(G)) := KK (\gC \otimes \Cl^{p,q} ,\cstarred(G))
\]
from the $G$-equivariant analytic $K$-homology of $X$ to the $K$-theory of $\cstarred (G)$. If $\underline{E}G$ is the universal $G$-space with finite isotropy, we obtain a map
\[
\BC: RK^G_i (\underline{E} G):= \colim_{X \subset \underline{E}G} K_{i}^G (X)\lra K_{i} (\cstarred(G))
\]
for all $i \in \bZ$. The \emph{Baum--Connes conjecture for $G$} predicts that $\BC$ is an isomorphism for all $i \in \bZ$. The classical Novikov assembly map 
$$\Nov^r=\Nov: KO_i (BG)=RKO_i (BG) \lra KO_i (\cstarred (G))$$
from \eqref{eq:classical-novikov-assembly} has a complex analogue
\[
\Nov^c: K_i (BG) = RK_i(BG) \lra K_i (\cstarred(G)).
\]
\begin{prop}
If $G$ is torsionfree and the Baum--Connes conjecture holds for $G$, then the real Novikov assembly map $\Nov^r: KO_i(BG)\to KO_i (\cstarred(G))$ is an isomorphism.
\end{prop}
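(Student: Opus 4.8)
The plan is to deduce the real statement from the complex Baum--Connes conjecture via a comparison of the real and complex Novikov assembly maps, using the standard fact that for torsionfree $G$ the Baum--Connes conjecture (for $\cstarred$) in complex $K$-theory is equivalent to the Novikov assembly map $\Nov^c : K_*(BG) \to K_*(\cstarred(G))$ being an isomorphism --- this is exactly the identification $RK^G_*(\underline{E}G) \cong RK_*(BG) = K_*(BG)$ for torsionfree $G$ together with the compatibility of $\BC$ with $\Nov^c$ already recalled in the text. So the complex half is immediate. The content is passing from complex to real.

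First I would set up the comparison. Both $\KO$ and $\KU$ are module spectra over $\KO$, and complexification $c : \KO \to \KU$ together with realification $r : \KU \to \KO$ (or the Wood/Bott cofibre sequence $\Sigma \KO \overset{\eta}\to \KO \overset{c}\to \KU$) are natural in the coefficient $\mathrm{C}^*$-algebra; smashing with $BG_+$ and applying $\Nov^r$, $\Nov^c$ gives a commuting ladder between the real and complex Novikov assembly maps. Concretely one gets a map of long exact sequences
\[
\xymatrix{
\cdots \ar[r] & KO_{i-1}(BG) \ar[r]^-{\eta} \ar[d]_{\Nov^r} & KO_i(BG) \ar[r]^-{c} \ar[d]_{\Nov^r} & K_i(BG) \ar[r]^-{\partial} \ar[d]_{\Nov^c} & KO_{i-2}(BG) \ar[r] \ar[d]_{\Nov^r} & \cdots \\
\cdots \ar[r] & KO_{i-1}(\cstarred(G)) \ar[r]^-{\eta} & KO_i(\cstarred(G)) \ar[r]^-{c} & K_i(\cstarred(G)) \ar[r]^-{\partial} & KO_{i-2}(\cstarred(G)) \ar[r] & \cdots
}
\]
where the rows are the long exact sequences associated with the cofibre sequence $\Sigma^{-1}\KO \overset{\eta}\to \Sigma^{-2}\KO \to \Sigma^{-2}\KU$ smashed with $BG_+$, respectively the corresponding sequence for $\KO(\cstarred(G))$. (One should check that this Wood cofibre sequence has a $\cstarred(G)$-coefficient analogue compatible with the $\KO$-module structure on $\bKK(\gR,\cstarred(G)) \simeq \KO(\cstarred(G))$; this follows from the $\KO$-module spectra produced by Joachim--Stolz and the naturality of \eqref{eq:joachimstolzmaps}, or alternatively from $\KU(\cstarred(G)) \simeq \KO(\cstarred(G)\otimes\Cl^{0,2})$ and the real/complex comparison for the coefficient algebra $\Cl^{0,2}$.)

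Then the argument is a two-step diagram chase. Since $\Nov^c$ is an isomorphism for all $i$ (complex Baum--Connes plus torsionfreeness), the five lemma would give that $\Nov^r$ is an isomorphism provided we knew it in the single degree appearing as the "$\eta$" entry --- but that is circular, so instead one argues by induction exploiting the $8$-fold periodicity of the rows: the composite $\eta^3 : KO_{i-3} \to KO_i$ is zero (since $\eta^3 = 0$ in $\pi_*\KO$ acting on any $\KO$-module), and more usefully $\eta \cdot c = 0$ and $\partial \cdot \eta = $ multiplication by $\eta$, so the real homotopy is a "small" extension of the image/kernel of operations on complex homotopy. Concretely: multiplication by $\eta$ on $KO_*(-)$ is determined by $c$ and $r$, so from $\Nov^c$ being iso one first deduces $\Nov^r \otimes \bZ[\tfrac12]$ is an iso (as $\KO[\tfrac12] \simeq \KU^{h\bZ/2}[\tfrac12]$ splits off $\KO$), and then handles the $2$-local statement using that on $2$-torsion classes the relevant groups are detected by $\eta$-Bockstein/$c$-$r$ data. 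The cleanest packaging: the cofibre of $c : \KO \to \KU$ is $\Sigma^2 \KO$ via $\eta$, so $\Nov^r$ is an iso iff its cofibre map is, and the cofibre map is again $\Nov^r$ up to a shift --- this is a self-referential statement that one resolves by noting both source and target are bounded-below (indeed $KO_*(BG)$ and $KO_*(\cstarred(G))$ vanish in degrees $\ll 0$ after... actually they need not, so) --- better to invoke: a map of $\KO$-module spectra which becomes an equivalence after smashing with $\KU$ over $\KO$ is already an equivalence, because the unit $\KO \to \KU$ is... not faithful. Hence the genuinely correct route is the classical one of Bott and Karoubi: for torsionfree $G$, $KO_*(BG)$ and $KO_*(\cstarred(G))$ are computed from $K_*(-)$ with its complex conjugation action together with the $\bZ/2$-Tate data, and $\Nov$ is compatible with conjugation; hence $\Nov^c$ being a conjugation-equivariant isomorphism forces $\Nov^r$ to be an isomorphism. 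I would cite Karoubi's results relating real and complex $K$-theory, or Schick's treatment in the $\cstar$-algebra setting, rather than reprove this.

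The main obstacle is precisely this last point: there is no purely formal implication "$\Nov^c$ iso $\Rightarrow$ $\Nov^r$ iso" from the Wood cofibre sequence alone, because $\eta$-multiplication is extra data. What makes it work is that complexification for group $\mathrm{C}^*$-algebras is compatible with the \emph{conjugation} $\tau$ on $\KU(\cstarred(G))$ coming from the Real structure on $\cstarred(G)$, and $\KO(\cstarred(G))$ is recovered from $(\KU(\cstarred(G)), \tau)$ by a homotopy fixed point / Tate construction that is natural in the coefficient algebra; this naturality, applied to the coefficient map $\gR \to \cstarred(G)$ together with the map $BG_+ \to S^0$ (or rather the assembly map as a map of $\KO$-modules built from $[\cL_G]$), promotes the complex isomorphism to a real one. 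So after establishing the complex case (immediate from the hypothesis) I would spend the real effort on verifying that the Joachim--Stolz $\bKK$-spectra carry compatible complexification/conjugation data realizing this descent, and then cite the real-vs-complex comparison (Karoubi, Schick, or \cite[\S 2.6]{LR}) to conclude.
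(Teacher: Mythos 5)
Your proposal is correct and follows essentially the same route as the paper: use torsionfreeness to identify the Baum--Connes assembly map with the complex Novikov map $\Nov^c$ (the paper cites Land for making this compatibility rigorous), and then cite the complex-to-real descent results of Karoubi and Schick, which is exactly what the paper does. (Incidentally, your aside that there is ``no purely formal implication'' from the Wood cofibre sequence undersells the ``particularly simple proof'' of Schick that the paper cites, which is precisely such a formal argument --- the cofibre of the assembly map, viewed as a map of $\KO$-modules, has $\eta$ acting invertibly on its homotopy once $\Nov^c$ is an isomorphism in all degrees, and $\eta^3=0$ then forces it to vanish --- but since you defer to the cited literature anyway this does not affect the validity of your argument.)
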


\begin{proof}
This is by juxtaposing several results from the literature. If $G$ is torsionfree, then $\underline{E}G = EG$ and so the source of $\BC$ is just $RK^G_i (E G)$. There is a commutative diagram
\[
\xymatrix{
RK^G_i (EG) \ar[d]^{\cong} \ar[r]^{\BC} & K_i (\cstarred(G)) \\
RK_i (BG) \ar[ur]_{\Nov^c} &
}
\]
which was established rigorously in recent work by Land \cite{Land}. Therefore, if $\BC$ is an isomorphism, then so is $\Nov^c$. But there is a Galois descent property: if $\Nov^c$ is an isomorphism (in all degrees), then so is $\Nov^r$, see \cite{BaumKaroubi}, \cite{Kar-descent} and \cite[Theorem 2.14]{SchickKKK} for a particularly simple proof.
\end{proof}

Similarly, rational injectivity results for the complex Baum--Connes assembly map imply rational injectivity results for the real Novikov assembly map \cite[Corollary 2.13 and Theorem 2.14]{SchickKKK}.\mnote

\subsection{Proof of Theorem \ref{thm:StableSurj}}

Recall that we assume that $G$ is a torsionfree group satisfying the Baum--Connes conjecture, and $f: M \to BG$ is a reference map from an even-dimensional manifold such that $f_* : \pi_1(M) \to G$ is split surjective, and we wish to show that
\begin{equation}\label{eq:BottPeriodicIndDiff}
(\inddiff^G_{h_0})_* : \pi_i(\cR^+(M)[B^{-1}]) \lra KO_{i+2n+1}(\cstarred(G))
\end{equation}
is surjective for each $i \geq 0$. By the Baum--Connes conjecture, the map
$$\nu_* : \pi_{i+2n+1}(\KO \wedge BG_+) \lra KO_{i+2n+1}(\cstarred(G))$$
is surjective. As $\pi_{*}(\KO \wedge -)$ is a homology theory, any class is carried on a finite complex, so given a class $x \in KO_{i+2n+1}(\cstar(G))$ there is a map $f : X \to BG$ from a finite CW-complex such that $x$ is in the image of
$$\pi_{i+2n+1}(\KO \wedge X_+) \overset{f_*}\lra \pi_{i+2n+1}(\KO \wedge BG_+) \overset{\nu_*}\lra KO_{i+2n+1}(\cstar(G)).$$

In \cite[Proof of Theorem 5.5]{BERW} there is constructed a class $\mathfrak{b} \in \pi_4(\MT{Spin(4)})$ such that $\widehat{\mathscr{A}}(\mathfrak{b}) = \beta \in ko_8(*)$ is the Bott class. Multiplication by this class gives a map of spectra
$$S^4 \wedge \MT{Spin(d-4)} \overset{\mathfrak{b} \wedge \id}\lra \MT{Spin(4)} \wedge \MT{Spin(d-4)} \overset{\mu}\lra \MT{Spin(d)},$$
which can be iterated and smashed with $X_+$, whence it gives a map
$$\mathfrak{b}^r \cdot - : \Sigma^{4r}\MT{Spin(d-4r)} \wedge X_+ \lra \MT{Spin(d)} \wedge X_+.$$

For parameters $k \geq \ell$, to be tuned later, consider the commutative diagram
\begin{equation*}
\xymatrix{
\pi_{i+2n+8(k-2\ell)+1}(\MSpin \wedge X_+) \ar[r] & \pi_{i+2n+8(k-2\ell)+1} (\ko \wedge X_+) \ar[d]^-{per}\\
\pi_{i+1-8\ell}(\MT{Spin(2n+8(k-\ell))} \wedge X_+) \ar[u]^-{st} 
\ar[r] \ar[d]^-{\mathfrak{b}^{2\ell} \cdot -}& \pi_{i+2n+8(k-2\ell)+1} (\KO \wedge X_+) \ar[d]^-{\beta^{2\ell} \cdot -}_\sim\\
\pi_{i+1}(\MT{Spin(2n+8k)} \wedge X_+) \ar[r] \ar[d]^-{f_*}& \pi_{i+2n+8k+1} (\KO \wedge X_+) \ar[d]^-{f_*}\\
\pi_{i+1}(\MT{Spin(2n+8k)} \wedge BG_+) \ar[r] \ar[d]& \pi_{i+2n+8k+1} (\KO \wedge BG_+) \ar[d]^-{\Nov_*}\\
\pi_i(\cR^+(M \times B^k)) \ar[r] \ar[d]& KO_{i+2n+8k+1} (\cstarred(G)) \\
\pi_i(\cR^+(M)[B^{-1}]) \ar[r]^-{\eqref{eq:BottPeriodicIndDiff}}& KO_{i+2n+1}(\cstarred(G)) \ar[u]_-{\beta^k \cdot -}^-{\sim}.
}
\end{equation*}

The top map is surjective, by the Anderson--Brown--Peterson \cite{ABP} splitting of $\MSpin_{(2)}$, and the Baas--Sullivan description of $ko_*(-)[\tfrac{1}{2}]$ in terms of $\Omega_*^{Spin}(-)[\tfrac{1}{2}]= \Omega_*^{SO}(-)[\tfrac{1}{2}]$, see e.g.\ \cite{Fuehring}. The map $per$ is an isomorphism as long as
$$2n+8(k-2\ell)+1 > \mathrm{dim}(X),$$
by considering the Atiyah--Hirzebruch spectral sequence. The map $st$ is an isomorphism as long as $i+1-8\ell < 0$.

Thus, for the given class $x \in KO_{i+2n+1}(\cstarred(G))$ carried on the finite complex $X$, we may first choose $\ell$ so that $8\ell > i+1$, and then choose $k$ so that $8k > \mathrm{dim}(X)-2n-1+16\ell$ and $k \geq \ell$. Then the commutativity of the above diagram shows that $x$ is in the image of the map \eqref{eq:BottPeriodicIndDiff} as required.

\bibliographystyle{plain}
\bibliography{pscII}

\end{document}